\newcommand{\Waff}{W_{\textrm{aff}}}
\newcommand{\N}{\mathbb{N}}
\newcommand{\Z}{\mathbb{Z}}
\newcommand{\Q}{\mathbb{Q}}
\newcommand{\C}{\mathbb{C}}
\newcommand{\fS}{\mathfrak{S}}
\newcommand{\bP}{\mathbb{P}}
\newcommand{\cO}{\mathcal{O}}
\newcommand{\cS}{\mathcal{S}}
\newcommand{\cQ}{\mathcal{Q}}
\newcommand{\GL}{\,\mathrm{GL}}
\newcommand{\QH}{\,\mathrm{QH}}
\newcommand{\QK}{\,\mathrm{QK}}
\newcommand{\K}{\,\mathrm{K}}
\newcommand{\Fl}{\,\mathrm{Fl}}
\newcommand{\Gr}{\,\mathrm{Gr}}
\newcommand{\Kpt}{\mathrm{Rep}_{\mathbb{T}}}
\newcommand{\T}{\,\mathbb{T}}
\newcommand{\G}{\,\mathbb{G}}
\newcommand{\End}{\operatorname{End}}
\newcommand{\mb}[1]{\mathbb{#1}}
\newcommand{\mf}[1]{\mathfrak{#1}}
\newcommand{\bs}[1]{\boldsymbol{#1}}
\newcommand{\op}[1]{\operatorname{#1}}
\newcommand{\opp}{\vee}
\newcommand{\pt}{\operatorname{pt}}
\newcommand{\ch}{\operatorname{ch}}
\newcommand{\Tr}{\operatorname{Tr}}
\newcommand{\YB}{\operatorname{YB}}
\newcommand{\Rep}{\operatorname{Rep}}
\newcommand{\sfT}{\mathsf{T}}
\newcommand{\bbV}{\bold{V}}
\newcommand{\ve}{\varepsilon}
\newcommand{\sigmax}{\sigma}
\newcommand{\be}{\mathbf{b}}
\newcommand{\bfe}{\mathbf{e}}
\newcommand{\tg}{\tau}
\newtheorem{defn}{Definition}[section]
\newtheorem{remark}{Remark}[section]
\newtheorem{thm}{Theorem}[section]
\newtheorem{prop}{Proposition}[section]
\newtheorem{cor}[prop]{Corollary}
\newtheorem{lemma}[prop]{Lemma}
\newtheorem{example}[prop]{Example}
\crefname{conjecture}{Conjecture}{Conjectures}
\Crefname{conjecture}{Conjecture}{Conjectures}
\numberwithin{equation}{section}
\newcommand{\connecting}[2]{\tikz[baseline=1ex]{
\draw[black] (0,0) -- (0,0.5);
\node at (0,-0.2) {$#2$};
\node at (0,0.7) {$#2$};
\draw[black] (-0.25, 0.25) -- (-0.05, 0.25);
\draw[black] (0.05, 0.25) -- (0.25, 0.25);
\node at (0.39,0.25) {$#1$};
\node at (-0.39,0.25) {$#1$};}}
\newcommand{\avoiding}[2]{\tikz[baseline=1ex]{
\node at (0.39,0.25) {$#2$};
\node at (-0.39,0.25) {$#1$};
\draw[black] (-0.25, 0.25) .. controls (0, 0.25) .. (0,0);
\draw[black] (0, 0.5) .. controls (0, 0.25) .. (0.25,0.25);
\node at (0,-0.2) {$#1$};
\node at (0,0.7) {$#2$};
} }
\newcommand{\passing}[2]{\tikz[baseline=1ex]{
\draw[black] (-0.25,0.25) -- (0.25,0.25);
\node at (0.39,0.25) {$#1$};
\node at (-0.39,0.25) {$#1$};
%\filldraw[black] (-0.25,0.25) circle (2pt);
%\filldraw[black] (0.25,0.25) circle (2pt);
\draw[black] (0, 0) -- (0, 0.2);
\draw[black] (0, 0.3) -- (0, 0.5);
\node at (0,-0.2) {$#2$};
\node at (0,0.7) {$#2$};}}
\newcommand{\Connecting}[4]{\tikz[baseline=1ex]{
\draw[black] (0,0) -- (0,0.5);
\node at (0,-0.2) {$#4$};
\node at (0,0.7) {$#2$};
\draw[black] (-0.25, 0.25) -- (-0.05, 0.25);
\draw[black] (0.05, 0.25) -- (0.25, 0.25);
\node at (0.39,0.25) {$#3$};
\node at (-0.39,0.25) {$#1$};}}
\newcommand{\Avoiding}[4]{\tikz[baseline=1ex]{
\node at (0.39,0.25) {$#3$};
\node at (-0.39,0.25) {$#1$};
\draw[black] (-0.25, 0.25) .. controls (0, 0.25) .. (0,0);
\draw[black] (0, 0.5) .. controls (0, 0.25) .. (0.25,0.25);
\node at (0,-0.2) {$#4$};
\node at (0,0.7) {$#2$};
} }
\newcommand{\Passing}[4]{\tikz[baseline=1ex]{
\draw[black] (-0.25,0.25) -- (0.25,0.25);
\node at (0.39,0.25) {$#3$};
\node at (-0.39,0.25) {$#1$};
%\filldraw[black] (-0.25,0.25) circle (2pt);
%\filldraw[black] (0.25,0.25) circle (2pt);
\draw[black] (0, 0) -- (0, 0.2);
\draw[black] (0, 0.3) -- (0, 0.5);
\node at (0,-0.2) {$#4$};
\node at (0,0.7) {$#2$};}}
\begin{document}
\title[Quantum K--theory from Yang-Baxter algebras]{Quantum K--theory of Grassmannians from a Yang-Baxter algebra}

\author{Vassily Gorbounov}
\address{Faculty of Mathematics, National Research University, Higher School of Economics, Usacheva 6, 119048 Moscow, Russia} 
\email{vgorb10@gmail.com}

\author{Christian Korff}
\address{School of Mathematics and statistics, University of Glasgow, Glasgow G12 8QQ, UK}
\email{Christian.Korff@glasgow.ac.uk}

\author{Leonardo C.~Mihalcea}
\address{
Department of Mathematics, 
Virginia Tech University, 
Blacksburg, VA 24061
USA
}
\email{lmihalce@vt.edu}
\subjclass[2020]{Primary: 14N35, 82B23; Secondary:14M15, 14N15, 17B80, 55N20, 81R12}
\keywords{Quantum K-theory, Yang Baxter algebra, Grassmannians, Bethe ansatz} 
%\thanks{TODO}

\date{March 31, 2025}

\begin{abstract} In an earlier paper, two of the authors 
defined a $5$-vertex Yang-Baxter algebra (a Hopf algebra) 
which acts on the sum of the equivariant quantum K-rings of Grassmannians
$\Gr(k,n)$, where $k$ varies from $0$ to $n$.
We construct geometrically defined operators
on quantum K-rings describing this action. In particular, the $R$-matrix defining the Yang-Baxter algebra corresponds to the left Weyl group action. Most importantly, we use the `quantum=classical'
statement for the quantum K-theory of Grassmannians 
to prove an explicit geometric interpretation of the action of generators
of the Yang-Baxter algebra. The diagonal entries of the monodromy matrix are given
by quantum K-multiplications by explicitly defined classes, and
the off-diagonal entries by certain push-pull convolutions. We use this to find a 
quantization of the classes of fixed points in the quantum K-rings, corresponding to the Bethe vectors of the Yang-Baxter algebra. On each of the quantum K-rings, we prove that the 
two Frobenius structures (one from geometry, and the other from the integrable system construction) coincide. We discuss several applications, 
including an action of the extended affine Weyl group
on the quantum K-theory ring (extending the Seidel action), 
a quantum version of the localization map (which is a ring
homomorphism with respect to the quantum K-product), and a graphical calculus to multiply by
Hirzebruch $\lambda_y$ classes of the dual of the tautological quotient bundle. In an Appendix we illustrate our results in the case when $n=2$. 
\end{abstract}
\maketitle

\setcounter{tocdepth}{2}
\tableofcontents

%\input{Introduction-v2}
%LaTex-root: QKYB.tex

\section{Introduction} The study of quantum cohomology, and quantum K-theory 
rings of flag manifolds
is closely related to that of (quantum) integrable systems and lattice 
models in mathematical physics; see for example the pioneering works of 
Givental, Kim \cite{givental-kim:QH,kim:QH} and Givental, Lee \cite{givental.lee:quantum}.
More recently, a different perspective has emerged, which links
the quantum cohomology and quantum K-theory
to solutions of the quantum Yang-Baxter equation.
The latter
describe exactly solvable lattice models in statistical mechanics \cite{Baxter:book} 
and, algebraically, leads to the definition of quantum groups via the quantum 
inverse scattering method; see e.g.~\cite{faddeev1990lectures} and references 
therein. 

The new perspective unifies constructions in the representation theory of quantum groups to those in enumerative geometry. Many of the foundational ideas are presented in the influential works \cite{okounkov:lectures,MO:book} by Maulik and Okounkov.
There is a flurry of activity in this area, and for a sampler biased
towards the specific case of (cotangent bundles of) flag manifolds, see
\cite{BMO:springer,GRTV:quantum,gorbounov2014equivariant,knutson.zinn:SchubertI,
schiffmann.vasserot:cohomological,gorbunov2020yang,KPSZ:quivers,koroteev.zetlin:qKZ-tRS,li.su.xiong;springer,tarasov.varchenko:monodromy,smirnov:elliptic-lectures,sinha.zhang:QK1} and references therein. In parallel, inspired by the Bethe/Gauge correspondence by Nekrasov and Shatashvili \cite{nekrasov.shatashvili:supersymmetric} there have been several papers by different groups approaching quantum cohomology and K-theory from a string theoretic perspective; for example see \cite{jockers.and.al:Wilson,ueda.yoshida:3d,kim.ueda.yoshida:residue,jockers.and.al:BPS,GMSZ:symplectic,closset.khlaif:grothendieck,closset.khlaif:twisted,gu2024quantum,GGMWY:correspondence} and references therein.

The results on the quantum K-theory of cotangent 
bundles are formulated in terms of certain quantizations
of classical operators; see, e.g., \cite{KPSZ:quivers}. 
After taking an appropriate limit, these imply statements 
in the quantum K-theory of the base manifold. However, explicit 
calculations of K-theoretic Gromov-Witten invariants are notoriously 
difficult, therefore the geometric interpretations
of the quantizations is unclear. 

In this paper we take 
a different route by relying on the `quantum=classical' statement
\cite{buch.m:qk}. This allows us to perform explicit 
calculations of quantum K-products with respect to a 
distinguished basis - the {\em Schubert basis} in geometry, respectively the 
{\em spin basis} in physics - and 
one aims to find geometric and combinatorial descriptions of the action of the elements in the 
Yang-Baxter algebra on this basis. 

In physics, another distinguished basis is given by the {\em Bethe vectors}, 
corresponding geometrically to
quantizations of the classes of torus fixed points. 
This basis arises 
as a common eigenbasis of a certain family of commuting elements in the Yang-Baxter algebra.
A foundational problem is to describe as explicitly as possible the transition 
matrix between the spin and Bethe bases. The main obstacle to solving this problem is that the Bethe ansatz equations (which govern the integrable system) cannot be explicitly solved in general. However, in their symmetrized form their solution is given in terms of the spectrum of the quantum cohomology rings (or their generalized versions) and one is therefore interested in exhibiting the underlying geometry of the Bethe ansatz or quantum inverse scattering method by giving a purely geometric construction of the Yang-Baxter algebra.

In this paper we address in detail these problems for the case of $\QK_{\T}(\Gr(k;n))$, the equivariant 
quantum K-theory ring of the Grassmannian $\Gr(k;n)$ parametrizing linear subspaces of dimension $k$ in $\C^n$. The quantum K-ring was defined by Givental and Lee \cite{givental:onwdvv,lee:QK} and is equipped with the distinguished Schubert basis.
An algebraic construction of the action of the Yang-Baxter algebra on $\QK_{\T}(\Gr(k;n))$ was given by two of the authors in \cite{gorbounov2017quantum}. 

Our goal is to find a geometric interpretation of the 
action of the Yang-Baxter algebra (in terms of the $R$-matrix, and 
all the entries of the monodromy matrix), 
and use this interpretation
to transport structures from geometry to integrable systems, and vice versa. 
This leads to new results, and new perspectives, in both 
quantum Schubert Calculus, and quantum integrable systems. For instance, we construct 
an action of the {\em extended affine} Weyl group on the ring $\QK_{\T}(\Gr(k;n))$, 
generalizing to the equivariant case the more familiar Seidel representation from \cite{postnikov:affine,chaput.perrin:affine} in
quantum cohomology, and \cite{li.liu.song.yang:seidel,buch.chaput.perrin:seidel} in quantum K-theory; see also \cite{chow:peterson}
for a different generalization.

Furthermore, we identify the geometric Frobenius structure of the quantum K-theory to the one defined in \cite{gorbounov2017quantum} for the integrable system
(in terms of Bethe vectors). This foundational question only arises for quantum K-rings - 
in quantum cohomology the two pairings giving the Frobenius structures are the 
same as the classical intersection pairing. The identification
leads to a quantum generalization of the localization map, and a quantum version of the
Atiyah-Bott theorem,
both of which might be of interest in their own right.

While this paper focuses on the quantum K-ring of the Grassmannian, 
it also provides a roadmap
for the study of the quantum K-rings of generalized flag manifolds. 
We plan to do this in subsequent work.
We provide next a table summarizing the `dictionary' we prove between the geometric and integrable systems perspectives; this
table should generalize to other flag manifolds, and beyond. 
%The notation will be explained below.
%%%%%%%%%%%%%%%%%%%%%%%%%%%%%%%%%%%%%%%%%%%%%%%%%%%%%%%%%%%%%%%%%%%%%%
\begin{table}[h!]
\centering
\begin{tabular}{||c | c||} 
 \hline
 {\bf Quantum Schubert Calculus} & {\bf Quantum Integrable System} \\ [0.5ex] 
 \hline\hline
 left Weyl group action & $R$ matrix \\ 
 & (solution of the quantum Yang-Baxter equation) \\ [0.5ex]\hline
 multiplication/convolution operators & entries of the monodromy matrix \\
% $RTT$ or Yang-Baxter algebra\\
 & $T(y)=\left(\begin{smallmatrix}t_{00}(y)& t_{01}(y)\\t_{10}(y)& t_{11}(y)\end{smallmatrix}\right)$ \\
% - monodromy matrix\\
 quantum multiplication by $\lambda_y(\cQ^\vee_{n-k})$ & quantum trace: $t(y)=t_{00}(y)+q t_{11}(y)$\\
 (push-pull) convolution operators & off diagonal operators: $t_{01}(y)$, $t_{10}(y)$\\[0.5ex]\hline
 Schubert classes & spin basis\\
 quantization of fixed points & Bethe vectors\\
 [0.5ex]\hline
 quantum Whitney relation & functional relation/Bethe ansatz equations\\
 quantized Chern roots & Bethe roots\\[1ex] \hline
\end{tabular}
\caption{Dictionary.}
\label{table:1}
\end{table}

\subsection{Statement of results} We present next a more precise account of our results.  
We start by introducing some notation. Let $\T \subset \GL_n(\C)$ be the 
subgroup of diagonal matrices in $\GL_n(\C)$. Denote by 
$\Kpt= \Z[\ve_1^{\pm 1}, \ldots, \ve_n^{\pm 1}]$ the representation ring of $\T$, and by $W \simeq S_n$ the Weyl group.
Consider the tautological sequence of ($\T$-equivariant) vector bundles on $\Gr(k;n)$:
\[ 0 \to \mathcal{S}_k \to \C^n \to \mathcal{Q}_{n-k} \to 0 \/\] 
Set $\K_{\T}(\Gr(k;n))$ to be the $\T$-equivariant $\K$-theory ring of $\Gr(k;n)$.
We use the same notation for an equivariant vector bundle $E$ and the class 
it determines in the equivariant 
$\K$-theory ring. The (Hirzebruch) $\lambda_y$-class of $E$ is the element 
$\lambda_y(E) = 1 + y E + \ldots + y^{e} \wedge^e E \in \K_T(\Gr(k;n)[y]$, where $y$ is a formal variable,
and $e$ is the rank of $E$; we refer to \cref{sec:preliminaries} for more details. 

The equivariant quantum K-theory ring $\QK_{\T}(\Gr(k;n))$, 
defined by Givental and Lee \cite{givental:onwdvv,lee:QK} (in a more general
context), is a free $\Kpt[\![q]\!]$-module equipped with a basis given by the Schubert classes 
$\cO_\lambda$, where $\lambda$
varies in the set of partitions $\lambda=(\lambda_1, \ldots, \lambda_k)$ included in the 
$k \times (n-k)$ rectangle; $q$ is the quantum parameter. The quantum K-multiplication is given by 
\[ \cO_\lambda \star \cO_\mu = \sum N_{\lambda,\mu}^{\nu,d} q^d \cO_\nu \/,\]
 where the sum is over all non-negative degrees $d$, and partitions $\nu$.
The structure constants $N_{\lambda,\mu}^{\nu,d}$ are defined in terms of $2$ and $3$-point K-theoretic
Gromov-Witten invariants of the Grassmannian; see \cref{sec:QK} below. 

We use the Yang-Baxter algebra $\YB$ from \cite{gorbounov2017quantum},
generated by $R=(R_{ij})$ (the $R$-matrix) and $T$ (the monodromy matrix) satisfying the $RTT=TTR$ equation; cf. \cref{sec:YB} below. Here $R$ is associated to a certain five vertex model, see \eqref{5v}. We extend 
scalars by the quantum parameter $q$ and the formal variable $y$ - the 
parameter in the Hirzebruch $\lambda_y$-class, understood here 
as the spectral parameter of the integrable system. 
For each $n\ge 2$ there is a natural $\YB$-module obtained from the $R$-matrix and the coproduct: 
\[ (\C^2)^{\otimes n} \otimes \Kpt[\![q]\!] \simeq \bbV_n = \bigoplus_{k=0}^n V_{k,n}\]
where $V_{k,n}$ is a $\Kpt[\![q]\!]$-module isomorphic to $\QK_{\T}(\Gr(k,n))$, equipped with the
 (spin) $\Kpt$-basis $\{ v_\lambda \}$. Then $\bbV_n$ is a highest weight module 
and each $V_{k,n}$ is a weight subspace. The monodromy matrix 
$T= \begin{pmatrix} t_{0,0} & t_{0,1} \\ t_{1,0} & t_{1,1} \end{pmatrix}$
is defined by 
the diagonal operators 
$t_{ii} \in \End_{\Kpt} \bbV_n$ 
which preserve each $V_{k,n}$, and off diagonal operators
\[ t_{10}: V_{k,n} \to V_{k+1,n}\/; \quad t_{01}: V_{k+1,n} \to V_{k,n} \/.\]
The isomorphism of free $\Kpt[y][\![q]\!]$-modules
\[ \Phi: \bbV_n= \bigoplus_{k=0}^n V_{k,n} \to \bigoplus_{k=0}^n \QK_{\T}(\Gr(k,n))\/; \quad v_\lambda \mapsto \cO_\lambda \/, \]
equips $\bigoplus_{k=0}^n \QK_{\T}(\Gr(k,n))$ with a $\YB$-module structure. Note that the latter module is also equipped
with a structure of $\Kpt[y][\![q]\!]$-{\em algebra} given by the quantum K-product $\star$ on each of the summands.  
Our main result realizes the monodromy and the R-matrix as 
geometric operators in the quantum K-algebra. 

%\subsubsection{The geometry - integrable system dictionary}
We show that the components of the $R$-matrix correspond to the left Weyl group action
on $\QK_{\T}(\Gr(k;n))$, defined in \cite{knutson:noncomplex,MNS:left}. This is the action induced by 
left multiplication by $\GL_n(\C)$ on $\Gr(k;n)$. 
In various forms, this relation was observed in many other contexts, see e.g. \cite{MNS:left}. 
To be precise, the twisted $R$-matrix operator $\check{R}=P\circ R$, defined in Lemma \ref{lem:Rcheck}, 
is given by the left Weyl group multiplication:
\[ \Phi(\check{R}_{n-i,n-i+1}(\ve_i/\ve_{i+1})v_\lambda) = \bs{s}_i.\cO_\lambda \/,\]
where $\bs{s}_i$ is the action given by the simple reflection $s_i$. 
For $0 \le k \le n-1$ consider the incidence diagram 
\begin{eqnarray}\label{diag:q=cl-intro}
\xymatrix{\Fl(k,k+1;n)\ar[d]_{p_1}\ar[r]^{{p_2}} & \Gr(k+1;n)\\
\Gr(k;n) & } \/
\end{eqnarray}
where $\Fl(k,k+1;n)$ is the two step flag manifold and all maps are the projection maps.
Our main result gives a geometric realization of the operators $t_{i,j}$; cf. \Cref{thm:int-system-ops} and \Cref{cor:main-cor}.
\begin{thm}\label{thm:QK=YB} The following hold:
\begin{enumerate} \item The quantum trace operator $t(y):=t_{00}(y)+q t_{11}(y)$ 
restricted to $V_{k,n} \otimes \C[y,q]$ satisfies:
\[ \Phi(t(y).v_\lambda) = \lambda_y(\mathcal{Q}_{n-k}^\vee) \star \cO_\lambda \/.\]

\item The off-diagonal operators are given by convolutions. More precisely, let
\[ \tg_{10}=\tau_{10}(y): \K_{\T}(\Gr(k,n))[y] \to \K_{\T}(\Gr(k+1,n))[y] \] and
\[ \tg_{01}=\tau_{01}(y): \K_{\T}(\Gr(k+1,n))[y] \to \K_{\T}(\Gr(k,n))[y] \]
be the convolution operators defined by 
\[ \tg_{10}(\kappa) = \lambda_y (\mathcal{Q}_{n-k-1}^\vee) \cdot (p_2)_*(p_1)^*(\kappa) - (p_2)_* p_1^*(\lambda_y (\mathcal{Q}_{n-k}^\vee) \cdot \kappa) \/, \]
and by
\[ \tg_{01}(\kappa) = (p_1)_* (p_2)^*(\lambda_y (\mathcal{Q}_{n-k-1}^\vee) \cdot \kappa) \/. \]
Then $\Phi(t_{10}(y).v_\lambda) = \tg_{10}(\cO_\lambda)$ and $\Phi(t_{01}(y).v_\mu) = \tau_{01}(\cO_\mu)$.
\end{enumerate}
\end{thm}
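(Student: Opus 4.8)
The plan is to reduce the statement about quantum operators to a computation in the quantum K-theory of Grassmannians via the ``quantum=classical'' statement of Buch--Mihalcea, and then match the resulting push-pull expressions with the explicit action of the generators of $\YB$ computed in \cite{gorbounov2017quantum}. First I would recall the precise algebraic formulas for the operators $t_{ij}(y)$ on the spin basis $\{v_\lambda\}$: since the monodromy matrix is built from the $R$-matrix of the five-vertex model by comultiplication along the auxiliary space, the matrix elements $\langle v_\mu, t_{ij}(y) v_\lambda\rangle$ are given by sums over admissible lattice configurations (paths on the $n\times k$ or $n\times(k+1)$ grid), which in \cite{gorbounov2017quantum} are already identified with combinatorial data — essentially the Pieri-type coefficients in $\QK_\T(\Gr(k;n))$ for multiplication by $\lambda_y$-classes. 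The key point for part (1) is that the quantum trace $t(y)=t_{00}(y)+q\,t_{11}(y)$ is precisely the transfer matrix of the model, and its eigenvalues/coefficients encode multiplication by $\lambda_y(\cQ^\vee_{n-k})$; I would verify the identity $\Phi(t(y).v_\lambda)=\lambda_y(\cQ^\vee_{n-k})\star\cO_\lambda$ by checking it on the $\YB$-side against the quantum Pieri rule for $\lambda_y(\cQ^\vee_{n-k})$ in $\QK_\T(\Gr(k;n))$, which is known (it follows from the quantum Whitney/Chern-class relations and the ``quantum=classical'' description).

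For part (2), the strategy is to express the right-hand sides $\tau_{10}, \tau_{01}$ entirely in classical K-theory of the two-step flag manifold $\Fl(k,k+1;n)$ using the incidence diagram \eqref{diag:q=cl-intro}, and then invoke ``quantum=classical'' to see that the push-pull along $p_1, p_2$ of $\lambda_y$-classes computes exactly the relevant quantum K-Gromov-Witten invariants on the Grassmannian side. Concretely, $(p_2)_*(p_1)^*$ and $(p_1)_*(p_2)^*$ are the building blocks appearing in the ``quantum=classical'' formula for degree-one (and, after iteration, higher-degree) invariants, so the combinations defining $\tau_{10}$ and $\tau_{01}$ will match the matrix coefficients of $t_{10}(y)$ and $t_{01}(y)$ once one identifies the relevant $\lambda_y$-twists. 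I would carry this out by: (a) writing the $\YB$-side matrix elements of $t_{10}(y)$ and $t_{01}(y)$ from the explicit $R$-matrix in \eqref{5v} and the coproduct; (b) writing the geometric side $\langle\cO_\nu,\tau_{10}(\cO_\lambda)\rangle$ and $\langle\cO_\nu,\tau_{01}(\cO_\mu)\rangle$ using the projection formula and the known fibers of $p_1$ (a $\bP^{n-k-1}$-bundle) and $p_2$ (a $\bP^k$-bundle), reducing these to classical K-theory Euler characteristics on $\Fl(k,k+1;n)$; and (c) checking the two sets of numbers agree, using that both satisfy the same recursion/are determined by the same Pieri-type data. The subtraction structure in the definition of $\tau_{10}$ — namely $\lambda_y(\cQ^\vee_{n-k-1})\cdot(p_2)_*(p_1)^*(\kappa) - (p_2)_*p_1^*(\lambda_y(\cQ^\vee_{n-k})\cdot\kappa)$ — should correspond exactly to the ``defect'' term in the five-vertex weights, and I expect that carefully tracking $\lambda_y$-classes through the short exact sequences relating $\cS_k,\cS_{k+1},\cQ_{n-k},\cQ_{n-k-1}$ on $\Fl(k,k+1;n)$ will make the match transparent.

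The main obstacle I anticipate is the bookkeeping of the quantum corrections, i.e.\ the terms with positive powers of $q$. For part (1), the presence of $q\,t_{11}(y)$ in the quantum trace is exactly what produces the quantum-corrected Pieri rule, and one must check that the $\YB$-side $q$-dependence matches the $q$-terms in $\lambda_y(\cQ^\vee_{n-k})\star\cO_\lambda$; this requires knowing the full quantum Pieri rule for $\lambda_y$ of the dual quotient bundle, not just its classical part. Similarly, although the incidence diagram \eqref{diag:q=cl-intro} has no explicit $q$ in it, the ``quantum=classical'' statement replaces the quantum products on the right-hand sides of part (2) with \emph{classical} push-pull, so one has to be careful that the operators $\tau_{10}, \tau_{01}$ as \emph{defined} (purely classically, with values in ordinary K-theory) indeed land on the correct quantum classes after applying $\Phi^{-1}$; the coherence of these identifications across all partitions $\lambda$ and the compatibility with the $RTT=TTR$ relations is where the argument will need the most care. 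A secondary technical point is verifying that the operators on the two sides are $\Kpt[y][\![q]\!]$-linear and hence determined by their values on the basis $\{v_\lambda\}$, so that checking the identity basis-element by basis-element suffices.
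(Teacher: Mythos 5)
Your proposal reproduces the general philosophy (identify the $\YB$-generators with push--pull convolutions via `quantum=classical'), but it is missing the one idea that makes the paper's proof work, and in its place it leans on a fact that is not actually available. The paper never compares the two sides on an arbitrary basis vector $v_\lambda$. Instead it observes that both families of operators commute with the left Weyl group action: the $t_{ij}(y)$ because the $\check R$-matrix braiding commutes with the $\YB$-action (\Cref{prop:leftWaction}), and the $\tau_{ij}$ because $\lambda_y(\cQ_{n-k}^\vee)$, $\lambda_y(\cQ_{n-k-1}^\vee)$ are classes of homogeneous bundles, so they and the $\GL_n$-equivariant maps $p_1,p_2$ commute with the Demazure operators. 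Since $\QK_{\T}(\Gr(k;n))$ is a cyclic module over the degenerate Hecke algebra generated by the Schubert point class $[\pt]_{k,n}$, it then suffices to check the identities on that single class. That check is done explicitly on both sides: geometrically, $\lambda_y(\cQ_{n-k}^\vee)\star[\pt]_{k,n}$ and $\tau_{10}[\pt]_{k,n}$, $\tau_{01}[\pt]_{k+1,n}$ are computed via `quantum=classical' and the push-forward formulae (\Cref{thm:lyQveept}, \Cref{thm:diag-ops-geom}, \Cref{thm:geom-offdiag}, including the vanishing of all $q^{\ge 2}$ terms); on the integrable side, $t_{ij}(y).v_{(n-k)^k}$ is computed by the graphical calculus (\Cref{thm:int-system-ops}), and the two lists of formulae visibly coincide (\Cref{cor:main-cor}).

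By contrast, your plan asks to match the matrix elements $\langle v_\mu, t_{ij}(y)v_\lambda\rangle$ with the geometric coefficients for \emph{all} $\lambda$, and for part (1) you propose to compare the transfer-matrix action against ``the quantum Pieri rule for $\lambda_y(\cQ_{n-k}^\vee)$ \ldots which is known.'' That rule is not known independently in the required generality: what is available is the quantum Chevalley formula and the Whitney relations, and the cancellation-free Pieri-type formula for $\lambda_y(\cQ_{n-k}^\vee)\star\cO_\lambda$ is exactly what this theorem (via the graphical calculus) produces as an output, not an input. For arbitrary $\lambda$ one would also have to prove directly that no $q^{\ge 2}$ terms occur and to match the five-vertex lattice sums with the geometric convolution coefficients for every pair $(\lambda,\mu)$ — your step (c), ``checking the two sets of numbers agree, using that both satisfy the same recursion,'' is precisely the unproved combinatorial identity, and you never specify what the common recursion is or why both sides satisfy it. The Weyl-group/nil-Hecke equivariance is that recursion in disguise: once you add the observation that both $t_{ij}$ and $\tau_{ij}$ intertwine the left $W$-action and that $[\pt]_{k,n}$ generates the module, your computation-on-the-point-class ingredients (which you essentially have, via the fibre descriptions of $p_1,p_2$ and the explicit $R$-matrix weights) assemble into a complete proof; without it, the argument as written has a genuine gap.
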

This generalizes to K-theory the results from \cite{gorbunov2020yang}
for the (equivariant) quantum cohomology of the Grassmannian. 
Unlike quantum cohomology, the structure constants in quantum K-theory are no longer
single Gromov-Witten invariants.
In particular, the two terms in $\tg_{10}$ correspond to the two factors
from \eqref{E:QKstr} below giving the structure constant $N_{\lambda, \mu}^{\nu,d}$. In other
words, $\tau_{10}$ is precisely the convolution arising in the `quantum=classical' phenomenon proved
in \cite{buch.m:qk}; cf.~\Cref{thm:q=cl} below. 

The proof of \Cref{thm:QK=YB} exploits the fact that both sets of operators
$t_{ij}$ and $\tau_{ij}$ commute with the left Weyl group action. Therefore it suffices to check equality on the class of the Schubert 
point class, as this class generates the full ring under the (left) nil-Hecke algebra action. 

In geometry, the action of the `off diagonal' operators $\tau_{ij}$, for $i \neq j$,
on the Schubert point is obtained by using known formulae to push and pull Schubert classes.
For the diagonal operators, we need to  
calculate the quantum K-product by $\lambda_y(\cQ^\vee_{n-k})$. This requires rather detailed 
control on K-theoretic Gromov-Witten invariants involving $\lambda_y(\cQ^\vee_{n-k})$.
Our calculation
relies on the `quantum=classical' statement \cite{buch.m:qk}, which reduces 
the quantum K-multiplication
to calculations of push-pull convolutions. All these are discussed in \cref{sec:QK-by-ly} and 
the final calculations are done in \cref{sec:convo-ops}. 

For integrable systems, the key calculation follows from the
graphical calculus associated to the $5$-vertex model, see \Cref{sec:graphical}. 
This is a direct consequence of the properties of the R-matrix giving the Yang-Baxter algebra in this paper; see \Cref{sec:YB}.

\Cref{thm:QK=YB} allows us to transport structures from one side to the other. 
A remarkable structure on $\bbV_n$ is the action of the extended affine Weyl group 
$\widetilde{W}=W\ltimes\Z^n$, extending the left Weyl group action of $W \simeq S_n$.
Let $t_{\ve_i}$ be the translation corresponding to the $i$th component of $\Z^n$.
This group is also isomorphic to $W_{\mathrm{af}} \ltimes \Z$, where $W_{\mathrm{af}}$
is the affine Weyl group, and $\Z$ is identified with the fundamental group of $\GL_n(\C)$.
We denote by $\rho$ the cyclic generator of $\Z$; in terms of translations, 
$\rho= t_{\ve_n}s_{n-1}\cdots s_1$. The endomorphisms
corresponding to elements in $\widetilde{W}$ 
will be denoted using bold letters.

Under the isomorphism 
$\Phi$ the action of $\widetilde{W}$ may be transported to the quantum $\K$-theory side, giving the following (cf.~\Cref{prop:trans} and \Cref{cor:QK-seidel}):
\begin{cor}
(a) The translations $t_{\varepsilon_i}\in\widetilde W$, for $1 \le i \le n$, act by $t(-\ve_i)$. Explicitly,
\[ {\bs t}_{\varepsilon_i}.\cO_\lambda=\Phi(t(-\ve_i).v_\lambda)= \lambda_{-\ve_i}(\mathcal{Q}^\vee) \star \cO_\lambda\] 
and ${\bs t}_{\varepsilon_i}$ leaves the equivariant parameters in $\Kpt$ invariant.

\noindent (b) The cyclic (or Seidel) generator $\rho$ acts by
\begin{equation}\label{rho}
        \bs{\rho}.(\chi\,\cO_\lambda)=\left\{\begin{array}{ll}
            q\,\chi^\rho\,\cO_{(\lambda_1-1,\ldots,\lambda_k-1)}, &  \ell(\lambda)=k \/;\\%\; \text{\rm  (delete max column)}\\
            \chi^\rho\,\cO_{(n-k,\lambda_1,\ldots,\lambda_{k-1})}, & \text{else} \/, %\; \text{\rm  (add max row)}
        \end{array}\right.\;
    \end{equation}
    where $\chi^\rho(\ve_1,\ldots,\ve_n)=\chi(\ve_n,\ve_1,\ldots,\ve_{n-1})$ with $\chi\in\Kpt$.
That is, $\rho$ permutes the equivariant parameters according to the cycle $s_{n-1}\cdots s_1$.
\end{cor}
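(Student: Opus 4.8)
The plan is to deduce both parts directly from \Cref{thm:QK=YB}(1) together with the structure of the $\YB$-module $\bbV_n$ and the explicit action of the extended affine Weyl group $\widetilde W$ on it established in (the sections preceding) \Cref{prop:trans}. For part (a), the key observation is that the translation $t_{\ve_i}$ acts on $\bbV_n$, by the very construction of the $\widetilde W$-action from the Yang-Baxter algebra, as the specialization $t(-\ve_i)$ of the quantum trace operator $t(y) = t_{00}(y) + q\,t_{11}(y)$ at the spectral parameter value $y = -\ve_i$. (This is where one uses that the diagonal entries of the monodromy matrix, evaluated at $y=-\ve_i$, reproduce the translation operators — a statement about the $5$-vertex $R$-matrix and its graphical calculus, cf. \Cref{sec:graphical}.) Granting this, \Cref{thm:QK=YB}(1) gives immediately
\[ \bs{t}_{\ve_i}.\cO_\lambda = \Phi(t(-\ve_i).v_\lambda) = \lambda_{-\ve_i}(\cQ_{n-k}^\vee) \star \cO_\lambda \/,\]
which is the claimed formula. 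The assertion that $\bs t_{\ve_i}$ fixes the equivariant parameters is then a formal consequence: $\lambda_{-\ve_i}(\cQ_{n-k}^\vee)$ is a specific element of $\QK_\T(\Gr(k,n))$ and quantum multiplication by it is $\Kpt[\![q]\!]$-linear, so the operator commutes with multiplication by scalars in $\Kpt$.

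For part (b), I would first recall from the $\YB$-side the action of the cyclic generator $\rho = t_{\ve_n}s_{n-1}\cdots s_1$ on the spin basis $\{v_\lambda\}$ of $\bbV_n$; this is the combinatorial shift operator on $01$-strings that cyclically rotates the string by one position (with the wrap-around producing the quantum parameter $q$), and it simultaneously applies the cyclic permutation $s_{n-1}\cdots s_1$ to the equivariant parameters. On partitions inside the $k\times(n-k)$ rectangle, this rotation is exactly the map described in \eqref{rho}: if the string ends in a particle (equivalently $\ell(\lambda)=k$, so the last part $\lambda_k \ge 1$), rotation produces the string for $(\lambda_1-1,\ldots,\lambda_k-1)$ and picks up a factor $q$; otherwise (the string ends in a hole) rotation produces the string for $(n-k,\lambda_1,\ldots,\lambda_{k-1})$. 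I would then transport this along $\Phi$, using $\Phi(v_\lambda)=\cO_\lambda$ and the compatibility of $\Phi$ with the $\Kpt$-module structures, to obtain the stated formula, including the twist $\chi\mapsto\chi^\rho$ on scalars coming from the permutation of the $\ve_j$'s.

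The main obstacle — and the only genuinely nontrivial input — is the identification in part (a) of the translation operator $t_{\ve_i}$ with the specialized quantum trace $t(-\ve_i)$; this rests on the precise form of the $R$-matrix of the underlying $5$-vertex model and is where the graphical calculus of \Cref{sec:graphical} does the work. Once this is in hand, both parts are essentially a bookkeeping exercise: part (a) is an immediate substitution into \Cref{thm:QK=YB}(1), and part (b) is unwinding the combinatorial definition of the cyclic generator on $01$-strings and translating it into the language of partitions and Schubert classes. A minor point to check carefully in (b) is the boundary/wrap-around case producing the factor $q$, and the bookkeeping of which of the two cases in \eqref{rho} corresponds to $\ell(\lambda)=k$ versus $\ell(\lambda)<k$; this amounts to matching the last bit of the $01$-string to the condition that the last part of $\lambda$ is nonzero.
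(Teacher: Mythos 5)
Your overall skeleton matches the paper's (define the $\widetilde W$-action on $\bbV_n$, identify translations with the specialized quantum trace, transport via $\Phi$ and \Cref{thm:QK=YB}), and your treatment of part (b) is essentially the paper's: in the paper the action of $\bs{\rho}$ is \emph{defined} by the cyclic shift on $01$-words with the wrap-around weight $q^{i_1}$ (eq.~\eqref{E:rhoI}), and the partition formula \eqref{rho}, including the criterion $\ell(\lambda)=k$ for the $q$-factor and the twist $\chi\mapsto\chi^\rho$, is exactly the bookkeeping you describe.

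The gap is in part (a). You assert that $\bs{t}_{\ve_i}=t(-\ve_i)$ holds ``by the very construction of the $\widetilde W$-action from the Yang-Baxter algebra,'' deferring the rest to the graphical calculus of \Cref{sec:graphical}. That is not how the action is built, and the identification is precisely the nontrivial content you would need to supply. In the paper the $\widetilde W$-action is constructed from the $\check R$-cocycle for the finite Weyl group together with the explicit cyclic-shift action of $\rho$; the translations are then \emph{defined} group-theoretically by $t_i=s_i\cdots s_{n-1}\rho s_1\cdots s_{i-1}$ as in \eqref{rho2t}, and the equality $\bs{t}_{\ve_i}=t(-\ve_i)$ is a theorem (\Cref{prop:trans}). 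Its proof is not a graphical-calculus argument but an algebraic manipulation of the monodromy matrix: one writes $t(z)=\Tr_0\bigl(\begin{smallmatrix}1&0\\0&q\end{smallmatrix}\bigr)_0R_{0n}(-z/\ve_1)\cdots R_{01}(-z/\ve_n)$, uses cyclicity of the trace to rotate the product, and specializes $z=-\ve_{n+1-i}$ so that one factor becomes $R(1)=P$; the permutation operator removes the trace over the auxiliary space and exhibits the remaining product of $\check R$-matrices as $\bs{s}_{n+1-i}\cdots\bs{s}_{n-1}\bs{\rho}\,\bs{s}_1\cdots\bs{s}_{n-i}=\bs{t}_{n+1-i}$. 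Without this step (or an equivalent argument) your part (a) is circular: you are using the conclusion as the construction. Once the identification is actually proved, the rest of your plan is fine — the formula $\bs{t}_{\ve_i}.\cO_\lambda=\lambda_{-\ve_i}(\cQ^\vee)\star\cO_\lambda$ follows from \Cref{cor:main-cor}, and $\Kpt$-linearity of quantum multiplication gives the statement about equivariant parameters. A smaller omission: one should also verify (as the paper notes) that the declared $\rho$-action together with \eqref{leftWaction} really satisfies the relations of $\widetilde W$, since otherwise the symbol $\bs{t}_{\ve_i}$ is not yet well defined.
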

We note that in integrable systems there is a further 
quantization of this action, with a single parameter, which may be 
thought of as a `loop parameter'; cf. \cref{sec:extended}.

Since the classical multiplication by $\lambda_y(\cQ_{n-k}^\vee)$ 
is diagonalizable with distinct eigenvalues,
so is the quantum multiplication $\lambda_y(\cQ_{n-k}^\vee) \star$. The corresponding 
eigenvectors are called the {\em Bethe vectors}. We denote by $\be_\lambda \in V_{n,k}$ the Bethe vector for $\lambda$,
and by 
\[ \bfe_\lambda^q:= \Phi(\be_\lambda) \in \QK_{\T}(\Gr(k;n)) \]
the corresponding element in the quantum K-theory ring.  
{\em A priori} the Bethe vectors are defined only up to a multiple, 
and a normalization was chosen in \cite{gorbounov2017quantum}, having many desirable properties, 
such as the fact that 
$\bfe_\lambda^q$ modulo $q$ is the class of the fixed point $\bfe_\lambda$ in $\K_{\T}(\Gr(k;n))$.
In particular, the Bethe vectors
$\be_\lambda \in V_{k,n}$ equip each weight space with a structure of a semisimple ring.
The geometric realization of the off-diagonal operators of $T$ gives a new, geometric, algorithm, to find the 
Bethe vectors $\bfe_\lambda^q$ in the quantum integrable system. 
We collect these facts in the next corollary.
\begin{cor} 
(a) The set $\{ \bfe^q_\lambda:=\Phi(\be_\lambda)\}$ of Bethe vectors diagonalize 
the operators $\lambda_y(\mathcal{Q}_{n-k}^\vee) \star$, and the Bethe vectors 
are orthogonal:
\[ \bfe_\lambda^q \star \bfe_\mu^q = 0 \/, \quad \forall \lambda \neq \mu \/. \]
In particular, $\QK_{\T}(\Gr(k,n))$ is a semisimple ring.

(b) The elements $\bfe_\lambda^q \in V_{k,n}$ may be calculated from the convolution 
operators and the roots of the Bethe ansatz equations: 
\[ \Phi(\be_\lambda) =\tg_{10}(-x^\lambda_1)\cdots\tg_{10}(-x^\lambda_{k})\cO_{o}  
= \bfe_\lambda^q = \tilde{\tg}_{01}(-x^{\lambda^t}_1)\cdots \tilde{\tg}_{01}(-x^{\lambda^t}_{n-k})\tilde{\cO}_o\]
where $x^\lambda=(x^\lambda_1, \ldots, x^\lambda_{k})$ with $\lambda\in\Pi_{k,n}$ are the distinct solutions of the Bethe ansatz equations for $\Gr(k,n)$,
\[
\prod_{j=1}^n(1-x_i/\ve_j)\prod_{j\neq i}^k(x_j/x_i)+(-1)^{k}q=0,
\]
and $\cO_o=\Phi(v_o)$ is the unique Schubert class in $\K_T(\Gr(0,n))$. 

Similarly, $x^{\lambda^t}$ denotes the set of roots associated to `dual' 
Bethe ansatz equations, and $\lambda^t$ is the transpose of $\lambda$, 
and $\tilde{\cO}_o$ is the Schubert class in $\K_{\T}(\Gr(n,n))$.~\footnote{Here the solutions $x^\lambda$ of the Bethe Ansatz equations are 
labelled by partitions $\lambda\in\Pi_{k,n}$ as follows: we attach the partition 
$\lambda$ to the solution $x^\lambda$ if at $q=0$ it specializes to $x^\lambda|_{q=0}=\ve^\lambda:=(\ve_{\lambda_k+1},\ldots,\ve_{\lambda_1+k})$; see Lemma 4.6 in \cite{gorbounov2017quantum}.}
Finally, the `dual' operators $\tilde{\tg}_{01}$ are defined in \cref{ss:dual-ops} below.
\end{cor}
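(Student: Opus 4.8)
The plan is to deduce everything from \Cref{thm:QK=YB} together with the integrable-systems input of \cite{gorbounov2017quantum}. For part (a), the starting point is that in \cite{gorbounov2017quantum} the Bethe vectors $\be_\lambda\in V_{k,n}$ are, by construction, the common eigenvectors of the commuting family of transfer-matrix operators obtained from $T(y)$; in particular they diagonalize the quantum trace $t(y)=t_{00}(y)+qt_{11}(y)$. By \Cref{thm:QK=YB}(1), under $\Phi$ this operator becomes quantum multiplication by $\lambda_y(\cQ_{n-k}^\vee)$, so $\{\bfe_\lambda^q\}$ diagonalizes $\lambda_y(\cQ_{n-k}^\vee)\star$. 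Since $\lambda_y(\cQ_{n-k}^\vee)$ generates $\QK_\T(\Gr(k,n))$ as an algebra over the coefficient ring (this is the quantum Whitney/presentation statement, and it is already known that the classical $\lambda_y(\cQ_{n-k}^\vee)$ is a regular generator with distinct eigenvalues, a property that persists after the $q$-deformation because the characteristic polynomial deforms continuously and the $q=0$ eigenvalues are already distinct), the eigenvectors of this single operator are, up to normalization, the primitive idempotents of the ring. Orthogonality $\bfe_\lambda^q\star\bfe_\mu^q=0$ for $\lambda\neq\mu$ and semisimplicity then follow formally from the fact that a commutative finite ring with a cyclic generator having distinct eigenvalues splits as a product of fields; one only has to check the normalization is consistent, i.e.\ that $\Phi$ intertwines the two ring structures, which is exactly the content of the Frobenius-structure identification asserted in the introduction and which I would cite at this point.

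For part (b), the idea is that in the algebraic Bethe ansatz the (off-shell) Bethe vectors are produced by applying the creation operator $t_{10}(y)$ repeatedly to the reference state, and the on-shell Bethe vector $\be_\lambda$ is obtained by specializing the spectral parameters $y$ to the Bethe roots $-x^\lambda_i$. Concretely, $\be_\lambda=t_{10}(-x^\lambda_1)\cdots t_{10}(-x^\lambda_k)v_o$ in $V_{k,n}$, where $v_o$ is the highest-weight vector spanning $V_{0,n}$; this is the standard statement recalled from \cite{gorbounov2017quantum}, including the labelling of solutions $x^\lambda$ by partitions described in the footnote. Applying $\Phi$ and using \Cref{thm:QK=YB}(2), which identifies $\Phi\circ t_{10}(y)=\tg_{10}(y)\circ\Phi$ on Schubert classes, converts this into
\[
\bfe_\lambda^q=\Phi(\be_\lambda)=\tg_{10}(-x^\lambda_1)\cdots\tg_{10}(-x^\lambda_k)\cO_o,
\]
since $\Phi(v_o)=\cO_o$. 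The `dual' formula is obtained in the same way from the dual algebraic Bethe ansatz: there the roles of $t_{10}$ and $t_{01}$ are exchanged (equivalently, one uses the opposite highest-weight vector $\tilde\cO_o\in\K_\T(\Gr(n,n))$ and the lowering operator $t_{01}$), the relevant spectral parameters are the roots $x^{\lambda^t}$ of the dual Bethe ansatz equations, and one invokes the definition of $\tilde{\tg}_{01}$ from \cref{ss:dual-ops}; combinatorially the transpose $\lambda\leftrightarrow\lambda^t$ corresponds to the particle-hole duality $\Gr(k,n)\leftrightarrow\Gr(n-k,n)$. The equality of the two resulting expressions for $\bfe_\lambda^q$ is then automatic because both equal $\Phi(\be_\lambda)$ for the same eigenvector $\be_\lambda$ (pinned down by its $q=0$ specialization being the fixed-point class $\bfe_\lambda$), so no separate identity needs to be proved.

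The main obstacle I expect is not the formal bookkeeping but making precise the claim that $\{\bfe_\lambda^q\}$ really are the eigenvectors and that the normalization matches: one must be careful that the off-shell vectors $t_{10}(-x^\lambda_1)\cdots t_{10}(-x^\lambda_k)v_o$ are nonzero and genuinely equal to the eigenvectors $\be_\lambda$ on the nose (not merely up to scalars depending on $\lambda$), which in the Bethe ansatz formalism relies on the Bethe equations holding — precisely the equations displayed in the statement — so that the `unwanted terms' in the commutation relations $[t(y),t_{10}(-x^\lambda_i)]$ cancel. I would handle this by quoting the corresponding lemmas of \cite{gorbounov2017quantum} verbatim (in particular the cited Lemma 4.6 for the labelling), and then the only new content is the translation through $\Phi$, which is immediate from \Cref{thm:QK=YB}. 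A secondary point requiring a line of justification is that the Bethe ansatz equations as written — $\prod_{j=1}^n(1-x_i/\ve_j)\prod_{j\neq i}^k(x_j/x_i)+(-1)^k q=0$ — have exactly $\binom{n}{k}$ distinct solutions (counted appropriately), matching $\dim V_{k,n}$; this is again in \cite{gorbounov2017quantum} and ultimately reflects that the spectrum of $\lambda_y(\cQ_{n-k}^\vee)\star$ is simple, which is the same fact used in part (a).
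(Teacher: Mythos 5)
Your proposal is correct and follows essentially the same route as the paper: the corollary is assembled exactly as you describe, from \Cref{thm:QK=YB} (i.e.\ \Cref{thm:int-system-ops} and \Cref{cor:main-cor}), the definition \eqref{Bethe_def} and \Cref{thm:bethe-vectors} (with the dual statement from \cref{sec:int-level-rank} and \cref{ss:dual-ops}) imported from \cite{gorbounov2017quantum}, distinctness of eigenvalues via the $q=0$ specialization, and the standard argument that eigenvectors of a single $\star$-multiplication operator with distinct eigenvalues in a commutative associative algebra are $\star$-orthogonal. One small remark: your appeal to the Frobenius-structure identification (\Cref{thm:Frob-intro}) for ``normalization'' is unnecessary for part (a) — orthogonality is insensitive to normalization and follows from the eigenvector argument alone — and is best avoided, since the integrable-system pairing's definition itself presupposes this orthogonality.
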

Implicit in all of the above is that the left Weyl group action permutes the Bethe vectors, i.e., 
for $w \in W$, 
\[ \bs{w}.\bfe_\lambda^q = \bfe_{w(\lambda)}^q \/.\]

%\subsubsection{Quantum localization map}
From definition, the quantum K-theory algebra has a structure of a Frobenius algebra, given by a pairing denoted by 
$( \cdot , \cdot )_{\QK}$, and defined by 
\[ (\cO_\lambda,\cO^{\mu})_{\QK} = \frac{q^{d(\lambda,\mu)}}{1-q} \/, \]
where $\cO^\mu$ is the opposite Schubert class, and $d(\lambda,\mu)$ is the smallest power of $q$ in 
the quantum cohomology (or K-theory) product of the opposite Schubert classes for $\lambda$ and $\mu$; cf.~\cite{BCLM:euler,BCMP:qkpos}.
We may use this structure, and the interpretation of the Bethe vectors $\bfe_\lambda^q$, to define a {\em quantum (equivariant) localization map}
\[ \iota: \QK_{\T}(\Gr(k;n)) \to \bigoplus_\lambda \Kpt[\![q]\!] \/; \quad \kappa \mapsto (\kappa, \bfe_\lambda^q)_{\QK} \/. \]
We prove in \Cref{prop:qloc-inj} that 
the quantum localization map $\iota$ is an injective homomorphism of $\Kpt[\![q]\!]$-algebras.

Motivated by calculations in the ordinary equivariant K-theory ring, Gorbounov and Korff 
\cite{gorbounov2017quantum} defined a pairing $\langle \cdot , \cdot \rangle$ giving each 
$V_{k,n}$ a structure 
of a Frobenius algebra. Their pairing is determined by the condition that 
\[ \langle v_\lambda, \be_\mu \rangle = G_\lambda (1-x^\mu| 1-\ve) \/, \]
where $G_\lambda(1-x|1-\ve)$ is the double Grothendieck polynomial
and $x^\mu$ is the $\mu$-Bethe root. (In particular, this implies that
the Bethe vectors are orthogonal, and that $\langle v_\emptyset, \be_\mu \rangle =1$ for any partition $\mu$.)
Since the Bethe vectors are deformations 
of fixed point classes, this formula deforms the usual localization formula for Schubert classes 
in equivariant $\K$-theory. 

Our theorem is that the two pairings coincide (cf.~\Cref{thm:frob-pairings}):
\begin{thm}\label{thm:Frob-intro}
The isomorphism $\Phi:V_{k.n} \to \QK_{\T}(\Gr(k,n))$ is 
an isomorphism of Frobenius algebras, i.e., for any $a,b \in V_{k,n}$,
\[ (\Phi(a), \Phi(b))_{\QK} = \langle a, b \rangle \/. \]
\end{thm}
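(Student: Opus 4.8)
The plan is to reduce the identity $(\Phi(a),\Phi(b))_{\QK}=\langle a,b\rangle$ to a single check on a spanning set, exploiting that both pairings are Frobenius (hence determined by a linear functional, the counit) together with equivariance under the left Weyl group action that has already been built into \Cref{thm:QK=YB}. First I would record that a Frobenius pairing on a commutative algebra $A$ is determined by the associated counit $\theta\colon A\to\Kpt[y][\![q]\!]$ via $(a,b)=\theta(a\star b)$; so it suffices to show the two counits agree, i.e.\ $\theta_{\QK}(\Phi(c))=\theta_{\YB}(c)$ for all $c$, where $\theta_{\YB}(c)=\langle v_\emptyset, c\rangle$ on the integrable side (using that $\langle v_\emptyset,\be_\mu\rangle=1$) and $\theta_{\QK}$ is the geometric counit, $\theta_{\QK}(\cO_\lambda)=(\cO_\lambda,1)_{\QK}$, which by the quoted formula $(\cO_\lambda,\cO^\mu)_{\QK}=q^{d(\lambda,\mu)}/(1-q)$ evaluates explicitly (the pairing against $\cO^\emptyset=1$ picks out $d(\lambda,\emptyset)$, which is $0$ when $\lambda$ fits the rectangle, so $\theta_{\QK}(\cO_\lambda)=\tfrac{1}{1-q}$ for the top class and more generally is governed by the Seidel/quantum-cohomology degree $d(\lambda,\emptyset)$).

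The key device is the Bethe basis. Since $\{\be_\mu\}$ is an orthogonal basis for $\langle\cdot,\cdot\rangle$, and since $\{\bfe^q_\mu=\Phi(\be_\mu)\}$ is, by the first Corollary, an orthogonal basis for $(\cdot,\cdot)_{\QK}$ as well (the $\bfe^q_\mu$ are idempotent-like: $\bfe^q_\lambda\star\bfe^q_\mu=0$ for $\lambda\ne\mu$), it is enough to compare the two pairings on the diagonal, i.e.\ to show $(\bfe^q_\mu,\bfe^q_\mu)_{\QK}=\langle\be_\mu,\be_\mu\rangle$ for every $\mu$. Equivalently, writing $\bfe^q_\mu=\sum_\lambda c^\mu_\lambda\,\cO_\lambda$ in the Schubert basis (with the same coefficients $c^\mu_\lambda$ expressing $\be_\mu$ in the spin basis, since $\Phi$ is the identity on bases), I want
\[
\sum_{\lambda}\theta_{\QK}(c^\mu_\lambda\,\cO_\lambda\star\bfe^q_\mu)
=\langle v_\emptyset,\be_\mu\rangle\cdot(\text{eigenvalue data}),
\]
and here the eigenvector property is what makes both sides collapse: $\cO_\lambda\star\bfe^q_\mu$ (after expanding $\cO_\lambda$ in terms of the generators whose action on $\bfe^q_\mu$ is known, namely $\lambda_y(\cQ^\vee_{n-k})\star$ from \Cref{thm:QK=YB}(1) and the Weyl action from the $\check R$-matrix) is a scalar multiple of $\bfe^q_\mu$, and the scalar is read off from the Bethe root $x^\mu$. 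Concretely I would use that $\QK_{\T}(\Gr(k,n))$ is generated over $\Kpt[\![q]\!]$ by the Chern classes $\lambda_y(\cQ^\vee_{n-k})$ together with the equivariant scalars — this is where the quantum Whitney presentation / the row of \Cref{table:1} matching "quantized Chern roots" to "Bethe roots" enters — so evaluating $\theta_{\QK}$ on $\bfe^q_\mu$ times any element reduces to evaluating $\theta_{\QK}(\bfe^q_\mu)$ times the value of that element at the point $x=x^\mu$. The same computation on the $\YB$ side is exactly the statement $\langle v_\lambda,\be_\mu\rangle=G_\lambda(1-x^\mu|1-\ve)$, since the double Grothendieck polynomial is precisely the value of the corresponding K-theory class under equivariant localization at the (deformed) fixed point, i.e.\ at the Bethe root.

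So the proof reduces to one genuine identity: $\theta_{\QK}(\bfe^q_\mu)=\langle v_\emptyset,\be_\mu\rangle=1$, together with the matching of the "evaluation at $x^\mu$" maps on the two sides. The first of these I expect to follow from \Cref{thm:QK=YB}(2): the Bethe vector is built from the vacuum $\cO_o$ by applying the convolution operators $\tg_{10}(-x^\mu_i)$, and one computes $\theta_{\QK}$ of such a creation-operator word directly — the convolution $\tg_{10}$ is defined via $p_1^*,p_2{}_*$ on the two-step flag $\Fl(k,k+1;n)$, and pairing the result with $1$ against the Frobenius form is, by the projection formula and the definition of $(\cdot,\cdot)_{\QK}$ in terms of opposite Schubert classes, a telescoping/normalization computation that bottoms out at $\theta_{\QK}(\cO_o)=1$ on $\Gr(0,n)=\pt$. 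The main obstacle, and the technical heart of the argument, is this last matching: showing that the geometric Frobenius counit $\theta_{\QK}$, when pushed through the generator $\lambda_y(\cQ^\vee)$-action and the off-diagonal convolutions, assigns to a Schubert class $\cO_\lambda$ (paired against a Bethe vector) exactly the double Grothendieck polynomial $G_\lambda(1-x^\mu|1-\ve)$ — i.e.\ that the quantum deformation of the localization map built in the paper's \Cref{prop:qloc-inj} really does specialize Schubert classes to Grothendieck polynomials at Bethe roots. I would handle this by first checking it at $q=0$ (where it is the classical Atiyah--Bott localization formula and the classical identity $\langle v_\lambda,\be_\mu\rangle|_{q=0}=G_\lambda$), then arguing that both sides are uniquely determined by (i) their $q=0$ value, (ii) Weyl-equivariance, and (iii) compatibility with multiplication by the ring generators $\lambda_y(\cQ^\vee)$ — all three of which hold for $\theta_{\QK}(-\star\bfe^q_\mu)$ by \Cref{thm:QK=YB}, and for the Grothendieck-polynomial side by the known recursions satisfied by double Grothendieck polynomials under the nil-Hecke/isobaric divided-difference operators. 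Uniqueness then forces equality, and the Frobenius-counit reduction at the top completes the theorem.
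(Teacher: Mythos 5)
Your reduction is sound as far as it goes: since both pairings are Frobenius with respect to the same product $\star$ (the products are identified via \Cref{cor:main-cor} and \Cref{cor:QK-pres}), and since the Bethe vectors are orthogonal for both (for $(\cdot,\cdot)_{\QK}$ this follows from $\bfe^q_\lambda\star\bfe^q_\mu=0$ and the Frobenius property), comparing the two pairings does collapse to comparing the linear functionals $\kappa\mapsto(\kappa,\bfe^q_\mu)_{\QK}$ and $\kappa\mapsto\langle\kappa,\be_\mu\rangle$. Moreover, once you know both functionals are multiplicative-after-normalization (i.e.\ $L(\kappa)=\mathrm{ev}_{x^\mu}(\kappa)\,L(1)$, using that every class acts on $\bfe^q_\mu$ by its value at the Bethe root, via \Cref{cor:QK-pres} and \eqref{E:intsysfrob2}), the whole theorem reduces to the single identity $(1,\bfe^q_\mu)_{\QK}=1=\langle 1,\be_\mu\rangle$. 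This is where your argument has a genuine gap. The "telescoping projection-formula computation" you propose for $(1,\tg_{10}(-x^\mu_k)\cdots\tg_{10}(-x^\mu_1)\cO_o)_{\QK}$ cannot work as described: any purely formal push--pull/projection-formula manipulation would prove an off-shell statement, and off shell the identity is false. Concretely, $(1,\Phi(\be_k(x)))_{\QK}=\frac{1}{1-q}\sum_\lambda c_\lambda(x)$ where $c_\lambda(x)$ are the Schubert coefficients of the off-shell Bethe vector, and $\sum_\lambda c_\lambda(x)=1-q$ holds only when $x$ satisfies the Bethe ansatz equations \eqref{BAE} (already for $\bP^1$ one needs $(1-x/\ve_1)(1-x/\ve_2)=q$). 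So the normalization step requires exactly the two inputs your sketch never engages: the explicit Grothendieck-polynomial expansion of the off-shell Bethe vector \eqref{E:Bk-GK}, and the on-shell specialization. Your back-up "uniqueness" argument does not repair this: $q=0$ agreement, Weyl-equivariance, and compatibility with the generator action relate $L(\kappa)$ to $L(1)$ but place no constraint on the $q$-dependence of $L(1)$ itself, so they cannot force $(1,\bfe^q_\mu)_{\QK}=1$.

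For comparison, the paper's proof avoids ever having to verify this normalization on the geometric side. It specializes \eqref{E:Bk-GK} at $x=x^\mu$, recognizes the coefficients as $\langle\mathcal{I}^{\lambda,q},\bfe^q_\mu\rangle$ where $\mathcal{I}^{\lambda,q}=\frac{1}{\ve_\lambda}\cO^\lambda\star\det\cS$ are the quantum ideal sheaves (using the multiplicativity of the \emph{integrable-systems} localization $\iota'$ from \Cref{prop:qloc-inj}, whose normalization $\langle 1,\be_\mu\rangle=1$ is built into the definition \eqref{E:intsysfrob2}), and then pairs the resulting expansion $\bfe^q_\mu=\sum_\lambda\langle\mathcal{I}^{\lambda,q},\bfe^q_\mu\rangle\cO_\lambda$ against $\mathcal{I}^{\nu,q}$ under $(\cdot,\cdot)_{\QK}$, where the duality $(\mathcal{I}^{\nu,q},\cO_\lambda)_{\QK}=\delta_{\nu,\lambda}$ immediately yields $(\mathcal{I}^{\nu,q},\bfe^q_\mu)_{\QK}=\langle\mathcal{I}^{\nu,q},\bfe^q_\mu\rangle$ on a pair of bases. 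If you want to salvage your route, you must either import \eqref{E:Bk-GK} together with the Bethe equations to prove $\sum_\lambda c^\mu_\lambda=1-q$ directly, or switch, as the paper does, to pairing against the ideal-sheaf basis so that no separate normalization identity is needed.
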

In the context of equivariant quantum cohomology, variants of
the quantum localization map appeared in \cite{korff.stroppel:W}, and in 
the preprint \cite{gorbounov2014equivariant} (see e.g., section 5.6), which was 
upgraded to quantum K-theory in \cite{gorbounov2017quantum}. We also note that 
in quantum cohomology, the pairings giving the Frobenius structures coincide with the 
classical pairing; see \Cref{rmk:QH-Frobenius}. The question of comparing the `geometric' and `integrable systems' pairings in quantum K-theory does not seem to have been asked before; in particular, \Cref{thm:Frob-intro} is new. We note that the equality of the geometric pairing with one arising from a string theory perspective was investigated in \cite{closset.khlaif:grothendieck,GGMWY:correspondence}.  

The proof of \Cref{thm:Frob-intro} relies on two key results. The first is a presentation
of the quantum K-ring by generators and relations, where one can explicitly determine polynomial
representatives of {\em both} the Schubert classes $\cO_\lambda$, and of the
class $\lambda_y(\cS_k)$. In turn, this is a consequence of the equivalence
of the Desnanot-Jacobi identity
satisfied by the product $t(y) \tilde{t}(y^{-1})$, and
proved in \cite{gorbounov2017quantum}, to the `Whitney relations' satisfied by 
\[\lambda_y(\cS_{k}) \star \lambda_{y}(\cQ_{n-k}) \/, \] 
proved in \cite{GMSZ:QK}; see \cref{sec:functional}. In particular, \cref{cor:QK-pres}, which proves that these are two equivalent presentations of the same ring, answers a question from \cite{GMSZ:QK}. The second fact is an explicit formula for the `off-shell' Bethe vector (cf.~\eqref{E:Bk-GK}), which allows us to expand the Bethe vectors into Schubert classes, and compare the 
two pairings. 

\Cref{thm:Frob-intro} leads to some remarkable identities, such as a 
`quantum Atiyah-Bott' localization theorem (cf.~\Cref{cor:qAB}), which states that:
\[ (\kappa , 1 )_{\QK} = \sum_\lambda \frac{(\kappa, \bfe_\lambda^q)_{\QK}}{(\bfe_\lambda^q,\bfe_\lambda^q)_{\QK}} \/.\]
In the particular case when $\kappa=1$, we know that 
$(1,1)_{\QK} =1/(1-q)$ (see \cite{BCLM:euler}) giving an identity satisfied by the 
the `quantum Euler classes'
$(\bfe_\lambda^q, \bfe_\lambda^q)_{\QK}$:  
\[ \frac{1}{1-q} = \sum_\lambda \frac{1}{(\bfe_\lambda^q, \bfe_\lambda^q)_{\QK}}\/. \]

Another important application of \Cref{thm:QK=YB} is that the graphical calculus 
associated to the $5$-vertex model,
developed in \Cref{sec:graphical}, yields cancellation free 
formulae for the actions of the operators $t(y)=\lambda_y(\cQ_{n-k}^\vee) \star$ 
and $\tilde{t}(y)=\lambda_y(\cS_k)\star$ 
on Schubert classes. This re-interprets formulae from
\cite{gorbounov2017quantum}, and generalizes the Chevalley formulae from \cite{buch.m:qk,BCMP:qkchev}.
Due to length reasons, we decide to explore this and other 
applications of the graphical calculus in future work.

Finally, in \Cref{sec:app-examples} we illustrate \Cref{thm:QK=YB} and the pairings from 
\Cref{thm:Frob-intro} in the case of the module $\bbV_2 = \QK_{\T}(\Gr(0,2)) \oplus \QK_{\T}(\Gr(1,2)) \oplus
\QK_{\T}(\Gr(2,2))$. The projective spaces are the only ones where one can solve the Bethe ansatz equations explicitly, and our results are already non-trivial for $\bP^1$. In \cref{sec:beta-calc} we explain how one can add the homogenizing parameter $\beta$ (of degree $-1$) in our statements, something which is important for potential physics applications, where $\beta$ plays the role of a coupling constant or an interaction parameter.
\medskip

{\em Acknowledgments.}
For V. G. this article is an output of a research project
implemented as part of the Basic Research Program at the National Research
University Higher School of Economics (HSE University).
L.M. was partially supported by NSF grant DMS-2152294, and gratefully acknowledges the support of Charles Simonyi Endowment, which provided funding for the membership at the Institute of Advanced Study during the 2024-25 Special Year in ‘Algebraic and Geometric Combinatorics’. C.K. and L.M. gratefully acknowledge the financial support by an International Exchange award of the Royal Society (IES/R2/232087) which allowed mutual visits to each other's institutions and helped in the completion of this work. L.M. also thanks A. Okounkov and M. Shimozono for useful discussions, and W. Gu, I. Huq-Kuruvilla, E. Sharpe, W. Xu, H. Zhang, and H. Zou for collaborating on related projects.
%%%%%%%%%%%%%%%%%%%%%%%%%%%%%%%%%%%%%%%%%%

%%LaTex-root:QKYB.tex

\section{Preliminaries}

\subsection{Notation and conventions}\label{sec:conventions} 
We work over $\C$, and by a variety we mean a scheme of finite type which is 
reduced and irreducible. For a fixed natural number $n$ denote by $f_1, \ldots, f_n$ 
the standard basis of $\C^n$. Let $\T \simeq (\C^*)^n$ be the $n$-dimensional torus, 
acting on $\C^n$ as usual: 
\[(a_1, \ldots, a_n).(z_1, \ldots, z_n) = (a_1z_1, \ldots, a_n z_n) \/. \]
The $\T$-module $\C^n$ has a weight space decomposition given by 
$\C^n = \bigoplus_{i=1}^n \C_{\ve_i}$, where $\C_{\ve_i}$ denotes 
the one dimensional $\T$-module with action given by the character 
$\ve_i(z_1, \ldots , z_n)= z_i$. Denote 
the representation ring of $\T$ by $\Kpt$, and as customary we identify characters by 
their $1$-dimensional 
modules: $\ve_i = [\C_{\ve_i}]$. Then $\Kpt= \Z[\ve_1^{\pm 1}, \ldots , \ve_n^{\pm 1}]$, 
the Laurent polynomial ring in the indeterminates $\ve_i$.
If $y$ be an indeterminate, the $\lambda_y$ class of 
$\C^n$ is the element in $\Kpt[y]$ defined by 
\[ \lambda_y(\C^n) = (1+y \ve_1)\cdot \ldots \cdot (1+ y\ve_n) \/.\]

Let $W:=S_n$ be the symmetric group in $n$ letters, equipped with 
length function $\ell:W \to \mathbb{N}$. Denote by $w_0$ its longest element. 
For an integer sequence $I= (1 \le i_1 < i_2 < \ldots < i_p < n)$,
define the subgroup $W_I \le W$
generated by simple reflections $s_i = (i,i+1)$ where $i \notin \{ i_1, \ldots, i_p\}$. 
Denote by $W^I$ 
the set of minimal length representatives of $W/W_I$.
This consists
of permutations $w \in W$ which have descents at most at positions 
$i_1, \ldots, i_p$, i.e., $w(i_k+1)< \ldots <w(i_{k+1})$, 
for $k=0, \ldots, p$, with the convention that $i_0=1, i_{p+1} =n$. 
Denote by $w_I$ the longest element in $W_I$. 
Consider two sequences $I=(1 \le i_1 < \ldots, i_p < n)$ and $J=(1 \le j_1 < \ldots < j_s )$
where $J$ is obtained from $I$ by removing some of the indices; we 
denote this situation by 
$J \subset I$. Then $W^J \subset W^I$ (as subsets), and
$W_I \le W_J$ (as subgroups). 

If $I= (1 < k < n)$ we denote by $W^k_n$ the set of minimal length 
coset representatives, or simply $W^k$ if $n$ is understood from the context. 
The elements in $W^k_n$ are called {\em Grassmannian permutations}. 
These are in bijection with $\Pi^k_n$, 
the set of partitions $\lambda=(\lambda_1, \ldots, \lambda_k)$ included in the 
$k \times (n-k)$ rectangle, i.e.,
$n-k \geq \lambda_1 \geq \ldots \ldots \lambda_k \geq 0$. Denote by $\mb{J}_n^k$ 
the set of $01$ words $j_1 j_2 \ldots j_n$
of length $n$ with $k$ $0$'s. We will use the following bijections among the sets $W^k_n, \Pi_n^k$ and $\mb{J}_n^k$.
The permutation $w_\lambda \in W^k_n$ associated to the partition $\lambda \subset k \times(n-k)$ is the unique Grassmannian permutation satisfying
\[ w(i)= \lambda_{k-i+1}+i \/; \quad 1 \le i \le k \/. \]
The $01$ word $J_\lambda = j_1 j_2 \ldots j_n$ is the word with $0$'s in positions 
$\lambda_i+k-i+1$, for $1 \le i \le k$.
Graphically, $J_\lambda$ is obtained by tracing the outline of the Young 
diagram of $\lambda$, starting from the SW corner, and placing $0$'s for 
the vertical 
steps, and $1$'s for the horizontal steps. The permutation $w_\lambda$ 
starts with the $k$ positions given by the $0$'s; see Figure \ref{fig:Young}.

\begin{figure}[h!]\label{fig:Young}
\centering
\includegraphics[width=.2\textwidth]{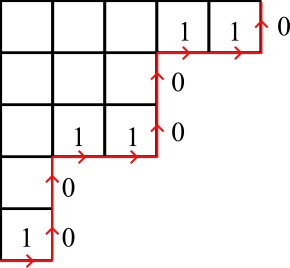} 
\caption{A Young diagram of $\lambda=(5,3,3,1,1)$ and its associated 01-word $I=1001100110$ for $\Gr(5;10)$. The corresponding Grassmannian permutation is $w_\lambda=[2,3,6,7,10,1,4,5,8,9]$.}
\end{figure}

\begin{example}\label{ex:Ydiagram} With these conventions
$J_\emptyset =00\cdots01\cdots1$ and $J_{(n-k)^k}=11\cdots10\cdots0$ (with $k$ $0$'s). 
The corresponding permutations are: 
\[ w_{\emptyset} = id \/; \quad w_{(n-k)^k} = \begin{pmatrix} 1 & 2 & \ldots & k & k+1 & \ldots & n \\ n- k +1 & n-k+2 & \ldots & n & 1 & \ldots & n-k \end{pmatrix} \/. \]
For another example, let $\lambda = (3,3)$, regarded in the $3 \times (7-3)$ rectangle. Then
\[ J_\lambda = 0 1 1 1 0 0 1 \quad \textrm{ and } \quad w_\lambda = \begin{pmatrix} 1 & 2 & 3 & 4 & 5 & 6 & 7 \\ 1 & 5 & 6 & 2 & 3 & 4 & 7 \end{pmatrix} \/. \]
\end{example}
%%%%%%%%%%%%%%%%%%%%%%%%%%%%%%%%%%%%%%%%%%%%%%%%%%%%%%%%%%%%%%%%%%%%%
We will use these different combinatorial descriptions interchangeably, noting that the description in terms of partitions and Grassmannian permutations is convenient in the geometric setting while the labelling in terms of binary strings occurs in the discussion of the quantum integrable lattice model and its graphical calculus.

%\subsection{K theory of Grassmannians}
\subsection{Schubert varieties} Let $\Gr(k,n)$ denote the Grassmannian 
parametrizing $k$ dimensional subspaces in $\C^n$. This is a complex projective manifold of 
dimension $k(n-k)$, and it is homogeneous under the action of $\G:=\GL_n(\C)$.
Fix the (opposite) standard flag 
\[ F_\bullet: F_1 = \langle f_n \rangle \subset F_2 = \langle f_n, f_{n-1} \rangle \subset \ldots \subset \C^n \/.\]
Let $\lambda=(\lambda_1, \ldots, \lambda_k)$ be a partition included in the $k \times (n-k)$ rectangle. The {\bf Schubert cell}
associated to $\lambda$ and $F_\bullet$ is defined by 
\[ X_\lambda^\circ(F_\bullet) = \{ V \in \Gr(k,n): \dim V \cap F_{n-k+i-\lambda_i} = i \} \/. \]
This is isomorphic to the affine space $\C^{k(n-k) - |\lambda|}$. The corresponding
{\bf Schubert variety}, denoted by $X_\lambda(F_\bullet)$ is the (Zariski) closure 
of $X_\lambda^\circ(F_\bullet)$. The Schubert cell is $\T$-stable and it 
contains a unique $\T$-fixed point, namely 
\[\bfe_\lambda= \langle f_{k+1-i+\lambda_i}: i=1 \ldots k \rangle \/. \] 
Finally, the Schubert cells give a stratification of the Grassmannian: $\Gr(k,n) = \bigsqcup_\lambda X_\lambda(F_\bullet)$.

More generally, for a sequence $I= (1 \le i_1 < i_2 < \ldots < i_p < n)$,
we consider the partial flag manifold
$\Fl(I) = \Fl(i_1, i_2, \ldots, i_p;n)$
which parametrizes partial flags 
$G_{i_1} \subset \ldots \subset G_{i_p} \subset \C^n$, where $\dim G_i = i$. 
This is homogeneous under the action of $\mathrm{GL}_n:=\mathrm{GL}_n(\C)$. 
The set of $\T$-fixed points of $\Fl(I)$ is in bijection with the set $W^I$, with
$w \in W^I$ corresponding to 
$\bfe_w := w. (F_{i_1} \subset \ldots \subset F_{i_p} \subset \C^n)$,
i.e., the $w$-translate of the appropriate components of the standard flag $F_\bullet$ above. 
To each $w \in W^I$, we may associate two Schubert varieties
$X^w = \overline{B. \bfe_w} \/; \quad X_w = \overline{B^-. \bfe_w}$.
With these conventions,
\[ \dim X^w = \mathrm{codim}~X_w = \ell(w) \/; \quad X_w = w_0 X^{w_0 w w_I} \]
where (recall) $w_I$ is the longest element in $W_I$. 
Inclusion of Schubert varieties defines the (partial) Bruhat order on $W^I$:
\[ v < w \textrm{ in } W^{I} \quad \Leftrightarrow \quad X^v \subset X^w 
\quad \Leftrightarrow \quad X_v \supset X_w \/. \]

%%%%%%%%%%%%%%%%%%%%%%%%%%%%%%%%%%%%%%%%%%%%
\section{Equivariant K-theory}
\subsection{Preliminaries on equivariant K theory}\label{sec:preliminaries} In this section we recall some basic facts about the equivariant 
K-theory of a variety with a group action. For an introduction to equivariant K theory, and more details, see \cite{chriss2009representation}. 

Let $X$ be a smooth projective variety with an 
action of a linear algebraic group $G$. The equivariant 
K theory ring $\K_G(X)$ is the Grothendieck ring 
generated by symbols $[E]$, where $E \to X$ is an 
$G$-equivariant vector bundle, modulo the relations 
$[E]=[E_1]+[E_2]$ for any short exact sequence 
$0 \to E_1 \to E \to E_2 \to 0$ of equivariant vector bundles. 
The additive ring structure is given by direct sum, and the 
multiplication is given by tensor products of vector bundles. 
Since $X$ is smooth, any $G$-linearized coherent sheaf has 
a finite resolution by (equivariant) vector bundles, and the ring 
$\K_G(X)$ coincides with the Grothendieck group of 
$G$-linearized coherent sheaves on $X$. In particular, any 
$G$-linearized coherent sheaf $\mathcal{F}$ on $X$ 
determines a class $[\mathcal{F}] \in \K_G(X)$. 
An important special case is if $\Omega \subset X$ is a 
$G$-stable subscheme; then its structure sheaf $\cO_\Omega$
determines a class $[\cO_\Omega] \in \K_G(X)$.
The ring $\K_G(X)$ is an algebra over $\K_G(pt) = \Rep(G)$, the representation ring of 
$G$. If $G=\T$ is a complex torus, then this is the Laurent polynomial ring 
$\Kpt = \Z[\ve_1^{\pm 1}, \ldots, \ve_n^{\pm 1}]$ from the previous section.

The (Hirzebruch) $\lambda_y$ class is defined by 
\[ \lambda_y(E) := 1 + y [E] + y^2 [\wedge^2 E] + \ldots + y^e  [\wedge^e E] \in \K_G(X)[y] \/. \] 
This class was introduced by Hirzebruch \cite{hirzebruch:topological} in relation to the 
formalism of the Grothendieck-Riemann-Roch theorem. It may be thought as the 
K theoretic analogue of the (cohomological) Chern polynomial 
\[ c_y(E)= 1+ c_1(E) y + \ldots + c_e(E) y^e\] of the bundle $E$. 
The $\lambda_y$ class is multiplicative with respect to short exact sequences, i.e., if 
$0 \to E_1 \to E_2 \to E_3 \to 0$
is such a sequence of vector bundles then 
\[ \lambda_y(E_2) = \lambda_y(E_1) \cdot \lambda_y(E_3) \/; \] 
cf.~\cite{hirzebruch:topological}. 
Since $X$ is proper, the push-forward to a point equals the Euler 
characteristic, or, equivalently, the character of a virtual representation: 
\[\chi(X, \mathcal{F})= \int_X [\mathcal{F}] := \sum_i (-1)^i \ch H^i(X, \mathcal{F}) \/. \] 
(We omit $\T$ from the notation, as all Euler characteristics will be $\T$-equivariant.)  
This defines the pairing (with $E,F$ equivariant vector bundles):
\begin{equation}\label{E:pairing} \langle - , - \rangle :\K_G(X) \otimes \K_G(X) \to \K_G(pt); \quad
\langle [E], [F] \rangle := \int_X E \otimes F = \chi(X, E \otimes F) \/.  \end{equation}

A proper morphism $f: X \to Y$ is {\bf cohomologically trivial} if $f_*\cO_X= \cO_Y$ and 
its higher direct images vanish, i.e., $R^i f_* \cO_X =0$ for $i>0$. This implies that 
the induced morphism $f_*: \K(X) \to \K(Y)$ satisfies $f_*[\cO_X]=[\cO_Y]$. 
Same definition applies for schemes with a $\T$-action and $\T$-equivariant morphisms.
An important class of examples of cohomological trivial morphisms are projections from Schubert varieties, see \Cref{lemma:proj} below. More general situations follow from a theorem of Koll{\'a}r \cite{kollar:higherII}, see also \cite{buch.m:qk,BCMP:projgw}.

\subsection{Equivariant K theory of flag manifolds} Consider the partial flag 
manifolds $\Fl(I) = \Fl(i_1, i_2, \ldots, i_k;n)$. The Schubert varieties 
$X_w, X^w \subset \Fl(I)$
determine classes 
$\cO_w:= [\cO_{X_w}]$ and $\cO^w := [\cO_{X^w}]$
in $\K_{\T}(X)$. The $\T$-fixed points $\bfe_w$ give classes which by abuse of notation we still denote by 
$\bfe_w:= [\mathcal{O}_{\bfe_w}]$. The equivariant K-theory $\K_{\T}(\Fl(I))$
is a free module over 
$\K_{\T}(pt)=\Kpt$, with bases given by Schubert classes $\{ \cO_w \}, \{ \cO^w \}$ as $w$ varies in $W^I$. 
For the Grassmannian $\Gr(k;n)$, we denote by $\cO_\lambda$ the Schubert class
$[\cO_{X_{\lambda}(F_\bullet)}]$.

Consider two sequences $I=(1 \le i_1 < \ldots, i_p < n)$ and $J=(1 \le j_1 < \ldots < j_s )$
such that $J \subset I$, and denote by $\pi_{I,J}: \Fl(I) \to \Fl(J)$ the natural projection. This 
is a $G$-equivariant smooth morphism.
The push forward and pull backs of Schubert varieties are again Schubert varieties,
stable under the same (standard or opposite) Borel subgroup. 
More precisely, if $w \in W^I$, then $\pi(X^w) = X^{w'}$, where 
$w' \in W^J$ is the minimal length 
representative of $w W_J$; if $v \in W^J$, then $\pi^{-1}(X_v) = X_v$.
We will need the following fact:
\begin{lemma}\label{lemma:proj} Let $J \subset I$ and let $\Omega \subset \Fl(I)$ and 
$\Omega' \subset \Fl(J)$
be Schubert varieties. Then the restriction $\pi_{I,J}: \Omega \to \pi_{I,J}(\Omega)$ is 
cohomologically trivial. Furthermore,
\[ (\pi_{I,J})_*[\cO_\Omega] = [\cO_{\pi_{I,J}(\Omega)}] \quad \textrm{ and } (\pi_{I,J})^*[\cO_{\Omega'}] = [\cO_{\pi_{I,J}^{-1}(\Omega')}] \/. \] 
\end{lemma}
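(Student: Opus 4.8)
The plan is to reduce everything to the case of a single step projection and then invoke a known cohomological triviality result for Schubert varieties under such projections. More precisely, I would first observe that any projection $\pi_{I,J}$ with $J \subset I$ factors as a composition of one-step projections $\Fl(I) \to \Fl(I')$, each of which forgets a single subspace $G_{i_\ell}$ from the flag; since composition of cohomologically trivial morphisms is cohomologically trivial (by the Leray spectral sequence and the projection formula), and since the image of a Schubert variety under such a projection is again a Schubert variety (stated in the paragraph preceding the lemma), it suffices to treat the case $I = J \cup \{a\}$ for a single index $a$. In that case $\pi_{I,J}$ is a Zariski-locally trivial fibration with fiber a sub-Grassmannian, and the restriction $\pi_{I,J}|_\Omega : \Omega \to \pi_{I,J}(\Omega)$ has fibers that are themselves Schubert varieties inside that sub-Grassmannian (this is the standard "Schubert varieties in fiber bundles" picture, and it is exactly the setup for which the quoted theorem of Koll\'ar \cite{kollar:higherII}, as used in \cite{buch.m:qk,BCMP:projgw}, applies).

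The heart of the argument is then the claim that a surjective projective morphism whose source is a Schubert variety and whose general (indeed every) fiber is a Schubert variety has $f_* \cO_\Omega = \cO_{f(\Omega)}$ with vanishing higher direct images. This follows because Schubert varieties have rational singularities — they admit resolutions by Bott–Samelson varieties (or, for the partial-flag / Grassmannian case, by suitable tower-of-projective-bundle resolutions) — so $\Omega$ and $f(\Omega)$ both have rational singularities and are normal with $\cO$ as dualizing-compatible structure sheaf in the relevant sense; combined with connectedness and reducedness of the fibers (again because the fibers are Schubert varieties, hence connected with $H^{>0}(\cdot,\cO)=0$), one gets $R^i f_* \cO_\Omega = 0$ for $i>0$ and $f_*\cO_\Omega = \cO_{f(\Omega)}$. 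For the $\T$-equivariant refinement, one simply notes that all varieties, morphisms, and resolutions in sight are $\T$-equivariant, so the sheaf-theoretic identities are identities of $\T$-equivariant sheaves and hence descend to the equality $(\pi_{I,J})_*[\cO_\Omega] = [\cO_{\pi_{I,J}(\Omega)}]$ in $\K_\T(\Fl(J))$.

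For the pullback statement, the key input is flatness: $\pi_{I,J}$ is a smooth (in particular flat) morphism, so for the $B^-$-stable Schubert variety $\Omega' = X_v \subset \Fl(J)$ the scheme-theoretic preimage $\pi_{I,J}^{-1}(\Omega')$ is reduced and irreducible (it is again a Schubert variety $X_v \subset \Fl(I)$, as recorded just before the lemma), and flat pullback of structure sheaves gives $(\pi_{I,J})^*[\cO_{\Omega'}] = [\cO_{\pi_{I,J}^{-1}(\Omega')}]$ directly from the definition of the pullback in $\K$-theory (no higher Tor terms appear precisely because $\pi_{I,J}$ is flat). I expect the main obstacle to be the careful bookkeeping in the one-step reduction — verifying that after forgetting one subspace the restricted map $\Omega \to \pi(\Omega)$ genuinely has Schubert-variety fibers in the sub-Grassmannian, uniformly over the base, so that Koll\'ar's theorem (or the rational-singularities argument) applies on the nose rather than only generically. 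Once that geometric picture is pinned down, the cohomological vanishing and the equivariant upgrade are formal.
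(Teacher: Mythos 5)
Your argument is correct in substance, but it follows a different route than the paper. The paper's proof of this lemma is a one-line citation: cohomological triviality and the pushforward identity are quoted from Brion--Kumar, Thm.~3.3.4(a) (whose proof rests on Frobenius splitting / rational resolutions of Schubert varieties), and the pullback identity follows from flatness of $\pi_{I,J}$, exactly as in your last paragraph. What you propose instead is essentially the Koll\'ar-type argument that the paper only alludes to in the remark preceding the lemma (and which is the engine behind the more general statements in \cite{buch.m:qk,BCMP:projgw}): use that $\Omega$ and $\pi_{I,J}(\Omega)$ are normal with rational singularities, that the fibers are connected and the general fiber is rational, and conclude $f_*\cO_\Omega=\cO_{f(\Omega)}$ and $R^{i}f_*\cO_\Omega=0$. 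That route buys generality (it works for projected Richardson varieties and other non-Schubert situations), at the cost of invoking heavier machinery than the direct citation; the Brion--Kumar statement is tailor-made for exactly this Schubert-to-Schubert situation, which is why the paper does not need your one-step reduction at all.

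Two small caveats on your write-up. First, the claim that \emph{every} fiber of $\Omega\to\pi_{I,J}(\Omega)$ is a (translated) Schubert variety is stronger than what you need and is not immediate: over a torus-fixed point the fiber is a $T$-stable closed subset of the fiber flag manifold whose irreducibility requires the standard lemma on Bruhat order and parabolic factorizations ($uv\le uv'$ iff $v\le v'$ for $u\in W^{J}$, $v,v'\in W_{J}$). For the Koll\'ar/rational-singularities theorem only the \emph{general} fiber matters, and its rationality follows from the cell-level factorization $BwB/B\to B\bar wP/P$; connectedness of all fibers plus normality of the image (Schubert varieties are normal) then gives $f_*\cO_\Omega=\cO_{f(\Omega)}$ via Zariski's main theorem. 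Second, the one-step reduction is harmless but unnecessary, since the theorem you invoke applies directly to $\pi_{I,J}$. With these adjustments your proof is complete and the equivariant and flat-pullback parts are exactly as in the paper.
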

\begin{proof} The cohomological triviality, and the first equality, follow from \cite[Thm. 3.3.4(a)]{brion.kumar:frobenius} and the second 
because $\pi_{I,J}$ is a flat morphism.\end{proof}

The (left) action of $\GL_n$ on the flag manifold $\Fl(I)$ induces an action of the Weyl group $W$ 
\[ \bs{w}: \K_{\T}(\Fl(I)) \to  \K_{\T}(\Fl(I)) \/, \quad \kappa  \mapsto \bs{w}.\kappa \/. \]
This was studied in \cite{knutson:noncomplex}
 (in cohomology), and \cite{harada.landweber.sjamaar:divided,MNS:left}
 in K theory. We refer to $\bs{w}$ as the {\bf left Weyl group action}. We recall next some basic facts,
 following \cite[\S 5]{MNS:left}. The action $\bs{w}$ is a ring endomorphism of
 $\K_{\T}(\Fl(I))$ which twists the ground ring $\K_{\T}(\pt)$, i.e., for 
 $\kappa, \zeta \in  \K_{\T}(\Fl(I))$ and $\chi \in \K_{\T}(\pt)$
 \[ \bs{w}.(\chi \otimes \kappa \otimes \zeta) = w(\chi) \otimes \bs{w}(\kappa) \otimes \bs{w}(\zeta)\/. \] 
The left Weyl group determines (left) Demazure operators
defined as follows. For a positive simple root $\alpha_i$ with corresponding reflection
$s_i$, the Demazure operator $\bs{\delta}_i$ is defined by 
\[ \bs{\delta}_i = \frac{1}{1- \alpha_i^{-1}} (1- \alpha_i^{-1} \bs{s}_i) \/. \] 
The operators $\bs{\delta}_i$ satisfy $\bs{\delta}_i^2 = \bs{\delta}_i$ and the usual braid and commutation relations
for the simple reflections in $W$. Therefore for any $w \in W$ with a reduced decomposition
$w= s_{i_1} s_{i_2} \cdot \ldots \cdot s_{i_k}$ there is a well defined operator 
$\bs{\delta}_w = \bs{\delta}_{i_1} \cdot \ldots \cdot \bs{\delta}_{i_k}$. 
The Demazure operators also satisfy a Leibniz rule, see \cite[\S 5]{MNS:left}. 

The action $\bs{w}$ fixes classes in 
$\K_{\GL_n}(\Fl(I))$, and therefore the Demazure
operator is linear with respect to such classes: 
for $\zeta  \in \K_{\GL}(\Fl(I))$ and $\kappa  \in \K_{\T}(\Fl(I))$,
\[ \bs{w}.(\zeta) = \zeta \quad \textrm{ and } \quad \bs{\delta}_w (\zeta \otimes \kappa) = \zeta \otimes \bs{\delta}_w(\kappa) \/. \]
For us, the most important examples of classes in $\K_{\GL}(\Fl(I))$
are the classes of homogeneous vector bundles on $\Fl(I)$ 
(such as the Schur bundles of tautological subbundles and quotient bundles on $\Fl(I)$),
and the classes of $\GL_n$-homogeneous line bundles 
$\mathcal{L}_\chi = \GL_n \times^{P_I} \C_{\chi}$
associated to characters $\chi$ of $P_I$, 
the parabolic subgroup stabilizing $\bfe_{id}$ (i.e., giving the identification
$\Fl(I) = \GL_n/P_I$). We record the following formulae proved in \cite[Prop. 5.5]{MNS:left}.

For $\alpha_i$ a simple root and $w \in W^I$, 
\begin{equation}\label{E:left-schub} \bs{s}_i.\cO_w = \begin{cases} \alpha_i \cO_w + (1-\alpha_i) \cO_{s_iw} & s_i w < w \/; \\
\cO_w & \textrm{otherwise} \/, \end{cases} \textrm{ and }
\bs{\delta}_i \cO_w = \begin{cases} \cO_{s_i w} & s_i w < w \/; \\
\cO_w & \textrm{otherwise} \/. \end{cases} \end{equation}
(Compare with \eqref{leftWaction} below.)
Furthermore, for a fixed point class $\bfe_w \in \K_{\T}(\Fl(I))$, 
\begin{equation}\label{E:left-fixed} \bs{s}_i.\bfe_w = \bfe_{s_i w} \/, \end{equation}
implying also that the pairing from \eqref{E:pairing} is invariant under the action of $W$.
In all cases above, if $s_i w $ is not a minimal length representative in $W^I$ then
it is replaced by the unique element $w'$ which satisfies $w' \in W^I$ and $s_i w W_I = w' W_I$.\begin{footnote}{We warn the reader that the conventions on Schubert classes
 in this paper are opposite from those in \cite{MNS:left}, and to 
relate the two one uses the relation $\cO_w = \bs{w}_0.\cO^{w_0 w w_I}$. Furthermore,
the operator denoted here by $\bs{\delta}_i$ is denoted by $\delta_i^\vee$ in {\em loc.~cit.}}\end{footnote} 
The Demazure operators generate a (degenerate) version of the Hecke algebra of $W$, and equip 
$\K_{\T}(\Fl(I))$ with a structure of a cyclic (Hecke) module.   
 
For further use, we record the following expressions for the pairing in the case of the fixed point 
classes in $\K_{\T}(\Gr(k;n))$. Recall that $\bfe_\lambda$ is 
the fixed point corresponding to $w_\lambda. \langle f_{1}, \ldots, f_k \rangle$.
Then 
\[ \langle \bfe_\lambda, \bfe_\mu \rangle = \delta_{\lambda,\mu} w_\lambda \langle \bfe_\emptyset, \bfe_\emptyset \rangle\] 
and, with $T_{\bfe_\emptyset}(\Gr(k;n))$ denoting the tangent space at 
$\bfe_\emptyset$),
\[ \langle \bfe_\emptyset, \bfe_\emptyset \rangle = \lambda_{-1}T_{\bfe_\emptyset}^*(\Gr(k;n)) = \prod_{1 \le i \le k; k+1 \le j \le n} (1- \ve_i/\ve_j) \/.\]
 
\subsection{Level-Rank duality}\label{ss:level-rank} Fix $\Fl(I) = (1 \le i_1 < i_2 < \ldots < i_k \le n)$ and  
consider the flag manifold 
$\Fl(I)$ equipped with the tautological sequence of bundles:
\[ \mathcal{S}_{i_1} \subset \mathcal{S}_{i_2} \subset \ldots \subset 
\mathcal{S}_{i_{k}} \subset \C^n \/. \]
For $1 \le s \le k$ set $K_{i_s}:= (\C^n/F_{i_s})^* \subset (\C^n)^*$. Define
\[ \Theta: \Fl (i_1, \ldots, i_k; \C^n) \to \Fl(n-i_k, \ldots , n-i_1; (\C^n)^*) \] 
by
\[ \Theta (F_{i_1} \subset F_{i_2} \subset \ldots \subset F_{i_k} \subset \C^n) = 
(K_{n-i_k} \subset K_{n-i_{k-1}} \subset \ldots \subset K_{n-i_1} \subset (\C^n)^*) \/.\]
Then $\Theta$ is an isomorphism of flag manifolds. The action of $\T$ on $\C^n$ induces the
contragredient action on $(\C^n)^*$, giving an action of $\T$ on the `dual'
flag manifold $\Fl(n-i_k, \ldots , n-i_1; (\C^n)^*)$, and making $\Theta$ a $\T$-equivariant morphism.
Note however that $\Theta$ takes the standard flag in $\C^n$ to the {\em opposite} flag in $(\C^n)^*$, 
as follows from 
\begin{equation*}\label{E:theta-flag} \Theta \left( \langle f_n, \ldots , f_{n-k+1} \rangle \subset \C^n \right) = 
\left( \langle f_1^*, \ldots , f_{n-k}^* \rangle \subset (\C^n)^* \right)
\end{equation*}
where $f_1^*, \ldots, f_n^* \in (\C^n)^*$ is the dual basis. 
In particular, the Schubert point $X_{w_0}$ in $\Fl(I)$ is sent to the {\em opposite} Schubert point
in $\Fl(n-i_k, \ldots , n-i_1; (\C^n)^*)$, and, more generally,
a Schubert variety $X_w(F_\bullet)$ is sent to one for the opposite flag
$X_{w^t}(F_\bullet^{opp})$. Here $w^t$ is the `transpose' of $w$, obtained
by taking the Dynkin automorphism exchanging the simple reflections $s_i$ to $s_{n-i+1}$.
For instance, $\Theta:\Gr(k;\C^n) \to \Gr(n-k;(\C^n)^*)$ satisfies
\[ \Theta(X_\lambda(F_\bullet)) = X_{\lambda^t}(\Theta(F_\bullet)) \/,\]
where $\lambda^t$ is the transpose of $\lambda$. 

Denote by $\mathcal{K}_{n-i_s}:= (\C^n/\cS_{i_s})^\vee \subset (\C^n)^*$ the 
tautological subbundle on the dual flag manifold
$\Fl(n-i_k, \ldots , n-i_1; (\C^n)^*)$. Since for any $u,w \in W$, 
$\bs{u}.[\cO_{X_w(F_\bullet)}]= [\cO_{X_w(\bs{u}.F_\bullet)}]$,
it follows that the map $\Theta$ induces an isomorphism of equivariant K-theory rings
\begin{equation}\label{E:level-rank-iso} \Theta^*: \K_{\T}(\Fl(n-i_k, \ldots , n-i_1; (\C^n)^*)) \to \K_{\T}(\Fl(i_1, \ldots, i_k; \C^n)) \end{equation}
which satisfies 
\begin{equation*} \Theta^*(\mathcal{K}_{n-i_s}) = (\C^n/\cS_{i_s})^\vee \textrm{ and } \Theta(\cO_w)= \bs{w}_0.\cO_{w^t} \end{equation*}
for $1 \le s \le k$, $1 \le i \le n$, where the Schubert classes are taken with respect to the standard flag.

Since the $\T$-modules $(\C^n)^*$ and $\C^n$ have distinguished ordered bases, one can further 
identify them by $f_i^* \leftrightarrow f_{n+1-i}$. 
The composition of the two transformations results in an isomorphism 
\[ \Theta': \Fl (i_1, \ldots, i_k; \C^n) \to \Fl(n-i_k, \ldots , n-i_1; \C^n) \]
called the {\em level-rank duality}. Note that $\Theta'$ preserves the standard 
flag, and it is equivariant
with respect to the map $\varphi:\T \to \T$ sending 
 $\ve_i \mapsto \ve_{n+1-i}^{-1}$. 
%In addition, 
It satisfies the identities:
\begin{equation}\label{E:theta-schub} \Theta'^*(\mathcal{K}_{n-i_s}) = (\C^n/\cS_{i_s})^\vee \textrm{ and } \Theta'(\ve_i \otimes \cO_w)= \ve_{n+1-i}^{-1} \otimes \cO_{w^t} \end{equation}
In other words, $\Theta'$ sends a Schubert class to its transpose, and it takes equivariant paremeters
to their inverses, and also reverses their order. 
Compare with \eqref{Gamma} below, where the same transformation arises in the 
context of representations of the Yang-Baxter algebra.

\subsection{Push-forward formulae of Schur bundles} Next we recall some results about cohomology of 
Schur bundles on Grassmann bundles. Our main reference is 
\cite[\S 3]{GMSZ:QK}, which in turn follows
Kapranov's paper \cite{kapranov:Gr}. 
For more on Schur bundles see Weyman's book \cite{weyman}.

Recall that if $X$ is a $\T$-variety, $\pi:E \to X$ is any 
$\T$-equivariant vector bundle of rank $e$, and 
$\lambda=(\lambda_1, \ldots, \lambda_k)$ 
is a partition with at most $e$ parts, 
the {\em Schur bundle} $\mathfrak{S}_\lambda(E)$ is a 
$\T$-equivariant vector bundle over $X$. It has the property
that if $x \in X$ is a $\T$-fixed point, the fibre $(\fS_\lambda(E))_x$ 
is the $\T$-module with character the Schur function $s_\lambda$. For example,
if $\lambda = (1^k)$, then $\fS_{(1^k)}(E) = \wedge^k E$, and if $\lambda = (k)$ then
$\fS_{(k)}(E)=\mathrm{Sym}^k(E)$.

In this paper $X=\Gr(k;\C^n)$ with the $\T$-action restricted from
$\mathrm{GL}_n(\C)$. 
To emphasize the $\T = (\C^*)^n$-module structure on
$\C^n$, we will occasionally denote by $V:=\C^n$ and
by $\Gr(k;V)$ the corresponding Grassmannian.
Further, $V$ will also be identified with the trivial, 
but not equivariantly trivial, vector bundle $\Gr(k;V) \times V$.
The following was proved in \cite{kapranov:Gr}.
\begin{prop}[Kapranov]\label{lemma:BWB} Consider the 
Grassmannian $\Gr(k;V)$ with the tautological sequence 
$0 \to \cS \to V \to \cQ \to 0$. 
For any nonempty partition {$\lambda= (\lambda_1\geq \lambda_2 \geq \ldots \geq \lambda_k \geq 0)$ such that 
$\lambda_1 \le n-k$}, 
there are the following isomorphisms of $\T$-modules:

(a) For all $i \ge 0$, $H^i(\Gr(k;V), \fS_\lambda(\cS)) = 0$.

(b) \[ H^i(\Gr(k;V), \fS_\lambda(\mathcal{S}^\vee)) = \begin{cases} \fS_\lambda(V^*) & i=0 \\ 0 & i>0 \/. \end{cases}\]

(c) For all $i \ge 0$, $H^i(\Gr(k;V), \fS_\lambda(\cQ^\vee)) = 0$.

(d) \[ H^i(\Gr(k;V), \fS_\lambda(\cQ)) = \begin{cases} \fS_\lambda(V) & i=0 \\ 0 & i>0 \/. \end{cases}\]

\end{prop}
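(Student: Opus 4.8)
The plan is to reduce everything to the Borel--Weil--Bott theorem on the full flag variety $\Fl(k;V) = \GL_n/P$, pulled back along the projection from a larger flag manifold, together with the projection formula. First I would recall that $\Gr(k;V) = \GL_n(\C)/P$ where $P$ is a maximal parabolic, and that a Schur bundle $\fS_\lambda(\cS)$ (resp.\ $\fS_\lambda(\cS^\vee)$, $\fS_\lambda(\cQ)$, $\fS_\lambda(\cQ^\vee)$) is the vector bundle $\GL_n \times^P M_\lambda$ associated to an irreducible $P$-representation $M_\lambda$ whose highest weight is read off from $\lambda$ and the Levi factor $\GL_k \times \GL_{n-k}$ of $P$. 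Concretely: $\cS$ corresponds to the standard representation of the $\GL_k$-factor, so $\fS_\lambda(\cS)$ has highest weight supported on the first $k$ coordinates as $(\lambda_1, \ldots, \lambda_k, 0, \ldots, 0)$; $\cS^\vee$ gives $(-\lambda_k, \ldots, -\lambda_1, 0, \ldots, 0)$ (so really $\fS_\lambda(\cS^\vee) = \fS_{\lambda}(\cS)^\vee$ up to the usual reordering); $\cQ$ corresponds to the standard representation of the $\GL_{n-k}$-factor, giving highest weight $(0, \ldots, 0, \lambda_1, \ldots, \lambda_{n-k})$ — here one needs $\lambda$ to have at most $n-k$ parts; and $\cQ^\vee$ gives $(0, \ldots, 0, -\lambda_{n-k}, \ldots, -\lambda_1)$.

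Then I would invoke Borel--Weil--Bott in the following form: for a weight $\mu$ of the Levi, form $\mu + \rho$ (with $\rho$ the half-sum of positive roots, i.e.\ $\rho = (n-1, n-2, \ldots, 1, 0)$); if $\mu + \rho$ has a repeated coordinate then all cohomology of the associated bundle on $\GL_n/P$ vanishes, and otherwise there is a unique Weyl group element $w$ making $w(\mu+\rho)$ strictly decreasing, and $H^i$ vanishes except in degree $i = \ell(w)$, where it equals the irreducible $\GL_n$-module with highest weight $w(\mu+\rho) - \rho$. Now the four cases are a direct computation with this recipe. For (a): $\mu + \rho = (\lambda_1 + n-1, \ldots, \lambda_k + n-k, n-k-1, \ldots, 1, 0)$; the constraint $\lambda_1 \le n-k$ forces $\lambda_k + n - k \le \lambda_1 + n - k \le 2(n-k) \le \ldots$ hmm — more carefully, one checks that the entry $\lambda_i + n - i$ for $1 \le i \le k$ can collide with one of $n-k-1, \ldots, 0$ precisely when $\lambda_i \le n-k-1$ and the values overlap, and in fact for \emph{every} partition in the $k \times (n-k)$ box there is such a collision (this is the combinatorial heart of (a) and (c)); I would verify this by a short counting argument showing the multiset $\{\lambda_i + n - i : 1 \le i \le k\} \cup \{0, 1, \ldots, n-k-1\}$ always has a repeat when $0 \le \lambda_k \le \cdots \le \lambda_1 \le n-k$. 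For (b) and (d), the weight $\mu$ is dominant for $\GL_n$ already (it is a genuine partition placed in the first $k$ or last $n-k$ slots), so $w = \mathrm{id}$, $\ell(w) = 0$, and $H^0 = \fS_\lambda(V^*)$ resp.\ $\fS_\lambda(V)$ with no higher cohomology; cases (c) follows from (b) by the duality $\fS_\lambda(\cQ^\vee) \cong \fS_\lambda(\cQ)^\vee$ combined with Serre duality, or directly by the same BWB computation with the negated weight.

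The main obstacle I anticipate is not any single deep step but rather pinning down the bookkeeping: getting the highest weights of the $P$-modules $M_\lambda$ exactly right (including signs and the reversal of order under dualizing), and then proving cleanly that the "$\rho$-shift has a repeated coordinate" condition holds for \emph{all} $\lambda$ in the rectangle in cases (a) and (c). The cleanest route for that combinatorial claim is to observe that $\{\lambda_i + k - i + 1 : 1 \le i \le k\}$ is exactly the set of positions of the $0$'s in the $01$-word $J_\lambda$ of \cref{sec:conventions}, an honest $k$-element subset of $\{1, \ldots, n\}$; translating this into the BWB language shows the relevant multiset is $\{1, \ldots, n-k\}$ together with the \emph{complementary} set of $0$-positions shifted, and since these two sets sit inside $\{1, \dots, n\}$ with total size... one sees the overlap directly from the pigeonhole once the indexing is aligned. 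Alternatively — and this is probably what I would actually write — I would simply cite \cite[\S 3]{GMSZ:QK} and Kapranov \cite{kapranov:Gr}, where exactly this statement is proved, and include the BWB computation only as a sketch, since \Cref{lemma:BWB} is quoted here as a known input rather than reproved from scratch.
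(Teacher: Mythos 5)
Your overall strategy (Borel--Weil--Bott on $\GL_n/P$, or else citing Kapranov) is in the same spirit as the paper, which quotes Kapranov's Prop.~2.2 for (a),(b) and then deduces (c),(d) from (a),(b) via the level-rank isomorphism $\Gr(k;V)\simeq\Gr(n-k;V^*)$, under which $\cS\leftrightarrow\cQ^\vee$ and $\cQ\leftrightarrow\cS^\vee$. However, your detailed BWB computation has a genuine bookkeeping error, and the place where you wrote ``hmm'' is exactly where it breaks. With the recipe you state (dominant shifted weight $\Rightarrow$ $H^0$ is the irreducible with that highest weight; repeated coordinate $\Rightarrow$ vanishing), the weight you must feed in for $\fS_\alpha(\cQ)\otimes\fS_\beta(\cS)$ is $(\alpha_1,\ldots,\alpha_{n-k},\beta_1,\ldots,\beta_k)$, i.e.\ with the $\cQ$-part in the \emph{first} $n-k$ slots. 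You instead placed the $\cS$-weight in the first $k$ slots, getting $\mu=(\lambda_1,\ldots,\lambda_k,0,\ldots,0)$; this $\mu$ is dominant, so your recipe would output $H^0\neq 0$ for case (a), which is false ($\fS_{(1)}(\cS)=\cO(-1)$ on $\bP^1$ already contradicts it). Your attempted rescue --- that the multiset $\{\lambda_i+n-i:1\le i\le k\}\cup\{0,1,\ldots,n-k-1\}$ always has a repeat --- is simply not true: every $\lambda_i+n-i\ge n-k$ while every element of the second set is $\le n-k-1$, so there is never a collision. With the correct placement the combinatorics is easy and does use the hypothesis: for (a) the weight is $(0^{n-k},\lambda)$, its $\rho$-shift has first block $\{n,n-1,\ldots,k+1\}$ and second block $\{\lambda_i+k-i+1\}$, and the entry $\lambda_1+k$ lies in the first block precisely because $1\le\lambda_1\le n-k$, forcing a repeat and hence total vanishing; cases (b),(d) then come out dominant with $H^0=\fS_\lambda(V^*)$, $\fS_\lambda(V)$, and (c) vanishes by the same collision argument.

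A second, smaller issue: your suggestion that (c) ``follows from (b) by the duality $\fS_\lambda(\cQ^\vee)\cong\fS_\lambda(\cQ)^\vee$ combined with Serre duality'' does not work as stated. Serre duality relates $H^i(\fS_\lambda(\cQ)^\vee)$ to $H^{k(n-k)-i}(\fS_\lambda(\cQ)\otimes\omega)^*$, and the canonical-bundle twist $\omega=\det(\cS)^{\otimes n}\otimes\det(V)^{-k}$ is not controlled by (b) or (d), so you would just be forced back into another BWB computation. The clean way to get (c),(d) for free --- and the one the paper uses --- is the $\T$-equivariant isomorphism $\Gr(k;V)\simeq\Gr(n-k;V^*)$ of \cref{ss:level-rank}, which exchanges $\cS$ with $\cQ^\vee$ and $\cQ$ with $\cS^\vee$, so (c) is (a) and (d) is (b) on the dual Grassmannian. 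Your fallback of citing Kapranov is legitimate for (a),(b) (that is what the paper does), but note that this citation alone does not cover (c),(d); some form of the duality step is still needed.
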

\begin{proof} Parts (a) and (b) were proved in \cite[Prop. 2.2]{kapranov:Gr}, 
as a consequence of
the Borel-Weil-Bott theorem on the complete flag manifold.
For parts (c) and (d), from level-rank duality, 
there is a $\T$-equivariant isomorphism 
$\Gr(k;V) \simeq \Gr(\dim V - k; V^*)$ 
under which the $\T$-equivariant bundle $\cS$ is sent to $\cQ^\vee$,
and $\cQ$ is sent to $\cS^\vee$. Then parts (c),(d) follow from (a) and (b) respectively.
\end{proof} 

We also need the following immediate consequence,
see e.g. \cite[Cor. 3.3]{GMSZ:QK}. Consider an $\T$-variety $X$ equipped with 
a $\T$-equivariant
vector bundle $\mathcal{V}$ of rank $n$. Denote by 
$\pi: \mathbb{G}(k,\mathcal{V}) \to X$ the Grassmann bundle over $X$. It is equipped with a 
tautological sequence
$0 \to \underline{\cS} \to \pi^* \mathcal{V} \to \underline{\cQ} \to 0$ over $\mathbb{G}(k,\mathcal{V})$. The following corollary follows from \Cref{lemma:BWB},
using that $\pi$ is a $T$-equivariant locally trivial fibration:

\begin{cor}\label{cor:relativeBWB}
For any nonempty partition {$\lambda= (\lambda_1\geq \lambda_2 \geq \ldots \geq \lambda_k \geq 0)$ such that 
$\lambda_1 \le n-k$}, there are the following isomorphisms of $T$-modules:

(a) For all $i \ge 0$, the higher direct images, 
$R^i \pi_* \fS_\lambda(\underline{\cS}) = R^i \pi_* \fS_\lambda(\underline{\cQ^\vee})= 0$. 

(b) \[ R^i \pi_*\fS_\lambda(\underline{\cS}^\vee)) = \begin{cases} \fS_\lambda(\mathcal{V}^\vee) & i=0 \\ 0 & i>0 \/. \end{cases} \/; \quad R^i \pi_*\fS_\lambda(\underline{\cQ}) = \begin{cases} \fS_\lambda(\mathcal{V}) & i=0 \\ 0 & i>0 \/. \end{cases} \]
%(c) \[ R^i \pi_*\fS_\lambda(\underline{\cQ}) = \begin{cases} \fS_\lambda(\mathcal{V}) & i=0 \\ 0 & i>0 \/. \end{cases}\]
\end{cor}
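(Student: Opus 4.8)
The statement to prove is \Cref{cor:relativeBWB}, which deduces vanishing and computation of higher direct images of Schur bundles on a Grassmann bundle $\pi:\mathbb{G}(k,\mathcal{V})\to X$ from the absolute case in \Cref{lemma:BWB}.

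\medskip

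The plan is to reduce the relative statement to the absolute one of \Cref{lemma:BWB} by a standard ``cohomology and base change'' argument exploiting that $\pi$ is a Zariski-locally trivial fibration with fibre the Grassmannian $\Gr(k;\C^n)$. First I would note that $\mathbb{G}(k,\mathcal{V})$ is constructed so that for each point $x\in X$ the fibre $\pi^{-1}(x)$ is canonically identified with $\Gr(k;\mathcal{V}_x)\cong\Gr(k;\C^n)$, and under this identification the tautological sequence $0\to\underline{\cS}\to\pi^*\mathcal{V}\to\underline{\cQ}\to 0$ restricts fibrewise to the tautological sequence $0\to\cS\to\C^n\to\cQ\to 0$ on the Grassmannian. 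Consequently the restriction of any Schur bundle $\fS_\lambda(\underline{\cS})$, $\fS_\lambda(\underline{\cS}^\vee)$, $\fS_\lambda(\underline{\cQ})$, $\fS_\lambda(\underline{\cQ}^\vee)$ to a fibre is the corresponding Schur bundle on $\Gr(k;\C^n)$. Then \Cref{lemma:BWB} tells us the cohomology of these fibre restrictions in every degree: zero in all degrees for $\fS_\lambda(\cS)$ and $\fS_\lambda(\cQ^\vee)$ (for nonempty $\lambda$ with $\lambda_1\le n-k$), and concentrated in degree $0$ with value $\fS_\lambda(V^*)$, resp.\ $\fS_\lambda(V)$, for $\fS_\lambda(\cS^\vee)$, resp.\ $\fS_\lambda(\cQ)$.

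\medskip

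Next I would invoke the Grauert/cohomology-and-base-change machinery (or, since $\pi$ is Zariski-locally trivial, simply work locally where $\pi$ is a projection $U\times\Gr(k;\C^n)\to U$ and apply the Künneth/projection formula). For part (a): since the fibrewise cohomology of $\fS_\lambda(\underline{\cS})$ and $\fS_\lambda(\underline{\cQ}^\vee)$ vanishes in all degrees, base change gives $R^i\pi_*\fS_\lambda(\underline{\cS})=R^i\pi_*\fS_\lambda(\underline{\cQ}^\vee)=0$ for all $i$. For part (b): the fibrewise cohomology is locally free (being $0$ in positive degrees and of constant rank in degree $0$), so by Grauert's theorem $R^0\pi_*$ is locally free and commutes with base change, while $R^i\pi_*=0$ for $i>0$. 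It remains to identify $R^0\pi_*\fS_\lambda(\underline{\cS}^\vee)$ with $\fS_\lambda(\mathcal{V}^\vee)$ and $R^0\pi_*\fS_\lambda(\underline{\cQ})$ with $\fS_\lambda(\mathcal{V})$: this identification can be checked locally where it reduces, via the projection formula, to the absolute statement $H^0(\Gr(k;\C^n),\fS_\lambda(\cS^\vee))=\fS_\lambda(V^*)$ resp.\ $\fS_\lambda(\cQ)\mapsto\fS_\lambda(V)$ from \Cref{lemma:BWB}(b),(d), with the $\T$-equivariant structure (and the compatibility with transition functions of $\mathcal{V}$) coming along for free because the isomorphisms in \Cref{lemma:BWB} are natural in $V$.

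\medskip

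The main obstacle, such as it is, is bookkeeping rather than mathematics: one must check that the degree-$0$ direct images genuinely glue to the globally defined Schur bundles $\fS_\lambda(\mathcal{V})$ and $\fS_\lambda(\mathcal{V}^\vee)$ on $X$ — i.e.\ that the local identifications are compatible with the $\GL_n$-transition functions of $\mathcal{V}$ — and that everything is $\T$-equivariant. This follows from the functoriality of the Borel--Weil--Bott computation used in \Cref{lemma:BWB}: the isomorphism $H^0(\Gr(k;V),\fS_\lambda(\cS^\vee))\cong\fS_\lambda(V^*)$ is $\GL(V)$-equivariant, hence respects base change by automorphisms of $V$, which is exactly what is needed for the gluing. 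Since \Cref{lemma:BWB} is already in hand, there is really nothing left to do beyond citing Grauert's base-change theorem and this naturality; accordingly the proof is short, as the phrase ``follows from \Cref{lemma:BWB}, using that $\pi$ is a $T$-equivariant locally trivial fibration'' already indicates.
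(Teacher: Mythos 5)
Your argument is correct and follows the same route the paper takes: the paper's proof is exactly the one-line reduction to \Cref{lemma:BWB} via the $\T$-equivariant local triviality of the Grassmann bundle $\pi$, which you have simply spelled out in detail (base change, vanishing of fibrewise cohomology, and naturality of the Borel--Weil--Bott identification to glue the degree-zero direct images into $\fS_\lambda(\mathcal{V})$ and $\fS_\lambda(\mathcal{V}^\vee)$). Nothing in your elaboration deviates from or adds a gap to the paper's intended argument.
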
   
\section{Grothendieck polynomials and expansions of $\lambda_y(\cQ^\vee)$}\label{sec:groth-exp} 
To goal of this section is to prove \Cref{thm:lyQveeGr}, 
giving the Schubert expansions of the class
$\lambda_y(\cQ_{n-k}^\vee) \in \K_{\T}(\Gr(k;n))$. 
To better orient the reader, we will index the tautological
bundles by their ranks; then for $\Gr(k;n)$, the tautological
sequence is $0 \to \cS_k \to \C^n \to \cQ_{n-k} \to 0$.  

To start, we recall a formula about the double Grothendieck polynomials given by column partitions, 
from \cite[Prop. 2.9]{gorbounov2017quantum}. To state it, consider sequences of variables
$x=(x_1, \ldots, x_N)$ and $t=(t_1, \ldots, t_N)$. The (double) Grothendieck polynomial for the column partition
$1^r$ is:
\begin{equation*}\label{E:defgroth} G_{1^r}(x|t) = \sum_{j=1}^{N+1-r} 
\frac{ \prod_{i=1}^N (x_i+t_j-x_i t_j)}{\prod_{i=1,i\neq j}^{N+1-r} \frac{t_j-t_i}{1-t_i}} \/. \end{equation*}
In our case we will need to take $N=n-k$, and, to switch to exponential Chern classes, 
we perform the change of variables
$x_i = 1-Y_i^{-1}, t_i = 1- \ve_{n+1-i}^{-1}$ to obtain:
\begin{equation}\label{E:deglocgroth} G_{1^r}(1-Y^{-1}|1-\ve^{-1}) =
\sum_{j=1}^{n-k+1-r} 
\frac{\prod_{i=1}^{n-k} (1-Y_i^{-1} \ve_{n+1-j}^{-1})}{\prod_{i=1,i\neq j}^{n-k+1-r} (1-\ve_{n+1-i}/\ve_{n+1-j})}\/. \end{equation}
Note that this does not depend on the variables $\ve_{k+1}, \ldots, \ve_{k+r-1}$. 
To relate this polynomial
to geometry, consider the sequence of bundles 
\[ \C \subset \C^2 \subset \ldots \subset \C^n \to \cQ_{n-k} \]
Let $Y_1, \ldots, Y_{n-k}$ be the exponentials of the Chern roots of 
$\cQ_{n-k}$, and recall that $\ve_n, \ldots, \ve_1$ (in this order) are the 
exponentials of the Chern roots of the sequence $\C \subset \ldots \subset \C^n$. 
Consider the (Thom-Porteous) degeneracy locus giving the Schubert variety $X_r$, i.e.,
\[ X_r = \{ V \in \Gr(k;n): \mathrm{rank}(\C^{n-k-r+1} \to \C^n/V) \le n-k-r \} \]
Using the K-theoretic Thom-Porteous formula from \cite[Thm. 2.3]{buch:groth-classes}, 
\begin{equation}\label{E:Kdeglocus} \cO_r = G_{1^{r}}(1 - Y_1^{-1}, \ldots, 1-Y_{n-k}^{-1}|1 - \ve_n, \ldots, 1- \ve_{k+r}) \/. \end{equation}
We now use \cite[Lemma 2.10]{gorbounov2017quantum}
(with $u=1+y$ in {\em loc.cit.}) to obtain:
\begin{equation}\label{E:Groth-exp}
\begin{split} & \prod_{i=1}^{n-k} (1+yY_i^{-1}) =  \prod_{i=k+1}^{n} (1+y \ve_i^{-1}) \\
 & - 
\sum_{r=1}^{n-k} y\ve_{k+r}^{-1} \Bigl(\prod_{i=k+r+1}^{n} (1+y \ve_i^{-1}) \Bigr)
G_{1^r}(1-Y_1^{-1}, \ldots, 1-Y_{n-k}^{-1} | 1-\ve_n, \ldots, 1-\ve_{k+r}) \/. 
\end{split} \end{equation}
Since $\lambda_y(\cQ_{n-k}^\vee)=\prod_{i=1}^{n-k} (1+yY_i^{-1}) $ this proves:
\begin{prop}\label{thm:lyQveeGr} The following holds in $\K_{\T}(\Gr(k;n))$:
\[ \lambda_y(\cQ^\vee_{n-k}) = \prod_{i=k+1}^{n} (1+y \ve_i^{-1}) \cO_\emptyset
 -\sum_{r=1}^{n-k} y\ve_{k+r}^{-1} \Bigl(\prod_{i=k+r+1}^{n} (1+y \ve_i^{-1}) \Bigr)
\cO_{r} \/. \]
\end{prop}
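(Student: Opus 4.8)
The plan is to derive \Cref{thm:lyQveeGr} directly from the identity \eqref{E:Groth-exp}, treating that as the analytic heart of the matter and the preceding lemmas from \cite{gorbounov2017quantum} (in particular \cite[Lemma 2.10]{gorbounov2017quantum}) as given. The key observation is the definition $\lambda_y(\cQ_{n-k}^\vee) = \prod_{i=1}^{n-k}(1+yY_i^{-1})$, where $Y_1,\dots,Y_{n-k}$ are the exponential Chern roots of $\cQ_{n-k}$; this is just the multiplicativity of the $\lambda_y$ class applied to the splitting of $\cQ_{n-k}^\vee$ into line bundles, combined with the fact that the dual of a line bundle with exponential Chern class $Y_i$ has exponential Chern class $Y_i^{-1}$. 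So the left-hand side of \eqref{E:Groth-exp} is literally $\lambda_y(\cQ_{n-k}^\vee)$, and the statement to be proved amounts to reading off the right-hand side in terms of Schubert classes.

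First I would recall that the special Schubert classes $\cO_r \in \K_{\T}(\Gr(k;n))$, $1 \le r \le n-k$, corresponding to the single-column partitions $(1^r)$ — equivalently to the degeneracy loci $X_r$ — have the explicit localization/Thom--Porteous presentation \eqref{E:Kdeglocus}, namely $\cO_r = G_{1^r}(1-Y_1^{-1},\dots,1-Y_{n-k}^{-1} \mid 1-\ve_n, \dots, 1-\ve_{k+r})$, which follows from the K-theoretic Thom--Porteous formula of \cite[Thm. 2.3]{buch:groth-classes} applied to the degeneracy locus description of $X_r$ in $\Gr(k;n)$. This identifies each summand $G_{1^r}(\cdots)$ appearing on the right of \eqref{E:Groth-exp} with the geometric class $\cO_r$. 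Substituting these identifications into \eqref{E:Groth-exp} and using $\cO_\emptyset = 1$ (the fundamental class, represented by the constant polynomial $1$) yields exactly the asserted formula
\[ \lambda_y(\cQ^\vee_{n-k}) = \prod_{i=k+1}^{n} (1+y \ve_i^{-1}) \cO_\emptyset - \sum_{r=1}^{n-k} y\ve_{k+r}^{-1} \Bigl(\prod_{i=k+r+1}^{n} (1+y \ve_i^{-1}) \Bigr) \cO_{r}. \]

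The one point requiring a little care — and the step I expect to be the main obstacle — is the bookkeeping of which variables appear in each Grothendieck polynomial. The polynomial $G_{1^r}(1-Y^{-1}\mid 1-\ve^{-1})$ in \eqref{E:deglocgroth} is manifestly a function of all of $Y_1,\dots,Y_{n-k}$ but, as remarked after that display, does \emph{not} depend on $\ve_{k+1},\dots,\ve_{k+r-1}$; correspondingly the Thom--Porteous expression \eqref{E:Kdeglocus} for $\cO_r$ only involves the equivariant parameters $\ve_n,\dots,\ve_{k+r}$. One must check that the $t$-variable specializations used in \cite[Lemma 2.10]{gorbounov2017quantum} to produce \eqref{E:Groth-exp} (with $u = 1+y$) match precisely the specialization $t_i = 1-\ve_{n+1-i}^{-1}$ made to pass from the abstract $G_{1^r}(x\mid t)$ to \eqref{E:deglocgroth}, so that the $r$-th summand of \eqref{E:Groth-exp} really is $y\ve_{k+r}^{-1}\bigl(\prod_{i=k+r+1}^n(1+y\ve_i^{-1})\bigr)\cO_r$ and not some variant with a shifted set of arguments. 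Once this compatibility of substitutions is verified, there is nothing left to do: the proof is the single line ``combine \eqref{E:Groth-exp}, \eqref{E:Kdeglocus}, and $\lambda_y(\cQ_{n-k}^\vee)=\prod_i(1+yY_i^{-1})$.''
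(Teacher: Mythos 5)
Your proposal is correct and follows essentially the same route as the paper: the paper's proof likewise combines the Thom--Porteous identification $\cO_r = G_{1^r}(1-Y^{-1}\mid \cdots)$ from \eqref{E:Kdeglocus} with the Grothendieck-polynomial identity \eqref{E:Groth-exp} (i.e.\ \cite[Lemma 2.10]{gorbounov2017quantum} with $u=1+y$) and the splitting identity $\lambda_y(\cQ_{n-k}^\vee)=\prod_i(1+yY_i^{-1})$. The variable-bookkeeping point you flag is exactly the one the paper addresses via the substitution $t_i = 1-\ve_{n+1-i}^{-1}$ in \eqref{E:deglocgroth} and the remark that $G_{1^r}$ does not depend on $\ve_{k+1},\ldots,\ve_{k+r-1}$.
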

\noindent In particular, for $y=-\ve_n$, this proposition implies that:
\begin{equation}\label{E:y=-en} \lambda_{-\ve_n}(\cQ_{n-k}^\vee) = \cO_{n-k} \/. \end{equation} 
\begin{cor}\label{cor:special-mult} Let $d \ge 1$. Then in $\K_{\T}(\Gr(k+d;n))$,
\[ \lambda_y(\cQ^\vee_{n-k-d}) \cdot \cO_{(n-k-d)^k}= 
\prod_{i=d+1}^{n-k} (1+\frac{y}{\ve_i}) \cO_{(n-k-d)^k}
 -\sum_{r=1}^{n-k-d} \frac{y}{\ve_{d+r}} \Bigl(\prod_{i=d+r+1}^{n-k} (1+\frac{y}{\ve_i}) \Bigr)
\cO_{((n-k-d)^k,r)} \/. \]
\end{cor}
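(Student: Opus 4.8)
The plan is to deduce \Cref{cor:special-mult} from \Cref{thm:lyQveeGr} by a pullback argument along a suitable Grassmann bundle. First I would set up the geometry: the locus of $V \in \Gr(k+d;n)$ containing a fixed $d$-dimensional subspace, or more precisely the incidence variety $\Fl(d,k+d;n)$, realizes $\Gr(k+d;n)$ in a neighborhood of the Schubert point $\bfe_{(n-k-d)^k}$ as (the relevant stratum of) a Grassmann bundle $\mathbb{G}(k; \cQ')$ over a base, where $\cQ'$ is a rank-$(n-d)$ quotient bundle. The key observation is that the Schubert variety $X_{(n-k-d)^k}(F_\bullet) \subset \Gr(k+d;n)$, i.e.\ the variety of subspaces $V$ with $V \supset F_{d+1}$... — more usefully, I would use that multiplying by $\cO_{(n-k-d)^k}$ amounts to restricting to this Schubert variety, which itself is isomorphic to a smaller Grassmannian $\Gr(k; \C^{n-d})$, equivariantly for the subtorus acting on the coordinates $f_{d+1}, \ldots, f_n$ (equivalently, with equivariant parameters $\ve_{d+1}, \ldots, \ve_n$).

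The second step is to track the tautological bundles under this restriction. On $X_{(n-k-d)^k}(F_\bullet) \cong \Gr(k; \langle f_{d+1}, \ldots, f_n\rangle)$, the restriction of $\cQ_{n-k-d}$ on $\Gr(k+d;n)$ becomes the quotient bundle $\cQ_{n-d-k}$ of this smaller Grassmannian (since $F_d \subset \cS_{k+d}|_{X}$ splits off as a trivial summand, it drops out of the quotient up to the relevant twist). Hence $\lambda_y(\cQ^\vee_{n-k-d})|_{X_{(n-k-d)^k}}$ is computed by \Cref{thm:lyQveeGr} applied to $\Gr(k;n-d)$, but with the equivariant parameters shifted: the role of $\ve_1, \ldots, \ve_n$ there is played by $\ve_{d+1}, \ldots, \ve_n$, so the product $\prod_{i=k+1}^n(1+y\ve_i^{-1})$ becomes $\prod_{i=d+k+1}^{n}(1+y\ve_i^{-1})$ and the sum $\sum_{r=1}^{n-k} y \ve_{k+r}^{-1}(\cdots)\cO_r$ becomes $\sum_{r=1}^{n-d-k} y\ve_{d+k+r}^{-1}(\cdots)\cO_r$, where now $\cO_r$ is the Schubert class in the small Grassmannian. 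Finally I would push forward (or rather, recognize under the closed embedding $X_{(n-k-d)^k} \hookrightarrow \Gr(k+d;n)$): the class $\cO_r$ on $\Gr(k; n-d)$ pushes to $\cO_{((n-k-d)^k,r)}$ on $\Gr(k+d;n)$, and $\cO_\emptyset$ pushes to $\cO_{(n-k-d)^k}$, and the product $\lambda_y(\cQ^\vee_{n-k-d})\cdot\cO_{(n-k-d)^k}$ is precisely the push-forward of the restricted class by the projection formula and \Cref{lemma:proj}. Re-indexing $\prod_{i=d+k+1}^{n}$ as $\prod_{i=k+1}^{n-k}$ after the substitution $i \mapsto i - d$ — wait, one must be careful: the statement to prove has $\prod_{i=d+1}^{n-k}(1+y/\ve_i)$, so the reindexing is $i \mapsto i+k$ on the small-Grassmannian side; I would check that the small Grassmannian $\Gr(k;n-d)$ has $n-d$ total and its "$\ve_{k+1},\ldots,\ve_{n-d}$" are the ambient $\ve_{d+k+1},\ldots,\ve_{n}$, matching after shifting by $d$.

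Alternatively, and perhaps more cleanly, I would avoid geometry entirely and argue purely K-theoretically: there is a $\Kpt$-algebra map $\K_\T(\Gr(k+d;n)) \to \K_\T(\Gr(k+d;n))$, or better a restriction homomorphism, but the cleanest is to observe that multiplying the identity of \Cref{thm:lyQveeGr} for $\Gr(k;n-d)$ (written with variables $Y_1, \ldots, Y_{n-d-k}$ the Chern roots of its $\cQ$) by $\cO_{(n-k-d)^k}$ and transporting through the well-known ring map sending that smaller Grassmannian's classes into the big one. The substantive content is entirely \eqref{E:Groth-exp} / \Cref{thm:lyQveeGr} applied in rank $n-d$; everything else is bookkeeping about which Schubert classes and equivariant parameters appear.

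The main obstacle I anticipate is purely bookkeeping but genuinely error-prone: correctly identifying which equivariant parameters $\ve_i$ survive the restriction to the Schubert variety $X_{(n-k-d)^k}$ and in which order, and confirming that the Schubert class $\cO_r$ of the sub-Grassmannian maps to $\cO_{((n-k-d)^k,r)}$ rather than to some transpose or shift. One must also verify the edge behavior: when $r$ ranges up to $n-k-d$ the partition $((n-k-d)^k, r)$ stays inside the $(k+d)\times(n-k-d)$ rectangle, which it does since $r \le n-k-d$ and there are $k$ rows equal to $n-k-d \le n-(k+d)$... actually $n-k-d = n-(k+d)$, so the first $k$ rows are full-width, and this requires checking that $X_{(n-k-d)^k}(F_\bullet)$ really is the sub-Grassmannian one expects — this is a standard fact but worth stating carefully via the degeneracy-locus description of Schubert varieties as in \eqref{E:Kdeglocus}.
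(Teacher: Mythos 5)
Your overall strategy is the paper's: restrict to the Schubert variety $X_{(n-k-d)^k}\subset\Gr(k+d;n)$, recognize it as a smaller Grassmannian carrying the restriction of $\cQ_{n-k-d}$ as its own quotient bundle, apply \Cref{thm:lyQveeGr} there, and push forward (projection formula, \Cref{lemma:proj}, $\cO_\emptyset\mapsto\cO_{(n-k-d)^k}$, $\cO_r\mapsto\cO_{((n-k-d)^k,r)}$). But the identification of the sub-Grassmannian is wrong, and this is not mere bookkeeping. With the paper's conventions, $\lambda=(n-k-d)^k$ has $k$ full-width rows, so $X_{(n-k-d)^k}(F_\bullet)=\{V\in\Gr(k+d;n):V\supset F_k=\langle f_n,\ldots,f_{n-k+1}\rangle\}$, which is $\T$-equivariantly $\Gr(d;\C^n/F_k)=\Gr(d;\langle f_1,\ldots,f_{n-k}\rangle)$, i.e.\ $\Gr(d;n-k)$ with equivariant parameters $\ve_1,\ldots,\ve_{n-k}$ — not your $\Gr(k;\langle f_{d+1},\ldots,f_n\rangle)$ with parameters $\ve_{d+1},\ldots,\ve_n$. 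A dimension count already rules your identification out: $\dim X_{(n-k-d)^k}=d(n-k-d)=\dim\Gr(d;n-k)$, whereas $\dim\Gr(k;n-d)=k(n-k-d)$, different unless $k=d$ (and even then your parameters are the wrong ones). The symptom is exactly the mismatch you flagged at the end: your route produces the factors $\prod_{i=d+k+1}^{n}(1+y\ve_i^{-1})$ and weights $\ve_{d+k+r}$, while the Corollary has $\prod_{i=d+1}^{n-k}(1+y/\ve_i)$ and $\ve_{d+r}$. These involve genuinely different sets of equivariant parameters, and no relabeling or ``shift by $d$'' is permissible — the parameters are determined by the torus weights on the ambient space of the sub-Grassmannian, so the discrepancy cannot be absorbed into reindexing. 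The same misidentification infects your ``purely K-theoretic'' alternative, which moreover appeals to an unspecified ring map into the big Grassmannian (pushforward along the closed embedding is not a ring map; restriction goes the other way).

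The repair is short and is precisely the paper's argument: since $V\supset F_k$ on the Schubert variety, $\C^n/V=(\C^n/F_k)/(V/F_k)$, so $\cQ_{n-k-d}$ restricts to the tautological quotient of $\Gr(d;\langle f_1,\ldots,f_{n-k}\rangle)$; applying \Cref{thm:lyQveeGr} with $(k,n)$ replaced by $(d,n-k)$ produces exactly the parameters $\ve_{d+1},\ldots,\ve_{n-k}$ appearing in the statement, and then your pushforward step (which is correct in structure) gives the Corollary, since the one-row class $\cO_r$ of the small Grassmannian maps to $\cO_{((n-k-d)^k,r)}$ and the fundamental class to $\cO_{(n-k-d)^k}$.
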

\begin{proof} Note that $X_{(n-k-d)^k} \subset \Gr(k+d;n)$ is the Grassmannian 
$\Gr(d,\C^n/\langle f_{n}, \ldots, f_{n-k+1} \rangle) = 
\Gr(d,\langle f_{n-k}, \ldots , f_{1} \rangle)$. Furthermore, 
the restriction of $\cQ_{n-k-d}$ to this Grassmannian 
is the corresponding quotient bundle.
Then the claim follows from \Cref{thm:lyQveeGr}. 
\end{proof}

\begin{remark}\label{rmk:groth-classes} Using the level-rank isomorphism 
$\Theta':\Gr(k;\C^n) \simeq \Gr(n-k; \C^n)$ from \eqref{ss:level-rank},
one can express the Schubert classes 
%$\cO_\lambda \in \K_{\T}(\Gr(k;\C^n))$ 
in terms of the exponential Chern roots $X_1, \ldots , X_k$ of $\cS_k$. 
{Applying the level rank duality to \eqref{E:deglocus} (with the effect of 
reversing the order of the equivariant parameters, 
and taking inverses of equivariant parameters) 
one obtains 
\[ {\cO_{1^r}= G_{1^r}(1-X_1, \ldots, 1-X_{n-k}|1-\ve_1^{-1}, \ldots, 1-\ve_{n-k-r+1}^{-1})\/,} \]
in $\K_{\T}(\Gr(n-k;\C^n))$.}
{More generally, using the factorial
Grothendieck polynomial $G_\lambda(x|t)$ defined by 
McNamara \cite{mcnamara:factorial}, and 
its relation to Schubert classes as obtained by combining Lemma 2.11, Corollary 4.12 (especially eq. (4.40)) in \cite{gorbounov2017quantum}, one obtains that in any $\K_{\T}(\Gr(k;n))$,
\[ \cO_\lambda =  G_\lambda(1-X_1, \ldots, 1-X_k|1-\ve_1^{-1}, \ldots, 1- \ve_n^{-1}) \/. \]
The same result may also be obtained from the geometry of double
Grothendieck polynomials studied in \cite{buch:groth-classes}, see, e.g. \S 5 in {\em loc.cit.}
}\end{remark}
%%%%%%%%%%%%%%%%%%%%%%%%%%%%%%%%%%%%%%%%%%%%%%%%%%%%
\section{Quantum K-theory and `quantum=classical'}\label{sec:QK}
\subsection{Definitions and notation} The quantum K-ring was defined by Givental and Lee 
\cite{givental:onwdvv, lee:QK}. We recall 
the definition below, following \cite{givental:onwdvv}.~The (small) {\bf quantum K-pairing} 
is a deformation of the usual K-theory pairing; we recall the definition of this pairing for $X= \Gr(k;n)$. 
For any $\kappa_1, \kappa_2 \in \K_{\T}(\Gr(k,n))$, 
\begin{equation}\label{E:qKmetric} (\kappa_1 , \kappa_2)_{\QK} 
= \sum_{d \ge 0} q^d \langle \kappa_1, \kappa_2 \rangle_d \quad \in \K_{\T}(\pt)[[q]]\/;\end{equation} 
then extend this by $\K_{\T}(\pt)[\![q]\!]$-bilinearity. The elements 
$\langle \kappa, \zeta \rangle_d \in \K_T(pt)$ denote the
$2$-point (genus $0$, equivariant) K-theoretic Gromov-Witten (KGW) invariants 
if $d>0$. If $d=0$ then this is the usual pairing in $\K_{\T}(\Gr(k;n))$.
The ($n$-point, genus $0$) KGW invariants 
$\langle \kappa_1, \ldots, \kappa_n \rangle_d \in \K_T(\pt)$
are defined by pulling
back via the evaluation maps, then integrating, over the Kontsevich moduli space of stable maps 
$\overline{\mathcal{M}}_{0,n}(\Gr(k;n),d)$. Instead of recalling the precise definition of the KGW invariants, 
in \Cref{thm:q=cl} below
we give a `quantum=classical' statement calculating $2$ and $3$-point KGW invariants of Grassmannians.  
Explicit combinatorial formulae for the $2$-point KGW invariants for any homogeneous space 
may be found in \cite{buch.m:nbhds,BCLM:euler}.

\begin{thm}[Givental \cite{givental:onwdvv}] Consider the $\Kpt[\![q]\!]$-module 
\[ \QK_{\T}(\Gr(k;n)):= \K_{\T}(\Gr(k;n)) \otimes \Kpt[\![q]\!] \/. \]
Define the (small) equivariant quantum K-product $\star$ on $\QK_{\T}(\Gr(k;n))$ by the equality
\begin{equation}\label{E:QKdefn}(\kappa_1 \star \kappa_2, \kappa_3)_{\QK} = \sum_{d \ge 0} q^d \langle \kappa_1,\kappa_2,\kappa_3 \rangle_d \quad \in \K_{\T}(\pt)[\![q]\!] \/,\end{equation}
for any $\kappa_1, \kappa_2,\kappa_3 \in \K_{\T}(\Gr(k;n))$. Then $(\QK_T(\Gr(k;n)), +,\star)$ is a commutative, associative $\Kpt[\![q]\!]$-algebra. Furthermore, the small quantum K-metric
gives it a structure of 
a Frobenius algebra, i.e. $(\kappa_1 \star \kappa_2, \kappa_3)_{\QK} = (\kappa_1,\kappa_2 \star \kappa_3)_{\QK} $.
\end{thm}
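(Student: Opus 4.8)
The plan is to establish well-definedness, commutativity and the Frobenius identity as formal consequences of the shape of the definition, and then to spend the real effort on associativity, which is the one genuinely geometric input (the K-theoretic WDVV equation). First I would check that $\star$ is well defined. Modulo $q$, the quantum K-pairing $(\cdot,\cdot)_{\QK}$ is the ordinary K-theory Poincar\'e pairing $\langle\cdot,\cdot\rangle$ on $\K_{\T}(\Gr(k;n))$, which is non-degenerate over $\Kpt$ (the Grassmannian is smooth, projective and has a cellular structure, so the Schubert basis $\{\cO_\lambda\}$ admits a dual basis); since $\Kpt[\![q]\!]$ is $q$-adically complete, a triangular-matrix argument shows $(\cdot,\cdot)_{\QK}$ remains non-degenerate over $\Kpt[\![q]\!]$. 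Hence for fixed $\kappa_1,\kappa_2$ the $\Kpt[\![q]\!]$-linear functional $\kappa_3\mapsto\sum_{d\ge 0}q^d\langle\kappa_1,\kappa_2,\kappa_3\rangle_d$ is represented by a unique element, which we name $\kappa_1\star\kappa_2$; $\Kpt[\![q]\!]$-bilinearity is immediate from multilinearity of the KGW invariants.

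Commutativity and the Frobenius identity both follow from the full $\fS_3$-symmetry of $\langle\kappa_1,\kappa_2,\kappa_3\rangle_d$: the symmetric group permutes the three marked points of $\Mb_{0,3}(\Gr(k;n),d)$, carrying $\mathrm{ev}_i$ to $\mathrm{ev}_{\sigma(i)}$, so the invariant is totally symmetric in its arguments (and likewise the $2$-point pairing is symmetric). For commutativity, $(\kappa_1\star\kappa_2,\kappa_3)_{\QK}=(\kappa_2\star\kappa_1,\kappa_3)_{\QK}$ for all $\kappa_3$, and non-degeneracy finishes. For the Frobenius identity,
\[ (\kappa_1\star\kappa_2,\kappa_3)_{\QK}=\sum_{d}q^d\langle\kappa_1,\kappa_2,\kappa_3\rangle_d=\sum_{d}q^d\langle\kappa_2,\kappa_3,\kappa_1\rangle_d=(\kappa_2\star\kappa_3,\kappa_1)_{\QK}=(\kappa_1,\kappa_2\star\kappa_3)_{\QK}, \]
the last equality by symmetry of $(\cdot,\cdot)_{\QK}$.

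The substantial step is associativity. By non-degeneracy it suffices to prove $((\kappa_1\star\kappa_2)\star\kappa_3,\kappa_4)_{\QK}=(\kappa_1\star(\kappa_2\star\kappa_3),\kappa_4)_{\QK}$ for all $\kappa_4$, and I would do this by exhibiting each side as a single ``corrected $4$-point invariant'' and comparing the two via the geometry of $\Mb_{0,4}(\Gr(k;n),d)$. The forgetful map $\pi:\Mb_{0,4}(\Gr(k;n),d)\to\Mb_{0,4}\cong\bP^1$ has as fibers over two of the three boundary points of $\bP^1$ the boundary divisors $D(12|34)$ and $D(14|23)$, each a union over $d_1+d_2=d$ of fiber products $\Mb_{0,3}(\Gr(k;n),d_1)\times_{\Gr(k;n)}\Mb_{0,3}(\Gr(k;n),d_2)$ glued along evaluation maps. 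Since $[\cO_0]=[\cO_\infty]$ in $\K(\bP^1)$, pulling back $\bigotimes_i\mathrm{ev}_i^*\kappa_i$ and pushing to a point through these two divisors gives the same element of $\Kpt[\![q]\!]$. The delicate point is that pushing forward through $D(ij|kl)$ does \emph{not} literally produce the classical pairing inserted at the node: one must account for the structure sheaf of the honest (non-derived) fiber product, i.e.\ for the higher $\mathrm{Tor}$-sheaves created by intersecting along the diagonal. Givental's observation, which I would reproduce, is that these corrections assemble precisely into the $q$-deformation $(\cdot,\cdot)_{\QK}$: one obtains a factorization
\[ \langle\kappa_1,\kappa_2,\kappa_3,\kappa_4\rangle_d\big|_{D(12|34)}=\sum_{d_1+d_2=d}\langle\kappa_1,\kappa_2,\phi_\nu\rangle_{d_1}\,\widehat g^{\,\nu\mu}\,\langle\phi_\mu,\kappa_3,\kappa_4\rangle_{d_2}, \]
where $\widehat g^{\,\nu\mu}$ is the inverse Gram matrix of $(\cdot,\cdot)_{\QK}$ in a basis $\{\phi_\nu\}$; unwinding the definition of $\star$ and the Frobenius identity identifies the right-hand side with $((\kappa_1\star\kappa_2)\star\kappa_3,\kappa_4)_{\QK}$, and the $D(14|23)$ channel gives $(\kappa_1\star(\kappa_2\star\kappa_3),\kappa_4)_{\QK}$ after commutativity. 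Equating and using non-degeneracy yields associativity.

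The main obstacle is exactly this $\mathrm{Tor}$-bookkeeping: proving the factorization with the \emph{corrected} metric rather than the classical one, which amounts to controlling the higher direct images of structure sheaves under the gluing and evaluation maps and seeing that the discrepancy is precisely the series $\sum_e q^e\langle\cdot,\cdot\rangle_e$. Here I would use heavily that $\Gr(k;n)$ is homogeneous: the spaces $\Mb_{0,m}(\Gr(k;n),d)$ are irreducible, rational, of expected dimension with rational singularities, the evaluation maps have rational general fibers, and the relevant $R^{>0}$ vanish — precisely the statements about stable-map spaces and their boundary fibrations used in \cite{buch.m:qk} (and traceable, as noted in \cref{sec:preliminaries}, to Koll\'ar's theorem; see also \cite{BCMP:projgw}). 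With these inputs the derived-intersection analysis goes through on the nose. Everything else in the argument is formal, and a reader willing to take the general statement as a black box may simply invoke \cite{givental:onwdvv,lee:QK}; the above is a recollection of that proof specialized to Grassmannians.
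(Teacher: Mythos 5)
The paper does not prove this statement at all: it is imported verbatim from Givental \cite{givental:onwdvv} (with \cite{lee:QK} for the general/equivariant setting), so there is no in-paper argument to compare yours against. Your sketch is a reasonable reconstruction of Givental's proof, and the formal parts are right: non-degeneracy of $(\cdot,\cdot)_{\QK}$ as a $q$-deformation of the classical pairing gives well-definedness, $\fS_3$-symmetry of the $3$-point invariants gives commutativity and the Frobenius identity, and associativity is the genuinely geometric WDVV-type statement proved through $\Mb_{0,4}(\Gr(k;n),d)\to\Mb_{0,4}\cong\bP^1$ with the crucial twist that the gluing insertion at the node is the inverse Gram matrix of the \emph{deformed} pairing $(\cdot,\cdot)_{\QK}$ rather than the classical one.

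One correction to how you locate the source of that deformation. For a homogeneous target the evaluation maps are flat, so the fiber products $\Mb_{0,3}(X,d_1)\times_X\Mb_{0,3}(X,d_2)$ are Tor-independent and the diagonal decomposition at a single node involves only the classical pairing; the ``Tor-bookkeeping along the diagonal'' you propose is not where the correction comes from. In Givental's argument the deformation arises because the fiber of the forgetful map over a boundary point of $\bP^1$ is the \emph{union} $\bigcup_{d_1+d_2=d}D_{d_1,d_2}$ of intersecting components, so its structure sheaf class is computed by inclusion--exclusion; the multiple intersections are strata whose curves contain chains of unmarked intermediate components, each contributing a $2$-point invariant, and the alternating sum over chains is exactly the Neumann series inverting $g+\sum_{e>0}q^e\langle\cdot,\cdot\rangle_e$, i.e.\ the inverse of the quantum K-metric. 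With that substitution your outline matches the cited proof; alternatively, as you note, the paper's stance is simply to quote \cite{givental:onwdvv,lee:QK} as a black box.
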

\begin{remark} It was proved in \cite{BCMP:qkfin} that the submodule 
$\K_{\T}(\Gr(k;n)) \otimes \Kpt[q]$ is stable under the QK product $\star$. This means 
that the product of two Schubert classes has structure constants which are polynomials in 
$q$. Similar statements hold for any flag manifold \cite{ACTI:finiteness,kato:loop}. 
However, working over the ring of formal power series in $q$
has the advantage of having inverses of elements such as $\det \cS$
in the quantum K-ring.\end{remark}

As proved in \cite[\S 8]{MNS:left}, the ring endomorphisms on $\K_{\T}(\Gr(k;n))$ 
given by the Weyl group elements extend by $q$-linearity to endomorphisms 
of the equivariant quantum K-theory ring 
$\QK_{\T}(\Gr(k;n))= \K_{\T}(\Gr(k;n)) \otimes_{\Kpt} \Kpt[\![q]\!]$ 
(and in fact the equivariant quantum K-ring of any $G/P$). Using this, one can 
define by the same formula Demazure operators 
\[ \bs{\delta}_w \in \End_{\Q[\![q]\!]}(\QK_{\T}(\Gr(k;n))) \/.\] 
These satisfy the same properties as the classical ones, in particular the linearity with respect 
to classes in $\K_{\GL_n}(\Gr(k;n))$ of the Demazure operators, and the same formulae as in 
\eqref{E:left-schub} for the actions on Schubert classes. Then $\QK_{\T}(\Gr(k;n))$ is again 
a cyclic module over the appropriate degenerate Hecke algebra. 
(The main difference is that the Leibniz rule satisfied by the Demazure operators 
uses the quantum K-product, see \cite[Prop. 8.1]{MNS:left}.) We note that the QK pairing is $W$-invariant, in the sense that for any $w \in W$, and any $\kappa_1,\kappa_2 \in \QK_{\T}(\Gr(k;n))$,
\[ (\bs{w}.\kappa_1, \bs{w}.\kappa_2)_{\QK}= w(\kappa_1, \kappa_2)_{\QK} \/.\]

\subsection{Quantum=classical} We recall next the `quantum = classical' statement,
which relates the ($3$-point, genus $0$) equivariant KGW invariants on Grassmannians to a `classical' calculation in the equivariant K-theory of a two-step flag manifold. This statement was proved in \cite{buch.m:qk}, and it generalized 
results of Buch, Kresch and Tamvakis \cite{buch.kresch.ea:gromov-witten} from quantum cohomology. The proofs
rely on the `kernel-span' technique introduced by Buch \cite{buch:qcohgr}. A Lie-theoretic approach, 
for large degrees $d$, and for the larger family of cominuscule Grassmannians, was obtained in 
\cite{chaput.perrin:rationality}; see also \cite{BCMP:projgw} for a `quantum=classical' statement utilizing projected 
Richardson varieties. 

We recall next the `quantum = classical' result, proved in \cite{buch.m:qk}, and 
which will be used later in this paper. To start, form the 
following incidence diagram:
\begin{eqnarray}\label{E:qcl-all}
\xymatrix{Z_d:=\Fl(k-d,k,k+d;n)\ar[r]^{{\quad \quad \bar{p}}}\ar[d]^{{q}} & \Fl(k-d,k;n) \ar[r]^{\quad \bar{p}_1} \ar[d]^{\bar{p}_2} &\Gr(k;n) \\
Y_d:=\Fl(k-d,k+d;n) \ar[r]^{\quad \quad {pr}} & \Gr(k-d;n) &}
\end{eqnarray}
Here all maps are the natural projections. Denote by $p: \Fl(k-d,k,k+d;n) \to \Gr(k;n)$
the composition $p:=\bar{p}_1 \circ \bar{p}$. If $d\ge k$ then we set $Y_d:= \Fl(k+d;n)$ and
if $k+d \ge n$ then we set $Y_d:= \Gr(k-d;n)$. In particular, if $d \ge \min \{ k, n-k \}$, then
$Y_d$ is a single point.

\begin{thm}\label{thm:q=cl} Let $a,b,c \in \K_{\T}(\Gr(k;n))$ and $d \ge 0$ a degree.

(a) The following equality holds in $\K_{\T}(pt)$:
\[ \langle a, b, c \rangle_d = \int_{Y_d} q_*(p^*(a))\cdot q_*(p^*(b))\cdot q_*(p^*(c)) \/. \]

(b) Assume that $q_*(p^*(a)) = pr^*(a')$ for some $a' \in \K_{\T}(\Gr(k-d;n)$. Then 
\[ \langle a, b, c \rangle_d = \int_{\Gr(k-d;n)} a' \cdot (\bar{p}_2)_*(\bar{p}_1^*(b))\cdot (\bar{p}_2)_*(\bar{p}_1^*(c)) \/. \]
A similar statement holds when one  considers the analogous
diagram \eqref{E:qcl-all} with $\Gr(k-d;n)$ replaced by $\Gr(k+d;n)$.
\end{thm}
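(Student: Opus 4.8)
The goal is to prove Theorem \ref{thm:q=cl}, the `quantum = classical' statement relating $3$-point KGW invariants on $\Gr(k;n)$ to classical K-theoretic intersection numbers on two-step flag manifolds. The plan is to follow the kernel-span technique of Buch \cite{buch:qcohgr}, in the K-theoretic form carried out in \cite{buch.m:qk}.

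\begin{proof}[Proof sketch]
\textbf{Step 1: Identify the relevant moduli space.} Recall that a degree $d$ rational curve in $\Gr(k;n)$ has a \emph{kernel} $K$ (the intersection of all the subspaces parametrized by points on the curve) and a \emph{span} $S$ (their sum), with $\dim K \ge k-d$ and $\dim S \le k+d$. The key geometric input, due to Buch, is that for \emph{any} degree $d$ stable map $f\colon C \to \Gr(k;n)$ with $C$ a tree of $\bP^1$'s, one still has $\dim K \ge k-d$ and $\dim S \le k+d$. Thus there is a rational map $\Mb_{0,n}(\Gr(k;n),d) \dashrightarrow \Fl(k-d,k+d;n) = Y_d$ sending a stable map to $(K \subseteq S)$ (after padding $K$ up and $S$ down arbitrarily when the inequalities are strict); the content is that on the locus of irreducible curves this is a morphism, and the indeterminacy locus has large codimension.

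\textbf{Step 2: Reduce the KGW integral to $Y_d$.} Combine the evaluation maps $\op{ev}_i\colon \Mb_{0,3}(\Gr(k;n),d) \to \Gr(k;n)$ with the kernel-span map to $Y_d$. The diagram \eqref{E:qcl-all} is precisely the incidence correspondence realizing this: $Z_d = \Fl(k-d,k,k+d;n)$ maps to $\Gr(k;n)$ via $p$ and to $Y_d$ via $q$, with fibers of $(p,q)$ being the relevant spaces of kernels/spans through a fixed $k$-plane. The heart of the argument is that the natural map $\Mb_{0,3}(\Gr(k;n),d) \to Y_d \times \Gr(k;n)^3$ (via kernel-span and the three evaluations) has, after suitable birational modification, image and general fiber behaviour that makes the pushforward of $\cO$ on the moduli space agree with the pushforward of $\cO$ on the fiber product $Z_d \times_{Y_d} Z_d \times_{Y_d} Z_d$. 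Concretely one shows the relevant map is \emph{cohomologically trivial} (using rational connectedness of fibers, Koll\'ar's vanishing \cite{kollar:higherII}, and the fact that the fibers of $\op{ev}$ on $\Mb_{0,3}$ are rationally connected), so that in K-theory $(\op{ev}_1 \times \op{ev}_2 \times \op{ev}_3)_*[\cO] $ is computed by the corresponding classical pushforward along $Z_d \times_{Y_d} Z_d \times_{Y_d} Z_d \to \Gr(k;n)^3$. Pushing forward to a point and using the projection formula yields
\[ \langle a,b,c\rangle_d = \int_{Y_d} q_*(p^*a)\cdot q_*(p^*b)\cdot q_*(p^*c)\/, \]
which is part (a).

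\textbf{Step 3: Factor through the two-step flag when the kernel condition is trivial.} For part (b), the hypothesis $q_*(p^*(a)) = pr^*(a')$ means that the class $q_*(p^*a)$ on $Y_d = \Fl(k-d,k+d;n)$ is pulled back from $\Gr(k-d;n)$ along $pr$. This lets one replace the triple product on $Y_d$ by a computation that only sees the $\Gr(k-d;n)$ factor for $a'$ and, for $b$ and $c$, the pushforwards $q_*(p^*b) = (\bar p_2)_* \bar p_1^*(b)$ reinterpreted on $\Fl(k-d,k;n)$ — here one uses that the square on the right of \eqref{E:qcl-all} commutes and that $\bar p$ is cohomologically trivial onto its image (Lemma \ref{lemma:proj}-type statements, since the fibers are Grassmann bundles), together with the projection formula and flat base change along $pr$. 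Carrying this out gives
\[ \langle a,b,c\rangle_d = \int_{\Gr(k-d;n)} a'\cdot (\bar p_2)_*(\bar p_1^* b)\cdot (\bar p_2)_*(\bar p_1^* c)\/. \]
The symmetric statement with $\Gr(k+d;n)$ follows by the level-rank duality $\Gr(k;\C^n)\simeq\Gr(n-k;(\C^n)^*)$ of \S\ref{ss:level-rank}, which interchanges kernels and spans.

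\textbf{Main obstacle.} The crux — and the step I expect to be hardest — is Step 2: verifying that the kernel-span map, suitably modified, induces in K-theory an \emph{honest} identification of pushforwards, i.e.\ that the relevant morphism from (a birational model of) $\Mb_{0,3}(\Gr(k;n),d)$ to the fiber product over $Y_d$ is cohomologically trivial. This requires controlling the boundary strata of the moduli space (reducible curves, where the kernel-span map is only rational), showing the indeterminacy and the non-generic fibers contribute nothing in K-theory, and invoking rational-connectedness/vanishing results of Koll\'ar type; all of this is exactly the technical content of \cite{buch.m:qk} (building on \cite{buch:qcohgr, buch.kresch.ea:gromov-witten}), so in the write-up I would cite that analysis rather than reproduce it.
\end{proof}
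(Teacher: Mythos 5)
Your proposal is correct and matches the paper's treatment: the paper does not reprove this theorem but quotes it from \cite{buch.m:qk}, remarking only that part (b) follows from part (a) because the left square in \eqref{E:qcl-all} is a fibre square, and your sketch likewise defers the genuine technical content (cohomological triviality of the kernel--span correspondence via rational connectedness and Koll\'ar-type vanishing) to that reference while deriving (b) from (a) by the same fibre-square/projection-formula argument. So this is essentially the same approach, just with a fuller outline of the cited proof.
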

Observe that part (b) follows from (a) and the fact that the left 
diagram is a fibre square; for details
see \cite{buch.m:qk}. Recall
the tautological sequence on $\Gr(k;n)$: 
\[ 0 \to \cS=\cS_k \to  \mathbb{C}^n \to  \cQ:=\mathbb{C}^n/\cS_k \to 0 \/. \] 
To lighten notation, we will denote by the same letters the bundles on various flag manifolds
from the diagram \eqref{E:qcl-all}, but we will indicate in the subscript the rank of the bundle in question.

An important consequence of the `quantum=classical' statement is a formula to calculate 
quantum K-products; see \cite{buch.m:qk} and further \cite{BCMP:qkfin,BCMP:rcfin} for more general formulae. Consider any class 
$\kappa \in \K_{\T}(\Gr(k;n))$, a Schubert class
$\cO_\lambda$, and consider the multiplication
\[ \kappa \star \cO_\lambda = \sum_{d, \nu} N_{\kappa, \lambda}^{\nu,d} q^d \cO_\nu \/. \]  
Then for $d>0$ the coefficient $N_{\kappa, \lambda}^{\nu, d}$ may be calculated in terms of $3$-point
KGW invariants as follows:   
\begin{equation}\label{E:QKstr} N_{\kappa, \lambda}^{\nu,d} = 
\langle \kappa, \cO_\lambda, \cO_\nu^\vee \rangle_d - 
\sum_{\mu} \langle \kappa, \cO_\lambda, \cO_\mu^\vee \rangle_{d-1} \cdot \langle \cO_\mu, \cO_\nu^\vee \rangle_{1} \/. \end{equation}
Here where $\cO_\nu^\vee$ is the dual of $\cO_\nu$ with respect to the classical pairing
in $\K_{\T}(\Gr(k;n))$.
%%%%%%%%%%%%%%%%%%%%%%%%%%%%%%%%%%%%%%%%%%%%%%%
%%%%%%%%%%%%%%%%%%%%%%%%%%%%%%%%%%%%%

\section{Quantum K-products by $\lambda_y$ classes}\label{sec:QK-by-ly}
The goal of this 
section is to prove a formula for the quantum K-multiplication of the form
$\lambda_y(\cQ^\vee) \star [\pt]_{k,n}$,  which will be utilized 
later to interpret the diagonal operators on the monodromy matrix $T$ in the Yang-Baxter algebra.
Here we used the notation $[\pt]_{k,n}$ for the Schubert point $X_{(n-k)^k}=\langle f_{n-k+1}, \ldots, f_n \rangle$.

Consider the commutative diagram
\begin{eqnarray}
\label{diag:q=cl2}
\xymatrix{\Fl(k-d,k,k+d;n)\ar[r]^{{\quad p_1'}}\ar[d]^{{q}} & \Fl(k,k+d;n) \ar[r]^{\quad p_1} \ar[d]^{p_2} &\Gr(k;n) \\
\Fl(k-d,k+d;n) \ar[r]^{\quad \pi} & \Gr(k+d;n) &}
\end{eqnarray}
where all maps are the natural projections. Denote by $p= p_1 \circ p_1'$ and we abuse notation to denote by 
$\cS_i$ the tautological subbundle of rank $i$ on the two or three step manifold.
It should be clear from the context what is the base of this bundle. 

Fix a partition $\lambda$ included in the $k \times (n-k)$ rectangle
and an integer $d \ge 0$. Define the following combinatorial operations on 
$\lambda$: 
\begin{enumerate} \item[(A)] The partition $\lambda^{r[-d]}$ 
is obtained from $\lambda$
by removing the top $d$ rows. If $d=1$ we will simply denote it by 
$\lambda^r$.

\item[(B)] The partition $\lambda^{c[-d]}$ 
is obtained from $\lambda$ by removing the leftmost $d$ columns.
Again, if $d=1$, we denote this operation by $\lambda^c$.

\item[(C)] The partition $\lambda[-d]=(\lambda^{r[-d]})^{c[-d]}$ 
denotes the removal of the top 
row and the leftmost column. In other words, this is 
the composition of two operations above. 
\end{enumerate}
Extend all the operations to any class $\sum a_\lambda \cO_\lambda$ in 
$\K_{\T}(\Gr(k;n))$ by linearity. 
We refer to $X_{\lambda[-d]}$ as the 
{\em $d$-th curve neighborhood of $X_\lambda$}; for more genral constructions, 
see  \cite{BCMP:qkpos}. 
The curve neighborhoods may be used to calculate any two-point
KGW invariants (cf.~\cite{buch.m:nbhds}): for any degree $d \ge 0$ and any partitions $\lambda, \mu$,
\begin{equation}\label{E:2KGW} \langle \cO_\lambda, \cO_\mu^\vee \rangle_d = 
\delta_{\lambda[-d],\mu} \/. \end{equation}

Next we recall some formulae for convolutions. 
\begin{lemma}\label{lemma:push-pulls} The following hold:

(a) $(p_2)_*(p_1)^*(\cO_\lambda) = \cO_{\lambda^{c[-d]}}$;
% = \cO^{\bar{\lambda}}$;

(b) $(p_1)_*(p_2)^*(\cO_\lambda) = \cO_{\lambda^{r[-d]}}$;
%\cO^{\hat{\lambda}}$;

(c) $(p_1)_*(p_2)^* (p_2)_*(p_1)^*(\cO_\lambda) = \cO_{\lambda[-d]} $.
\end{lemma}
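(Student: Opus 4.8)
The plan is to deduce all three formulae from \Cref{lemma:proj} together with a combinatorial identification of the images and preimages of Schubert varieties under the two projections $p_1\colon\Fl(k,k+d;n)\to\Gr(k;n)$ and $p_2\colon\Fl(k,k+d;n)\to\Gr(k+d;n)$ of \eqref{diag:q=cl2}; both are of the form $\pi_{I,J}$ with $J\subset I$. Since $p_1$ is flat, $(p_1)^*\cO_\lambda=[\cO_{p_1^{-1}(X_\lambda(F_\bullet))}]$, and $p_1^{-1}(X_\lambda(F_\bullet))$ is again a Schubert variety in $\Fl(k,k+d;n)$; by \Cref{lemma:proj} the restriction of $p_2$ to it is cohomologically trivial, so $(p_2)_*[\cO_{p_1^{-1}(X_\lambda)}]=[\cO_{p_2(p_1^{-1}(X_\lambda))}]$. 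Thus part (a) reduces to the set-theoretic identity $p_2(p_1^{-1}(X_\lambda))=X_{\lambda^{c[-d]}}$ in $\Gr(k+d;n)$, and likewise part (b)---where now $\lambda$ is a partition in the $(k+d)\times(n-k-d)$ rectangle, i.e.\ $\cO_\lambda\in\K_\T(\Gr(k+d;n))$---reduces to $p_1(p_2^{-1}(X_\lambda))=X_{\lambda^{r[-d]}}$ in $\Gr(k;n)$.

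For (a) I would argue directly with incidences. Using the description $X_\lambda(F_\bullet)=\{V:\dim(V\cap F_{n-k+i-\lambda_i})\ge i,\ 1\le i\le k\}$ and the fact that the subspaces $F_{n-k+i-\lambda_i}$ increase with $i$, one gets
\[ p_2(p_1^{-1}(X_\lambda))=\{W\in\Gr(k+d;n):\exists\,V\subseteq W,\ V\in X_\lambda(F_\bullet)\}=\{W:\dim(W\cap F_{n-k+i-\lambda_i})\ge i,\ 1\le i\le k\}, \]
where the nontrivial inclusion ``$\supseteq$'' is proved by choosing independent vectors $v_i\in W\cap F_{n-k+i-\lambda_i}$ one at a time. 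Over $\Gr(k+d;n)$ the conditions indexed by the $i$ with $\lambda_i<d$ are automatic (they follow from $\dim(W\cap F_m)\ge m-(n-k-d)$), and the remaining ones---those with $\lambda_i\ge d$---are, after rewriting in terms of jump sequences, exactly the defining conditions of $X_{\lambda^{c[-d]}}$; this gives (a). For (b), the set $p_1(p_2^{-1}(X_\lambda))=\{V:\exists\,W\supseteq V,\ W\in X_\lambda(F_\bullet)\}$ is closed, irreducible, and stable under the Borel subgroup fixing $F_\bullet$, hence a Schubert variety $X_\nu(F_\bullet)$; to identify $\nu$ it suffices to locate the unique smallest $\T$-fixed point it contains, which one computes---taking the span of the $k$ smallest coordinate vectors appearing in $\bfe_\lambda$---to be $\bfe_{\lambda^{r[-d]}}$. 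Alternatively, (b) follows from (a) via the level--rank duality $\Theta'$ of \S\ref{ss:level-rank}, which interchanges the roles of $p_1$ and $p_2$ and sends $\cO_\mu\mapsto\cO_{\mu^t}$, so that $\lambda^{r[-d]}$ is carried to $(\lambda^t)^{c[-d]}$.

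Part (c) is then formal. By (a) and then (b),
\[ (p_1)_*(p_2)^*(p_2)_*(p_1)^*\cO_\lambda=(p_1)_*(p_2)^*\cO_{\lambda^{c[-d]}}=\cO_{(\lambda^{c[-d]})^{r[-d]}}, \]
and since deleting the top $d$ rows of a Young diagram commutes with deleting its leftmost $d$ columns, $(\lambda^{c[-d]})^{r[-d]}=(\lambda^{r[-d]})^{c[-d]}=\lambda[-d]$. The main obstacle is precisely the combinatorial identification of the Schubert labels in (a) and (b): keeping straight the case split $\lambda_i\ge d$ versus $\lambda_i<d$, and the several distinct rectangles $k\times(n-k)$, $(k+d)\times(n-k-d)$ that occur; all the geometric content---cohomological triviality of the restricted projections and the fact that images and preimages of Schubert varieties are Schubert varieties---is supplied by \Cref{lemma:proj}.
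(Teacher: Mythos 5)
Your proposal is correct and follows essentially the same route as the paper, whose proof is precisely ``apply \Cref{lemma:proj} and trace the (pre)images of the Schubert varieties under $p_1,p_2$''; you simply carry out that tracing explicitly (incidence conditions for (a), Borel-stability/fixed points or level--rank duality for (b), and composition plus commutation of row/column removal for (c)). No gaps: the combinatorial identifications you state, including $(\lambda^{c[-d]})^{r[-d]}=\lambda[-d]$, check out.
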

\begin{proof} This follows from \Cref{lemma:proj}, then tracing the (pre)images of the 
Schubert varieties under the maps $p_1, p_2$.\end{proof}

The lemma allows us to interpret the operations $\lambda^{r[-d]}, \lambda^{c[-d]}$ and
$\lambda[-d]$ geometrically in terms of push-pull operators. For any $\kappa \in \K_{\T}(\Gr(k;n))$,
\[ \kappa^{r[-d]} = (p_1)_*(p_2)^*(\kappa)\/; \quad \kappa^{c[-d]} = (p_2)_*(p_1)^*(\kappa)\/; 
\quad \kappa[-d] = (p_1)_*(p_2)^* (p_2)_*(p_1)^*(\kappa) \/. \]

The next lemma upgrades these statements to push forwards and pull backs of $\lambda_y$ classes.
\begin{lemma}\label{lemma:pqQvee} Let $d\ge 1$. The following hold:

(a) $q_* (p^* \lambda_y(\mathcal{Q}^\vee_{n-k})) = \lambda_y(\cQ_{n-k-d}^\vee)$. 

(b) $q_* (p^*\lambda_y(\cQ_{n-k}^\vee)) = \pi^*(\lambda_y(\cQ_{n-k-d}^\vee))$.

(c) $(p_2)_* (p_1^* \lambda_y(\mathcal{Q}^\vee_{n-k})) = \lambda_y(\mathcal{Q}_{n-k-d}^\vee)$.

(d) $(p_1)_* (p_2^* \lambda_y(\mathcal{Q}^\vee_{n-k-d})) = 1$.

In all statements $\cQ_{n-k-d}=0$ if $n-k-d<0$.

Analogous statements hold when one replaces $\cQ^\vee$ by the tautological subbundle $\cS$. 
\end{lemma}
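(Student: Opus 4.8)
The plan is to reduce each of the four statements to the Borel--Weil--Bott-type vanishing recorded in \Cref{cor:relativeBWB}, applied along the appropriate projection maps in the diagram \eqref{diag:q=cl2}. The key observation is that each projection in that diagram is a Grassmann bundle: $p_2:\Fl(k,k+d;n)\to\Gr(k+d;n)$ has fibre $\Gr(k;\cS_{k+d})$, the composition $p=p_1\circ p_1'$ realizes $\Fl(k-d,k,k+d;n)$ as (an iterated) Grassmann bundle over $\Gr(k+d;n)$, and $\pi:\Fl(k-d,k+d;n)\to\Gr(k+d;n)$ is the Grassmann bundle with fibre $\Gr(k-d;\cS_{k+d})$. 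On $\Gr(k+d;n)$ itself the tautological sequence is $0\to\cS_{k+d}\to\C^n\to\cQ_{n-k-d}\to 0$, and the point is that along all these maps the pullback of $\cQ_{n-k-d}$ is \emph{the same bundle}, because the rank-$(k+d)$ subbundle is fixed while only smaller-rank subbundles vary in the fibre.

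First I would treat (c). Here $p_1^*\cQ_{n-k}$ on $\Fl(k,k+d;n)$ sits in the exact sequence $0\to\cS_{k+d}/\cS_k\to p_1^*\cQ_{n-k}\to p_2^*\cQ_{n-k-d}\to 0$, so by multiplicativity of $\lambda_y$ we get $\lambda_y(p_1^*\cQ_{n-k}^\vee)=\lambda_y((\cS_{k+d}/\cS_k)^\vee)\cdot p_2^*\lambda_y(\cQ_{n-k-d}^\vee)$. Applying $(p_2)_*$ and the projection formula, it remains to show $(p_2)_*\lambda_y((\cS_{k+d}/\cS_k)^\vee)=1$. On the Grassmann bundle $p_2$ with fibre $\Gr(k;\cS_{k+d})$, the bundle $\cS_{k+d}/\cS_k$ is the tautological \emph{quotient} bundle $\underline{\cQ}$ of rank $d$; writing $\lambda_y(\underline{\cQ}^\vee)=\sum_{j}y^j\wedge^j\underline{\cQ}^\vee=\sum_j y^j\fS_{(1^j)}(\underline{\cQ}^\vee)$ and invoking \Cref{cor:relativeBWB}(a), every term with $j\ge 1$ pushes to zero, leaving only the $j=0$ term, namely $1$. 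That gives (c). Statement (a) is the composite: factor $p=p_1\circ p_1'$, apply (c) once to get down to $\Fl(k,k+d;n)$, then... actually cleaner is to apply the same argument directly to the Grassmann-bundle structure of $q:\Fl(k-d,k,k+d;n)\to\Fl(k-d,k+d;n)=Y_d$, noting $p^*\cQ_{n-k}$ and $q^*(\text{pullback of }\cQ_{n-k-d})$ differ by a tautological quotient bundle in the fibre direction, and pushing forward kills all the higher $\lambda_y$ terms by \Cref{cor:relativeBWB}(a). Then (b) follows by further pushing forward $Y_d\xrightarrow{\pi}\Gr(k+d;n)$ — or, more simply, (b) is literally (a) combined with the compatibility $q_*p^* = \pi^*\circ(p_2)_*p_1^*$ coming from the fibre-square on the right of \eqref{diag:q=cl2} (base change), together with (c). For (d): $p_2^*\cQ_{n-k-d}^\vee$ on $\Fl(k,k+d;n)$ is pulled back from $\Gr(k+d;n)$, so by the projection formula $(p_1)_*(p_2^*\lambda_y(\cQ_{n-k-d}^\vee))=\lambda_y(\cQ_{n-k-d}^\vee)\cdot(p_1)_*(1)$; since $p_1$ is the Grassmann bundle with fibre $\Gr(d;\cQ_{n-k})$ (a manifold with $\chi(\cO)=1$, being a rational homogeneous space), $(p_1)_*(1)=1$ and the result is... hmm, that gives $\lambda_y(\cQ_{n-k-d}^\vee)$, not $1$. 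So for (d) I would instead argue that $\cQ_{n-k-d}$ on $\Fl(k,k+d;n)$ is the relative tautological quotient along $p_1$: indeed over $\Gr(k;n)$ the fibre of $p_1$ is $\Gr(d;\cQ_{n-k})$ and $\cQ_{n-k-d}$ is its tautological quotient $\underline{\cQ}$, of rank $n-k-d$; then exactly as in the (c) argument, $\lambda_y(\underline{\cQ}^\vee)$ has all $\lambda_y$ terms of positive degree pushed to zero by \Cref{cor:relativeBWB}(a), leaving $(p_1)_*\lambda_y(\cQ_{n-k-d}^\vee)=1$.

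For the $\cS$ versions, the same proof goes through using the other half of \Cref{cor:relativeBWB}: the relevant tautological subbundles in the fibre directions push forward via $\fS_\lambda(\underline{\cS})\mapsto 0$ for $\lambda\neq\emptyset$, which is again \Cref{cor:relativeBWB}(a) (it covers both $\underline{\cS}$ and $\underline{\cQ}^\vee$). One has to be slightly careful about which exact sequence to use — e.g.\ for the analogue of (c) one uses $0\to p_1^*\cS_k\to\cS_{k+d}\to(\cS_{k+d}/p_1^*\cS_k)\to 0$ and the fact that $\cS_{k+d}$ on $\Fl(k,k+d;n)$ is $p_2$-pulled back — but the mechanism is identical. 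I expect the main obstacle to be purely bookkeeping: correctly identifying, for each of the four maps and each choice of $\cS$ vs.\ $\cQ^\vee$, which tautological bundle is the ``relative'' (fibre-direction) one and which is pulled back from the base, so that the projection formula plus \Cref{cor:relativeBWB} can be applied cleanly; there is no real analytic or geometric difficulty once the bundle identifications are pinned down, and the degenerate cases $n-k-d<0$ (where $\cQ_{n-k-d}=0$ and the statements are trivial) should be dispatched separately at the start.
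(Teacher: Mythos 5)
Your proposal follows essentially the same route as the paper: split the pulled-back bundle by the Whitney formula into a piece coming from the base and the relative tautological piece along the relevant Grassmann-bundle projection, apply the projection formula, and kill all positive-degree terms of the fibre-direction $\lambda_y$ class using \Cref{cor:relativeBWB}. Your arguments for (a), (c), (d) (after your self-correction on (d)) and your sketch for the $\cS$-analogues are exactly the paper's mechanism, and in fact you spell out the $\cS$ case slightly more explicitly than the paper does.

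The one point that does not survive scrutiny is your justification of (b). The right-hand square of \eqref{diag:q=cl2} is \emph{not} Cartesian: the fibre product $\Fl(k,k+d;n)\times_{\Gr(k+d;n)}\Fl(k-d,k+d;n)$ parametrizes pairs $(V_k\subset V_{k+d},\,V_{k-d}\subset V_{k+d})$ with no containment imposed between $V_{k-d}$ and $V_k$, so it strictly contains $\Fl(k-d,k,k+d;n)$ whenever $0<d<k$ (compare dimensions), and the base-change identity $q_*(p_1')^*=\pi^*(p_2)_*$ is therefore not automatic. Fortunately (b) needs no such input: once (a) is established, the class $q_*(p^*\lambda_y(\cQ_{n-k}^\vee))$ equals $\lambda_y((\C^n/\cS_{k+d})^\vee)$ on $Y_d=\Fl(k-d,k+d;n)$, and the bundle $\C^n/\cS_{k+d}$ on $Y_d$ is literally $\pi^*$ of the tautological quotient $\cQ_{n-k-d}$ on $\Gr(k+d;n)$, so (b) is a restatement of (a). This is the one-line observation the paper uses; with that substitution your proof is complete.
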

\begin{proof} This essentially follows from the formulae proved in 
\cite[\S 5]{GMSZ:QK}; for the convenience of the reader, we include
a proof. From definition $p^*(\lambda_y(\cS_k)) = \lambda_y(\cS_k)$. To calculate $q_* (\lambda_y((\mathbb{C}^n/\cS_k)^\vee))$ we observe that we have the short exact sequence on $\Fl(k-d,k,k+d;n)$
\begin{equation}\label{E:sesZ1} 0 \to (\mathbb{C}^n/\cS_{k+d})^\vee \to p^*(\cQ_{n-k}^\vee) = (\mathbb{C}^n/\cS_{k})^\vee \to 
(\cS_{k+d}/\cS_k)^\vee \to 0 \/. \end{equation}
From the Whitney formula it follows that 
\[ p^* \lambda_y(\mathcal{Q}_{n-k}^\vee)= \lambda_y(p^*(\cQ_{n-k}^\vee) ) = \lambda_y((\mathbb{C}^n/\cS_{k+d})^\vee) \cdot
\lambda_y((\cS_{k+d}/\cS_k)^\vee) \]
and then by the projection formula we deduce that 
\begin{equation}\label{E:W1} q_* (p^* \lambda_y(\mathcal{Q}_{n-k}^\vee)) = \lambda_y((\mathbb{C}^n/\cS_{k+d})^\vee) \cdot q_*(\lambda_y((\cS_{k+d}/\cS_k)^\vee)) \/. \end{equation}
Observe that 
the projection $q$ realizes the three-step flag manifold $\Fl(k-d,k,k+d;n)$ 
as the Grassmann bundle
$\mathbb{G}(\cS_{k+d}/\cS_{k-d})$ over $\Fl(k-d,k+d;n)$, with tautological sequence
\[ 0 \to \cS_{k}/\cS_{k-d} \to \cS_{k+d}/\cS_{k-d} \to \cS_{k+d}/\cS_{k} \to 0 \/. \]
\Cref{cor:relativeBWB}(b) implies that  
\[ q_*(\lambda_y(\cS_{k+d}/\cS_k)^\vee) = q_* [\cO_{\Fl(k-d,k,k+d;n)}] = [\cO_{\Fl(k-d,k+d;n)}] \/. \]
Then the claim in (a) follows from this and \eqref{E:W1}, and (b) follows because 
$\pi^*(\lambda_y(\cQ_{n-k-d})^\vee)= \lambda_y((\mathbb{C}^n/\cS_{k+d})^\vee)$. 

The claim in (c) follows along the same lines. More precisely, one uses again the short exact sequence in 
\eqref{E:sesZ1} and the projection formula to show that 
\begin{equation}\label{E:W2} (p_2)_* (p_1^* \lambda_y(\mathcal{Q}_{n-k}^\vee)) = 
\lambda_y(\cQ_{n-k-d}^\vee) \cdot (p_2)_*(\lambda_y((\cS_{k+d}/\cS_k)^\vee)) \/. \end{equation}
The projection $p_2$ realizes the two-step flag manifold $\Fl(k,k+1;n)$ as the Grassmann bundle
$\mathbb{G}(k,\cS_{k+d})$ with tautological sequence
$0 \to \cS_{k} \to \cS_{k+d} \to \cS_{k+d}/\cS_{k} \to 0$.
Then one applies \Cref{cor:relativeBWB}(b) to show that 
$(p_2)_*(\lambda_y((\cS_{k+d}/\cS_k)^\vee)) = [\cO_{\Gr(k+d;n)}]$. 
This finishes the proof of (c). 

Finally, to prove (d) one observes that $p_1:\Fl(k,k+d;n) \to \Gr(k;n)$ realizes the two-step flag manifold
as the Grassmann bundle $\mathbb{G}(\C^n/\cS_k)$ with tautological sequence 
$0 \to \cS_{k+d}/\cS_k \to \C^n/\cS_k \to \C^n/\cS_{k+d} \to 0$. Since
$\C^n/\cS_{k+d}=p_2^* \lambda_y(\mathcal{Q}^\vee_{n-k-d})$, the claim
follows again from \Cref{cor:relativeBWB}(b).
\end{proof}

\begin{cor}\label{cor:q=clQvee} Let $a,b \in \K_{\T}(\Gr(k;n))$ and $d \ge 1$. Then there is an equality: 
\[ \langle a, b, \lambda_y(\mathcal{Q}^\vee_{n-k}) \rangle_d = \int_{\Gr(k+d;n)} 
(p_2)_*(p_1^*(a)) \cdot (p_2)_*(p_1^*(b)) \cdot \lambda_y(\cQ_{n-k-d}^\vee) \/. \]
\end{cor}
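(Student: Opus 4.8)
The plan is to deduce the formula directly from the `quantum=classical' theorem \Cref{thm:q=cl}, using the pushforward computation in \Cref{lemma:pqQvee}(b) to supply the hypothesis needed for part (b) of that theorem.

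First I would observe that the genus-zero $3$-point KGW invariant $\langle a,b,c\rangle_d$ is symmetric in its three arguments — it is defined by pulling the classes back along the evaluation maps from $\Mb_{0,3}(\Gr(k;n),d)$ and integrating — so there is no loss in treating $c=\lambda_y(\cQ^\vee_{n-k})$ as the distinguished argument to which the reduction of \Cref{thm:q=cl}(b) applies; since everything involved (pullback, pushforward, integration, the KGW invariant) is $\C[y]$-linear, the formal parameter $y$ is inert throughout. Then I would identify the incidence diagram \eqref{diag:q=cl2} with the `$\Gr(k+d;n)$-version' of \eqref{E:qcl-all} alluded to in \Cref{thm:q=cl}(b): the three-step flag variety $Z_d=\Fl(k-d,k,k+d;n)$ maps by $q$ onto $Y_d=\Fl(k-d,k+d;n)$, and then by $\pi$ onto $\Gr(k+d;n)$, while $p=p_1\circ p_1'\colon Z_d\to\Gr(k;n)$; in particular $\pi\circ q=p_2\circ p_1'$, so the left-hand square of \eqref{diag:q=cl2} is the required fibre square and $(p_2)_*p_1^*$ is the convolution appearing in the conclusion of \Cref{thm:q=cl}(b).

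The hypothesis of \Cref{thm:q=cl}(b), applied with $c=\lambda_y(\cQ^\vee_{n-k})$ in the distinguished slot, asks that $q_*(p^*(c))$ be pulled back along $\pi$ from $\Gr(k+d;n)$; this is precisely \Cref{lemma:pqQvee}(b), which gives $q_*(p^*\lambda_y(\cQ^\vee_{n-k}))=\pi^*(\lambda_y(\cQ^\vee_{n-k-d}))$. Feeding $a'=\lambda_y(\cQ^\vee_{n-k-d})$ into \Cref{thm:q=cl}(b) then produces
\[ \langle a,b,\lambda_y(\cQ^\vee_{n-k})\rangle_d=\int_{\Gr(k+d;n)}\lambda_y(\cQ^\vee_{n-k-d})\cdot(p_2)_*(p_1^*(a))\cdot(p_2)_*(p_1^*(b))\/, \]
which is the asserted identity after commuting the factors. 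I do not anticipate a real obstacle — the corollary is a repackaging of \Cref{thm:q=cl} and \Cref{lemma:pqQvee}. The only care needed is the bookkeeping: matching the maps of \eqref{diag:q=cl2} against those of \eqref{E:qcl-all}, and disposing of the degenerate ranges $n-k-d<0$ (where $\cQ_{n-k-d}=0$, so $\lambda_y(\cQ^\vee_{n-k-d})=1$) and $k+d\ge n$ (where $\Gr(k+d;n)$ is a point or empty), which are governed by the conventions already fixed for \eqref{E:qcl-all} in \Cref{thm:q=cl} and at the end of \Cref{lemma:pqQvee}.
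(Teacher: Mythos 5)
Your proposal is correct and follows exactly the paper's route: the paper proves this corollary by citing \Cref{thm:q=cl} (its $\Gr(k+d;n)$ variant of part (b)) together with \Cref{lemma:pqQvee}(b), which is precisely the reduction you carry out, with the symmetry of the $3$-point invariant and the matching of \eqref{diag:q=cl2} with the relevant incidence diagram supplying the bookkeeping the paper leaves implicit.
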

\begin{proof} This follows from  \Cref{thm:q=cl} and \Cref{lemma:pqQvee}.\end{proof}
\begin{thm}\label{prop:vanishingQvee} Consider the multiplication
\[ \lambda_y(\cQ^\vee_{n-k}) \star [\pt]_{k,n} = \sum_{d, \nu} N_{\kappa, \lambda}^{\nu,d} q^d \cO_\nu \/. \]  
Then $N_{\kappa, \lambda}^{\nu, d}=0$ for any $d \ge 2 $.
\end{thm}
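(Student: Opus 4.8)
The plan is to combine the quantum$=$classical reduction for three-point invariants involving $\lambda_y(\cQ^\vee_{n-k})$ with a vanishing statement for curve neighborhoods of the Schubert point. Recall from \eqref{E:QKstr} that the structure constant $N^{\nu,d}_{\kappa,\lambda}$ (with $\kappa = \lambda_y(\cQ^\vee_{n-k})$ and $\lambda = (n-k)^k$) is built out of the three-point invariants $\langle \lambda_y(\cQ^\vee_{n-k}), \cO_{(n-k)^k}, \cO^\vee_\nu \rangle_e$ for $e \le d$, together with the two-point invariants governing the reduction from $d$ to $d-1$. So it suffices to show that all three-point invariants $\langle \lambda_y(\cQ^\vee_{n-k}), [\pt]_{k,n}, c \rangle_d$ vanish for $d \ge 2$, for every $c \in \K_{\T}(\Gr(k;n))$; the vanishing of $N^{\nu,d}_{\kappa,\lambda}$ for $d \ge 2$ then drops out of \eqref{E:QKstr}, noting that the $d = 1$ correction term only involves $e = 0$ invariants and that the pairing $\langle \cO_\mu, \cO^\vee_\nu \rangle_1$ is controlled by a single-step curve neighborhood via \eqref{E:2KGW}.

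First I would apply \Cref{cor:q=clQvee}: for $d \ge 1$,
\[ \langle \lambda_y(\cQ^\vee_{n-k}), [\pt]_{k,n}, c \rangle_d = \int_{\Gr(k+d;n)} (p_2)_*(p_1^* \lambda_y(\cQ^\vee_{n-k})) \cdot (p_2)_*(p_1^* [\pt]_{k,n}) \cdot (p_2)_*(p_1^* c) \/. \]
By \Cref{lemma:pqQvee}(c) the first factor is $\lambda_y(\cQ^\vee_{n-k-d})$, a genuinely nonzero class; but the second factor is the key. By \Cref{lemma:push-pulls}(a), $(p_2)_*(p_1)^* \cO_{(n-k)^k} = \cO_{((n-k)^k)^{c[-d]}}$, and removing the leftmost $d \ge 2$ columns from the $k \times (n-k)$ rectangle yields the partition $((n-k-d)^k)$ inside the $k \times (n-k-d)$ rectangle of $\Gr(k+d;n)$ — but one must be careful: when $d \ge 1$ the column-removal of the full rectangle $(n-k)^k$ gives $(n-k-d)^k$, which is still a nonzero Schubert class on $\Gr(k+d;n)$, \emph{not} zero. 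So the naive hope that the point pushes forward to zero fails, and the vanishing has to come from elsewhere.

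The real mechanism, and the step I expect to be the main obstacle, is a dimension/degree count forcing the integrand to vanish on $\Gr(k+d;n)$ for $d \ge 2$. The point class $[\pt]_{k,n}$ pushes forward along $p_1$ and then $p_2$ to the class of the sub-Grassmannian $\Gr(d, \C^n/\langle f_{n-k+1},\dots,f_n\rangle) = X_{(n-k-d)^k} \subset \Gr(k+d;n)$ (as already used in \Cref{cor:special-mult}), and by \Cref{cor:special-mult} the product $\lambda_y(\cQ^\vee_{n-k-d}) \cdot \cO_{(n-k-d)^k}$ is supported on $X_{(n-k-d)^k}$ and lies in the span of $\cO_{(n-k-d)^k}$ and $\cO_{((n-k-d)^k,r)}$ for $1 \le r \le n-k-d$. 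Pairing this against $(p_2)_*(p_1^* c)$ and integrating over $\Gr(k+d;n)$, I would argue that the only Schubert classes $\cO_\mu$ on $\Gr(k+d;n)$ that can appear in $(p_2)_*(p_1^* c)$ for $c \in \K_{\T}(\Gr(k;n))$ are those $\mu$ with $\mu_1 \le n - k$ but $\mu$ fitting in the $k \times (n-k)$ rectangle after column-insertion — more precisely, such $\mu$ has at most $k$ parts and $\mu \subseteq (n-k)^k$ after removing $d$ columns, so $\mu$ lies in $(n-k)^k + $ (at most $d$ extra columns). The integral $\int_{\Gr(k+d;n)} \cO_\alpha \cdot \cO_\beta$ is nonzero only when $\alpha$ and $\beta$ are complementary in the $(k+d) \times (n-k-d)$ rectangle, i.e. $|\alpha| + |\beta| \le (k+d)(n-k-d)$ with equality essentially needed. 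Matching the constraint $\mu \subseteq$ (small rectangle plus $d$ columns) against the complement of $\{(n-k-d)^k, ((n-k-d)^k,r)\}$ inside the $(k+d)\times(n-k-d)$ rectangle, one finds the two conditions are incompatible precisely when $d \ge 2$: there is simply not enough room. The bookkeeping here — identifying exactly which $\mu$ arise from $(p_2)_*p_1^* c$, and checking the complementarity fails for $d \ge 2$ but (necessarily) can succeed for $d = 0, 1$ — is the technical heart of the argument; everything else is assembling \Cref{cor:q=clQvee}, \Cref{lemma:pqQvee}(c), \Cref{cor:special-mult}, and \eqref{E:QKstr}.
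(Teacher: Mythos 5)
Your reduction is not the one that works, and the statement you reduce to is false. You claim it suffices to show that $\langle \lambda_y(\cQ^\vee_{n-k}), [\pt]_{k,n}, c \rangle_d = 0$ for all $d \ge 2$ and all $c$; but these invariants do not vanish. For example, taking $c = \cO_\lambda^\vee$ with $\lambda = (n-k-d)^{k-d}$, the quantum$=$classical reduction together with \Cref{cor:special-mult} and \Cref{lemma:push-pulls} gives the nonzero value $\prod_{i=d+1}^{n-k}(1+y/\ve_i)$, and similarly one gets $-\tfrac{y}{\ve_{d+r}}\prod_{i=d+r+1}^{n-k}(1+y/\ve_i)$ for $\lambda = ((n-k-d)^{k-d},r)$. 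The vanishing of $N^{\nu,d}_{\kappa,\lambda}$ for $d\ge 2$ is therefore not a vanishing of individual KGW invariants; it is a \emph{cancellation} inside \eqref{E:QKstr}: one must show that the degree-$d$ three-point invariant equals the correction term $\sum_{\nu} \langle \lambda_y(\cQ^\vee_{n-k}), [\pt]_{k,n}, \cO_\nu^\vee\rangle_{d-1}\,\langle \cO_\nu, \cO_\lambda^\vee\rangle_1$. This is what the paper does: both sides are computed in closed form (the left side via \Cref{cor:q=clQvee}, \Cref{cor:special-mult}, \Cref{lemma:push-pulls}; the right side additionally using the curve-neighborhood formula \eqref{E:2KGW} for the two-point invariants), and they are matched case by case in $\lambda$, the hypothesis $d \ge 2$ entering to guarantee that the relevant $\nu$'s of the form $((n-k-d+1)^{k-d+1},s)$ exist so that the two contributions with $s=0,1$ combine to reproduce the left side. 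Your proposal never engages with this correction term — your aside that ``the $d=1$ correction term only involves $e=0$ invariants'' does not address it, since for $d=2$ the correction involves genuinely nonzero degree-$1$ three-point invariants.

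The fallback argument you sketch, a dimension/complementarity count on $\Gr(k+d;n)$ meant to force the integral to vanish for $d\ge 2$, also fails: it aims at the same false conclusion, and the complementarity criterion $\int \cO_\alpha\cdot\cO_\beta \neq 0$ only for complementary partitions is a cohomological degree statement that does not hold in K-theory (structure sheaves pair nontrivially whenever the relevant intersections are nonempty, and here the paper's computation shows the pairing is $\pm$ a product of factors $(1+y/\ve_i)$, not zero). So the technical heart you identify is not a gap to be filled but a dead end; the correct route is the explicit two-sided computation and cancellation in \eqref{E:QKstr}.
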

\begin{proof} From \eqref{E:QKstr} it suffices to show that for any $\cO_\lambda \in \K_{\T}(\Gr(k;n))$, and
\begin{equation}\label{E:vanish} \langle \lambda_y(\mathcal{Q}^\vee_{n-k}), [\pt]_{k;n}, \cO_\lambda^\vee \rangle_d = 
\sum_{\nu} \langle \lambda_y(\mathcal{Q}^\vee_{n-k}), [\pt]_{k;n}, \cO_\nu^\vee \rangle_{d-1} \cdot
\langle \cO_\nu, \cO_\lambda^\vee \rangle_{1} \/. \end{equation}
We calculate both sides of this equation. By \Cref{cor:q=clQvee} and \Cref{lemma:push-pulls}(a), 
the left hand side is equal to
\[ \chi_{\Gr(k+d;n)}\Bigl(\lambda_y(\cQ_{n-k-d}^\vee) \cdot \cO_{(n-k-d)^k} \cdot (p_2)_*(p_1^*)(\cO_\lambda^\vee)\Bigr) \/. \]
Recall from \Cref{cor:special-mult} that 
\[ \lambda_y(\cQ^\vee_{n-k-d}) \cdot \cO_{(n-k-d)^k}= 
\prod_{i=d+1}^{n-k} (1+\frac{y}{\ve_i}) \cO_{(n-k-d)^k}
 -\sum_{r=1}^{n-k-d} \frac{y}{\ve_{d+r}} \Bigl(\prod_{i=d+r+1}^{n-k} (1+\frac{y}{\ve_i}) \Bigr)
\cO_{((n-k-d)^k,r)} \/. \]
Taking each Schubert class in this sum and using projection formula and \Cref{lemma:push-pulls}(b), 
one obtains that the left hand side of \eqref{E:vanish} is equal to:
\[ \begin{split} \chi_{\Gr(k+d;n)}(\cO_{((n-k-d)^k,r)} \cdot (p_2)_*(p_1^*)(\cO_\lambda^\vee)) & =
\chi_{\Gr(k;n)} (\cO_{((n-k-d)^{k-d},r)} \cdot \cO_\lambda^\vee ) \\
& = \delta_{((n-k-d)^{k-d},r), \lambda}
\end{split} \]
Therefore the left hand side of \eqref{E:vanish} is equal to
\begin{equation}\label{E:vanish-lhs} \begin{cases} \prod_{i=d+1}^{n-k} (1+\frac{y}{\ve_i}) & \lambda = (n-k-d)^{k-d} \/;\\
- \frac{y}{\ve_{d+r}} \Bigl(\prod_{i=d+r+1}^{n-k} (1+\frac{y}{\ve_i}) \Bigr) & \lambda = ((n-k-d)^{k-d},r) \/; \\
0 & \textrm{otherwise}\/. 
\end{cases}
\end{equation}
A similar calculation, using now that $\langle \cO_\nu, \cO_\lambda^\vee \rangle_{1} = \delta_{\nu[-1],\lambda}$, 
yields that the right hand side of \eqref{E:vanish} is equal to:
\begin{equation}\label{E:vanish-rhs} \begin{cases} \prod_{i=d}^{n-k} (1+\frac{y}{\ve_i}) \delta_{\nu[-1],\lambda} & \nu = (n-k-d+1)^{k-d+1} \/;\\
- \frac{y}{\ve_{d-1+s}} \Bigl(\prod_{i=d+s}^{n-k} (1+\frac{y}{\ve_i}) \Bigr) \delta_{\nu[-1],\lambda} & \nu = ((n-k-d+1)^{k-d+1},s) \/; \\
0 & \textrm{otherwise}\/. 
\end{cases}
\end{equation}
Here $1 \le s \le n-k-d+1$. The hypothesis $d\ge 2$ implies that the set of $\nu$'s of the 
form $((n-k-d+1)^{k-d+1},s)$ is non-empty. 

If $\lambda = (n-k-d)^{k-d}$ then there are two $\nu$'s such that $\nu[-1]=\lambda$ and 
$\nu = ((n-k-d+1)^{k-d+1},p)$. These arise for $p=0,1$, and their contribution to the right hand side is
\[ \prod_{i=d}^{n-k} (1+\frac{y}{\ve_i}) - \frac{y}{\ve_{d}} \Bigl(\prod_{i=d+1}^{n-k} (1+\frac{y}{\ve_i}) \Bigr)
= \prod_{i=d+1}^{n-k} (1+\frac{y}{\ve_i}) \/, \]
which confirms the two sides are equal in this case. If $\lambda = ((n-k-d)^{k-d},r)$ for some 
$1 \le r \le n-k-d$, then there is exactly one $\nu$ with the required constraints, of the form
$\nu = ((n-k-d+1)^{k-d+1},r+1)$; one easily checks the contributions from \eqref{E:vanish-lhs} and 
\eqref{E:vanish-rhs} are equal. The other possibilities for $\lambda$ contribute with $0$ in both sides. 
\end{proof}

For later use we denote the Euler class at $[\pt]_{k,n}$ by  
\begin{equation}\label{E:euler-pt} e_{k;n}=  \prod_{j=1}^{n-k} (1+
\frac{y}{\ve_j}) \/.\end{equation}

\begin{thm}\label{thm:lyQveept} 
The multiplication $\lambda_y(\cQ_{n-k}^\vee) \star [\pt]_{k,n} $ in $\mathrm{QK}_{\T}(\Gr(k;n))$ is given by 
\[ \begin{split} \lambda_y(\cQ_{n-k}^\vee) \star [\pt]_{k,n} & = 
e_{k;n} [\pt]_{k,n} - q \sum_{i=0}^{n-k-1} \frac{y}{\ve_{i+1}} \prod_{s=i+2}^{n-k} (1+ \frac{y}{\ve_s}) \cO_{(n-k-1)^{k-1},i)} 
\\ & = \prod_{j=1}^{n-k} (1+ \frac{y}{\ve_j}) \cO_{(n-k)^k} - q \sum_{i=0}^{n-k-1} \frac{y}{\ve_{i+1}} \prod_{s=i+2}^{n-k} (1+ \frac{y}{\ve_s}) \cO_{((n-k-1)^{k-1},i)} \/. \end{split}
\]
\end{thm}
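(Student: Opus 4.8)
The plan is to split the quantum product by degree. By \Cref{prop:vanishingQvee} only the degrees $d=0$ and $d=1$ contribute, so the claim reduces to computing the classical part and the coefficient of $q$. For the $d=0$ term, note that $X_{(n-k)^k}$ is the single reduced $\T$-fixed point $\bfe_{(n-k)^k}=\langle f_{n-k+1},\ldots,f_n\rangle$; tensoring its structure sheaf by a vector bundle $E$ returns $E|_{\bfe_{(n-k)^k}}\otimes\cO_{\bfe_{(n-k)^k}}$, so $\lambda_y(\cQ_{n-k}^\vee)\cdot[\pt]_{k,n}=\bigl(\lambda_y(\cQ_{n-k}^\vee)|_{\bfe_{(n-k)^k}}\bigr)\,[\pt]_{k,n}$. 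At that fixed point $\cS_k$ carries the $\T$-characters $\ve_{n-k+1},\ldots,\ve_n$, hence $\cQ_{n-k}^\vee$ carries $\ve_1^{-1},\ldots,\ve_{n-k}^{-1}$ and the restriction equals $\prod_{j=1}^{n-k}(1+y/\ve_j)=e_{k;n}$. (Equivalently, this is the ``$d=0$'' specialization of \Cref{cor:special-mult}, the extra summands disappearing because $((n-k)^k,r)$ has more than $k$ rows.) So the degree-zero part of the product is $e_{k;n}[\pt]_{k,n}$.

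For degree one, write $\lambda_y(\cQ_{n-k}^\vee)\star[\pt]_{k,n}=\sum_{\nu,d}N^{\nu,d}q^d\cO_\nu$ and use \eqref{E:QKstr} with $d=1$:
\[
N^{\nu,1}=\langle\lambda_y(\cQ_{n-k}^\vee),[\pt]_{k,n},\cO_\nu^\vee\rangle_1-\sum_\mu\langle\lambda_y(\cQ_{n-k}^\vee),[\pt]_{k,n},\cO_\mu^\vee\rangle_0\cdot\langle\cO_\mu,\cO_\nu^\vee\rangle_1.
\]
The first invariant is exactly \eqref{E:vanish-lhs} specialized to $d=1$, which was already evaluated inside the proof of \Cref{prop:vanishingQvee} (via \Cref{cor:q=clQvee}, \Cref{cor:special-mult}, and the push-pull formulae of \Cref{lemma:push-pulls}): it equals $\prod_{i=2}^{n-k}(1+y/\ve_i)$ when $\nu=(n-k-1)^{k-1}$, equals $-\frac{y}{\ve_{r+1}}\prod_{i=r+2}^{n-k}(1+y/\ve_i)$ when $\nu=((n-k-1)^{k-1},r)$ with $1\le r\le n-k-1$, and vanishes otherwise. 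For the correction term, the degree-zero computation above gives $\langle\lambda_y(\cQ_{n-k}^\vee),[\pt]_{k,n},\cO_\mu^\vee\rangle_0=e_{k;n}\langle[\pt]_{k,n},\cO_\mu^\vee\rangle=e_{k;n}\delta_{\mu,(n-k)^k}$, so the $\mu$-sum collapses to $\mu=(n-k)^k$; then \eqref{E:2KGW} together with $(n-k)^k[-1]=(n-k-1)^{k-1}$ shows the correction equals $e_{k;n}\,\delta_{\nu,(n-k-1)^{k-1}}$.

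Subtracting, $N^{\nu,1}=0$ unless $\nu=((n-k-1)^{k-1},i)$ for some $0\le i\le n-k-1$ (reading $i=0$ as $(n-k-1)^{k-1}$). For $i\ge 1$ the correction does not contribute and $N^{\nu,1}=-\frac{y}{\ve_{i+1}}\prod_{s=i+2}^{n-k}(1+y/\ve_s)$, already in the asserted form. For $i=0$ one finds $N^{\nu,1}=\prod_{i=2}^{n-k}(1+y/\ve_i)-e_{k;n}$; factoring out $\prod_{i=2}^{n-k}(1+y/\ve_i)$ and using $e_{k;n}=(1+y/\ve_1)\prod_{i=2}^{n-k}(1+y/\ve_i)$ rewrites this as $-\frac{y}{\ve_1}\prod_{s=2}^{n-k}(1+y/\ve_s)$, which is precisely the $i=0$ summand of the claimed formula. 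Assembling the degree-zero and degree-one contributions (and \Cref{prop:vanishingQvee} for $d\ge 2$) yields the theorem. The one delicate point is the bookkeeping of the correction term in \eqref{E:QKstr}: it is exactly what converts the honest degree-one invariant at $\nu=(n-k-1)^{k-1}$ into the uniform $i=0$ term, and one must be careful not to double-count the two labels $(n-k-1)^{k-1}$ and $((n-k-1)^{k-1},0)$ for the same partition.
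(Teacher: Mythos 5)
Your proposal is correct and follows essentially the same route as the paper's proof: reduce to degrees $d\le 1$ via \Cref{prop:vanishingQvee}, read off the classical term, and evaluate the $q$-coefficient from \eqref{E:QKstr} using the invariants computed in \eqref{E:vanish-lhs} and \eqref{E:vanish-rhs} at $d=1$, including the cancellation that produces the uniform $i=0$ term. The only differences are expository (you spell out the fixed-point restriction for the classical term and rederive the collapse of the correction sum to $\mu=(n-k)^k$, which the paper cites from its earlier computation), so there is nothing to add.
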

\begin{proof} \Cref{prop:vanishingQvee} implies that we only need to consider
terms corresponding to the $q$-powers $q^0$ and $q^1$. The classical term
is clearly the one claimed. From \eqref{E:QKstr} the coefficient of $q\cO_\lambda$ is equal to 
\begin{equation}\label{E:q1} \langle \lambda_y(\mathcal{Q}^\vee_{n-k}), [\pt]_{k;n}, (\cO_\lambda)^\vee \rangle_1 - 
\sum_{\nu} \langle \lambda_y(\mathcal{Q}^\vee_{n-k}), [\pt]_{k;n}, (\cO_\nu)^\vee \rangle_{0} \cdot
\langle \cO_\nu, (\cO_\lambda)^\vee \rangle_{1} \/. \end{equation}
The terms in this expression have been calculated in \eqref{E:vanish-lhs} respectively \eqref{E:vanish-rhs}.
If $\lambda= (n-k-1)^{k-1}$ then the contribution is
\[ \prod_{i=2}^{n-k} (1+\frac{y}{\ve_i}) -  \prod_{i=1}^{n-k} (1+\frac{y}{\ve_i}) =
 -\frac{y}{\ve_{1}} \prod_{i=2}^{n-k} (1+\frac{y}{\ve_i}) \/, \]
 as claimed. The only other possible $\lambda$'s must be of the form
$((n-k-1)^{k-1},i)$ with $1 \le i \le n-k-1$. Their contribution arises from \eqref{E:vanish-lhs}
(with $d=1$), and it is equal to 
\[ - \frac{y}{\ve_{1+i}} \prod_{s=i+2}^{n-k} (1+\frac{y}{\ve_s}) \]
as claimed. This finishes the proof.
\end{proof}

\begin{cor}\label{cor:deg1-lambda} Let $\lambda$ be any partition in the $k \times (n-k)$ rectangle. Then the maximal power of $q$ in the multiplication $\lambda_y(\cQ_{n-k}^\vee) \star \cO_\lambda$ is at most $1$.\end{cor}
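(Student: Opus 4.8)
The plan is to reduce to the case of the Schubert point class, which is exactly \Cref{thm:lyQveept} (or \Cref{prop:vanishingQvee}), and then propagate the $q$-degree bound to all Schubert classes using the left Demazure operators. The key observations are: $\lambda_y(\cQ_{n-k}^\vee)$ is the $\lambda_y$-class of a $\GL_n$-homogeneous bundle, hence lies in $\K_{\GL_n}(\Gr(k;n))[y]$; the Demazure operators $\bs{\delta}_i$ on $\QK_{\T}(\Gr(k;n))$ are $\K_{\GL_n}$-linear (for the quantum product, by \cite[Prop.~8.1]{MNS:left}), so each $\bs{\delta}_i$ — and hence every element $D$ of the left nil-Hecke algebra generated by the $\bs{\delta}_i$ and $\Kpt$-scalars — commutes with quantum multiplication by $\lambda_y(\cQ_{n-k}^\vee)$ (multiplication by scalars in $\Kpt$ commutes with $\star$ by $\Kpt[\![q]\!]$-bilinearity); and, finally, the point class $[\pt]_{k,n}=\cO_{(n-k)^k}$ generates $\K_{\T}(\Gr(k;n))$ as a module over this nil-Hecke algebra (see \cref{sec:QK}), so for each partition $\lambda$ there is a ($q$-independent) operator $D_\lambda$ with $\cO_\lambda = D_\lambda([\pt]_{k,n})$.

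With this in hand, I would extend $D_\lambda$ to $\QK_{\T}(\Gr(k;n))$ by $q$-linearity. Since $D_\lambda$ involves no powers of $q$, it preserves the maximal power of $q$ appearing in a class, and by the commutation above
\[ \lambda_y(\cQ_{n-k}^\vee) \star \cO_\lambda \;=\; \lambda_y(\cQ_{n-k}^\vee) \star D_\lambda([\pt]_{k,n}) \;=\; D_\lambda\bigl( \lambda_y(\cQ_{n-k}^\vee) \star [\pt]_{k,n} \bigr), \]
whose right-hand side has $q$-degree at most $1$ by \Cref{thm:lyQveept}. This gives the corollary; as a bonus it expresses the $q^1$-term of $\lambda_y(\cQ_{n-k}^\vee) \star \cO_\lambda$ as $D_\lambda$ applied to the explicit $q^1$-term of \Cref{thm:lyQveept}.

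The main point to be careful about is the commutation of the Demazure operators with \emph{quantum} multiplication by $\lambda_y(\cQ_{n-k}^\vee)$, as opposed to classical multiplication; this is where one must invoke the quantum Leibniz rule together with the $\GL_n$-equivariance of the class. Once that is granted, the rest is routine bookkeeping with $q$-degrees, so I do not anticipate any further obstacle — the substantive computation is \Cref{thm:lyQveept} itself.
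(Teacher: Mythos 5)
Your proposal is correct and is essentially the paper's own argument: the paper likewise reduces to the point class via \Cref{thm:lyQveept} and applies a ($q$-independent) left divided difference operator $\partial_{w_\lambda}$ with $\partial_{w_\lambda}[\pt]_{k;n}=\cO_\lambda$, using that it commutes with quantum multiplication by the $\GL_n$-equivariant class $\lambda_y(\cQ_{n-k}^\vee)$. Your extra care about the quantum Leibniz rule and $\K_{\GL_n}$-linearity (\cite[Prop.~8.1]{MNS:left}) is exactly the justification the paper implicitly invokes.
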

\begin{proof} By \Cref{thm:lyQveept}, the claim holds in the case $\cO_\lambda = [\pt]_{k;n}$.
For the general case, one may pick a left divided difference operator $\partial_{w_\lambda}$ such that
$\partial_{w_\lambda}[\pt]_{k;n} = \cO_\lambda$. Using that $\partial_{w_\lambda}$ commutes with
$\lambda_y(\cQ_{n-k}^\vee)$ we obtain:
\[ \begin{split} \lambda_y(\cQ_{n-k}^\vee) \star \cO_\lambda & = \lambda_y(\cQ_{n-k}^\vee) \star \partial_{w_\lambda}[\pt]_{k;n} \\
& = \partial_{w_\lambda} ( \lambda_y(\cQ_{n-k}^\vee) \star [\pt]_{k;n}) \/. \end{split} \]
The claim follows from this. 
\end{proof}
We will see below that the coefficients of $q^0$ and $q^1$ in the multiplication 
$\lambda_y(\cQ_{n-k}^\vee) \star \cO_\lambda$ recovers the expressions for the diagonal operators
$\tg_{00}$ and $\tg_{11}$ acting on $\cO_\lambda$. 

%%%%%%%%%%%%%%%%%%%%%%%%%%%%%%%%%%%%%%%%%%%%%%%%%%%%
%%%%%%%Texroot: QKYB.tex

\section{On and off-diagonal operators from convolutions}\label{sec:convo-ops}
The goal of this section is to define the convolutions $\tau_{ij}$, which give the geometric counterparts of the entries in the monodromy matrix $T=(t_{ij})$ from \eqref{T}.
To make the connection between geometry and the integrable system, 
the key calculation is the action of  
these operators on the class of the Schubert point. Since 
$\QK_{\T}(\Gr(k;n))$ is a cyclic (degenerate) Hecke algebra generated by 
the class of the Schubert point, this determines the action on all other Schubert classes.

As advertised, the geometric operators arise as convolutions, and the attentive reader will observe that 
these are precisely the convolutions calculating the quantum K-product by $\lambda_y(\cQ^\vee)$
using the `quantum=classical' diagrams.

Recall the diagram: 
\begin{eqnarray}
\label{diag:q=cl}
\xymatrix{\Fl(k,k+1;n)\ar[d]_{p_1}\ar[r]^{{p_2}} & \Gr(k+1;n)\\
\Gr(k;n) & } \/, 
\end{eqnarray}
where all maps are the projection maps. All these maps are $\GL_n$-equivariant.
Next is our geometric definition of the coefficients $\tau_{ij}$ in the monodromy matrix.  

For each $0 \le k \le n$, define the `diagonal' operators $\tau_{00}, \tau_{11}$ as the endomorphisms in $\End(\K_{\T}(\Gr(k;n)))$ given by:
\[ \tau_{00} (\kappa) = \lambda_y(\cQ_{n-k}^\vee) \cdot \kappa \/. \]
The endomorphism $\tau_{11}$ is defined as the difference  
\[ \tau_{11} = \tau_{11}^{(1)} - \tau_{11}^{(2)} \] 
where 
\[ \tau_{11}^{(1)} (\kappa) = (p_1)_* p_2^*((p_2)_* p_1^*(\kappa) \cdot \lambda_{y}(\mathcal{Q}^\vee_{n-k-1})) \/; \] 
\[ \tau_{11}^{(2)} (\kappa) = (p_1)_* p_2^*(p_2)_* p_1^*(\kappa \cdot \lambda_y(\mathcal{Q}^\vee_{n-k})) \/. \] 
Define the `off-diagonal' operators $\tau_{10}: \K_{\T}(\Gr(k;n)) \to \K_{\T}(\Gr(k+1;n))$ and
 $\tau_{01}: \K_{\T}(\Gr(k+1;n)) \to \K_{\T}(\Gr(k;n))$ by:
\[ \tau_{10}(\kappa) = \lambda_y (\mathcal{Q}_{n-k-1}^\vee) \cdot (p_2)_*(p_1)^*(\kappa) - (p_2)_* p_1^*(\lambda_y (\mathcal{Q}_{n-k}^\vee) \cdot \kappa) \/. \]
\[ \tau_{01}(\kappa) = (p_1)_* (p_2)^*(\lambda_y (\mathcal{Q}_{n-k-1}^\vee) \cdot \kappa) \/.\]
(We will not use this, but note that $\tau_{11} = (p_1)_*p_2^*(\tau_{10})$.)
Since $\cQ_{n-k},\cQ_{n-k-1}$ are homogeneous bundles, their $\lambda_y$ classes commute 
with the left Weyl group action. In turn, this implies that the operators $\tau_{ij}$ also 
commute with the left Weyl group action, in the sense that for any $w \in W$, and any class $\kappa$,
\[ \tau_{ij}(\bs{w}.\kappa)= \bs{w}. \tau_{ij}(\kappa) \/. \]
The next result gives the main calculation needed to identify the operators $\tau_{00},\tau_{11}$ with the diagonal operators from the integrable system. 
\begin{thm}\label{thm:diag-ops-geom} The endomorphism
$\tau := \tau_{00} + q \tau_{11}$
is equal to the operator of quantum K-multiplication by $\lambda_y(\cQ_{n-k}^\vee)$. That is, for any 
$\kappa \in \K_{\T}(\Gr(k;n))$,  
\[ \tau(\kappa) = \lambda_y(\cQ_{n-k}^\vee) \star \kappa \/. \]
In particular, 
\[\tau_{00}[\pt]_{k,n} = \prod_{i=1}^{n-k} (1+ y \ve_i^{-1})[\pt]_{k,n} \/, \]
and
\[ \tau_{11}[\pt]_{k,n} = -\sum_{i=0}^{n-k-1} y\ve_{i+1}^{-1} \prod_{s=i+2}^{n-k} (1+ y\ve_s^{-1}) \cO_{(n-k-1)^{k-1},i)} \/. \]
\end{thm}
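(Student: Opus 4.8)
The plan is to show that $\tau := \tau_{00} + q\tau_{11}$ agrees with quantum K-multiplication by $\lambda_y(\cQ_{n-k}^\vee)$ by first checking it on the Schubert point $[\pt]_{k,n}$ and then propagating to all Schubert classes via the left Weyl group action. First I would observe that, by \Cref{cor:deg1-lambda}, the quantum product $\lambda_y(\cQ_{n-k}^\vee)\star\kappa$ has only $q^0$ and $q^1$ contributions, so it suffices to match the classical ($q^0$) term and the linear ($q^1$) term separately. The classical term of $\tau$ is $\tau_{00}(\kappa)=\lambda_y(\cQ_{n-k}^\vee)\cdot\kappa$, which is by definition the $q^0$-part of the quantum product; so the real content is the $q^1$ term, i.e.\ showing $\tau_{11}(\kappa)$ equals the coefficient of $q$ in $\lambda_y(\cQ_{n-k}^\vee)\star\kappa$.

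For this, I would first handle the case $\kappa=[\pt]_{k,n}$. Here \Cref{thm:lyQveept} already gives the explicit answer for $\lambda_y(\cQ_{n-k}^\vee)\star[\pt]_{k,n}$, so I only need to compute $\tau_{11}[\pt]_{k,n}$ and check it matches the stated $q^1$-coefficient $-\sum_{i=0}^{n-k-1}y\ve_{i+1}^{-1}\prod_{s=i+2}^{n-k}(1+y\ve_s^{-1})\cO_{((n-k-1)^{k-1},i)}$. To evaluate $\tau_{11}=\tau_{11}^{(1)}-\tau_{11}^{(2)}$ on $[\pt]_{k,n}=\cO_{(n-k)^k}$ with $d=1$ in diagram \eqref{diag:q=cl}, I would: (i) use \Cref{lemma:push-pulls}(a) to compute $(p_2)_*p_1^*(\cO_{(n-k)^k})=\cO_{(n-k)^{k}{}^{c[-1]}}=\cO_{(n-k-1)^k}=[\pt]_{k+1,n}$; (ii) apply \Cref{cor:special-mult} (with $d=1$) to expand $\lambda_y(\cQ_{n-k-1}^\vee)\cdot\cO_{(n-k-1)^k}$ into Schubert classes on $\Gr(k+1;n)$; (iii) push each resulting class back down via $(p_1)_*p_2^*$, which by \Cref{lemma:push-pulls}(b) corresponds to removing the top row. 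For $\tau_{11}^{(2)}$ I would similarly expand $\cO_{(n-k)^k}\cdot\lambda_y(\cQ_{n-k}^\vee)$ on $\Gr(k;n)$ using \Cref{thm:lyQveeGr}, then apply $(p_1)_*p_2^*(p_2)_*p_1^*$, which by \Cref{lemma:push-pulls}(c) is the curve-neighborhood operation $\lambda\mapsto\lambda[-1]$. The difference $\tau_{11}^{(1)}-\tau_{11}^{(2)}$ should then telescope — the bookkeeping here closely mirrors the cancellation in the proof of \Cref{prop:vanishingQvee}, and in fact this same difference is exactly what appears in \eqref{E:vanish-lhs}–\eqref{E:vanish-rhs} — producing the claimed formula.

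Finally, to pass from $[\pt]_{k,n}$ to an arbitrary $\cO_\lambda$, I would use that $\QK_{\T}(\Gr(k;n))$ is cyclic over the degenerate Hecke algebra, generated by $[\pt]_{k,n}$: pick left Demazure/divided-difference operators $\bs{\delta}_{w}$ (equivalently $\partial_{w_\lambda}$ as in \Cref{cor:deg1-lambda}) with $\partial_{w_\lambda}[\pt]_{k,n}=\cO_\lambda$. Both $\tau$ and $\lambda_y(\cQ_{n-k}^\vee)\star$ commute with these operators — the former because each $\tau_{ij}$ commutes with the left Weyl group action (as noted just before the theorem, since $\cQ_{n-k}$ and $\cQ_{n-k-1}$ are homogeneous), and the latter because $\lambda_y(\cQ_{n-k}^\vee)\in\K_{\GL_n}$ so quantum multiplication by it is left-Demazure-linear (cf.\ the discussion in \Cref{sec:QK} extending \eqref{E:left-schub} to quantum K-theory). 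Applying $\partial_{w_\lambda}$ to the already-established identity $\tau([\pt]_{k,n})=\lambda_y(\cQ_{n-k}^\vee)\star[\pt]_{k,n}$ then yields $\tau(\cO_\lambda)=\lambda_y(\cQ_{n-k}^\vee)\star\cO_\lambda$ for all $\lambda$, and by $\Kpt[\![q]\!]$-linearity for all $\kappa$.

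\textbf{Main obstacle.} I expect the only real work is step (ii)–(iii) above: carefully organizing the double sum that arises when one expands via \Cref{cor:special-mult}, pushes/pulls through the flag bundle, and subtracts the $\tau_{11}^{(2)}$ contribution, so that everything collapses to the single clean sum in the statement. This is the same telescoping phenomenon as in \Cref{prop:vanishingQvee} but now tracking the surviving $q^1$ terms rather than showing they vanish; the indexing of partitions of the form $((n-k-1)^{k-1},i)$ versus $((n-k)^k)$ and the shifted products $\prod_{s=i+2}^{n-k}(1+y/\ve_s)$ need to be matched exactly against \Cref{thm:lyQveept}. Everything else — the reduction to $q^0,q^1$, the classical term, and the Hecke-module propagation — is formal given the results already in the excerpt.
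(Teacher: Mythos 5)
Your argument is correct, but it takes a genuinely different route from the paper's. The paper proves the identity directly for an arbitrary class $\kappa$: starting from the structure-constant formula \eqref{E:QKstr}, it identifies the degree-one invariant $\langle \lambda_y(\cQ_{n-k}^\vee),\kappa,\cO_\lambda^\vee\rangle_1$ with $\chi(\tau_{11}^{(1)}(\kappa)\cdot\cO_\lambda^\vee)$ via the quantum=classical statement (\Cref{cor:q=clQvee}), and the subtracted term with the curve neighborhood $(\lambda_y(\cQ_{n-k}^\vee)\cdot\kappa)[-1]=\tau_{11}^{(2)}(\kappa)$ via \eqref{E:2KGW} and \Cref{lemma:push-pulls}; the displayed formulae for $\tau_{00}[\pt]_{k,n}$ and $\tau_{11}[\pt]_{k,n}$ are then simply read off from \Cref{thm:lyQveept}. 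You instead verify the identity only at the Schubert point, computing $\tau_{11}[\pt]_{k,n}$ by hand from \Cref{lemma:push-pulls} and \Cref{cor:special-mult} and matching against the $q$-coefficient in \Cref{thm:lyQveept}, and then propagate to all $\cO_\lambda$ using that both $\tau$ (which is $\Kpt[y]$-linear and commutes with the left Weyl group action) and $\lambda_y(\cQ_{n-k}^\vee)\star$ (a class in $\K_{\GL_n}$, hence Demazure-linear in quantum K-theory, exactly as used in \Cref{cor:deg1-lambda}) commute with the left Demazure operators, which generate every Schubert class from the point. This is legitimate — it is in fact the strategy the paper announces for matching $t_{ij}$ with $\tau_{ij}$ in \Cref{cor:main-cor} — and with the propagation in place you do not even need the degree bound of \Cref{cor:deg1-lambda}; what the paper's direct argument buys instead is that $\tau_{11}^{(1)}$ and $\tau_{11}^{(2)}$ are exhibited as the two terms of \eqref{E:QKstr} for every $\kappa$ at once, without invoking cyclicity of the Hecke module. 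Two small points: $(p_2)_*p_1^*[\pt]_{k,n}=\cO_{(n-k-1)^k}$ is not the point class of $\Gr(k+1;n)$ (that would be $(n-k-1)^{k+1}$), though the mislabel is harmless since \Cref{cor:special-mult} is exactly the statement you then apply; and the ``telescoping'' you anticipate amounts to the single cancellation $\prod_{i=2}^{n-k}(1+y/\ve_i)-\prod_{i=1}^{n-k}(1+y/\ve_i)=-\tfrac{y}{\ve_1}\prod_{i=2}^{n-k}(1+y/\ve_i)$ in the coefficient of $\cO_{(n-k-1)^{k-1}}$, the remaining terms coming straight from $\tau_{11}^{(1)}$.
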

\begin{proof} The statement follows from the calculation of
$\lambda_y(\cQ_{n-k}^\vee) \star \kappa$ based on the 
`quantum=classical' statement. More precisely, by \Cref{cor:deg1-lambda}, 
the multiplication in question has only $q$-powers at most equal to $1$.
The classical term
is clearly the one claimed. Take an arbitrary partition $\lambda$ in the $k \times (n-k)$ rectangle. 
From \eqref{E:QKstr} the coefficient of $q\cO_\lambda$ in
$\lambda_y(\cQ_{n-k}^\vee) \star \kappa$ is equal to 
\begin{equation}\label{E:q1-lambda} \langle \lambda_y(\mathcal{Q}^\vee_{n-k}), \kappa, (\cO_\lambda)^\vee \rangle_1 - 
\sum_{\nu} \langle \lambda_y(\mathcal{Q}^\vee_{n-k}), \kappa, (\cO_\nu)^\vee \rangle_{0} \cdot
\langle \cO_\nu, (\cO_\lambda)^\vee \rangle_{1} \/. \end{equation}
From the quantum=classical statement in \Cref{cor:q=clQvee}, and by applying repeatedly the projection formula, and the cohomological triviality of the projection maps from \Cref{lemma:proj}, we obtain that
\[ \langle \lambda_y(\mathcal{Q}^\vee_{n-k}), \kappa, \cO_\lambda^\vee \rangle_1 = \chi( t_{11}^{(1)}(\kappa), \cO_\lambda^\vee) \/. \]
To calculate the sum in the second part, observe that by \eqref{E:2KGW} the two-point KGW invariants
may be calculated using the curve neighbourhoods: 
$\langle \cO_\nu, \cO_\lambda^\vee \rangle_{1}= \delta_{\nu[-1],\lambda}$. Then 
\[ \begin{split} \sum_{\nu} \langle \lambda_y(\mathcal{Q}^\vee_{n-k}), \kappa, \cO_\nu^\vee \rangle_{0} \cdot
\langle \cO_\nu, \cO_\lambda^\vee \rangle_{1} & = \sum_{\nu} \chi(\lambda_y(\mathcal{Q}^\vee_{n-k}) \cdot \kappa \cdot \cO_\nu^\vee) \delta_{\nu[-1],\lambda} \\
& = \chi \left( (\lambda_y(\mathcal{Q}^\vee_{n-k}) \cdot \kappa)[-1], \cO_\lambda^\vee \right)
\/.\end{split} \]
In other words, this is the coefficient of $\cO_\lambda$ in the curve neighborhood of
the multiplication $\lambda_y(\mathcal{Q}^\vee_{n-k}) \cdot \kappa$ in $\K_{\T}(\Gr(k;n))$. 
By \Cref{lemma:push-pulls},
this curve neighborhood is precisely $\tau_{11}^{(2)}(\kappa)$. Combining everything, we proved that 
the coefficient of $q \cO_\lambda$ in $\lambda_y(\mathcal{Q}^\vee_{n-k}) \star \kappa$ is equal
$\chi( (\tau_{11}^{(1)} - \tau_{11}^{(2)})(\kappa), \cO_\lambda^\vee)$, which proves the claim about $\tau$.

Finally, the formulae for $\tau_{00}[\pt]_{k;n}$ and $\tau_{11}[\pt]_{k;n}$ follow from the multiplication
$\lambda_y(\cQ_{n-k}^\vee) \star [\pt]_{k;n}$ from \Cref{thm:lyQveept}.
\end{proof}

The next theorem gives the actions of the `off-diagonal' operators $\tau_{01},\tau_{10}$ on the class of the point. This key result will be used to identify these operators with those
from \Cref{thm:int-system-ops}.

\begin{thm}\label{thm:geom-offdiag} The following hold:
\begin{enumerate} \item[(a)] $\tau_{10}[\pt]_{k;n} = -\sum_{r=1}^{n-k}y\ve_{r}^{-1}\prod_{i=r+1}^{n-k}(1+y\ve_i^{-1})\cO_{((n-k-1)^k,r-1)}$;

\item[(b)] $\tau_{01}[\pt]_{k+1;n}= \prod_{i=1}^{n-k-1}(1+y\ve_i^{-1})\cO_{(n-k-1)^{k}}$.

\end{enumerate}
\end{thm}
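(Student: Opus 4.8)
The plan is to evaluate both convolutions directly on the Schubert point, using \Cref{lemma:push-pulls} for the push-pull operations and \Cref{cor:special-mult} (together with a fixed-point localization, which recovers the classical term of \Cref{thm:lyQveept}) to compute the required products with $\lambda_y$-classes.

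For part (a) I would write $[\pt]_{k,n}=\cO_{(n-k)^k}$ and handle the two terms of $\tau_{10}[\pt]_{k,n}$ separately. In the first term, $(p_2)_*(p_1)^*\cO_{(n-k)^k}=\cO_{(n-k-1)^k}$ by \Cref{lemma:push-pulls}(a) (removal of the leftmost column), and the product $\lambda_y(\cQ^\vee_{n-k-1})\cdot\cO_{(n-k-1)^k}$, taken in $\K_{\T}(\Gr(k+1;n))$, is given by \Cref{cor:special-mult} with $d=1$. In the second term, I would use that $X_{(n-k)^k}$ is the single reduced $\T$-fixed point $\bfe_{(n-k)^k}$, so multiplying $\lambda_y(\cQ^\vee_{n-k})$ against its structure sheaf only sees the fiber there, whose $\T$-weights are $\ve_1^{-1},\ldots,\ve_{n-k}^{-1}$; hence $\lambda_y(\cQ^\vee_{n-k})\cdot[\pt]_{k,n}=e_{k;n}\,[\pt]_{k,n}$, and $(p_2)_*(p_1)^*$ of this equals $e_{k;n}\cO_{(n-k-1)^k}$ by $\Kpt[y]$-linearity and \Cref{lemma:push-pulls}(a) again. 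Subtracting, the coefficient of $\cO_{(n-k-1)^k}$ collapses via $\prod_{i=2}^{n-k}(1+y/\ve_i)-\prod_{i=1}^{n-k}(1+y/\ve_i)=-\frac{y}{\ve_1}\prod_{i=2}^{n-k}(1+y/\ve_i)$, while the remaining summands $\cO_{((n-k-1)^k,r)}$ with $1\le r\le n-k-1$ come entirely from \Cref{cor:special-mult}; reindexing $r\mapsto r-1$ and folding the $\cO_{(n-k-1)^k}$ term into the $r=1$ case as the trailing-zero partition $\cO_{((n-k-1)^k,0)}$ produces the stated formula.

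For part (b) the computation is shorter. With $[\pt]_{k+1,n}=\cO_{(n-k-1)^{k+1}}$, the same fixed-point localization gives $\lambda_y(\cQ^\vee_{n-k-1})\cdot[\pt]_{k+1,n}=e_{k+1;n}\,[\pt]_{k+1,n}$, since the fiber of $\cQ^\vee_{n-k-1}$ at $\bfe_{(n-k-1)^{k+1}}\in\Gr(k+1;n)$ has $\T$-weights $\ve_1^{-1},\ldots,\ve_{n-k-1}^{-1}$. Then $\tau_{01}[\pt]_{k+1,n}=e_{k+1;n}(p_1)_*(p_2)^*\cO_{(n-k-1)^{k+1}}=e_{k+1;n}\cO_{(n-k-1)^k}$ by \Cref{lemma:push-pulls}(b) (removal of the top row), which is the claimed formula.

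The argument is essentially bookkeeping; the points requiring care are keeping straight which Grassmannian (and hence which ambient rectangle) each partition lives in when invoking \Cref{lemma:push-pulls} and \Cref{cor:special-mult}, and the small telescoping and reindexing at the end of part (a). I do not anticipate a genuine obstacle here.
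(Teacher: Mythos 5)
Your proposal is correct and follows essentially the same route as the paper's proof: apply \Cref{lemma:push-pulls} to the push-pull pieces, expand $\lambda_y(\cQ^\vee_{n-k-1})\cdot\cO_{(n-k-1)^k}$ via \Cref{cor:special-mult} with $d=1$, and telescope the two $\cO_{(n-k-1)^k}$ coefficients, with part (b) being the direct computation the paper leaves to the definition and \Cref{lemma:push-pulls}. The only cosmetic difference is that you justify $\lambda_y(\cQ^\vee)\cdot[\pt]$ by restricting to the fixed point (the weights giving $e_{k;n}$ as in \eqref{E:euler-pt}), which is the same evaluation the paper reads off from its earlier multiplication formulas.
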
 
In particular, part (1) of the theorem implies that if $y=-\ve_{n-k}$, then 
\[ (\tau_{10}[\pt]_{k;n})_{{y=-\ve_{n-k}}} = [\pt]_{k+1;n} \/. \]
\begin{proof} To prove (a), we calculate  
\[ \begin{split} \tau_{10}[\pt]_{k,n} 
& = \lambda_y (\mathcal{Q}_{n-k-1}^\vee) \cdot (p_2)_*(p_1)^*[\pt]_{k;n} - (p_2)_* p_1^*(\lambda_y (\mathcal{Q}_{n-k}^\vee) \cdot [\pt]_{k;n}) \\ 
& = 
\lambda_y (\mathcal{Q}_{n-k-1}^\vee) \cdot \cO_{(n-k-1)^k} - \prod_{i=1}^{n-k}(1+y\ve_i^{-1})
\cO_{(n-k-1)^{k}} \\
& = 
\prod_{i=2}^{n-k} (1+\frac{y}{\ve_i}) \cO_{(n-k-1)^k}\\
 &\hspace{1cm} -\sum_{r=1}^{n-k-1} \frac{y}{\ve_{r+1}} \Bigl(\prod_{i=r+2}^{n-k} (1+\frac{y}{\ve_i}) \Bigr)
\cO_{((n-k-1)^k,r)} - \prod_{i=1}^{n-k}(1+\frac{y}{\ve_i})
\cO_{(n-k-1)^{k}} \\
& =
- y\ve_1^{-1} \prod_{i=2}^{n-k} (1+y\ve_i^{-1}) \cO_{(n-k-1)^k} 
 -\sum_{r=1}^{n-k-1} y\ve_{r+1}^{-1} \Bigl(\prod_{i=r+2}^{n-k} (1+y\ve_i^{-1}) \Bigr)
\cO_{((n-k-1)^k,r)} 
\/. \end{split} \]
Here the second equality follows from \Cref{lemma:push-pulls} and the third from \Cref{cor:special-mult}.
The last expression is the same as that in part (a).
Finally, part (b) follows from definition and \Cref{lemma:push-pulls}.
\end{proof}

\subsection{Dual operators}\label{ss:dual-ops} The operators $\tg_{ij}$ involve $\cQ^\vee$, the (vector bundle) 
dual of the tautological quotient bundle. There is also a dual theory which involves 
the tautological subbundles $\cS$, and it is defined in terms of 
operators $\tilde{\tg}_{ij}$ acting on $\bbV_n$. The dual theory 
also arises in the context of integrable systems, see \Cref{sec:int-level-rank} below. 
The definition of the operators $\tilde{\tg}_{ij}$, and the proof of their properties (in analogy to 
$\tg_{ij}$) is an exercise in judiciously applying the level-rank duality from 
\Cref{ss:level-rank}. For the convenience of the reader, we give the precise 
definitions, but we leave out most of the details of proof.

The definitions of $\tilde{\tau}_{ij}$'s are similar to those for $\tau_{ij}$, except that the order of arrows is reversed. More precisely, define endomorphisms $\tilde{\tg}_{00},\tilde{\tg}_{11}$ of $\K_{\T}(\Gr(k;n))[y]$ by the condition that:
\[ \tilde{\tau}_{00}+q \tilde{\tau}_{11}:\QK_{\T}(\Gr(k;n))[y] \to \QK_{\T}(\Gr(k;n))[y]\/; \quad \cO_\lambda \mapsto \cO_\lambda \star \lambda_y(\cS_k) \/. \]
For the `off-diagonal' entries, recall the diagram \eqref{diag:q=cl}, and define the convolution operators:
\[ \tilde{\tg}_{01}=\tilde{\tg}_{01}(y): \K_T(\Gr(k+1,n))[y] \to \K_T(\Gr(k,n))[y] \] 
by 
\[ \tilde{\tg}_{01}(\kappa) = \lambda_y (\cS_{k}) \cdot (p_1)_*(p_2)^*(\kappa) - (p_1)_* p_2^*(\lambda_y (\cS_{k+1}) \cdot \kappa) \/, \]
and
\[ \tilde{\tg}_{10}=\tilde{\tg}_{10}(y): \K_T(\Gr(k,n))[y] \to \K_T(\Gr(k+1,n))[y] \]
by
\[ \tilde{\tg}_{10}(\kappa) = (p_2)_* (p_1)^*(\lambda_y (\cS_{k}) \cdot \kappa) \/. \]

\begin{thm}\label{thm:geom-dual-ops} The operators $\tilde{\tg}_{ij}[\pt_{k;n}]$
satisfy the same formulas as those satisfied by $\tilde{t}_{ij}(v_{(n-k)^k})$ in \Cref{thm:int-system-ops} below.\end{thm}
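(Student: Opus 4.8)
The plan is to obtain every assertion by transporting the corresponding statements for the ``primal'' operators $\tg_{ij}$ --- \Cref{thm:diag-ops-geom}, \Cref{thm:geom-offdiag}, and their identification with the monodromy entries in \Cref{thm:int-system-ops} --- through the level-rank isomorphism $\Theta'\colon\Gr(k;\C^n)\to\Gr(n-k;\C^n)$ of \Cref{ss:level-rank}. First I would check that $\Theta'$ intertwines the two copies of the incidence diagram \eqref{diag:q=cl}: it identifies $\Fl(k,k+1;n)$ with $\Fl(n-k-1,n-k;n)$ so that the projection $p_1$ to $\Gr(k;n)$ becomes the projection $\Fl(n-k-1,n-k;n)\to\Gr(n-k;n)$ and the projection $p_2$ to $\Gr(k+1;n)$ becomes the projection to $\Gr(n-k-1;n)$; in particular the roles of $p_1$ and $p_2$ are exchanged and the off-diagonal arrows reverse direction. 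Combining this with the level-rank identities of \eqref{E:theta-schub} --- under which $\Theta'^*$ interchanges $\cS$ with $\cQ^\vee$, transposes Schubert classes, and sends $\ve_i$ to $\ve_{n+1-i}^{-1}$ --- and with the fact that the level-rank isomorphism respects quantum K-products and fixes $q$, one checks that $\Theta'^*$ conjugates the dual operator $\tilde{\tg}_{ij}$ on $\Gr(k;n)$ into the primal operator $\tg_{ji}$ at the level-rank dual level (a transpose of the monodromy matrix), accompanied on the quantum space by the transposition of partitions and the substitution $\ve_i\mapsto\ve_{n+1-i}^{-1}$.

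Next, since $\Theta'$ carries the Schubert point $[\pt]_{k;n}=\cO_{(n-k)^k}$ to the Schubert point $[\pt]_{n-k;n}=\cO_{k^{n-k}}$ (the transpose of the rectangle $(n-k)^k$ is $k^{n-k}$), I would apply $\Theta'^*$ to the explicit formulas of \Cref{thm:diag-ops-geom} and \Cref{thm:geom-offdiag}, written out for $\Gr(n-k;n)$. This transposes each Schubert class that appears and replaces every $\ve_j$ by $\ve_{n+1-j}^{-1}$ (with the order of the parameters reversed), yielding closed formulas for $\tilde{\tg}_{ij}$ applied to the Schubert point of its source Grassmannian; as a sanity check one recovers, e.g., $\tilde{\tg}_{00}[\pt_{k;n}]=\prod_{j=n-k+1}^{n}(1+y\ve_j)\,[\pt]_{k;n}$, which is $\lambda_y(\cS_k)$ restricted to the Schubert point, as it must be.

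It remains to match the outcome with the integrable system. On the Yang-Baxter side the dual monodromy matrix $\tilde T$ is obtained from $T$ by the algebra symmetry $\Gamma$ of \eqref{Gamma}, which is the algebraic incarnation of $\Theta'$ (reverse and invert the equivariant parameters, transpose the spin strings, transpose the monodromy matrix). Since the primal identity $t_{ji}(v_{k^{n-k}})\leftrightarrow\tg_{ji}([\pt_{n-k;n}])$ is part of \Cref{thm:int-system-ops}, and since $\Gamma$ and $\Theta'^*$ realize the same change of variables, applying them to both sides shows that the formula for $\tilde{\tg}_{ij}[\pt_{k;n}]$ produced above is exactly the formula for $\tilde t_{ij}(v_{(n-k)^k})$ asserted in \Cref{thm:int-system-ops}, which is the claim. (A more pedestrian route avoids $\Theta'$ entirely: redo the proofs of \Cref{thm:diag-ops-geom} and \Cref{thm:geom-offdiag} verbatim with $\cS$ in place of $\cQ^\vee$, using the $\cS$-analogue of \Cref{lemma:pqQvee}. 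It works, but is longer.)

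The computations are routine; the delicate point, and the only real obstacle, is the bookkeeping of the level-rank involution. One must keep straight the reversal-and-inversion of the equivariant parameters, the interchange $p_1\leftrightarrow p_2$ (which reverses the direction of the off-diagonal maps $\Gr(k;n)\leftrightarrow\Gr(k+1;n)$), and --- most importantly --- verify that $\Theta'^*$ and the algebra symmetry $\Gamma$ of \eqref{Gamma} produce exactly the same substitution, with no spurious determinantal twist or scalar, and with the spectral parameter $y$ left untouched by both (the substitution $y\mapsto y^{-1}$ appearing in the Desnanot-Jacobi identity $t(y)\,\tilde t(y^{-1})$ of \cite{gorbounov2017quantum} is a separate structural fact and plays no role here). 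Once these are aligned, \Cref{thm:diag-ops-geom}, \Cref{thm:geom-offdiag}, and \Cref{thm:int-system-ops} give the result with no further work.
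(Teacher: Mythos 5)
Your proposal is correct and follows essentially the same route as the paper: the paper's proof likewise rests on matching the geometric level-rank duality \eqref{E:theta-schub} with the integrable-systems involution $\Gamma$ of \eqref{Gamma} so that \eqref{tGamma} transports the formulas for $\tg_{ji}$ on the dual Grassmannian into the stated formulas for $\tilde{\tg}_{ij}[\pt_{k;n}]$. Your ``pedestrian route'' (rerunning the arguments with $\cS$ in place of $\cQ^\vee$ via the dual push--pull lemmas) is also exactly the paper's stated alternative argument, so nothing is missing beyond the bookkeeping you already flag.
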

\begin{proof} One proof follows from the observation that the `geometric' level rank duality from \eqref{E:theta-schub} fits with the equality \eqref{Gamma} developed in integrable systems.~In particular, the identity \eqref{tGamma} also holds, which proves the claim.

Alternatively, one can do this using a geometric argument. We start with the diagonal operators
$\tilde{\tg}_{00}$ and $\tilde{\tg}_{11}$. In this case,
the claim follows from applying the level-rank duality to the formulae calculating  ${\tg}_{00}[\pt_{k;n}]$ and ${\tg}_{11}[\pt_{k;n}]$: indeed, the level rank duality extends without changes to quantum K-theory.

For the off diagonal operators, using the previous formulae for 
$\lambda_y(\cQ^\vee)$ acting on the class of the (Schubert) point, 
along with the level-rank duality, one obtains similar formulae for the action of 
$\lambda_y(\cS)$ on the point. Once multiplications
by $\lambda_y(\cS)$ are obtained, the `dual' push forward formulae from \Cref{lemma:push-pulls}
and \Cref{lemma:pqQvee} may be applied, and similar arguments as those in \Cref{thm:geom-offdiag}
yield the claim.
\end{proof}
%%%%%%%%%%%%%%%%%%%%%%%%%%%%%%%%%%%%%%%%%%%%%%%%%

\section{The Yang-Baxter algebra}\label{sec:YB} 
In this section we introduce a noncommutative and non-cocommutative 
Hopf algebra $\YB=\YB(R)$ in terms of a solution of the quantum 
Yang-Baxter equation, called the $R$-matrix, which is intimately 
related to the left Weyl group action in quantum K-theory. The 
Hopf algebra $\YB$ can be seen as a degenerate version of the 
dual Hopf algebra of the Drinfeld-Jimbo quantum group $U_q(\mathfrak{gl}_2[z^{\pm 1}])$ which is a quasi-triangular Hopf algebra,
see~e.g.~\cite[Ch.~7 and~12]{chari1995guide}.

Historically, quantum groups originated from the work of Faddeev, 
Reshetikhin, Takhtajan on the {\em quantum inverse scattering method} and {\em quantum integrable systems}, such as quantum spin chains \cite{faddeev1990lectures}. 
In their approach, one starts from the $R$-matrix to define an associative 
unital algebra in terms of quadratic relations which are written in matrix form;
the so-called $RTT$-relations. In the special case of $q$-deformed universal 
enveloping algebras such as $U_q(\mathfrak{gl}_2)$ 
as introduced by Drinfel'd and Jimbo, this approach can be understood as a $q$-deformation 
of the algebra of functions on $\GL_2$. Viewed as a Hopf algebra, the latter 
is dual to the universal enveloping algebra 
$U(\mathfrak{gl}_2)$. 
The case we consider here is a degenerate version of the deformation of the loop algebra 
$\mathfrak{gl}_2[z^{\pm 1}]$.

\subsection{Definitions}

Consider $\C^2$ and denote by $\{v_0,v_1\}$ its standard basis. Given some indeterminate $u$ define the following function $R(z)$ with values in $\End(\C^2\otimes\C^2)$,
\begin{equation}\label{R}%%%%%%changed April 2021
R(z)=\left(\begin{smallmatrix}1&0\\0&0\end{smallmatrix}\right)\otimes
    \left(\begin{smallmatrix} 1&0\\0&1-z\end{smallmatrix}\right)+
    \left(\begin{smallmatrix}0&1\\0&0\end{smallmatrix}\right)\otimes
    \left(\begin{smallmatrix}0&0\\1&0\end{smallmatrix}\right)+
    \left(\begin{smallmatrix}0&0\\1&0\end{smallmatrix}\right)\otimes
    \left(\begin{smallmatrix}0&z\\0&0\end{smallmatrix}\right)+
    \left(\begin{smallmatrix}0&0\\0&1\end{smallmatrix}\right)\otimes
    \left(\begin{smallmatrix}0&0\\0&1\end{smallmatrix}\right)\;.
\end{equation}
Note that $R(1)=P$, the flip-operator, defined by $P(v_a\otimes v_b)=v_b\otimes v_a$. Fixing in $\C^2\otimes\C^2$ the ordered basis $\{v_0\otimes v_0,v_0\otimes v_1,v_1\otimes v_0,v_1\otimes v_1\}$, the same $R$-matrix is often written as the $4\times 4$ matrix
\[
R(z)=\begin{pmatrix}
    1 & 0 &0 &0\\
    0& 1-z & z&0\\
    0& 1 & 0&0\\
    0&0&0&1
\end{pmatrix}\;.
\]
This form of the $R$-matrix is particularly convenient when deriving the graphical calculus where we identify each non-zero matrix element with one of the following five vertex configurations,
\begin{equation}\label{5vR}
\avoiding{0}{0}\qquad
\connecting{0}{1}\qquad
\avoiding{1}{0}\qquad
\avoiding{0}{1}\qquad
\avoiding{1}{1}
\;.
\end{equation}
Here the labels $\alpha$ and $\beta$ on the West and North edge determine which vector $v_\alpha\otimes v_\beta$ the $R$-matrix acts on, while the values $\gamma$ and $\delta$ of the East and South edge prescribe the term $v_\gamma\otimes v_\delta$ in the expansion of the image.
\begin{lemma}
The $R$-matrix \eqref{R} solves the quantum Yang-Baxter equation (QYBE):
\begin{equation}\label{YBE}
R_{12}(z/w)R_{13}(z)R_{23}(w)=R_{23}(w)R_{13}(z)R_{12}(z/w)\;.
\end{equation}
Furthermore, the additional identities hold for the inverse $R$-matrix:
\begin{equation}\label{invR}
R(z)^{-1}=P\circ R(z^{-1})\circ P=(\sigmax\otimes\sigmax)R(z^{-1})(\sigmax\otimes\sigmax) \/;
\end{equation} 
here $\sigmax=\left(\begin{smallmatrix}0&1\\1&0\end{smallmatrix}\right)$.
\end{lemma}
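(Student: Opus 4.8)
The plan is to derive both assertions of the lemma from small, finite matrix checks, using as organizing principle the particle-number grading that the five-vertex $R$-matrix respects. I would treat the unitarity identities \eqref{invR} first, since the Yang--Baxter verification can then be streamlined by a symmetry argument that invokes them.

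For \eqref{invR}, first observe that conjugation by $P$ and conjugation by $\sigmax\otimes\sigmax$ act identically on $R(z)$: in the ordered basis $\{v_0\otimes v_0,v_0\otimes v_1,v_1\otimes v_0,v_1\otimes v_1\}$, $P$ interchanges the middle two basis vectors while $\sigmax\otimes\sigmax$ reverses the order of all four; since the $4\times4$ matrix displayed after \eqref{5vR} is unchanged when one simultaneously swaps its first and last rows and its first and last columns, the two conjugations produce the same operator, which proves the second equality in \eqref{invR}. It then remains to check $R(z)\cdot\bigl(P\circ R(z^{-1})\circ P\bigr)=\mathrm{Id}$. Off the middle $2\times 2$ block this is immediate because both operators are the identity there; on the middle block it is the one-line identity $\left(\begin{smallmatrix}1-z & z\\ 1 & 0\end{smallmatrix}\right)\left(\begin{smallmatrix}0 & 1\\ z^{-1} & 1-z^{-1}\end{smallmatrix}\right)=\left(\begin{smallmatrix}1 & 0\\ 0 & 1\end{smallmatrix}\right)$, using that $PR(z^{-1})P$ has middle block $\left(\begin{smallmatrix}0 & 1\\ z^{-1} & 1-z^{-1}\end{smallmatrix}\right)$.

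For the QYBE \eqref{YBE}, the key structural input is that $R(z)$ preserves the $\Z$-grading on $\C^2\otimes\C^2$ by the number of tensor factors equal to $v_1$ (equivalently, in each picture of \eqref{5vR} the number of occupied edges is conserved across a vertex). Hence \eqref{YBE}, an identity in $\End\bigl((\C^2)^{\otimes 3}\bigr)$, decomposes into blocks indexed by the total particle number $N\in\{0,1,2,3\}$ of the input vector. The blocks $N=0$ and $N=3$ are one-dimensional and every elementary weight in \eqref{5vR} that occurs there equals $1$, so both sides act as the identity. The particle--hole involution, namely conjugation by $\sigmax^{\otimes 3}$, carries the $N=2$ block isomorphically onto the $N=1$ block of the same equation (after the evident relabeling of the three spaces): indeed \eqref{invR} gives $(\sigmax\otimes\sigmax)R(z)(\sigmax\otimes\sigmax)=P R(z) P$, so conjugating \eqref{YBE} transposes the two tensor slots of each $R$, which is \eqref{YBE} again. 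Thus everything reduces to the block $N=1$: a $3\times 3$ matrix identity in the basis $\{v_1\otimes v_0\otimes v_0,\ v_0\otimes v_1\otimes v_0,\ v_0\otimes v_0\otimes v_1\}$ with entries rational in $z,w$, which I would verify by directly assembling and comparing the two ordered products; graphically, this is precisely the statement that sliding the third line across the $12$-crossing leaves the vertex weights unchanged, for each of the finitely many admissible boundary labelings.

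The main obstacle is pure bookkeeping: computing the $3\times3$ matrices for the $N=1$ block of each side of \eqref{YBE} without sign or ordering mistakes, since $R$ is neither symmetric nor visibly of difference form. One can sidestep even this routine step by exhibiting $R(z)$ as the five-vertex degeneration of the trigonometric $U_q(\widehat{\mathfrak{gl}}_2)$ $R$-matrix — rescaling the six-vertex weights and sending $q$ to $0$ (or $\infty$) — and noting that both the QYBE and the unitarity relation \eqref{invR} persist under this limit; this is the route implicit in \cite{gorbounov2017quantum}.
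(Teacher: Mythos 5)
The paper states this lemma without a written proof --- it is left as a direct verification along the lines of \cite{gorbounov2017quantum} --- so there is no argument to compare line by line; your proposal is a correct and usefully structured way of carrying that verification out. Your treatment of \eqref{invR} is complete: the invariance of the displayed $4\times4$ matrix under simultaneously exchanging its first and last rows and columns does identify conjugation by $P$ with conjugation by $\sigmax\otimes\sigmax$, and your middle-block product computation settles the inverse (a one-sided inverse suffices in finite dimensions). The particle-number reduction of \eqref{YBE} is also sound, but two points should be made explicit. First, conjugating \eqref{YBE} by $\sigmax^{\otimes3}$ replaces each $R_{ij}(u)$ by $P_{ij}R_{ij}(u)P_{ij}$, and after relabelling the spaces $1\leftrightarrow3$ the resulting identity reads $R_{23}(z/w)R_{13}(z)R_{12}(w)=R_{12}(w)R_{13}(z)R_{23}(z/w)$; this is not the instance of \eqref{YBE} at $(z,w)$ but the instance at $(z,z/w)$, so the $N=2$ block follows from the $N=1$ block only because you establish the latter for all values of the spectral parameters --- say so. Second, the $N=1$ block is the entire computational content of the lemma and you only promise to perform it; it does check out, and compactly: writing $e_i$ for the one-particle vector with the $v_1$ in slot $i$, each $R_{ij}(u)$ acts by $e_i\mapsto e_j$, $e_j\mapsto(1-u)e_j+u\,e_i$, $e_k\mapsto e_k$ for $k\neq i,j$ (up to the transpose ambiguity between the two displays in \eqref{R}, to which both \eqref{YBE} and \eqref{invR} are insensitive), and comparing the two triple products one finds all matrix entries agree precisely because the argument of $R_{13}$ is the product of the arguments of $R_{12}$ and $R_{23}$, i.e.\ $z=(z/w)\cdot w$. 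With that short $3\times3$ check written out, your argument is a complete proof; the six-vertex degeneration you mention at the end is a legitimate alternative, but it would require its own justification of the limit, so the direct check remains the shorter route.
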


Employing the $R$-matrix \eqref{R} we follow the same steps as in \cite{gorbounov2017quantum} and
define a Hopf algebra $\YB:=\YB(R)$ in several steps. 

The generators of $\YB$ will be denoted by $\{t_{ij}[r]~:~r\in\Z,\;i,j=0,1\}$ and are collectively written in terms of the currents 
\begin{equation}\label{t}
t_{ij}(z)=\sum_{r\in\Z}t_{ij}[r]z^{r}\in\YB[\![z^{\pm 1}]\!],\qquad\YB[\![z^{\pm 1}]\!]=\YB\otimes_\C\C[\![z^{\pm 1}]\!]\;
\end{equation}
and the following {\em monodromy matrix},
\begin{multline}\label{T}
T(z)=
\left(\begin{smallmatrix}
    1&0\\0&0
\end{smallmatrix}\right)\otimes t_{00}(z)+
\left(\begin{smallmatrix}
    0&1\\0&0
\end{smallmatrix}\right)\otimes t_{01}(z)+
\left(\begin{smallmatrix}
    0&0\\1&0
\end{smallmatrix}\right)\otimes t_{10}(z)+
\left(\begin{smallmatrix}
    0&0\\0&1
\end{smallmatrix}\right)\otimes t_{11}(u)\\
=\begin{pmatrix}
t_{00}(z) & t_{01}(z)\\
t_{01}(z)& t_{11}(z)
\end{pmatrix}\in \End (\C^2) \otimes\YB[[z^{\pm 1}]]\;.
\end{multline}
The (quadratic) defining relations of $\YB$ are encoded in the matrix identity
\begin{equation}\label{RTT}
R_{12}(z/w)T_1(z)T_2(w)=T_2(w)T_1(z)R_{12}(z/w)\;
\end{equation}
in $\End(\C^2\otimes\C^2)\otimes\YB[\![z^{\pm 1},w^{\pm 1}]\!]$, where we have set
\[
T_1(z)=\sum_{i,j=0,1}E_{ij}\otimes 1\otimes t_{ij}(z)\quad\text{and}\quad T_2(w)=\sum_{i,j=0,1}1\otimes E_{ij}\otimes t_{ij}(w)\;
\]
with $E_{ij}=(\delta_{ia}\delta_{jb})_{0\le a,b\le 1}$. The following lemma 
explicitly states some of the commutation relations of $\YB$ in terms 
of the generators \eqref{t}. The latter are used in the computation of the Bethe vectors, which we will identify with the quantum idempotents in $\QK_T(\Gr(k,n))$; see also \cite{gorbounov2017quantum}, but we warn the reader that we use a change of variable in this paper. 
\begin{lemma}\label{lem:RTTrelations}
The matrix equation \eqref{RTT} implies (among others) the identities:
\begin{gather}\label{YBcomm}%%%changed April 2024
t_{ij}(z)t_{ij}(w)=t_{ij}(w)t_{ij}(z)\;,\qquad i,j=0,1\\
t_{11}(w)t_{00}(z)-t_{00}(z)t_{11}(w)=\frac{1}{1-z/w}\,t_{10}(z)t_{01}(w)-\frac{z/w}{1-z/w}\,t_{10}(w)t_{01}(z)\\
t_{00}(z)t_{10}(w)=\frac{1}{1-z/w}\,t_{10}(w)t_{00}(z)-\frac{1}{1-z/w}\,t_{10}(z)t_{00}(w)\\
t_{11}(z)t_{10}(w)=\frac{1}{1-w/z}\,t_{10}(w)t_{11}(z)-\frac{w/z}{1-w/z}\,t_{10}(z)t_{11}(w)\\
t_{00}(z)t_{01}(w)=(1-z/w)t_{01}(w)t_{00}(z)+\frac{z}{w}\,t_{01}(z)t_{00}(w)\\
t_{11}(z)t_{01}(w)=(1-w/z)t_{01}(w)t_{11}(z)+t_{11}(w)t_{01}(z)
\end{gather}
In particular, we have that for any central element $q$ the elements $t(z)=t_{00}(z)+qt_{11}(z)$ generate an abelian subalgebra in $\YB$, i.e. we have that $t(z)t(w)=t(w)t(z)$.
\end{lemma}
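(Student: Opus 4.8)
\emph{Approach.} The plan is to treat the matrix identity \eqref{RTT} as an equality of $4\times4$ matrices with entries in $\YB[\![z^{\pm1},w^{\pm1}]\!]$ and to extract the quadratic relations by comparing coefficients in the standard basis $\{E_{ij}\otimes E_{kl}\}_{i,j,k,l\in\{0,1\}}$ of $\End(\C^2\otimes\C^2)$. Setting $x=z/w$, one expands the left-hand side $R_{12}(x)\,T_1(z)\,T_2(w)$ and the right-hand side $T_2(w)\,T_1(z)\,R_{12}(x)$ using the explicit form \eqref{R} of the $R$-matrix together with $T_1(z)=\sum_{i,j}E_{ij}\otimes 1\otimes t_{ij}(z)$ and $T_2(w)=\sum_{i,j}1\otimes E_{ij}\otimes t_{ij}(w)$; each of the sixteen matrix entries then yields one scalar identity in $\YB[\![z^{\pm1},w^{\pm1}]\!]$, and the non-redundant ones among them are exactly the relations displayed in the statement. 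Because $R$ is of five-vertex type, most of these entries are trivial or coincide with one another, so the computation is short. This is the standard Faddeev--Reshetikhin--Takhtajan mechanism; alternatively one may simply quote the corresponding calculation in \cite{gorbounov2017quantum} and apply the change of variable relating the normalization there to the one used here.

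\emph{Reading off the relations.} Concretely, the entries of \eqref{RTT} along which $R_{12}(x)$ acts as the identity immediately give $t_{ij}(z)t_{ij}(w)=t_{ij}(w)t_{ij}(z)$ for all $i,j$, i.e.\ \eqref{YBcomm}. The entry in which the nontrivial $2\times2$ block of \eqref{R} (with entries $1-x$, $x$ and $1$) is active produces, after clearing the prefactor $1-x$ and rearranging, the $t_{00}$--$t_{11}$ commutation relation; likewise the entries involving exactly one off-diagonal generator together with one diagonal one yield the four relations for $t_{00}(z)t_{10}(w)$, $t_{11}(z)t_{10}(w)$, $t_{00}(z)t_{01}(w)$ and $t_{11}(z)t_{01}(w)$, in each case by isolating the term in which the arguments appear in the order $(z,w)$ and dividing by the appropriate factor $1-x$ or $1-x^{-1}$. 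Throughout, $(1-z/w)^{-1}$ and $(1-w/z)^{-1}$ must be read as formal geometric series expanded consistently (in the same direction on both sides of \eqref{RTT}), which is what makes the coefficient-by-coefficient comparison in $z^a w^b$ — and hence the resulting relations among the generators $t_{ij}[r]$ — well posed.

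\emph{The abelian subalgebra.} For the final assertion, let $q$ be central and put $t(z)=t_{00}(z)+q\,t_{11}(z)$. Expanding the product and cancelling the $t_{00}(z)t_{00}(w)$, $t_{11}(z)t_{11}(w)$ and $q^2$ contributions by \eqref{YBcomm}, one obtains
\[
t(z)t(w)-t(w)t(z)=q\bigl([t_{00}(z),t_{11}(w)]+[t_{11}(z),t_{00}(w)]\bigr).
\]
Now insert the $t_{00}$--$t_{11}$ commutation relation established above — once as stated and once with $z$ and $w$ interchanged — to write the right-hand side as $c_1(z,w)\,t_{10}(z)t_{01}(w)+c_2(z,w)\,t_{10}(w)t_{01}(z)$ with $c_1,c_2$ rational in $z,w$; using the elementary identities $\tfrac{w/z}{1-w/z}=-\tfrac{1}{1-z/w}$ and $\tfrac{1}{1-w/z}=-\tfrac{z/w}{1-z/w}$ one checks that $c_1=c_2=0$. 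Hence $t(z)t(w)=t(w)t(z)$, so the currents $t(z)$ generate an abelian subalgebra of $\YB$.

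\emph{Main difficulty.} There is no conceptual obstacle here; the one place where care is needed is the bookkeeping in the second step — tracking signs, the spectral-parameter prefactors $x$, $1-x$, $1-x^{-1}$ attached to the off-diagonal vertices, and a single consistent choice of expansion for $(1-z/w)^{-1}$ — so that the sixteen scalar identities are mutually consistent and normalized exactly as in the statement. Cross-checking the output against the defining relations of the degenerate limit of the dual Hopf algebra of $U_q(\mathfrak{gl}_2[z^{\pm1}])$ (or against \cite{gorbounov2017quantum} after the change of variable) is the simplest safeguard.
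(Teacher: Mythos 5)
Your proposal is correct and takes essentially the same route as the paper: insert the definitions \eqref{R} and \eqref{T} into \eqref{RTT} and compare coefficients of the basis elements $E_{ij}\otimes E_{kl}$ in $\End(\C^2\otimes\C^2)$, which is exactly the paper's (brief) argument. Your explicit cancellation of the $t_{10}t_{01}$ terms in $t(z)t(w)-t(w)t(z)$, using the $t_{00}$--$t_{11}$ relation with $z$ and $w$ interchanged, is a correct spelling-out of the ``in particular'' clause that the paper leaves implicit.
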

\begin{proof}
A straightforward computation where one inserts the definitions \eqref{R} and \eqref{T} into the identity \eqref{RTT} and then compares coefficients of the basis elements $E_{ij}\otimes E_{kl}$ in $\End(\C^2\otimes\C^2)$ on both sides of the equation.
\end{proof}
We require $\YB$ to be unital, denoting the identity by $1$. The associativity of $\YB$ follows 
from the following standard argument: if one considers the triple product 
$T_1(u)T_2(z)T_3(w)$, then one can successively exchange the 
$T$-matrices using \eqref{RTT}. Because $R$ satisfies \eqref{YBE}, 
the two possible choices of exchanging the three $T$-matrices 
in a different order must coincide and, hence, the algebra $\YB$ is associative.
%well-defined.
\begin{defn}
We shall call the associative unital algebra $\YB$ defined in terms of the 
$R$-matrix \eqref{R} via \eqref{T}, \eqref{t}, \eqref{RTT}  
the {\em Yang-Baxter algebra} associated with $R$. 
\end{defn}

One advantage of presenting the algebra relations in the $RTT=TTR$ form \eqref{RTT} 
is that one can easily read off the algebra automorphisms from the definition 
\eqref{R} and the properties \eqref{invR} of the $R$-matrix. In particular, 
recall from \eqref{invR} the vector space isomorphism $\sigmax:\C^2\to\C^2$, 
which simply swaps the basis vectors $v_0$ and $v_1$. We have the following statement:
\begin{lemma}\label{lem:auto}
The map $T(z)\mapsto T(a\cdot z)$ with $a\in\C^*$ constitutes an algebra automorphism, while the map %
$
T(z)\mapsto \sigmax\cdot T(z)\cdot\sigmax$ gives rise to an algebra anti-automorphism.
\end{lemma}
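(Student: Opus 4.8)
The plan is to verify each map directly on the defining relations \eqref{RTT}, using only the explicit form \eqref{R} of the $R$-matrix together with the identities \eqref{invR}; in both cases one shows that the map sends the generating relations into the relation ideal, hence descends from the free algebra to $\YB$.

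\emph{The rescaling $T(z)\mapsto T(az)$.} The point is that $R(z/w)$ depends on $z,w$ only through the ratio $z/w$, so the substitution $z\mapsto az$, $w\mapsto aw$ fixes $R_{12}(z/w)$ and carries \eqref{RTT} into itself with the spectral parameters relabeled. To make this rigorous I would work on the free associative algebra $\mathcal F$ on the symbols $t_{ij}[r]$, so that $\YB=\mathcal F/\mathcal I$ where $\mathcal I$ is generated by the coefficients of $E_{ij}\otimes E_{kl}\,z^{m}w^{n}$ in the matrix $(w\,R_{12}(z/w))\,T_1(z)T_2(w)-T_2(w)T_1(z)\,(w\,R_{12}(z/w))$. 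Clearing the single denominator by the factor $w$ makes the entries of $w\,R_{12}(z/w)$ homogeneous of degree one in $(z,w)$, so the coefficient of $z^{m}w^{n}$ is a linear combination of quadratic monomials $t_{ab}[p]\,t_{cd}[s]$ all with $p+s=m+n-1$; hence the grading $\deg t_{ij}[r]=r$ makes every defining relation homogeneous. The algebra automorphism $\psi_a$ of $\mathcal F$ determined by $t_{ij}[r]\mapsto a^{r}t_{ij}[r]$ therefore scales each generator of $\mathcal I$ by a unit, so $\psi_a(\mathcal I)=\mathcal I$, and $\psi_a$ descends to an algebra automorphism of $\YB$ with inverse $\psi_{a^{-1}}$. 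By construction $\psi_a(t_{ij}(z))=t_{ij}(az)$, i.e.\ $\psi_a(T(z))=T(az)$.

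\emph{The anti-automorphism $T(z)\mapsto\sigma T(z)\sigma$.} I would define the would-be map on generators by $\Psi(t_{ij}(z))=\bigl(\sigma T(z)\sigma\bigr)_{ij}=t_{\bar\imath\bar\jmath}(z)$ (with $\bar 0=1,\ \bar 1=0$), extend it to $\mathcal F$ as an anti-homomorphism, and then check $\Psi(\mathcal I)\subseteq\mathcal I$ by pushing $\Psi$ through \eqref{RTT}. Since $\Psi$ reverses products while the matrix factors in different auxiliary slots commute, the image of the matrix identity \eqref{RTT} is again an $RTT$-type identity for the matrices $S_i:=\sigma_i T_i(\cdot)\sigma_i$; conjugating it by $\sigma\otimes\sigma$ moves the two $\sigma$'s onto the $R$-matrix, replacing $R_{12}(z/w)$ by $(\sigma\otimes\sigma)R_{12}(z/w)(\sigma\otimes\sigma)$, which by the second equality in \eqref{invR} equals $R_{12}(w/z)^{-1}$. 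Clearing this invertible factor -- legitimate at the level of $\mathcal I$, since the relation ideal is stable under left and right multiplication by scalar matrices -- returns an instance of the $RTT$ relations, using that \eqref{RTT} holds for all values of the spectral parameters. Thus \eqref{RTT} lands in $\mathcal I$, so $\Psi$ descends to $\YB$; it is visibly involutive, hence an anti-automorphism.

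The genuinely routine parts are the degree count in the first step and the commutation/transposition bookkeeping in the second. I expect the main obstacle to be precisely that bookkeeping: after conjugating by $\sigma\otimes\sigma$ and reversing the product order one must track exactly which spectral parameter is attached to which auxiliary factor, and how \eqref{invR} converts $R(z/w)$ into a reciprocal-argument inverse $R$-matrix, since mishandling this produces a relation that is \emph{not} a consequence of \eqref{RTT}. As an independent safeguard one can instead verify on the explicit commutation relations of \Cref{lem:RTTrelations} that $\Psi$ sends each relation to a consequence of the others.
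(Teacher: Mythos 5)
Your treatment of the rescaling $T(z)\mapsto T(az)$ is correct and is just a more formal version of the paper's one-line argument (the relations are homogeneous for the grading $\deg t_{ij}[r]=r$ because $R$ depends only on $z/w$), so that part is fine.

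The second half has a genuine gap, located exactly at the step you flagged and then waved through. After conjugating by $\sigmax\otimes\sigmax$ and invoking \eqref{invR}, the identity your anti-homomorphism needs is $R_{12}(w/z)^{-1}T_2(w)T_1(z)=T_1(z)T_2(w)R_{12}(w/z)^{-1}$, i.e.\ after clearing the invertible factor, $R_{12}(w/z)\,T_1(z)T_2(w)=T_2(w)T_1(z)\,R_{12}(w/z)$. This is \emph{not} an instance of \eqref{RTT}: at parameters $(z,w)$ the relation carries $R_{12}(z/w)$, and at $(w,z)$ it carries $R_{12}(w/z)$ but then pairs it with $T_1(w)T_2(z)$; since $R(u)\neq R(u^{-1})$ you cannot relabel your way out. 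Moreover the gap is not merely presentational: for this $R$-matrix the needed identity actually fails, so the map $t_{ij}(z)\mapsto t_{\bar\imath\bar\jmath}(z)$ with reversed products does not preserve the relation ideal. Your own proposed safeguard detects this: applying your $\Psi$ to relation (3) of \Cref{lem:RTTrelations} produces the requirement $t_{11}(z)t_{01}(w)=(1-z/w)\,t_{01}(w)t_{11}(z)+t_{11}(w)t_{01}(z)$, whereas relation (6) states $t_{11}(z)t_{01}(w)=(1-w/z)\,t_{01}(w)t_{11}(z)+t_{11}(w)t_{01}(z)$; together these force $t_{01}(w)t_{11}(z)=0$, which is false already in the two-site evaluation module built from \Cref{lem:evhom} and \eqref{Delta}, where a direct computation gives $t_{01}(w)t_{11}(z)=-\tfrac{z}{\ve_2}\,E_{10}\otimes E_{11}\neq 0$.

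What the conjugation argument genuinely proves is that $S(z)=\sigmax T(z)\sigmax$ satisfies the $RTT$ relations with $R_{12}$ replaced by $R_{21}$; to return to $\YB$ one must also invert the spectral parameter. Indeed, for $T(z)\mapsto \sigmax T(z^{-1})\sigmax$ (equivalently $t_{ij}[r]\mapsto t_{\bar\imath\bar\jmath}[-r]$, reversed) the required identity becomes $R_{21}(w/z)T_2(w)T_1(z)=T_1(z)T_2(w)R_{21}(w/z)$, which \emph{is} \eqref{RTT} conjugated by the flip, so that map is an anti-automorphism; without the inversion it is only an anti-isomorphism onto the Yang--Baxter algebra built from $R_{21}$. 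Be aware that the paper's own proof is silent about the same bookkeeping (multiplying by $P_{12}$ also exchanges which spectral parameter sits in which tensor slot), so you reproduced its strategy; but as written, your justification of the final step is not valid, and carrying out the check on \Cref{lem:RTTrelations} that you yourself suggested would have exposed it.
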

\begin{proof}
From the defining relations \eqref{RTT} and \eqref{R} we infer that only the difference of the variables $u,v$ in the RTT-equation \eqref{RTT} enters the defining relations of the algebra and, thus, the first assertion now readily follows. Similarly, multiplying by $\sigmax\otimes\sigmax$ from both sides in \eqref{RTT} we obtain (using \eqref{invR})
\begin{multline*}
\sigmax_1\sigmax_2 R_{12}(z/w)T_1(z)T_2(w)\sigmax_1\sigmax_2=R_{21}(z/w)\sigmax_1 T_1(z)\sigmax_1\sigmax_2 T_2(w)\sigmax_2=\\
\sigmax_2 T_2(w)\sigmax_2 \sigmax_1 T_1(z)\sigmax_1 R_{21}(z/w)=\sigmax_1\sigmax_2 T_2(w)T_1(z)R_{12}(z/w)\sigmax_1\sigmax_2\;.
\end{multline*}
After switching factors in the tensor product by multiplying from both sides with the flip operator $P_{12}$ the anti-automorphism now follows. 
\end{proof}

We now introduce the structure of an Hopf algebra on $\YB$. 
Define the following algebra homomorphisms
\begin{equation}\label{Delta}
\Delta:\YB\to\YB\otimes\YB,\qquad\Delta(t_{ij}(z))=\sum_{k=0,1}t_{kj}(z)\otimes t_{ik}(z)
\end{equation}
and
\begin{equation}\label{counit}
\epsilon:\YB\to\C,\qquad\epsilon(t_{ij}(z))=\delta_{ij}
\end{equation}
where $\delta_{ij}$ is the Kronecker delta. For the representation we consider below in 
this paper, the formal inverse $T(z)^{-1}$ exists; see Lemma \ref{lem:Tinv}. This equips $\YB$ with a 
structure of a Hopf algebra with the antipode $S:\YB\to\YB$ defined by
\begin{equation}\label{S}
S(t_{ij}(z))=(T(z)^{-1})_{ij}\;.
\end{equation}
That this map constitutes an anti-automorphism can directly be seen from the RTT-relation by multiplying both sides with the inverse $T$-matrices,
\[
T_1(z)^{-1}T_2(w)^{-1}R_{12}(z/w)=R_{12}(z/w) T_2(w)^{-1}T_1(z)^{-1}\;.
\]
\begin{prop}
$(\YB,\Delta,\epsilon,S)$ is a non-commutative and a non-cocommutative Hopf algebra.
\end{prop}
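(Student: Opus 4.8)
The plan is to verify, in order: (i) that the assignments \eqref{Delta} and \eqref{counit} on generators are compatible with the defining relations \eqref{RTT}, so that $\Delta$ and $\epsilon$ are genuine algebra homomorphisms; (ii) that $\Delta$ is coassociative and $\epsilon$ is a counit, which together with the unitality $\Delta(1)=1\otimes 1$, $\epsilon(1)=1$ makes $(\YB,\Delta,\epsilon)$ a bialgebra; (iii) that $S$ is an antipode; and (iv) to exhibit explicit witnesses showing that $\YB$ is neither commutative nor cocommutative. Everything is most transparent in the matrix notation of \eqref{T}, and associativity and unitality of $\YB$ are already established. For (i), one has $\epsilon(T(z))=I$ (the $2\times 2$ identity matrix), so both sides of \eqref{RTT} are sent to $R_{12}(z/w)$, and $\epsilon$ descends to an algebra homomorphism $\YB\to\C$. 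Formula \eqref{Delta} reads $\Delta(T(z))=T^{[2]}(z)\,T^{[1]}(z)$, where $T^{[a]}$ has entries in the $a$-th tensor factor of $\YB\otimes\YB$; since the two factors commute and $R_{12}$ is scalar in the algebra variables, applying $\Delta$ to \eqref{RTT} and using \eqref{RTT} once in each factor gives
\begin{multline*}
R_{12}\,\Delta(T_1(z))\,\Delta(T_2(w)) = R_{12}\,T_1^{[2]}(z)\,T_2^{[2]}(w)\,T_1^{[1]}(z)\,T_2^{[1]}(w)\\
= T_2^{[2]}(w)\,T_1^{[2]}(z)\,T_2^{[1]}(w)\,T_1^{[1]}(z)\,R_{12} = \Delta(T_2(w))\,\Delta(T_1(z))\,R_{12}\,,
\end{multline*}
which is \eqref{RTT} for the images of $T_1,T_2$; hence $\Delta$ extends to an algebra homomorphism $\YB\to\YB\otimes\YB$ by exactly the ``move the $R$-matrix around'' mechanism already used for associativity.

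For (ii), a direct computation on generators shows that both $(\Delta\otimes\mathrm{id})\Delta(t_{ij}(z))$ and $(\mathrm{id}\otimes\Delta)\Delta(t_{ij}(z))$ equal $\sum_{k,l} t_{kj}(z)\otimes t_{lk}(z)\otimes t_{il}(z)$ — this is coassociativity, and it merely reflects the associativity of matrix multiplication. The counit property is likewise immediate: $(\epsilon\otimes\mathrm{id})\Delta(t_{ij}) = \sum_k \delta_{kj}\, t_{ik} = t_{ij} = \sum_k t_{kj}\,\delta_{ik} = (\mathrm{id}\otimes\epsilon)\Delta(t_{ij})$.

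For (iii), by Lemma \ref{lem:Tinv} the formal inverse $T(z)^{-1}$ exists, so \eqref{S} defines $S$ on generators, and $S$ is an algebra anti-automorphism by applying $S$ to \eqref{RTT} (as noted after \eqref{S}). Evaluated on $t_{ij}(z)$, the antipode axioms $m(S\otimes\mathrm{id})\Delta = \eta\epsilon = m(\mathrm{id}\otimes S)\Delta$ become entrywise identities for the matrices $T(z)$ and $T(z)^{-1}$, which follow from the two-sided inverse property $T(z)T(z)^{-1}=T(z)^{-1}T(z)=I$ together with the anti-automorphism property of $S$. I expect this to be the main obstacle of the proof: the comultiplication \eqref{Delta} is the co-opposite of the usual FRT coproduct, so the order of the tensor factors (and of the algebra factors) must be tracked carefully when matching the two antipode axioms to the two inverse identities; this is the one point where the argument is more than formal and where the existence statement of Lemma \ref{lem:Tinv} is genuinely used. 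Combining (i)--(iii), $(\YB,\Delta,\epsilon,S)$ is a Hopf algebra.

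Finally, for (iv): non-commutativity is immediate from Lemma \ref{lem:RTTrelations}, since for instance
\[
t_{00}(z)\,t_{10}(w) = \frac{1}{1-z/w}\,t_{10}(w)\,t_{00}(z) - \frac{1}{1-z/w}\,t_{10}(z)\,t_{00}(w) \neq t_{10}(w)\,t_{00}(z)
\]
(compare the expansions in $z,w$); and non-cocommutativity is immediate from \eqref{Delta}, since
\[
\Delta(t_{01}(z)) = t_{01}(z)\otimes t_{00}(z) + t_{11}(z)\otimes t_{01}(z) \neq t_{00}(z)\otimes t_{01}(z) + t_{01}(z)\otimes t_{11}(z)\,,
\]
the right-hand side being the image of the left under the flip $a\otimes b\mapsto b\otimes a$, and the two differ because the modes $t_{00}[r],t_{01}[r],t_{11}[r]$ are linearly independent in $\YB$. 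This completes the proof.
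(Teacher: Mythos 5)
The paper offers no written argument for this proposition (its ``proof'' is a one-line statement that the axiom checking is straightforward and omitted), so your write-up is supplying details rather than paralleling an existing proof. Your steps (i), (ii) and (iv) are correct: encoding \eqref{Delta} as $\Delta(T(z))=T^{[2]}(z)T^{[1]}(z)$ and sliding the scalar $R$-matrix through is the right way to see that $\Delta$ respects \eqref{RTT}, and the coassociativity, counit and non-(co)commutativity checks are fine.

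The genuine gap is at step (iii), precisely the point you yourself flag as ``the main obstacle'': the sentence you substitute for the verification is false as stated. With the coproduct \eqref{Delta}, the antipode axioms on generators read $\sum_k S(t_{kj}(z))\,t_{ik}(z)=\delta_{ij}$ and $\sum_k t_{kj}(z)\,S(t_{ik}(z))=\delta_{ij}$. Taking $S(t_{ab}(z))=(T(z)^{-1})_{ab}$ as in \eqref{S}, each summand has its two factors in the \emph{opposite} order to the entries of the matrix identities $(T(z)T(z)^{-1})_{ij}=\delta_{ij}$ and $(T(z)^{-1}T(z))_{ij}=\delta_{ij}$; since the $t_{ij}(z)$ do not commute, these are different statements, and neither two-sided invertibility of $T(z)$ nor anti-multiplicativity of $S$ produces them. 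The failure is not merely formal: in the rank-one evaluation module of Lemma \ref{lem:evhom}, where Lemma \ref{lem:Tinv} gives $T(z)^{-1}$ explicitly (so $(T^{-1})_{00}=E_{00}$, $(T^{-1})_{10}=E_{01}$), one computes
\begin{equation*}
\sum_k S(t_{k0}(z))\,t_{0k}(z)\;\longmapsto\; E_{00}\bigl(1+(z/\ve)E_{11}\bigr)+E_{01}E_{10}=2E_{00}\neq \epsilon(t_{00}(z))\,\mathrm{id}\,,
\end{equation*}
so the first axiom fails for the reversed-order sums your argument relies on. The point is that \eqref{Delta} is the co-opposite of the FRT coproduct, and the antipode adapted to it is not $(T^{-1})_{ij}$ but the transposed inverse $\bigl((T(z)^{\sfT})^{-1}\bigr)_{ji}$ (equivalently the inverse of the standard FRT antipode), which is \emph{characterized} by the reversed-order identities above and whose existence is a separate issue (Lemma \ref{lem:Tinv} provides $T^{-1}$, not $(T^{\sfT})^{-1}$); alternatively one must pair \eqref{S} with the opposite coproduct \eqref{Delta_op}. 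As written, the one step you identified as non-formal is exactly the step your proof does not carry out, and the shortcut you propose for it would not go through.
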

\begin{proof} 
A straightforward albeit somewhat tedious checking of the axioms of a Hopf algebra which we omit.
\end{proof}

Note that we have made a choice in the definition of the coproduct \eqref{Delta}. The `opposite coproduct' defined by
\begin{equation}\label{Delta_op}
\Delta^{\rm op}:\YB\to\YB\otimes\YB,\qquad\Delta^{\rm op}(t_{ij}(z))=\sum_{k=0,1} t_{ik}(z)\otimes t_{kj}(z)
\end{equation}
leads to an alternative Hopf algebra structure $(\YB,\Delta^{\rm op},\epsilon,S)$. Obviously, the two coproducts $\Delta$ and $\Delta^{\rm op}$ are related by taking the matrix transpose of the monodromy matrix \eqref{T}, $T(z)^\sfT=(t_{ji}(z))_{i,j=0,1}$. This corresponds to considering the dual space $(\C^2)^*=\C v^0\oplus\C v^1$, where $\langle v^i,v_j\rangle=\delta_{ij}$ under the canonical pairing, and, hence considering the transpose of the $R$-matrix \eqref{R}. In fact, when computing the transpose in each factor, denoted by $\sfT\otimes\sfT$, we notice that the $R$-matrix \eqref{R} is not invariant, 
\begin{equation}\label{Ropp}
    R^\opp(z)=R(z)^{\sfT\otimes \sfT}=
    \left(\begin{smallmatrix}1&0\\0&0\end{smallmatrix}\right)\otimes
    \left(\begin{smallmatrix} 1&0\\0&1-z\end{smallmatrix}\right)+
    \left(\begin{smallmatrix}0&1\\0&0\end{smallmatrix}\right)\otimes
    \left(\begin{smallmatrix}0&0\\z&0\end{smallmatrix}\right)+
    \left(\begin{smallmatrix}0&0\\1&0\end{smallmatrix}\right)\otimes
    \left(\begin{smallmatrix}0&1\\0&0\end{smallmatrix}\right)+
    \left(\begin{smallmatrix}0&0\\0&1\end{smallmatrix}\right)\otimes
    \left(\begin{smallmatrix}0&0\\0&1\end{smallmatrix}\right)\;.
\end{equation}
Therefore, we arrive at a different, albeit closely related, Hopf algebra which we shall call the `opposite Yang-Baxter algebra' $\YB^\opp$. Namely,  noting that $R^\opp$ also solves the quantum Yang-Baxter equation \eqref{YBE}, we can as before define a Hopf algebra $\YB^\opp=\YB(R^\opp)$ with generators $t^\opp_{ij}(z)$ and monodromy matrix $T^\opp(z)=(t^\opp_{i,j}(z))_{i,j=0,1}$.

\begin{lemma}\label{lem:YB2YBopp}
    The map $T(z)\mapsto T^\opp(z)=T(z)^\sfT$ defines a Hopf algebra anti-isomorphism $\YB\to\YB^\opp $.
\end{lemma}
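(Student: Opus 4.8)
The plan is to verify directly that the auxiliary-space transpose $\phi\colon \YB\to\YB^\opp$, $t_{ij}(z)\mapsto t^\opp_{ji}(z)$ (equivalently $T(z)\mapsto T(z)^\sfT$), is a bijective algebra anti-homomorphism that intertwines the coproducts, counits and antipodes in the way required of a Hopf-algebra anti-isomorphism (algebra anti-map, coalgebra anti-map, commuting with $S$).

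First I would show that $\phi$ is a well-defined algebra anti-homomorphism. Apply the transpose $\sfT\otimes\sfT$ in the two auxiliary $\C^2$ factors to the defining relation \eqref{RTT}. Three facts are used: on $\End(\C^2)\otimes\End(\C^2)$ the operation $\sfT\otimes\sfT$ is the full matrix transpose, hence reverses products; $R(z)^{\sfT\otimes\sfT}=R^\opp(z)$ by \eqref{Ropp}; and $T_1(z)$ and $T_2(w)$ sit in different auxiliary slots, so the transpose acts on each separately and does not reorder their $\YB$-valued entries. One thus rewrites \eqref{RTT} as
\[ T_1(z)^\sfT\,T_2(w)^\sfT\,R^\opp_{12}(z/w)=R^\opp_{12}(z/w)\,T_2(w)^\sfT\,T_1(z)^\sfT . \]
Reading off the coefficient of each matrix unit $E_{pq}\otimes E_{rs}$ turns this into an explicit family of quadratic relations among the $t_{ij}(z)$; comparing it, after the relabeling $(p,q,r,s)\mapsto(q,p,s,r)$ of the free indices, with the coefficient form of the defining relations of $\YB^\opp=\YB(R^\opp)$, one finds that $\phi$ carries the defining relations of $\YB$ exactly onto those of $\YB^\opp$ once the two monomials on each side are swapped — this swap is the source of the \emph{anti}-homomorphism property. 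Preservation of the unit is clear, and since $R$ and $R^\opp$ both solve \eqref{YBE} both algebras are associative, so $\phi$ is a genuine anti-homomorphism. Bijectivity follows at once: the same construction applied to $R^\opp$, using $(R^\opp)^{\sfT\otimes\sfT}=R$, yields an anti-homomorphism $\YB^\opp\to\YB$, and the two composites are the identity on generators because a double transpose is the identity.

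Next I would check the Hopf compatibilities. The counit is immediate from \eqref{counit}: $\epsilon(t^\opp_{ji}(z))=\delta_{ji}=\delta_{ij}=\epsilon(t_{ij}(z))$. For the coproduct, applying $\phi\otimes\phi$ to \eqref{Delta} gives $(\phi\otimes\phi)\Delta(t_{ij}(z))=\sum_k t^\opp_{jk}(z)\otimes t^\opp_{ki}(z)$, which is the tensor flip of $\Delta_{\YB^\opp}(t^\opp_{ji}(z))=\sum_k t^\opp_{ki}(z)\otimes t^\opp_{jk}(z)$; hence $\Delta_{\YB^\opp}\circ\phi$ equals $(\phi\otimes\phi)\circ\Delta_{\YB}$ composed with the flip, i.e. $\phi$ is a coalgebra anti-map, exactly the co-reversal matching the order-reversal on the algebra side. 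For the antipode, use that $T(z)$ is invertible over the representation in play (Lemma \ref{lem:Tinv}): applying the anti-homomorphism $\phi$ to $\sum_k t_{ik}(z)\,(T(z)^{-1})_{kj}=\delta_{ij}$ shows that the matrix with entries $\phi\bigl((T(z)^{-1})_{kj}\bigr)$ is the two-sided inverse of $T^\opp(z)$, whence $\phi\bigl((T(z)^{-1})_{ij}\bigr)=(T^\opp(z)^{-1})_{ji}$; by \eqref{S} this says $\phi\circ S_{\YB}=S_{\YB^\opp}\circ\phi$ on generators.

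The one step that requires genuine care — the heart of the argument — is the middle one: correctly tracking that the auxiliary-space transpose simultaneously (i) turns $R$ into $R^\opp$, (ii) reverses the order of the two $\YB$-valued factors in each $RTT$ monomial, and (iii) flips the coproduct, and then matching the resulting relations against the defining relations of $\YB^\opp$ up to the harmless reindexing of the free auxiliary labels noted above. Everything else is a routine, if somewhat tedious, comparison of coefficients, entirely parallel to the checks that \eqref{RTT} defines an associative unital Hopf algebra in the first place.
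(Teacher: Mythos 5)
Your proposal is correct and follows essentially the same route as the paper's proof: transpose the $RTT$-relation in both auxiliary factors, use $R(z)^{\sfT\otimes\sfT}=R^\opp(z)$ to obtain $T_1(z)^{\sfT}T_2(w)^{\sfT}R^\opp_{12}(z/w)=R^\opp_{12}(z/w)T_2(w)^{\sfT}T_1(z)^{\sfT}$, conclude the algebra anti-isomorphism, and then match the coproduct (flip), counit, and antipode. Your version merely spells out the coefficient bookkeeping and the antipode/counit checks that the paper dismisses as straightforward.
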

\begin{proof}
    Suppose that $T(z)$ satisfies the RTT-relation \eqref{RTT}. Taking the transpose in both factors on both sides of the equation we obtain
    \begin{equation*}
        T_1(z)^{\sfT}T_2(w)^{\sfT}R^\opp_{12}(z/w)=R^\opp_{12}(z/w) T_2(w)^{\sfT}T_1(z)^{\sfT}\;.
    \end{equation*}
    Thus, sending $t_{ij}(z)\mapsto t^\opp_{ij}(z)=t_{ji}(z)$ is an algebra anti-isomorphism. As for the co-product we note that
    \[
    \Delta(t_{ij}(z))^\opp=\sum_k t^\opp_{kj}(z)\otimes t^\opp_{ik}(z)=\Delta^{\rm op}(t^\opp_{ij}(z))\;.
    \]
    Checking the relations for the antipode and co-unit are similarly straightforward.
\end{proof}
 
 \begin{remark}\label{rmk:YBopp}\rm
 For the sake of brevity and in order not to overburden the reader with too many technical details, we shall limit our discussion mostly to the algebra $\YB$ and often omit the parallel discussion of the algebra $\YB^\opp$ where the results follow along analogous lines using Lemma \ref{lem:YB2YBopp}. 
 
 Geometrically, the algebra $\YB^\opp$ describes the ring structure of $\QK$ in the basis of quantum ideal sheaves, while $\YB$ is used to describe the ring structure in the basis of structure sheaves. This technical complication is absent from the case of quantum cohomology which is reflected in the fact that the corresponding $R$-matrix for quantum cohomology is symmetric, i.e. $R(z)^{\sfT\otimes \sfT}=R(z)$.
\end{remark}

\subsection{Evaluation modules and their tensor products}
It is well known that the representations of quasi-triangular Hopf algebras give rise to braided monoidal categories; see e.g. \cite[Ch. 4-5]{chari1995guide} and references therein. 
We now consider a particular class of finite-dimensional modules 
of the Yang-Baxter algebra $\YB$ and discuss the braiding of their tensor 
products in terms of the $R$-matrix \eqref{R}; see Lemma \ref{lem:Rcheck} and Corollary \ref{cor:YBbraid}. These modules will be later identified 
with the sum of (quantum) equivariant K-theory modules of Grassmannians and their braiding will be used to define a left action of the symmetric group which we will show to (1) coincide with the geometrically defined action in $K$-theory and (2) to commute with the action of $\YB$; see Proposition \ref{prop:leftWaction}.

\begin{lemma}\label{lem:evhom}
Let $\ve$ be some indeterminate. The map $\YB[\![z^{\pm 1}]\!]\to\End\C^2[\ve^{\pm 1}]\otimes\C[z^{\pm 1}]$ given via
\begin{equation}\label{L}
T(z)\mapsto  R(-z/\ve)\;
%t_{ij}(z)\mapsto E_{ji}+(z- y)\delta_{ij}\sum_{k>i}E_{kk}
\end{equation} 
defines an algebra homomorphism $\YB\to\End\C^2[\ve^{\pm 1}]$. Explicitly, we have in terms of the generators:
\begin{gather}
    t_{00}(z)\mapsto\begin{pmatrix}1&0\\0&1\end{pmatrix}+
    \begin{pmatrix} 0&0\\0&z /\ve\end{pmatrix}\;,\qquad
    t_{01}(z)\mapsto\begin{pmatrix}0&0\\1&0\end{pmatrix}\\
    t_{10}(z)\mapsto \begin{pmatrix}0&-z/\ve\\0&0\end{pmatrix}\;,\qquad
    t_{11}(z)\mapsto\begin{pmatrix}0&0\\0&1\end{pmatrix}\;.
\end{gather}
\end{lemma}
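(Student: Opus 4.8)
The plan is to exploit the very definition of $\YB$: it is the unital associative $\C$-algebra on the generators $t_{ij}[r]$ ($r\in\Z$, $i,j\in\{0,1\}$), subject \emph{only} to the quadratic relations packaged in the $RTT=TTR$ identity \eqref{RTT}. Hence, to produce a unital algebra homomorphism out of $\YB$ it suffices to specify elements of the target algebra to which the generators are sent and to check that the corresponding $RTT$ identity holds there; the universal property of the $RTT$-presentation then does the rest. First I would observe that the prescription ``$T(z)\mapsto R(-z/\ve)$'' is exactly such a specification in disguise: writing the right-hand side of \eqref{R}, with $z$ replaced by $-z/\ve$, in its decomposition $R(-z/\ve)=\sum_{i,j}E_{ij}\otimes\rho_{ij}(-z/\ve)$ with the first tensor slot the auxiliary $\C^2$, the four blocks $\rho_{ij}(-z/\ve)\in\End\C^2[\ve^{\pm1}][z]$ are polynomials in $z$ of degree at most one, and declaring $t_{ij}[r]\mapsto [z^r]\,\rho_{ij}(-z/\ve)$ (so $t_{ij}[r]\mapsto 0$ unless $r\in\{0,1\}$) is a well-defined assignment on generators. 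Carrying out the substitution $z\rightsquigarrow -z/\ve$ in \eqref{R} reproduces the four matrices displayed in the statement; this is a one-line computation.

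It then remains to verify that these images satisfy \eqref{RTT}. Write $\hat T(z):=R(-z/\ve)$ for the proposed image of the monodromy matrix \eqref{T}, regarded as an operator on (auxiliary $\C^2$)$\,\otimes\,$(module $\C^2[\ve^{\pm1}]$). In the triple tensor product $\C^2\otimes\C^2\otimes\C^2[\ve^{\pm1}]$, with the first two factors auxiliary and the third the module, one has $\hat T_1(z)=R_{13}(-z/\ve)$ and $\hat T_2(w)=R_{23}(-w/\ve)$ directly from the definitions of $T_1,T_2$. Consequently the $RTT$ relation to be checked reads
\[ R_{12}(z/w)\,R_{13}(-z/\ve)\,R_{23}(-w/\ve)=R_{23}(-w/\ve)\,R_{13}(-z/\ve)\,R_{12}(z/w) \]
as an identity in $\End\big((\C^2)^{\otimes 3}\big)$ after extending scalars to $\C[\ve^{\pm1},z^{\pm1},w^{\pm1}]$.

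The key point is that this is nothing but the quantum Yang--Baxter equation \eqref{YBE}, namely $R_{12}(a/b)R_{13}(a)R_{23}(b)=R_{23}(b)R_{13}(a)R_{12}(a/b)$, specialized at $a=-z/\ve$ and $b=-w/\ve$: the spectral parameter entering $R_{12}$ appears only through the ratio $a/b=(-z/\ve)/(-w/\ve)=z/w$, so the normalization by the common factor $-1/\ve$ is precisely what is needed to leave that ratio unchanged. Thus the displayed identity holds, the assignment respects the defining relations of $\YB$, and it extends uniquely to a unital algebra homomorphism $\YB\to\End\C^2[\ve^{\pm1}]$; reading off the blocks of $R(-z/\ve)$ gives the explicit formulas. (As a cross-check one could instead substitute the explicit $2\times2$ images into the relations of \Cref{lem:RTTrelations}, but that is the more laborious route.)

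I do not expect a genuine obstacle here. The only care required is bookkeeping: being consistent about which tensor slots are ``auxiliary'' and which is the ``quantum''/module factor, about how $R$ written in block form sits inside $\End(\C^2\otimes\C^2)$, and about the fact—used silently—that $\YB$ carries no defining relations beyond \eqref{RTT}. The one substantive observation, that evaluating with the normalization $z\mapsto -z/\ve$ turns $RTT$ into an instance of \eqref{YBE}, is immediate once the ratio $z/w$ is seen to be preserved.
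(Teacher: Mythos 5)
Your proposal is correct and follows essentially the same route as the paper: the explicit matrices come from substituting $z\mapsto -z/\ve$ into \eqref{R}, and the $RTT$ relation for the images reduces to the quantum Yang--Baxter equation \eqref{YBE} because the argument of $R_{12}$ only involves the ratio $z/w$, which the rescaling by $-1/\ve$ leaves unchanged (the paper phrases this last point via the automorphism $T(z)\mapsto T(az)$ of Lemma \ref{lem:auto}).
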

\begin{proof}
In order to prove the assertion one needs to verify the relations \eqref{RTT}, but the latter are trivially satisfied because the $R$-matrix solves \eqref{YBE} and $T(z)\mapsto T(-z)$ is an algebra automorphism. 
\end{proof}
\begin{remark}\rm
   The sign change $z\mapsto y=-z$ is introduced for later convenience when we discuss the geometric interpretation of the Yang-Baxter algebra.
\end{remark}
We call the map \eqref{L} the {\em evaluation homomorphism} and 
$\C^2[\ve^{\pm 1}]$ a left {\em evaluation module}, because of the 
close analogy with the definition of such modules for the affine quantum group 
$U_q(\widehat{\mf{gl}}_2)$. We shall refer to $\ve$ as `evaluation parameter' 
as one usually evaluates it in the base field of the algebra, although here we will 
(eventually) identify it with one of the equivariant parameters.

Fix $n\in\N$ and $w\in W\simeq S_n$. Exploiting the coproduct structure of $\YB$ define 
the tensor product of evaluation modules:
\begin{equation}\label{left_tensor}
\bbV_w=\C^2[\ve^{\pm 1}_{w(n)}]\otimes\cdots\otimes\C^2[\ve^{\pm 1}_{w(2)}]\otimes\C^2[\ve^{\pm 1}_{w(1)}]\;.
\end{equation}
Namely, sending
\begin{equation}\label{evT}
T(z)\mapsto R_{0n}(-z/\ve_{w(1)})\cdots R_{02}(-z/\ve_{w(n-1)})R_{01}(-z/ \ve_{w(n)})\in\End(\C^2\otimes\bbV_w)\otimes\C[z]
\end{equation}
in \eqref{RTT} defines an algebra homomorphism $\op{YB}\to\End \bbV_w$ via the decomposition \eqref{T}. By abuse of notation we will keep using the same symbol for the matrix elements $t_{ij}(z)$ in $\YB_m[\![z^{\pm 1}]\!]$ and their images in $\End\bbV_w \otimes\C[z]$. 
\begin{example}\rm
    Let us demonstrate on the simplest example, $n=2$, how the map \eqref{evT} defines a representation $\YB\to \End\bbV_{s_1}$. Comparing with the definition \eqref{T} of the monodromy matrix we need to decompose the product $R_{02}(-z/\ve_2)R_{01}(-z/\ve_1)$ into a sum of the form
    \[
    R_{02}(-z/\ve_2)R_{01}(-z/\ve_1)=\sum_{i,j=0,1}E_{ij}\otimes t_{ij}(z) \/,
    \]
    with
    \[
    t_{ij}(z)\in\End(\C^2\otimes\C^2(\ve_2^{\pm 1})\otimes\C^2(\ve_1^{\pm 1}))\otimes\C[z]\;.
    \]
   Here the index 0 of the R-matrices refers to the first factor in the tensor product $\C^2\otimes\C^2(\ve_2^{\pm 1})\otimes\C^2(\ve_1^{\pm 1})$ and the indices 1 and 2 to the second and third factor respectively, indicating where each $R$-matrix acts non-trivially according to the expansion in \eqref{R}. Using the latter expansion and multiplying matrices in the first factor labelled 0 we find,
   \begin{multline*}
   R_{02}(-z/\ve_2)R_{01}(-z/\ve_1)=\\
   \left(\begin{smallmatrix}1&0\\0&0\end{smallmatrix}\right)\otimes
    \left(\begin{smallmatrix} 1&0\\0&1+z/\ve_1\end{smallmatrix}\right)\otimes
    \left(\begin{smallmatrix} 1&0\\0&1+z/\ve_2\end{smallmatrix}\right)
    +\left(\begin{smallmatrix}1&0\\0&0\end{smallmatrix}\right)\otimes
    \left(\begin{smallmatrix}0&-z/\ve_1\\0&0\end{smallmatrix}\right)\otimes
    \left(\begin{smallmatrix}0&0\\1&0\end{smallmatrix}\right)\\
    +\left(\begin{smallmatrix}0&1\\0&0\end{smallmatrix}\right)\otimes
    \left(\begin{smallmatrix}0&0\\1&0\end{smallmatrix}\right)\otimes
    \left(\begin{smallmatrix} 1&0\\0&1+z/\ve_2\end{smallmatrix}\right)
    +\left(\begin{smallmatrix}0&1\\0&0\end{smallmatrix}\right)\otimes
    \left(\begin{smallmatrix}0&0\\0&1\end{smallmatrix}\right)\otimes
    \left(\begin{smallmatrix} 0&0\\1&0\end{smallmatrix}\right)\\    +\left(\begin{smallmatrix}0&0\\1&0\end{smallmatrix}\right)\otimes
    \left(\begin{smallmatrix} 1&0\\0&1+z/\ve_1\end{smallmatrix}\right)\otimes
    \left(\begin{smallmatrix}0&-z/\ve_2\\0&0\end{smallmatrix}\right)
    +\left(\begin{smallmatrix}0&0\\1&0\end{smallmatrix}\right)\otimes
\left(\begin{smallmatrix}0&-z/\ve_1\\0&0\end{smallmatrix}\right)\otimes
\left(\begin{smallmatrix}0&0\\0&1\end{smallmatrix}\right)\\
    +\left(\begin{smallmatrix}0&0\\0&1\end{smallmatrix}\right)\otimes
    \left(\begin{smallmatrix}0&0\\1&0\end{smallmatrix}\right)\otimes
    \left(\begin{smallmatrix}0&-z/\ve_2\\0&0\end{smallmatrix}\right)\otimes
    +\left(\begin{smallmatrix}0&0\\0&1\end{smallmatrix}\right)\otimes
    \left(\begin{smallmatrix}0&0\\0&1\end{smallmatrix}\right)\otimes
    \left(\begin{smallmatrix}0&0\\0&1\end{smallmatrix}\right)\;.
   \end{multline*}
   Thus, when comparing with the expansion of $T(z)$ in \eqref{T} we find from the first line that
   \[
   t_{00}(z)\mapsto 
   \left(\begin{smallmatrix} 1&0\\0&1+z/\ve_1\end{smallmatrix}\right)\otimes
    \left(\begin{smallmatrix} 1&0\\0&1+z/\ve_2\end{smallmatrix}\right)
    +\left(\begin{smallmatrix}0&-z/\ve_1\\0&0\end{smallmatrix}\right)\otimes
    \left(\begin{smallmatrix}0&0\\1&0\end{smallmatrix}\right)
   \]
   which matches the action prescribed by the coproduct \eqref{Delta},
   \[
   \Delta\,t_{00}(z)=t_{00}(z)\otimes t_{00}(z)+t_{10}(z)\otimes t_{01}(z)
   \]
   and the evaluation modules when inserting the expressions from Lemma \eqref{lem:evhom} in each factor. Similarly, one checks the result for the remaining generators $t_{ij}(z)$.
\end{example}
\begin{lemma}\label{lem:Tinv}
    The inverse $T(z)^{-1}$ of the monodromy matrix exists in $\End\bbV_w$ and is given by
    \[
    T(z)^{-1}\mapsto R_{10}(-\ve_{w(n)}/z)R_{20}(-\ve_{w(n-1)}/z)\cdots R_{01}(-\ve_{w(n)}/z)\;.
    \]
\end{lemma}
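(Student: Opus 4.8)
The plan is to observe that on the module $\bbV_w$ the monodromy matrix is, by \eqref{evT}, an \emph{ordered product} of $R$-matrices, each acting in the auxiliary space $\C^2$ (labelled $0$) and in exactly one of the $n$ quantum tensor factors, so its inverse is obtained simply by reversing the product and inverting each factor. First I would record that for any nonzero scalar $a$ the operator $R(a)\in\End(\C^2\otimes\C^2)$ is invertible: in the ordered basis the only non-scalar block is $\left(\begin{smallmatrix}1-a&a\\1&0\end{smallmatrix}\right)$, of determinant $-a\neq 0$; and by \eqref{invR} its inverse is $R(a)^{-1}=P\circ R(a^{-1})\circ P$ with $P$ the flip. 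Applying this identity in the two factors labelled $0$ and $j$ gives $R_{0j}(a)^{-1}=P_{0j}R_{0j}(a^{-1})P_{0j}=R_{j0}(a^{-1})$, where $R_{j0}$ denotes the same $R$-matrix with the roles of the two factors interchanged.

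Next I would note that every argument appearing in \eqref{evT} is of the form $-z/\ve_{w(i)}$, which is invertible in the ring over which we work (the $\ve_i$ are already inverted in $\bbV_w$, and one adjoins $z^{-1}$); hence each factor $R_{0j}(-z/\ve_{w(i)})$ is invertible in $\End(\C^2\otimes\bbV_w)\otimes\C[z^{\pm 1}]$, and therefore so is their finite product $T(z)$. The inverse of a finite product of invertible operators is the product of the inverses in the opposite order, so from
\[ T(z)=R_{0n}(-z/\ve_{w(1)})\cdots R_{02}(-z/\ve_{w(n-1)})R_{01}(-z/\ve_{w(n)}) \]
and the substitution $(-z/\ve_{w(i)})^{-1}=-\ve_{w(i)}/z$ in each inverted factor one obtains exactly the asserted expression
\[ T(z)^{-1}=R_{10}(-\ve_{w(n)}/z)R_{20}(-\ve_{w(n-1)}/z)\cdots R_{n0}(-\ve_{w(1)}/z)\,. \]
For completeness I would remark that this is automatically a two-sided inverse, so no separate check of $T(z)^{-1}T(z)=\mathrm{id}$ is needed, and that expanding the right-hand side in the auxiliary $\C^2$ according to the decomposition \eqref{T} displays the entries $(T(z)^{-1})_{ij}$, which is precisely the data used to define the antipode \eqref{S} on this representation.

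There is essentially no genuine mathematical obstacle here; the only thing that requires care is the bookkeeping of index conventions — tracking that inverting $R_{0j}$ turns it into $R_{j0}$ via \eqref{invR}, that the order of the product reverses, and that the spectral arguments $-z/\ve$ are replaced by their reciprocals $-\ve/z$ — together with being explicit that one passes to a ring in which $z$ (equivalently the spectral parameter $y$) and all the $\ve_i$ are invertible, so that the displayed formula genuinely lands in $\End\bbV_w$ after the relevant localization.
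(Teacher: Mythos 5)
Your proposal is correct and is essentially the paper's own argument: the paper proves the lemma in one line by invoking the invertibility formula \eqref{invR} for each $R$-factor together with the product expression \eqref{evT}, which is exactly the reverse-the-product-and-invert-each-factor computation you spell out. Your final expression ending in $R_{n0}(-\ve_{w(1)}/z)$ is the intended reading of the lemma (the last factor as printed in the statement is evidently a typo), so no gap remains.
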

\begin{proof}
This is immediate from the existence of the inverse of the $R$-matrix \eqref{invR} and \eqref{evT}.
\end{proof}

\begin{remark}\rm
In general it is difficult to write explicit formulae for the generators $t_{ij}(z)$ as in the case $n=1$. However, we will be able to describe the action of the matrix entries $t_{ij}(z)$ on the evaluation modules $\bbV_w$ in terms of the graphical calculus described in \S \ref{sec:graphical} below. This action can be identified with certain multiplication and convolution operators in the quantum K-rings; see \Cref{thm:int-system-ops} and \Cref{cor:main-cor}.
\end{remark}

\begin{remark}\label{rmk:Vopp}\rm
It follows from Lemma \ref{lem:YB2YBopp} that any left $\YB$-module is a right $\YB^\opp$-module and vice versa. Our definition of evaluation modules extends to $\YB^\opp$ along the same lines as discussed previously. However, one then considers instead the dual modules
 \begin{equation}\label{Vopp}
 \bbV_w^\opp:=(\C^2)^*[\ve^{\pm 1}_{w(1)}]\otimes\cdots\otimes(\C^2)^*[\ve^{\pm 1}_{w(n)}]
 \end{equation}
where we have reversed the ordering of the factors (as usual in the context of dual modules for Hopf algebras). Both sets of modules are related via the following isomorphism: given a $01$-word $J=j_1\ldots j_n$ and $f_i\in\C[\ve_i^{\pm 1]}]$, $i=1,\ldots,n$ define a $\C$-linear map $\opp:\bbV_w\to\bbV^\opp_{w}$ via
\begin{equation}\label{PDdef}
    f_n(\ve_n) v_{j_n}\otimes\cdots\otimes f_1(\ve_1)v_{j_1}\mapsto
    f_n(\ve_1) v^{j_1}\otimes\cdots\otimes f_1(\ve_n) v^{j_n}\;,
    \end{equation}
    where $\{v^{j}:j=0,1\}\subset(\C^2)^\ast$ denotes the dual basis to $\{v_j:j=0,1\}\subset\C^2$. 
    Then we have the following intertwining relation between the (left) actions of the Yang-Baxter algebras $\YB$ and $\YB^\opp$,
    \begin{equation}\label{Poincare}
        \opp\circ t_{ij}(z)=(t^\opp_{ji}(z))^\sfT\circ\opp\;,
    \end{equation}
    where $\sfT$ denotes the matrix transpose in $\End\bbV^\opp_{w}$. In the case of quantum cohomology this isomorphism is simply Poincar\'e duality and one has $\YB\cong\YB^\opp$ because the corresponding $R$-matrix in that case is symmetric, $R^\opp(z)=R(z)^{\sfT\otimes\sfT}=R(z)$. This ceases to be true in the case of quantum $K$-theory and, hence, two Yang-Baxter algebras and two types of modules are involved.
\end{remark}

We now discuss the braiding of the modules \eqref{left_tensor} which we will then identify with the geometric left Weyl group action below.

\begin{cor}\label{cor:YBbraid}
Let $u,w\in W$. Then the $\YB$-modules $\bbV_u$ and $\bbV_w$ are isomorphic and the isomorphism is given explicitly in terms of the $R$-matrix \eqref{R}.
\end{cor}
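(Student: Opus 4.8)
The plan is to build the isomorphism $\bbV_u \xrightarrow{\sim} \bbV_w$ out of the twisted $R$-matrices $\check{R}(z) = P \circ R(z)$, exactly as one constructs the braiding in the category of modules over a quasitriangular Hopf algebra. First I would reduce to the case where $w = s_i u$ differs from $u$ by a single simple transposition: since every $w\in W$ is connected to $u$ by a chain of such moves, and since a composite of $\YB$-module isomorphisms is again one, it suffices to treat this elementary step. For $w = s_i u$ the modules $\bbV_u$ and $\bbV_w$ have the same tensor factors but with the $i$-th and $(i{+}1)$-th evaluation parameters swapped, so the natural candidate map is the operator acting as $\check{R}_{n-i,n-i+1}(\ve_{u(i)}/\ve_{u(i+1)})$ on those two adjacent factors and as the identity elsewhere (the index shift matches the convention already used in the excerpt's statement $\Phi(\check{R}_{n-i,n-i+1}(\ve_i/\ve_{i+1})v_\lambda)=\bs s_i.\cO_\lambda$).

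The key computation is then that this operator intertwines the two $\YB$-actions, i.e. it commutes with each $t_{ij}(z)$ acting via \eqref{evT}. This is a direct consequence of the Yang--Baxter equation \eqref{YBE}: written in $\check R$-form, \eqref{YBE} says precisely that $\check R$ acting on two adjacent auxiliary spaces commutes past the product $R_{0,n}\cdots R_{0,1}$ defining the monodromy matrix, because moving the flip $P$ together with the $R$ through the neighbouring factor $R_{0,j}R_{0,j+1}$ reproduces the same product with the two parameters exchanged. Concretely, one inserts the definition \eqref{evT} of the representation, isolates the two consecutive factors $R_{0,j}(-z/\ve_{u(i+1)})R_{0,j-1}(-z/\ve_{u(i)})$ (with $j = n-i+1$), and applies \eqref{YBE} with the spectral parameters $(z/\ve_{u(i)}, z/\ve_{u(i+1)})$ to push the braiding operator across; the remaining $R_{0,\ell}$ factors commute with $\check R_{n-i,n-i+1}$ trivially since they act on disjoint tensor slots. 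Invertibility of the resulting map follows from \eqref{invR}, which gives $\check R(z)\check R(z^{-1}) = \mathrm{id}$ up to the appropriate parameter; one should also note that $\ve_{u(i)}/\ve_{u(i+1)}$ is a unit in $\Kpt$, so no denominators cause trouble, and $\check R(z)$ is an honest isomorphism of free modules.

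Finally, to get a canonical isomorphism $\bbV_u \to \bbV_w$ for a general pair $(u,w)$ one composes the elementary braidings along a reduced word for $w u^{-1}$; the Yang--Baxter equation \eqref{YBE} guarantees that different reduced words give the same composite (this is the standard coherence/hexagon argument for braidings, which one can either invoke abstractly or verify directly by checking the braid relation on triples of adjacent factors using \eqref{YBE} once more). I expect the main obstacle to be purely bookkeeping: getting the index conventions consistent between the ``geometric'' ordering $\ve_1,\dots,\ve_n$ and the ``reversed'' ordering in which the tensor product \eqref{left_tensor} and the monodromy product \eqref{evT} are written, so that the correct auxiliary-space labels $n-i, n-i+1$ and the correct spectral-parameter ratio appear; the analytic content is entirely contained in the already-established properties \eqref{YBE} and \eqref{invR} of the five-vertex $R$-matrix.
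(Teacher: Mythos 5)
Your proposal is correct and follows essentially the same route as the paper: the paper's Lemma \ref{lem:Rcheck} is exactly your elementary step (the QYBE \eqref{YBE}, multiplied by the flip, shows $\check R_{n-i,n-i+1}(\ve_{u(i)}/\ve_{u(i+1)})$ intertwines the two monodromy products on adjacent factors, with invertibility from \eqref{invR}), and the general case is obtained, as you do, by composing such elementary braidings along simple reflections connecting $u$ to $w$. Your extra remarks on coherence of reduced words are not needed for the corollary itself, but they anticipate the cocycle property the paper establishes separately in Section \ref{sec:left-weyl}.
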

The proof of this corollary is immediate from the following lemma.
\begin{lemma}\label{lem:Rcheck}
Set $\check R(z)=P\circ R(z)$. Then 
\begin{equation}
\check R_{12}(\ve_1/\ve_2) R_{02}(-z/\ve_2)R_{01}(-z/\ve_1)=
R_{02}(-z/\ve_2)R_{01}(-z/\ve_2)\check R_{12}(\ve_1/\ve_2) \;.
\end{equation}
In particular, set $n=2$ and consider the tensor products $\bbV_{s_1}$ and $\bbV_{id}$. Then the matrix $\check R(\ve_1/\ve_2)$ gives an isomorphism $\bbV_{id}\to\bbV_{s_1}$ of $\YB$-modules.
\end{lemma}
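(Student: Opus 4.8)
The plan is to obtain the displayed braid-type identity as a consequence of the quantum Yang--Baxter equation \eqref{YBE}, and then to read off the asserted module isomorphism from the block decomposition \eqref{T} of the monodromy matrix on the evaluation module \eqref{evT}, using the invertibility of $\check R$ recorded in \eqref{invR}. The first part is where the actual work lies; the second is formal.

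For the algebraic identity, the key observation is that conjugating by the flip satisfies $P_{12}R_{1\bullet}(z)P_{12}=R_{2\bullet}(z)$ and $R_{21}(z)P_{12}=P_{12}R_{12}(z)=\check R_{12}(z)$. So I would take \eqref{YBE} with the auxiliary space labelled by the smallest index (so that the two site--auxiliary factors appear in the ``$R_{0\bullet}$'' form used in \eqref{evT}), multiply one side by $P_{12}$, and insert $P_{12}P_{12}=1$ between the remaining three $R$-matrices; by the conjugation rules above, the factor of $R$ between the two sites turns into $\check R_{12}$ moved to the far side, while the two $R_{0\bullet}$-factors get their spectral parameters interchanged. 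Substituting for those parameters $-z/\ve_1$ and $-z/\ve_2$, whose ratio is $\ve_1/\ve_2$ (and recalling $\check R(z)^{-1}=\check R(z^{-1})$ from \eqref{invR}, so that the precise choice of ratio is immaterial), gives the identity of the lemma. Alternatively, since all operators lie in $\End\bigl((\C^2)^{\otimes 3}\bigr)$ and $R$ has the explicit five-vertex form \eqref{R} with the configurations \eqref{5vR}, the identity can simply be checked by a short computation on the eight basis vectors.

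For the ``in particular'' statement, take $n=2$. By \eqref{evT} the algebra $\YB$ acts on $\bbV_{id}$ via $T(z)\mapsto R_{02}(-z/\ve_1)R_{01}(-z/\ve_2)$ and on $\bbV_{s_1}$ via $T(z)\mapsto R_{02}(-z/\ve_2)R_{01}(-z/\ve_1)$. Since $\check R_{12}(\ve_1/\ve_2)$ acts trivially on the auxiliary factor, it commutes with each $(E_{ij})_0$, so decomposing both sides of the braid identity along the matrix units $E_{ij}$ of that factor as in \eqref{T} shows that $\check R_{12}(\ve_1/\ve_2)$ intertwines $t_{ij}(z)$ acting on $\bbV_{id}$ with $t_{ij}(z)$ acting on $\bbV_{s_1}$ for all $i,j\in\{0,1\}$, and hence (as these generate the image of $\YB$) is a morphism of $\YB$-modules. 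It is invertible because $\check R(z)=P\circ R(z)$ has inverse $P\circ R(z^{-1})$ by \eqref{invR}, so $\check R_{12}(\ve_1/\ve_2)^{-1}=\check R_{12}(\ve_2/\ve_1)$. Thus $\check R_{12}(\ve_1/\ve_2)\colon\bbV_{id}\to\bbV_{s_1}$ is an isomorphism of $\YB$-modules; combined with the fact that an arbitrary $w\in W$ is a product of simple transpositions acting on adjacent sites, this also yields \Cref{cor:YBbraid}.

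The only genuine difficulty I anticipate is bookkeeping rather than conceptual: keeping track of the order of the tensor factors (``aux-first'' versus ``site-first'' $R$-matrices), the normalization $z\mapsto -z/\ve$ of the evaluation homomorphism, and whether $\ve_1/\ve_2$ or $\ve_2/\ve_1$ enters $\check R$. Once these conventions are pinned down consistently, both halves of the lemma follow immediately.
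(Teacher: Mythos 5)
Your proof is correct and follows essentially the same route as the paper: the braid identity is deduced from the QYBE \eqref{YBE} by left-multiplying with the flip $P_{12}$ (the paper relabels the spaces and replaces $z$ by $-z$), and the module isomorphism is then read off from the decomposition \eqref{T} of the monodromy matrix on $\bbV_{id}$ and $\bbV_{s_1}$ together with the invertibility of $\check R$ from \eqref{invR}. Note that your derivation gives the right-hand side with the spectral parameters interchanged, $R_{02}(-z/\ve_1)R_{01}(-z/\ve_2)\check R_{12}(\ve_1/\ve_2)$, which is exactly what the paper's own proof produces as well; the right-hand side as printed in the lemma's displayed equation appears to contain a typo.
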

\begin{proof}
From \eqref{YBE} it follows (after a relabelling of the spaces) that 
\[
R_{01}(z/\ve_1)R_{02}(z/\ve_2)R_{12}(\ve_1/\ve_2)=
R_{12}(\ve_1/\ve_2)R_{02}(z/\ve_2)R_{01}(z/\ve_1)\;.
\]
Multiplying with the flip operator $P_{12}$ from the left on both sides and replacing $z$ with $-z$ the first assertion now follows. 

To prove the second claim recall that exploiting the coproduct structure of $\YB$ the map $T(z)\mapsto R_{02}(-z/\ve_2)R_{01}(-z/\ve_{1})$ gives $\bbV_{s_1}$ and  $T(z)\mapsto R_{02}(-z/\ve_1)R_{01}(-z/\ve_{2})$ gives the representation $\bbV_{id}$ for $n=2$. The second claim now follows from the first noting that the $\check R$-matrix is invertible, according to \eqref{invR}.
\end{proof}
\begin{defn} A left $\YB$-module $M$ is called 
highest weight if there exists a nonzero vector $v_o\in M$ such that the following properties hold:
\begin{equation}\label{lefthw}
\text{(i)}\quad t_{ii}(z).v_o=\mu_i(z)v\qquad\text{ and }\qquad\text{(ii)}\quad t_{ij}(z).v_o=0,\quad i<j
\end{equation}
for some $\mu_i\in\C[\![z^{\pm 1}]\!]$ and $i,j=0,1$. 
\end{defn}

\begin{prop}\label{prop:pseudo}
(i) All evaluation modules and their tensor products $\bbV_w$ are highest weight $\YB$-modules. Specifically, we have for the (left) $\YB$-module $\bbV_w$ defined in \eqref{evT} that
\begin{equation}\label{pseudom}
v_o=v_1\otimes\cdots\otimes v_1\in(\C^2)^{\otimes n}\;
\end{equation}
and
\begin{equation}\label{leftOm}
t_{00}(z).v_o=\prod_{k=1}^{n}(1+z/\ve_k)v_o,\quad
t_{01}(z).v_o=0,\quad
t_{11}(z).v_o=v_o\;.
\end{equation}
(ii) The images $t_{ij}(z)$ in $\End\bbV_w\otimes\C[z]$ have at most degree $n$ in $z$, i.e. for the coefficients defined in \eqref{t} we have that
\[
t_{ij}[r]\mapsto 0,\qquad\forall r>n\;.
\]
\noindent (iii) Denote by $V_{k,n}\subset\bbV_w$ the subspace spanned by vectors of the form
\[
f_n(\ve_{w(n)})v_{i_n}\otimes\cdots\otimes f_1(\ve_{w(1)})v_{i_1},\qquad\sum_{j=1}^ni_j=n-k,
\]
where $f_j\in\Kpt$. Then
\[
t_{00}(z),t_{11}(z):V_{k,n}\to V_{k,n}\otimes\C[z],\quad t_{01}(z):V_{k,n}\to V_{k-1,n}\otimes\C[z],
\quad t_{10}(z):V_{k,n}\to V_{k+1,n}\otimes\C[z]\;.
\]
That is, the operators $t_{00}(z),t_{11}(z)$ preserve the number of 0 and 1-letters in the word $I=i_1\ldots i_n$ labeling a vector in $V_{k,n}$, while $t_{01}(z)$ adds a 1-letter and removes a zero letter and $t_{10}(z)$ does the opposite.
\end{prop}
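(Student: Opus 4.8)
The plan is to verify all three statements directly from the explicit formula \eqref{evT} for the action of $T(z)$ on the tensor product $\bbV_w$, using the coproduct structure and the single-site formulae of Lemma \ref{lem:evhom}. For part (i), I would evaluate $T(z)$ on $v_o = v_1 \otimes \cdots \otimes v_1$ by applying the $R$-matrices $R_{0j}(-z/\ve_{w(\cdot)})$ one at a time, starting from the rightmost factor. Inspecting \eqref{R}, the only summands of $R(z)$ whose second tensor factor does not annihilate $v_1$ are the first summand (which sends the auxiliary state $0 \mapsto 0$ and multiplies by $1-z$) and the last summand (which sends $1 \mapsto 1$). Consequently, acting on $v_o$, each $R_{0j}$ either keeps the auxiliary index at $1$ (contributing nothing new) or, if the auxiliary index is $0$, keeps it at $0$ and multiplies by $(1 + z/\ve_{w(j)})$ after the sign change $z \mapsto -z$. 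This immediately gives $t_{11}(z).v_o = v_o$, $t_{00}(z).v_o = \prod_{k=1}^n (1+z/\ve_k) v_o$, and $t_{01}(z).v_o = 0$ since exiting with auxiliary index $0$ having entered with index $0$ forces every site to stay at $1$ whereas $t_{01}$ requires the auxiliary channel to change from $1$ to $0$. The highest weight property (ii) of the definition, namely $t_{ij}(z).v_o = 0$ for $i<j$, is exactly the statement $t_{01}(z).v_o=0$ just established.

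For part (ii), the degree bound in $z$ follows because each factor $R_{0j}(-z/\ve_{w(j)})$ in \eqref{evT} is affine-linear in $z$ (visible from \eqref{R}: the entries are $1$, $1-z$, $z$, or $0$), so the product of $n$ such matrices has entries of degree at most $n$ in $z$; hence in the expansion \eqref{t} the coefficients $t_{ij}[r]$ map to $0$ for $r>n$. (One may also note the lowest degree is $0$, so $r$ ranges over $0 \le r \le n$, though only the upper bound is asserted.)

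For part (iii), I would track the total number of $1$-letters in the word labeling a basis vector. Label states of $(\C^2)^{\otimes n}$ by $01$-words; the weight $\sum i_j = n-k$ is preserved or shifted by the auxiliary channel exactly as dictated by the five admissible vertices \eqref{5vR}: in each vertex configuration the number of incoming $1$'s among the West/North edges equals the number of outgoing $1$'s among the East/South edges (this "conservation of arrows" is manifest from the list \eqref{5vR}). Composing vertices along the auxiliary line, the net change in the number of $1$-letters in the quantum space equals (West auxiliary label) $-$ (East auxiliary label). For $t_{00}$ this is $0-0=0$ and for $t_{11}$ it is $1-1=0$, so both preserve $V_{k,n}$; for $t_{01}$ (West $=0$, East $=1$) the quantum space loses one $1$-letter, i.e. $t_{01}: V_{k,n} \to V_{k-1,n} \otimes \C[z]$; for $t_{10}$ (West $=1$, East $=0$) it gains one $1$-letter, i.e. $t_{10}: V_{k,n} \to V_{k+1,n} \otimes \C[z]$. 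The coefficient functions $f_j$ lie in $\Kpt$ throughout since each $R$-matrix entry involves only $\ve_{w(j)}^{\pm 1}$ and $z$.

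I do not expect a genuine obstacle here: the statements are a direct unpacking of \eqref{evT}, Lemma \ref{lem:evhom}, and the combinatorics of the five-vertex configurations \eqref{5vR}. The only point requiring a modicum of care is the bookkeeping of indices when peeling off the $R$-matrices in part (i)—in particular confirming that the "wrong" off-diagonal summands of \eqref{R} genuinely annihilate the all-$v_1$ reference state—but this is a finite check on the $4\times 4$ form of $R(z)$ and the coproduct \eqref{Delta}.
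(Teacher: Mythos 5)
Your overall strategy — direct verification from \eqref{evT}, Lemma \ref{lem:evhom} and the five-vertex combinatorics — is exactly the paper's (its proof is a one-line ``straightforward computation''), part (ii) is fine, and your final mapping statements are correct; but two steps are wrong as written. In part (i), the claim that only the first and last summands of \eqref{R} act nontrivially on $v_1$ is false: the third summand $\left(\begin{smallmatrix}0&0\\1&0\end{smallmatrix}\right)\otimes\left(\begin{smallmatrix}0&z\\0&0\end{smallmatrix}\right)$ sends $v_1\mapsto z\,v_0$ while raising the auxiliary index $0\mapsto 1$, and it is precisely this vertex that makes $t_{10}(z).v_o\neq 0$ (the Bethe creation operator in \eqref{Bethe_def}). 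So your assertion that the auxiliary index never changes when acting on $v_o$ would force $t_{10}(z).v_o=0$, which is false. The correct argument for \eqref{leftOm} is that any path of the auxiliary index which rises $0\to 1$ at some site must later descend $1\to 0$ through the summand $\left(\begin{smallmatrix}0&1\\0&0\end{smallmatrix}\right)\otimes\left(\begin{smallmatrix}0&0\\1&0\end{smallmatrix}\right)$, whose quantum part annihilates $v_1$; hence such paths contribute only to $t_{10}$, and $t_{00},t_{01},t_{11}$ receive only the excursion-free paths, which gives \eqref{leftOm}. Equivalently: in the iterated coproduct \eqref{Delta} of $t_{00}$, $t_{01}$, $t_{11}$, every term other than the surviving one contains a single-site factor $t_{01}$, which kills $v_1$ by Lemma \ref{lem:evhom}.

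In part (iii) the conservation law is the right idea, but your bookkeeping has two canceling slips. With the paper's conventions the West boundary of the row carries $j$ and the East boundary carries $i$, so for $t_{01}$ one has West $=1$, East $=0$; the conservation law (number of $1$-letters in $J'$ equals that in $J$ plus $j-i$) then says the word gains a $1$-letter, i.e.\ loses a zero-letter, and since $V_{k,n}$ consists of words with $k$ zero-letters (hence $n-k$ one-letters) this lands in $V_{k-1,n}$. You instead assign West $=0$, East $=1$ to $t_{01}$, deduce that a $1$-letter is lost, and then conclude $V_{k-1,n}$ — but losing a $1$-letter means gaining a zero, i.e.\ $V_{k+1,n}$; the two errors cancel, so the final statement matches the proposition while the intermediate claims are inconsistent. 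The quickest sanity check is the single-site action of Lemma \ref{lem:evhom}: $t_{01}(z).v_0=v_1$ and $t_{01}(z).v_1=0$, so $t_{01}$ removes a zero-letter, while $t_{10}(z).v_1=(-z/\ve)v_0$ adds one; the telescoping of indices in the iterated coproduct then gives the stated shifts for all four entries.
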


\begin{proof}
All claims are a straightforward computation employing the explicit form of the $R$-matrix \eqref{R} and \eqref{evT}.
\end{proof}

Recall from Lemma \ref{lem:RTTrelations} that the elements $t(z)=t_{00}(z)+q t_{11}(z)$ with $q$ central generate an abelian subalgebra of $\YB$. We now construct a new basis in the evaluation module $\bbV_w$ where these elements act diagonally. The elements of this basis are called `Bethe vectors' and have been previously constructed in \cite[Section 4]{gorbounov2017quantum} and we refer the readers to this work for proofs.

We start by defining so-called `off-shell' Bethe vectors: fix some $0\le k\le n$ and let $x_1,\ldots,x_k$ be pairwise commuting indeterminates. Then we set
\begin{equation}\label{Bethe_def}
\be(x_1,\ldots,x_k)=t_{10}(-x_1)\cdots t_{10}(-x_k)v_o,
\end{equation}
where $v_o$ is the highest weight vector \eqref{pseudom}. (For $k=0$ we shall simply take $v_o$ instead.) The following is a restatement of \cite[Prop 4.3]{gorbounov2017quantum} which gives the expansion of the off-shell Bethe vectors in the spin basis $v_\lambda=v_{i_n}\otimes\cdots \otimes v_{i_1}$ where $I=i_1\ldots i_n$ is the unique 01-word corresponding to $\lambda$:
\begin{prop}
    We have the expansion
    \begin{equation}\label{Bethe2spin}
        \be(x_1,\ldots,x_k)=x_1\cdots x_k\sum_{\lambda}\ve_\lambda^{-1}G_{\lambda^\vee}(1-x_1,\ldots,1-x_k|1-\ve^{-1}_{w(n)},\ldots,1-\ve^{-1}_{w(1)})v_\lambda,
    \end{equation}
    where the sum runs over all $\lambda\subset (n-k)^k$, $\lambda^\vee$ denotes the partition whose Young diagram is the complement of the one of $\lambda$ in the $k\times(n-k)$ bounding box and $\ve_\lambda=\prod_{j\in J_\lambda}\ve_{w(j)}$ with $J_\lambda\subset\{1,\ldots,n\}$ being the positions of $0$-letters in the 01-word corresponding to $\lambda$.
\end{prop}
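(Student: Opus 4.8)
The statement is, after a change of variables, \cite[Prop.~4.3]{gorbounov2017quantum}, so the plan has two parts: (i) set up the dictionary between the present conventions and those of \emph{loc.\ cit.}, and (ii) for completeness sketch the underlying combinatorial/algebraic argument in the form adapted to \Cref{sec:graphical}. For (i) the key points are the substitution $z\mapsto y=-z$ already built into the evaluation map \eqref{L}, the $w$-twisted ordering of the evaluation parameters in \eqref{evT} (which produces $\ve_{w(n)},\dots,\ve_{w(1)}$ in this order on the right-hand side), and the identification of $t_{10}$ in the present normalization with the creation operator of \cite{gorbounov2017quantum}; once these are matched, \eqref{Bethe2spin} is exactly \cite[Prop.~4.3]{gorbounov2017quantum}.

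For (ii), first I would rewrite $\be(x_1,\dots,x_k)=t_{10}(-x_1)\cdots t_{10}(-x_k)v_o$ as a partition function of the five-vertex model. Expanding $T(-x_i)$ against the configurations \eqref{5vR}, the vector is the sum over configurations on a $k\times n$ grid in which the $i$-th horizontal line is the monodromy $T(-x_i)$ selected in its $(1,0)$ matrix element, the $j$-th vertical line carries the parameter $\ve_{w(j)}$, the bottom boundary is the all-$1$ state of $v_o$ from \eqref{pseudom}, and the coefficient of $v_\lambda$ is the restricted sum over configurations whose top boundary is the $01$-word attached to $\lambda$. By \Cref{lem:evhom} each horizontal strip, taken alone, computes a one-row partition function with explicit vertex weights.

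Next I would evaluate the coefficient of $v_\lambda$. Since the five-vertex $R$-matrix has no doubly-turning vertex, the admissible configurations are non-intersecting families of lattice paths, so a Lindstr\"om--Gessel--Viennot type argument expresses the coefficient as a $k\times k$ determinant whose entries are one-column partition functions; these are precisely the single-column double Grothendieck polynomials computed in \cite[Prop.~2.9]{gorbounov2017quantum} (the same ones used in \Cref{sec:groth-exp}, cf.\ \eqref{E:deglocgroth}) after the substitution $x_i=1-Y_i^{-1}$, $t_j=1-\ve_{w(j)}^{-1}$. Identifying this determinant with the factorial Grothendieck polynomial $G_{\lambda^\vee}$ then follows from the Jacobi--Trudi type determinantal identity for factorial Grothendieck polynomials (McNamara \cite{mcnamara:factorial}, see also \cite{gorbounov2017quantum}), and the global prefactor $x_1\cdots x_k$ together with the monomial $\ve_\lambda^{-1}$ is read off from the weights of the $t_{10}$-vertices in \Cref{lem:evhom}. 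An alternative to the determinant is a direct induction on $k$ via $\be(x_1,\dots,x_k)=t_{10}(-x_1)\be(x_2,\dots,x_k)$, using the $t_{10}t_{00}$ and $t_{10}t_{11}$ relations of \Cref{lem:RTTrelations} to reduce the action of $t_{10}$ on the spin basis to a branching/Pieri rule that one matches with the corresponding rule for $G_{\lambda^\vee}$; the symmetry of $\be$ in the $x_i$ (from $t_{10}(z)t_{10}(w)=t_{10}(w)t_{10}(z)$) is consistent with the symmetry of $G_{\lambda^\vee}(1-x_1,\dots,1-x_k\,|\,\cdot)$ in $x$.

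The main obstacle is bookkeeping rather than a conceptual difficulty: the right-hand side involves two dualizations that must be tracked simultaneously against the geometry of the lattice — the passage $\lambda\mapsto\lambda^\vee$ to the complementary partition, and the reversal together with inversion of the equivariant parameters $1-\ve_{w(n)}^{-1},\dots,1-\ve_{w(1)}^{-1}$ — and these interact with the choice of which boundary of the lattice is the input and with the $\sigma$-conjugation symmetry \eqref{invR} of the $R$-matrix. I would pin down all signs and the overall normalization first on the case $n=1$ (equivalently $k=1$), where \Cref{lem:evhom} gives $t_{10}$ in closed form and the identity reduces to the explicit formula for $G_{1^r}$, and then propagate the normalization through the recursion above.
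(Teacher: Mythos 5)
Your part (i) is precisely the paper's approach: the proposition is presented as a restatement of \cite[Prop.~4.3]{gorbounov2017quantum} under the conventions dictionary (the sign change built into \eqref{L}, the $w$-twisted ordering of evaluation parameters in \eqref{evT}, and the $\beta=-1$ change of variables recorded in \cref{sec:beta-calc}), and the paper explicitly refers the reader to that work for proofs. Your additional sketch (ii) of an independent five-vertex/RTT argument goes beyond what the paper supplies, but the essential content of your proposal coincides with the paper's.
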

Using the commutation relations from Lemma \ref{lem:RTTrelations} one now shows by a standard computation in the literature on quantum integrable systems (see e.g. \cite{faddeev1990lectures}) that the Bethe vectors are eigenvectors of the images of the elements $t(z)$ in the evaluation module $\bbV_w$ provided the indeterminates $x_i$ satisfy the so-called Bethe ansatz equations,
\begin{equation}\label{BAE}
\prod_{j=1}^n(1-x_i/\ve_j)\prod_{j\neq i}^k(x_j/x_i)+(-1)^kq=0,\qquad i=1,\ldots,k\;.
\end{equation}
The solutions of these equations are called `Bethe roots' and the Bethe vectors evaluated on the Bethe roots are called `on-shell'. (Physically, the Bethe roots are the momenta of quasi-particles in the associated quantum spin chain.) The statement which in is general difficult to prove is that all eigenvectors are obtained this way and that they from an eigenbasis in each subspace $V_{k,n}=\mathrm{span}\{v_\lambda~:~\lambda\subset(n-k)^k\}\subset\bbV_w$. Expanding the Bethe roots as power series in $q$ both statements have been proven in \cite[Lemma 4.6 and Theorem 4.8]{gorbounov2017quantum}. In particular, each solution $x^\lambda=(x^\lambda_1,\ldots,x_k^\lambda)$ with $\lambda\subset (n-k)^k$ is uniquely identified by its formal limit $q\to0$ where it coincides with $\ve_\lambda$.
\begin{thm}\label{thm:bethe-vectors}
    The `on-shell' Bethe vectors $\{\be_\lambda=\be(x^\lambda_1,\ldots,x_k^\lambda)~:~\lambda\subset (n-k)^k\}$ provide an eigenbasis in each subspace $V_{k,n}$ and we have the eigenvalue equations
    \begin{equation}\label{eigenvalues}
        t(z)\be_\lambda=\left(\frac{\prod_{j=1}^n(1+z/\ve_j)}{\prod_{i=1}^k(1+z/x^\lambda_i)}+\frac{q}{\prod_{i=1}^k(1+x^\lambda_i/z)}\right)\be_\lambda\;.
    \end{equation}
\end{thm}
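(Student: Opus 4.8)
The plan is to run the algebraic Bethe ansatz, using only the $RTT$-relations of Lemma~\ref{lem:RTTrelations} together with the highest weight relations \eqref{leftOm}, and then to extract the basis statement from a $q$-adic analysis of \eqref{BAE} exactly as in \cite[Lemma~4.6 and Theorem~4.8]{gorbounov2017quantum}. \textbf{Step 1 (the wanted terms give the eigenvalue).} Starting from the off-shell vector $\be(x_1,\ldots,x_k)=t_{10}(-x_1)\cdots t_{10}(-x_k)v_o$ of \eqref{Bethe_def}, I would apply $t(z)=t_{00}(z)+qt_{11}(z)$ and commute $t_{00}(z)$, resp. $t_{11}(z)$, to the right through each factor $t_{10}(-x_i)$ using
\[ t_{00}(z)t_{10}(w)=\frac{1}{1-z/w}\,t_{10}(w)t_{00}(z)-\frac{1}{1-z/w}\,t_{10}(z)t_{00}(w) \]
and the analogous relation for $t_{11}$ from Lemma~\ref{lem:RTTrelations}. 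Each commutation produces a \emph{wanted} term (spectral parameter $z$ retained) and an \emph{unwanted} term ($z$ exchanged with $-x_i$). Collecting only the wanted terms and evaluating on $v_o$ via \eqref{leftOm} gives
\[ \Bigl(\prod_{j=1}^n(1+z/\ve_j)\prod_{i=1}^k\frac{1}{1+z/x_i}+q\prod_{i=1}^k\frac{1}{1+x_i/z}\Bigr)\be(x_1,\ldots,x_k), \]
which is precisely the eigenvalue in \eqref{eigenvalues}.

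\textbf{Step 2 (the unwanted terms vanish on-shell).} Since the $t_{10}$'s commute by \eqref{YBcomm}, the off-shell vector $\be(x_1,\ldots,x_k)$ is symmetric in its arguments, so it is enough to compute the coefficient of the single unwanted vector $t_{10}(z)\,t_{10}(-x_2)\cdots t_{10}(-x_k)v_o$ in $t(z).\be(x_1,\ldots,x_k)$. Tracking the branch in which $z$ is exchanged with $-x_1$ at the first commutation, and then propagating the residual operators $t_{00}(-x_1)$, resp. $t_{11}(-x_1)$, through the remaining $t_{10}$'s and onto $v_o$ via \eqref{leftOm}, this coefficient equals a fixed nonzero rational prefactor times $\prod_{j=1}^n(1-x_1/\ve_j)\prod_{j\neq 1}^k(x_j/x_1)+(-1)^kq$. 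Hence every unwanted term vanishes exactly when $x=x^\lambda$ satisfies the Bethe ansatz equations \eqref{BAE}; combined with Step~1 this proves \eqref{eigenvalues} for $\be_\lambda$, and in particular the $\be_\lambda$ are simultaneous eigenvectors of all the $t(z)$.

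\textbf{Step 3 (the $\be_\lambda$ form a basis).} Here I would invoke the $q$-adic analysis of \cite[Lemma~4.6 and Theorem~4.8]{gorbounov2017quantum}: at $q=0$ the equations \eqref{BAE} factor and have, for each $\lambda\subset(n-k)^k$, the unique solution $x^\lambda|_{q=0}=(\ve_{\lambda_k+1},\ldots,\ve_{\lambda_1+k})$, which lifts uniquely to $x^\lambda\in\Kpt[\![q]\!]^k$ by Newton iteration; this produces $\binom{n}{k}=\dim_{\Kpt}V_{k,n}$ vectors $\be_\lambda$. Reducing the expansion \eqref{Bethe2spin} modulo $q$ exhibits $\be_\lambda$ as a nonzero scalar multiple of $v_\lambda$ plus a triangular correction in the dominance order on $\Pi^k_n$, because the factorial Grothendieck polynomials $G_{\mu^\vee}$ enjoy the usual vanishing/normalization at the relevant points (the K-theoretic shadow of the triangular restriction of Schubert classes to fixed points). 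Thus the transition matrix from the spin basis $\{v_\lambda\}$ to $\{\be_\lambda\}$ is triangular with nonvanishing diagonal, so the $\be_\lambda$ are linearly independent and span $V_{k,n}$, and $t(z)$ acts diagonally on this eigenbasis by \eqref{eigenvalues}.

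\textbf{Main obstacle.} The essential difficulty is Step~2: after expanding both $t_{00}(z)$ and $qt_{11}(z)$ past all $k$ factors $t_{10}(-x_i)$, one must show that the accumulated unwanted contributions collapse, index by index in $i$, to a single multiple of $t_{10}(z)\prod_{j\neq i}t_{10}(-x_j)v_o$ whose coefficient is exactly the left-hand side of \eqref{BAE}. This is the heart of the algebraic Bethe ansatz; it is carried out in full in \cite{gorbounov2017quantum}, and the only genuinely new bookkeeping here is carrying the central parameter $q$ along and tracking the sign change $z\mapsto -z$, $x_i\mapsto-x_i$ built into the evaluation map \eqref{L} and the definition \eqref{Bethe_def}.
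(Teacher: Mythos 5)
Your proposal is correct and follows essentially the same route as the paper: the paper likewise treats the eigenvalue equation as the standard algebraic Bethe ansatz computation from the $RTT$-relations of Lemma~\ref{lem:RTTrelations} and the highest-weight relations \eqref{leftOm} (citing \cite{faddeev1990lectures}), with the unwanted-term cancellation giving exactly \eqref{BAE}, and it defers the completeness/eigenbasis statement to the $q$-adic analysis of \cite[Lemma~4.6, Theorem~4.8]{gorbounov2017quantum}, just as you do in Step~3. Your explicit bookkeeping of the sign change $w=-x_i$ and of the central parameter $q$ reproduces the stated eigenvalue and the Bethe ansatz equations correctly, so there is no gap.
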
 
N.B. the eigenvalues are polynomial in $z$ because of the Bethe ansatz equations. In fact, one verifies that the condition that the residues at $z=-x_i$ vanish for $i=1,\ldots,k$ is equivalent to the Bethe ansatz equations \eqref{BAE}. The polynomial form of the eigenvalues can be made explicit using  the so-called level-rank duality which we discuss next.

\subsection{Right modules and Level-Rank duality}\label{sec:int-level-rank}
Recall that there is a natural ring isomorphism $\QK_{\T}(\Gr(k;n))\to\QK_{\T}(\Gr(n-k;n))$, often referred to as `level-rank duality' in the literature (because of the close connection between these rings in the simpler case of quantum cohomology and fusion rings in conformal field theory when setting $q=1$). We briefly discuss this isomorphism in terms of the Yang-Baxter algebra by considering a specific linear isomorphism $\bbV_w\to\bbV_{ww_0}$ and comment on its algebraic origin at the end.

Define the following transformation of the $R$-matrix \eqref{R},
\begin{equation}\label{L'}
\tilde R(-z\ve)=(1\otimes \sigmax)R(-z\ve)^{\sfT\otimes 1}(1\otimes \sigmax)
\end{equation}
where $\sfT\otimes 1$ denotes taking the transpose in the first factor of $\C^2\otimes\C^2$. Using the latter we define a corresponding $\tilde T$-matrix via
\begin{equation}\label{evT'}
\tilde T(z)= 
\tilde R_{0n}(-z\ve_{w(n)})\cdots \tilde R_{02}(-z\ve_{w(2)})\tilde R_{01}(-z\ve_{w(1)})\in\End(\C^2\otimes\bbV_w)\otimes\C[z]
\end{equation}
and denote the corresponding matrix elements by $\tilde t_{ij}(z)$ with $i,j=0,1$. We then have the following relation with the (left) action of the Yang-Baxter algebra $\YB$ on $\bbV_w$:
\begin{lemma}\label{lemma:intsys-level-rank} 
Consider the $\C$-linear extension of the involutive map $\Gamma:\bbV_w\to\bbV_{w\cdot w_0}$ defined by
\begin{equation}\label{Gamma}
\Gamma:f_1(\ve_{w(1)})v_{i_1}\otimes\cdots\otimes f_n(\ve_{w(n)})v_{i_n}\mapsto f_1(\ve^{-1}_{w(n)})v_{1-i_n}\otimes\cdots \otimes f_n(\ve^{-1}_{w(1)})v_{1-i_1}\;.
\end{equation}
Then we have the following relation between the entries of the monodromy matrix \eqref{evT} and the ones of \eqref{evT'},
\begin{equation}\label{tGamma}
\tilde t_{ij}(z)\circ\Gamma=\Gamma\circ t_{ji}(z)\;.
\end{equation}
\end{lemma}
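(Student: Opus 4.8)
The plan is to verify the intertwining relation \eqref{tGamma} directly at the level of $R$-matrices, propagating the identity through the product \eqref{evT} one factor at a time. First I would record the key local fact: with $\tilde R(-z\ve)$ as in \eqref{L'}, the map $\sigmax$ that swaps $v_0 \leftrightarrow v_1$ in the auxiliary factor, combined with the substitution $\ve \mapsto \ve^{-1}$, converts the entries of $R(-z/\ve)$ into those of $\tilde R(-z\ve)$ with indices transposed. Concretely, examining the five-vertex entries in \eqref{R}, one checks that $(\sigmax \otimes \sigmax)\,R(w)\,(\sigmax\otimes\sigmax)$ reverses all $01$-labels, while the transpose $R(w)^{\sfT\otimes 1}$ in the first factor swaps the role of West/East labels; the composition in \eqref{L'} implements precisely the passage from $t_{ij}$ to $t_{ji}$ together with the reversal of the $01$-word and of the ordering of the tensor factors, which is exactly what $\Gamma$ does. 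This is the same mechanism underlying \eqref{invR}.

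Next I would set up the induction on $n$. For $n=1$ the statement is a finite check: apply the explicit formulas from \Cref{lem:evhom} (for $t_{ij}(z)$ on $\C^2[\ve^{\pm1}]$) and the analogous formulas coming from $\tilde R$, and verify $\tilde t_{ij}(z)\circ\Gamma = \Gamma\circ t_{ji}(z)$ entry by entry on the two basis vectors $v_0, v_1$; here $\Gamma$ sends $f(\ve)v_i \mapsto f(\ve^{-1})v_{1-i}$. For the inductive step, one uses the coproduct structure: the $\YB$-action on $\bbV_w$ built from $R_{0n}(-z/\ve_{w(n)})\cdots R_{01}(-z/\ve_{w(n)})$ decomposes each $t_{ij}(z)$ as in \eqref{Delta}, namely $t_{ij}(z) = \sum_k t_{kj}(z)^{(n-1)}\otimes t_{ik}(z)^{(1)}$ where the superscripts indicate which tensor factors are involved. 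Applying $\Gamma$, which acts as the single-factor involution on the outermost slot tensored with the $(n-1)$-factor version of $\Gamma$ (after reindexing $w \mapsto w w_0$), one transports the relation across using the $n=1$ case on the outer factor and the inductive hypothesis on the inner $n-1$ factors; the index transposition $t_{ik}\mapsto \tilde t_{ki}$ matches because $\Delta$ for the $\tilde T$-matrix pairs with $\Delta^{\mathrm{op}}$ under transposition, exactly as in the proof of \Cref{lem:YB2YBopp}.

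The main obstacle, and the point requiring care, is bookkeeping the two simultaneous reversals: $\Gamma$ both reverses the \emph{order} of the tensor factors (so the factor acted on by $\ve_{w(1)}$ becomes the factor acted on by $\ve^{-1}_{w(n)} = \ve^{-1}_{(ww_0)(1)}$) and transposes the auxiliary indices $i \leftrightarrow j$. One must check that the product \eqref{evT'} defining $\tilde T(z)$ is written in the order compatible with this reversal — i.e. that conjugating the product $R_{0n}\cdots R_{01}$ by $\Gamma$ and by the appropriate auxiliary-space flip reproduces $\tilde R_{0n}\cdots\tilde R_{01}$ in the stated order — rather than its reverse. This is a purely combinatorial verification using \eqref{L'} and the fact, already noted in the excerpt, that $R$ depends only on the ratio $z/w$ of spectral parameters; once the conventions are aligned it follows mechanically. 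An alternative, cleaner route avoiding the induction entirely is to observe that $\Gamma$ is, up to the identification $f_i^* \leftrightarrow f_{n+1-i}$, the linear map underlying the geometric level-rank isomorphism $\Theta'$ of \Cref{ss:level-rank}, and that \eqref{L'} is the $R$-matrix analogue of $\Theta'^*$; then \eqref{tGamma} is the statement that $\Theta'$ intertwines the two monodromy matrices, which can be deduced from the $RTT$-relations for $R$ and $R^\opp$ together with \Cref{lem:YB2YBopp} and the automorphism $z \mapsto a\cdot z$ of \Cref{lem:auto}.
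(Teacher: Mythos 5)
Your proposal is correct in outline but organized quite differently from the paper. The paper's proof is a one-shot global computation: it reduces to $w=w_0$, writes $t_{ij}(z)=\Tr_0\,(E_{ji}\otimes 1)R_{0n}(-z/\ve_n)\cdots R_{01}(-z/\ve_1)$, and transposes in the auxiliary factor inside the trace — which simultaneously swaps $E_{ji}\mapsto E_{ij}$ (hence $t_{ij}\mapsto t_{ji}$) and reverses the order of the $R$-matrix product — then uses \eqref{L'} to rewrite each $R^{\sfT\otimes 1}$ as a $\sigmax$-conjugate of $\tilde R$, so that the $\sigmax$'s, the slot reversal and the substitution $\ve\mapsto\ve^{-1}$ are exactly absorbed by $\Gamma$ upon comparison with \eqref{evT'}. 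Your induction via the coproduct, with a single-site base case from \Cref{lem:evhom}, is a legitimate alternative: the index bookkeeping does close up, since $\Delta(\tilde t_{ji})=\sum_k\tilde t_{ki}\otimes\tilde t_{jk}$ has precisely the form produced by applying the base case to the outer slot and the inductive hypothesis to the rest, and your appeal to the $\Delta\leftrightarrow\Delta^{\mathrm{op}}$ pairing under transposition (as in \Cref{lem:YB2YBopp}) is the right justification. What the paper's route buys is that the order reversal — the point you defer as a ``mechanical'' convention check — is handled automatically by transposing a product inside the trace, whereas in your induction it must be tracked by hand at every step; what your route buys is a more elementary, site-by-site argument that also explains the graphical calculus. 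One imprecision to fix in a write-up: in \eqref{L'} the transpose is taken in the \emph{auxiliary} factor and the $\sigmax$-conjugation acts in the \emph{quantum} factor, not the other way around as you state; the index swap $t_{ij}\mapsto t_{ji}$ comes from the transpose, while the flip $v_i\mapsto v_{1-i}$ in $\Gamma$ comes from the quantum-factor $\sigmax$'s, and $(\sigmax\otimes\sigmax)R(\cdot)(\sigmax\otimes\sigmax)$ from \eqref{invR} is a different operation that should not be conflated with \eqref{L'}.
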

%%%%%%%%%%%%%%%%%%%%%%%%%%% continue here 10/09/2024
\begin{proof}
It suffices to consider the case where $w=w_0$, since all the other tensor products are isomorphic. Consider the tensor product $\C^2\otimes\bbV_{w_0}$, where we label the first factor with `0' and the factors in $\bbV_{w_0}$ consecutively from left to right with $1,2,\ldots,n$. Then by taking the trace over the $0$th factor, we arrive at the identity
\begin{multline*}
t_{ij}(z)=\Tr_0(E_{ji}\otimes1)R_{0n}(-z/\ve_n)\cdots R_{01}(-z/\ve_1)\\
=\Tr_0(E_{ij}\otimes1)R_{01}(-z/\ve_1)^{\sfT\otimes 1}\cdots R_{0n}(-z/\ve_n)^{\sfT\otimes 1}\\
=\bigl(\bigotimes_{i=1}^n\sigmax_i\bigr) \Tr_{0}(E_{ij}\otimes1)\tilde R_{01}(-z/\ve_1)\cdots \tilde R_{0n}(-z/\ve_n)\bigl(\bigotimes_{i=1}^n\sigmax_i\bigr), 
\end{multline*}
where the indices indicate on which factors in $\C^2\otimes\bbV_{w_0}$ the operators act non-trivially. Comparing the last line with \eqref{evT'} the assertion follows.
\end{proof}

\begin{remark}\rm
    The algebraic context of the last lemma becomes clear upon noting that the $\tilde T$-matrix defined in \eqref{evT'} satisfies the following quadratic relation,
    \[
\tilde T_1(z)\tilde T_2(w)R^{\sfT\otimes\sfT}_{12}(z/w)=R^{\sfT\otimes\sfT}_{12}(z/w)\tilde T_2(w)\tilde T_1(z)\;.
    \]
    That is, the map $T^\opp(z)\mapsto\tilde T(z)$ is an algebra anti-morphism $\YB^\opp\to\End\bbV_{ww_0}$, where $\YB^\opp$ is the Yang-Baxter algebra from Remark \ref{rmk:YBopp}. Due to Lemma \ref{lem:YB2YBopp} we can regard each left $\YB$-module as a right $\YB^\opp$-module and the matrix \eqref{evT'} just describes this right action of $\YB^\opp$ on the (left) $\YB$-module $\bbV_w$. On the level of algebras this amounts to the statement that the map $\gamma: T(z)\mapsto \sigmax\cdot (T(z^{-1})^{-1})^{\sfT}\cdot\sigmax$ defines an Hopf algebra anti-isomorphism $\YB\to\YB^\opp $. The latter is the algebraic analogue for the geometrically defined level-rank duality.
\end{remark}

Recall that we have shown that all $\{\bbV_w\}_{w\in W}$ viewed as left $\YB$-modules are isomorphic. Below we will show that this statement implies that the action of $\YB$ commutes with the natural (geometric) left Weyl-group action. It is therefore important to also discuss that all $\{\bbV_w\}_{w\in W}$ viewed as right $\YB^\opp$-modules are isomorphic and, importantly, that the associated braiding for the right $\YB^\opp$-modules is the same as for as for the left $\YB$-modules. 
\begin{lemma}\label{lem:R'check}
Let $\check R(z)=P\circ R(z)$ be the same matrix as defined in Lemma \ref{lem:Rcheck}. Then
\begin{equation}
\check R_{12}(\ve_1/\ve_2)\tilde R_{02}(-z\ve_2)\tilde R_{01}(-z\ve_1)=
\tilde R_{02}(-z\ve_1)\tilde R_{01}(-z\ve_2)\check R_{12}(\ve_1/\ve_2)\;.
\end{equation}
\end{lemma}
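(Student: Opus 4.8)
The plan is to deduce the stated identity from the quantum Yang--Baxter equation \eqref{YBE} by the same transpose-and-conjugate manoeuvre that was used, in the remark following Lemma~\ref{lemma:intsys-level-rank}, to pass from the $RTT$-relation for $T$ to the one for $\tilde T$. I would start from the relabelled form of \eqref{YBE} used in the proof of Lemma~\ref{lem:Rcheck}, namely
\[
R_{01}(b)\,R_{02}(a)\,R_{12}(a/b)=R_{12}(a/b)\,R_{02}(a)\,R_{01}(b)
\]
for indeterminates $a,b$, and then apply the partial transpose $\sfT\otimes 1$ in the auxiliary factor, which I label $0$, and denote by $(\,\cdot\,)^{\sfT_0}$.

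The one point that needs a little care is that $(\,\cdot\,)^{\sfT_0}$ does \emph{not} reverse the whole product in the naive way. Because the quantum legs of $R_{01}$ and of $R_{02}$ act on the disjoint spaces $1$ and $2$, one checks directly that $(R_{01}(b)R_{02}(a))^{\sfT_0}=R_{02}(a)^{\sfT_0}R_{01}(b)^{\sfT_0}$, while $R_{12}$ carries no $0$-dependence and simply stays put. Thus the relabelled \eqref{YBE} becomes
\[
R_{02}(a)^{\sfT_0}R_{01}(b)^{\sfT_0}R_{12}(a/b)=R_{12}(a/b)\,R_{01}(b)^{\sfT_0}R_{02}(a)^{\sfT_0}.
\]
I would then conjugate throughout by $\sigma_1\sigma_2$, inserting $\sigma_1\sigma_2\sigma_1\sigma_2=1$ between consecutive factors. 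By the definition \eqref{L'} of $\tilde R$, conjugating $R_{0j}(\,\cdot\,)^{\sfT_0}$ by $\sigma_j$ produces exactly $\tilde R_{0j}(\,\cdot\,)$ (the remaining $\sigma$ commutes through it), whereas conjugating $R_{12}(a/b)$ by $\sigma\otimes\sigma$ yields, using \eqref{invR} together with $\check R=P\circ R$, the operator $R_{12}(b/a)^{-1}=P_{12}R_{12}(a/b)P_{12}=\check R_{12}(a/b)P_{12}$. After these substitutions the identity reads
\[
\tilde R_{02}(a)\tilde R_{01}(b)\,\check R_{12}(a/b)\,P_{12}=\check R_{12}(a/b)\,P_{12}\,\tilde R_{01}(b)\tilde R_{02}(a).
\]
Finally, pushing $P_{12}$ through $\tilde R_{01}(b)\tilde R_{02}(a)$ on the right (it interchanges the labels $1\leftrightarrow 2$, so the product becomes $\tilde R_{02}(b)\tilde R_{01}(a)P_{12}$) and cancelling the common invertible $P_{12}$ gives
\[
\tilde R_{02}(a)\tilde R_{01}(b)\,\check R_{12}(a/b)=\check R_{12}(a/b)\,\tilde R_{02}(b)\tilde R_{01}(a);
\]
specializing $a=-z\ve_1$, $b=-z\ve_2$ (so that $a/b=\ve_1/\ve_2$), as dictated by the evaluation map \eqref{evT'}, is precisely the asserted equation.

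I expect the main --- in fact the only --- delicate point to be the bookkeeping in the two middle steps: verifying that the transpose in the auxiliary leg reorders just the two $\tilde R$-factors while fixing $\check R_{12}$, and tracking how the $\sigma$'s slide through the various factors. Everything else is a mechanical use of \eqref{YBE}, \eqref{invR}, and the conjugation relations $P_{12}R_{01}=R_{02}P_{12}$ already exploited several times in this section. A more conceptual alternative would be to invoke the remark after Lemma~\ref{lemma:intsys-level-rank}, that $T(z)\mapsto\tilde T(z)$ realizes the right $\YB^\opp$-action on $\bbV_w$, and transport the braiding statement of Lemma~\ref{lem:Rcheck} along it; but since the same matrix $\check R$ governs the braiding on both sides, unwinding this ultimately reduces to the computation above.
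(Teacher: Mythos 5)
Your argument is correct and is essentially the paper's own proof: both start from a relabelled instance of \eqref{YBE}, take the partial transpose in the auxiliary leg, conjugate by $\sigmax\otimes\sigmax$ using \eqref{invR} to produce the $\tilde R$'s and to turn $R_{12}$ into $\check R_{12}P_{12}$, and finish by sliding the flip through and cancelling it. The only difference is that you make explicit the order reversal under the partial transpose and the final $P_{12}$ bookkeeping, which the paper leaves implicit.
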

\begin{proof}
Employing once more \eqref{YBE}, we obtain 
\begin{eqnarray*}
R_{13}(-z\ve_2)^{\sfT\otimes 1}R_{12}(-z \ve_3)^{\sfT\otimes 1} R_{23}(\ve_2/\ve_3)&=&
R_{23}(\ve_2/\ve_3)R_{12}(-z\ve_3)^{\sfT\otimes 1}R_{13}(-z\ve_2)^{\sfT\otimes 1}\\
\tilde R_{13}(-z\ve_2)\tilde R_{12}(-z\ve_3)R_{32}(\ve_2/\ve_3) &=&
R_{32}(\ve_2/\ve_3)\tilde R_{12}(-z\ve_3)\tilde R_{13}(-z\ve_2)
\end{eqnarray*}
where we have used the identity $(\sigmax \otimes\sigmax)R_{23}(z)(\sigmax\otimes\sigmax)=R_{32}(z)$ from \eqref{invR} in the second line. The assertion then follows upon noting that $R_{32}(z)=\check R_{23}(z)\circ P_{23}$.
\end{proof}

\begin{cor}
The vector spaces $\{\bbV\}_{w\in W}$ interpreted as right $\YB^\opp$-modules via \eqref{evT'} are all isomorphic, with the isomorphisms given by the repeated action with the $\check R$-matrix.
\end{cor}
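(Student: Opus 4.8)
The plan is to reproduce the argument of \Cref{cor:YBbraid} verbatim, replacing the $R$-matrix by the matrix $\tilde R$ of \eqref{L'}, replacing \Cref{lem:Rcheck} by \Cref{lem:R'check}, and using the $\tilde T$-matrix \eqref{evT'} in place of \eqref{evT}. First I would note that $\check R(z)=P\circ R(z)$ is invertible by \eqref{invR}, so it suffices to produce, for any $u,w\in W$ differing by a single simple transposition, an isomorphism of right $\YB^\opp$-modules $\bbV_u\to\bbV_w$ built from one $\check R$-matrix; composing these along a reduced expression for $u^{-1}w$ then connects arbitrary $\bbV_u$ and $\bbV_w$.

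For the elementary step I would run the standard ``train'' argument. The right $\YB^\opp$-action on $\bbV_w$ has matrix entries $\tilde t_{ab}(z)$ obtained by tracing over the auxiliary factor $0$ of the ordered product $\tilde R_{0n}(-z\ve_{w(n)})\cdots\tilde R_{01}(-z\ve_{w(1)})$ of \eqref{evT'}. Inserting the operator $\check R_{j,j+1}(\ve_{w(j)}/\ve_{w(j+1)})$ between the two adjacent physical factors and commuting it across $\tilde R_{0,j+1}$ and then $\tilde R_{0,j}$ by two applications of \Cref{lem:R'check} has the effect of interchanging the evaluation parameters $\ve_{w(j)}\leftrightarrow\ve_{w(j+1)}$ while leaving the auxiliary factor $0$ untouched; upon taking traces, this is precisely the statement that $\check R_{j,j+1}$ intertwines the right $\YB^\opp$-structures on $\bbV_w$ and on the module where $w$ has been precomposed with the corresponding transposition. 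In particular the braiding of the right $\YB^\opp$-modules is effected by the \emph{same} $\check R$-matrix as the braiding of the left $\YB$-modules, which is the point needed later for the comparison with the geometric left Weyl group action.

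I expect the real work here to be bookkeeping rather than mathematics, since all the substance is already contained in \Cref{lem:R'check}: one must keep careful track of the reversed ordering of the tensor factors in \eqref{Vopp}/\eqref{evT'} relative to \eqref{left_tensor}, so as to match a given $\check R_{j,j+1}$ with the correct transposition of the indices $w(1),\dots,w(n)$, and one must check that the side on which $\check R$ is commuted through in \Cref{lem:R'check} is the one compatible with a \emph{right} action. If desired, independence of the resulting isomorphism from the chosen reduced expression can be deduced from the braid and unitarity relations satisfied by $\check R$ (consequences of \eqref{YBE} and \eqref{invR}), but this is not required for the statement.
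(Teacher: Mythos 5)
Your proposal is correct and matches the paper's intended argument: the paper states this corollary without a separate proof precisely because it is immediate from Lemma \ref{lem:R'check}, in exact parallel with how \Cref{cor:YBbraid} follows from \Cref{lem:Rcheck}, which is the route you take (elementary $\check R$-intertwiner for adjacent factors from \Cref{lem:R'check}, trivial commutation with the remaining $\tilde R_{0,i}$, invertibility of $\check R$ from \eqref{invR}, and composition along a reduced word). The only cosmetic remarks are that one application of \Cref{lem:R'check} already handles the pair of adjacent $\tilde R$-factors, and that the matrix entries $\tilde t_{ij}(z)$ are extracted by pairing with $E_{ji}$ in the auxiliary space rather than by a plain trace; neither affects the argument.
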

%%%%%%%%%%%%%%%%%%%%%%%%%%%%%%%%%%%%%%%%%
Having established that the modules $\{\bbV_w\}_{w\in W}$ form isomorphism classes we specialize henceforth $w=id$ and to ease the notation simply write $\bbV_n$ instead. Note that, as vector spaces, we have 
\begin{equation}\label{tensor2V}
\bbV_n\cong (\C^2)^{\otimes n}\otimes\Kpt\;,
\end{equation} 
where the latter isomorphism is canonical,
\begin{equation}
f_n(\ve_n)v_{j_n}\otimes\cdots\otimes f_1(\ve_1)v_{j_1}\mapsto f_n(\ve_n)\cdots f_1(\ve_1) v_{j_n}\otimes\cdots\otimes v_{j_1}\;.
\end{equation}

\begin{remark}\rm
Having identified the vector space $\bbV_n$ simultaneously as a left $\YB$ and as a right $\YB^\opp$-module, we can ask what the commutation relations are between the corresponding monodromy matrices \eqref{evT} and \eqref{evT'}. One finds that the following identity holds true,
\begin{equation}\label{LTT}
\tilde R_{12}(zw)\tilde T_1(z)T_2(w)=T_2(w)\tilde T_1(z)\tilde R_{12}(zw)\;,
\end{equation}
from which the commutation relations of the corresponding matrix elements $t_{ij}(z)$ and $\tilde t_{ij}(z)$ can be derived.
\end{remark}

Using level-rank duality there is an alternative construction of the Bethe vectors \eqref{Bethe_def} using instead the operators $\tilde t_{01}(z)$. Namely, we now fix the lowest weight vector $\tilde v_o=\Gamma(v_o)=v_0\otimes\cdots\otimes v_0\in V_{n,n}$ satisfying
\[
\tilde t_{00}(z)\tilde v_o=\prod_{i=1}^n(1+z\ve_i)\;\tilde v_o,\qquad\tilde t_{11}(z)\tilde v_o=\tilde v_o,\qquad
\tilde t_{10}(z)\tilde v_o=0
\]
and set
\begin{equation}
    \tilde\be(\tilde x_1,\ldots,\tilde x_{n-k})=\tilde t_{01}(-\tilde x_1)\cdots\tilde t_{01}(-\tilde x_{n-k})\tilde v_o\;.
\end{equation}
Using \eqref{tGamma} one easily verifies that $\tilde\be(\tilde x_1,\ldots,\tilde x_{n-k})\in V_{k,n}[\tilde x_1,\ldots,\tilde x_{n-k}]$ is the image of $\be(\tilde x_1,\ldots,\tilde x_{n-k})\in V_{n-k,n}[\tilde x_1,\ldots,\tilde x_{n-k}]$ under level-rank duality. The following is a restatement of \cite[Prop. 4.9 and Cor. 4.10]{gorbounov2017quantum}.
\begin{thm}
Suppose that the $\tilde x_i$ satisfy the dual Bethe ansatz equations,
\begin{equation}\label{dualBAE}
    \prod_{j=1}^n(1-\tilde x_i\ve_j)\prod_{j\neq i}^{n-k}(\tilde x_j/\tilde x_i)+(-1)^{n-k}q=0,\qquad i=1,\ldots,n-k\;.
\end{equation}
Then $\tilde\be_{\lambda'}=\tilde\be(\tilde x^{\lambda'}_1,\ldots,\tilde x^{\lambda'}_{n-k})=\be(x^{\lambda^\vee}_1,\ldots,x^{\lambda^\vee}_k)=\be_{\lambda^\vee}$ and we have the eigenvalue equations
\begin{gather}
    t(z)\be_{\lambda^\vee}=\prod_{j=1}^{n-k}(1+z\,\tilde x^{\lambda'}_{j})\be_{\lambda^\vee}\quad\text{and}\quad
    \tilde t(z)\be_{\lambda^\vee}=\prod_{i=1}^{k}(1+z\, x^{\lambda^\vee}_{i})\be_{\lambda^\vee}\;,
\end{gather}
where $\tilde t(z)=\tilde t_{00}(z)+q\tilde t_{11}(z)$.
\end{thm}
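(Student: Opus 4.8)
The plan is to deduce all the assertions from \cite[Prop.~4.9 and Cor.~4.10]{gorbounov2017quantum}, re-expressed through the level-rank intertwiner of \Cref{lemma:intsys-level-rank} and the change of variable $z\mapsto y=-z$ used in this paper. Two things must be established: the chain of equalities $\tilde\be_{\lambda'}=\be_{\lambda^\vee}$, and the two eigenvalue equations for $t(z)$ and $\tilde t(z)$.

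The first, formal, step is to identify the off-shell vectors. Specializing the intertwining relation \eqref{tGamma} to $(i,j)=(0,1)$ gives $\tilde t_{01}(z)\circ\Gamma=\Gamma\circ t_{10}(z)$, and since $\tilde v_o=\Gamma(v_o)$ by construction, an immediate induction on the number of creation operators yields
\[
\tilde\be(\tilde x_1,\dots,\tilde x_{n-k})
=\tilde t_{01}(-\tilde x_1)\cdots\tilde t_{01}(-\tilde x_{n-k})\,\Gamma(v_o)
=\Gamma\bigl(t_{10}(-\tilde x_1)\cdots t_{10}(-\tilde x_{n-k})\,v_o\bigr)
=\Gamma\bigl(\be(\tilde x_1,\dots,\tilde x_{n-k})\bigr).
\]
This is the level-rank identification of off-shell Bethe vectors stated just before the theorem. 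Applying the same computation to the diagonal entries of \eqref{tGamma} gives $\tilde t(z)\circ\Gamma=\Gamma\circ t(z)$, hence $\tilde t(z)=\Gamma\circ t(z)\circ\Gamma$ since $\Gamma$ is an involution.

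Next I would match Bethe ansatz equations and partition labels. The $\tilde t$-operators are built from $\tilde R(-z\ve_{w(j)})$, that is, they realize the ordinary Yang-Baxter action with the equivariant parameters inverted; consequently the Bethe ansatz equations governing the $\tilde t$-system are exactly \eqref{dualBAE}, namely \eqref{BAE} for $\Gr(n-k;n)$ after $\ve_j\mapsto\ve_j^{-1}$ (the order reversal of the $\ve_j$ is immaterial, \eqref{BAE} being symmetric in them), and the partition label gets complemented and transposed, as one reads off from \eqref{Bethe2spin} and the geometric level-rank duality \eqref{E:theta-schub}. Thus a solution $\tilde x^{\lambda'}$ of \eqref{dualBAE} makes $\tilde\be(\tilde x^{\lambda'})$ on-shell, and \Cref{thm:bethe-vectors}, applied in this $(n-k)$-particle sector and transported back by $\Gamma$, together with the normalization fixed in \cite[\S4]{gorbounov2017quantum}, identifies it with $\be_{\lambda^\vee}$; this gives $\tilde\be_{\lambda'}=\be_{\lambda^\vee}$.

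Finally, for the eigenvalue equations: the $t(z)$-eigenvalue on $\be_{\lambda^\vee}$ is the rational expression of \Cref{thm:bethe-vectors}, and — as already observed after that theorem — its residues at $z=-x^{\lambda^\vee}_i$ vanish precisely because the $x^{\lambda^\vee}$ solve \eqref{BAE}, so the eigenvalue is a polynomial in $z$ of degree $n-k$ with value $1$ at $z=0$; its roots are by definition the dual Bethe roots (up to the usual sign), giving $t(z)\be_{\lambda^\vee}=\prod_{j=1}^{n-k}(1+z\,\tilde x^{\lambda'}_j)\be_{\lambda^\vee}$, and a short standard computation (the second Baxter TQ-relation) confirms that these roots satisfy \eqref{dualBAE}. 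The eigenvalue of $\tilde t(z)$ is then obtained by transporting this through $\tilde t(z)=\Gamma\circ t(z)\circ\Gamma$, yielding $\tilde t(z)\be_{\lambda^\vee}=\prod_{i=1}^{k}(1+z\,x^{\lambda^\vee}_i)\be_{\lambda^\vee}$. The only place any real work is concealed is the bookkeeping in the middle step — verifying that $\Gamma$ simultaneously converts $t_{10}$ into $\tilde t_{01}$, turns \eqref{dualBAE} into \eqref{BAE}, and relabels the partitions compatibly with the chosen normalization; I do not expect a genuinely new difficulty, since the substance is already contained in \cite[\S4]{gorbounov2017quantum}.
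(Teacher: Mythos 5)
Your proposal is correct and in substance follows the same route as the paper: the paper offers no independent argument here, presenting the theorem as a restatement of \cite[Prop.~4.9 and Cor.~4.10]{gorbounov2017quantum}, and your sketch (the intertwiner $\Gamma$ of \Cref{lemma:intsys-level-rank} for the off-shell identification, \Cref{thm:bethe-vectors} plus the polynomiality/TQ computation for the eigenvalues) is exactly the mechanism behind those cited results and the paper's surrounding remarks. The one genuinely substantive point --- that $\tilde\be_{\lambda'}$ and $\be_{\lambda^\vee}$ are literally equal rather than merely proportional eigenvectors, which requires the precise normalization and the semilinear bookkeeping of $\Gamma$ (it inverts the equivariant parameters, so the transported eigenvalue statement is the dual polynomiality argument, not a naive conjugation) --- you defer to \cite{gorbounov2017quantum}, which is precisely what the paper does as well.
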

In the latter form the eigenvalue equations for both operators, $t(z)$ and $\tilde t(z)$, are manifestly polynomial in $z$, but they do require one to consider simultaneously solutions of both Bethe ansatz equations, \eqref{BAE} and \eqref{dualBAE}. In the classical limit $q\to 0$ these two sets of solutions can be identified with the Chern roots of the quotient and the tautological bundle, respectively. Both sets of roots are of course algebraically dependent and this algebraic dependence is encoded in the level-rank duality \eqref{Gamma}.

%%%%%%%%%%%%%%%%%%%%%%%%%%%%%%%%
%%%%%%%%%%%%%%%%%%%%%%%%%%%%%%%%%

\section{The Weyl group action and the R-matrix; proof of the main theorem}\label{sec:left-weyl} 
Using the $\check R$-matrix from Lemmata \ref{lem:Rcheck} and \ref{lem:R'check}, which provides the braiding of the left $\YB$ and the right $\YB^\opp $-modules, we now define a non-trivial left Weyl group action on the tensor product $\bbV_n\cong\Kpt\otimes (\C^{2})^{\otimes n}$. Denote by $w\otimes 1$ with $w \in {W}$ 
the natural left ${W}$-action 
on the factor $\Kpt$ in $\Kpt\otimes(\C^2)^{\otimes n}$ and for ease of notation set 
$\alpha_i=\ve_i/\ve_{i+1}$ for $i=1,\ldots,n-1$ which we can identify with the 
simple roots of $A_n$. Recall that a cocycle $C:W\to\End\bbV_n$ is a map satisfying
\begin{equation}\label{cocycle}
C_{ww'}=C_w\;w(C_{w'}),\qquad \forall w,w'\in {fW} \/.
%W\,.
\end{equation}
\begin{lemma}
Setting for simple reflections $w=s_i$ with $i=1,\ldots,n-1$
\begin{equation}\label{C2R}
C_{s_i}=\check R_{n-i,n-i+1}(\alpha_i),
\end{equation}
where the indices of the $\check R$-matrix indicating on which factors of the tensor product $(\C^2)^{\otimes n}$ of $\bbV_n$ it acts non-trivially, gives a well-defined cocycle $C:W\to\End\bbV_n$. The other values $C_w\in\End\bbV_n$ are determined by the cocycle condition \eqref{cocycle}.
\end{lemma}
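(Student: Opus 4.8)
The plan is to use the standard fact that, since $W=S_n$ has the Coxeter presentation with generators $s_1,\dots,s_{n-1}$ and relations $s_i^2=1$, $(s_is_{i+1})^3=1$, and $(s_is_j)^2=1$ for $|i-j|\ge 2$, a crossed homomorphism (cocycle) $C\colon W\to\End\bbV_n$ with respect to the $W$-action on $\End\bbV_n$ through its action on the coefficient ring $\Kpt$ (the $\ve$'s; the tensor factors being untouched) is uniquely determined by its values on the simple generators, and exists provided those values satisfy the defining relations of $W$ in cocycle form. Equivalently, by Matsumoto's lemma it suffices to check that the formula $C_{s_{i_1}}\,s_{i_1}(C_{s_{i_2}})\cdots(s_{i_1}\cdots s_{i_{\ell-1}})(C_{s_{i_\ell}})$ is unchanged under a single braid move on a reduced word, together with the quadratic relation. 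So I would verify three identities: (a) $C_{s_i}\,s_i(C_{s_i})=\mathrm{id}$; (b) $C_{s_i}\,s_i(C_{s_j})=C_{s_j}\,s_j(C_{s_i})$ for $|i-j|\ge 2$; and (c) the braid identity $C_{s_i}\,s_i(C_{s_{i+1}})\,(s_is_{i+1})(C_{s_i})=C_{s_{i+1}}\,s_{i+1}(C_{s_i})\,(s_{i+1}s_i)(C_{s_{i+1}})$.

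Cases (a) and (b) are immediate. Since $\alpha_i=\ve_i/\ve_{i+1}$, we have $s_i(\alpha_i)=\alpha_i^{-1}$; and from \eqref{invR}, writing $\check R(z)=P\circ R(z)$ and hence $R(z)^{-1}=P\,R(z^{-1})\,P$, one gets $\check R(z)\check R(z^{-1})=P\,R(z)\,R(z)^{-1}\,P=\mathrm{id}$, so $C_{s_i}\,s_i(C_{s_i})=\check R_{n-i,n-i+1}(\alpha_i)\,\check R_{n-i,n-i+1}(\alpha_i^{-1})=\mathrm{id}$. For (b) with $|i-j|\ge 2$, the operators $\check R_{n-i,n-i+1}(\alpha_i)$ and $\check R_{n-j,n-j+1}(\alpha_j)$ act on disjoint pairs of tensor factors and hence commute, and $s_i$ fixes $\alpha_j$ while $s_j$ fixes $\alpha_i$ because the index sets $\{i,i+1\}$ and $\{j,j+1\}$ are disjoint; both sides therefore equal $C_{s_i}C_{s_j}$.

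The only real content is (c). Here I would set $a=\alpha_i$, $b=\alpha_{i+1}$, so $ab=\ve_i/\ve_{i+2}$, and compute the $W$-translates using $s_i\colon\ve_i\leftrightarrow\ve_{i+1}$ and $s_{i+1}\colon\ve_{i+1}\leftrightarrow\ve_{i+2}$: namely $s_i(b)=ab$, $(s_is_{i+1})(a)=b$, $s_{i+1}(a)=ab$, $(s_{i+1}s_i)(b)=a$. Substituting, the left side of (c) becomes $\check R_{n-i,n-i+1}(a)\,\check R_{n-i-1,n-i}(ab)\,\check R_{n-i,n-i+1}(b)$ and the right side $\check R_{n-i-1,n-i}(b)\,\check R_{n-i,n-i+1}(ab)\,\check R_{n-i-1,n-i}(a)$. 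Relabelling the three consecutive factors $n-i-1,n-i,n-i+1$ as $1,2,3$, this is exactly the braid form of the Yang--Baxter equation, $\check R_{23}(a)\check R_{12}(ab)\check R_{23}(b)=\check R_{12}(b)\check R_{23}(ab)\check R_{12}(a)$, which follows from \eqref{YBE} by composing with the flip operators $P$ and using $\check R=P\circ R$ (it is the specialization $(x,y)=(b,a)$ of $\check R_{12}(x)\check R_{23}(xy)\check R_{12}(y)=\check R_{23}(y)\check R_{12}(xy)\check R_{23}(x)$, valid since $a$ and $b$ commute). The main (and essentially only) obstacle is thus the bookkeeping in (c) --- tracking the $W$-action on the roots $\alpha_i$ and confirming that the resulting spectral parameters are precisely those appearing in the Yang--Baxter equation \eqref{YBE}; once this is done, the existence and well-definedness of $C$ on all of $W$, and the cocycle relation \eqref{cocycle}, follow formally.
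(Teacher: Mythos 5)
Your proposal is correct and takes essentially the same route as the paper: reduce to the Coxeter relations in cocycle form, deduce the quadratic relation $C_{s_i}\,s_i(C_{s_i})=\mathrm{id}$ from \eqref{invR}, and obtain the braid relation from \eqref{YBE} rewritten for $\check R$ by multiplying with flip operators. Your explicit bookkeeping of the $W$-twisted spectral parameters (specializing the $\check R$-braid identity at $z=\alpha_i\alpha_{i+1}$, $w=\alpha_{i+1}$) and the commuting case $|i-j|\ge 2$ just spell out details the paper treats tersely.
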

\begin{proof}
It suffices to show that $C_{s_i}$ satisfies the braid relations and that $s_i(C_{s_i})=\check R(\ve_{i+1}/\ve_i)=\check R(\ve_1/\ve_{i+1})^{-1}=1$. The braid relations are a rewriting of the relation \eqref{YBE} for the $\check R$-matrix. Namely, denote by $P_{ij}$ the flip operator in the $i$th and $j$th factor of the tensor product $(\C^{2})^{\otimes 3}$. The latter trivially satisfies the braid relation, $P_{23}P_{12}P_{23}=P_{12}P_{23}P_{12}$. Multiplying in \eqref{YBE} with $P_{23}P_{12}P_{23}$ on the left hand side and with $P_{12}P_{23}P_{12}$ on the right hand side, we find
\[
\check R_{23}(z/w)\check R_{12}(z)\check R_{23}(w)=\check R_{12}(w)\check R_{23}(z)\check R_{12}(z/w)\;.
\]
Setting $z=\ve_3/\ve_2$ and $w=\ve_1/\ve_3$ the braid relation now follows. The second identity is immediate from \eqref{invR}.
\end{proof}
We now define the $W$-action on $\bbV_n$ by $W\ni w\mapsto\bs{w}:= C_w (w\otimes 1)\in\End\bbV_n$ with $C_w$ the cocycle just introduced.
 \begin{prop}\label{prop:leftWaction}
     (i) The left $W$-action on $\bbV_n$ defined by the cocycle $C$ reads explicitly as follows: let $\chi\in \Kpt$ and $J=j_1\ldots j_n$ be a $01$-word, then
   \begin{equation}\label{leftWaction}
\bs{s}_i(\chi\otimes v_J)=\left\{
\begin{array}{ll}
\alpha_i\,\chi^{s_i}\otimes v_J+(1-\alpha_i) \chi^{s_i}\otimes v_{s_i.J},& j_i>j_{i+1}\\
\chi^{s_i}\otimes v_J,&\text{else}
\end{array}\right.,%\qquad\forall f\in S,\;I=i_1\ldots i_n\in\mathcal{I}_{\lambda}
\end{equation}
where $\alpha_i=\ve_i/\ve_{i+1}$, $\chi^{s_i}(\ve)=\chi(\ldots,\ve_{i+1},\ve_i,\ldots)$ and $s_i.J=j_1\cdots j_{i+1}j_{i}\cdots j_n$ for $i=1,\ldots,n-1$. \medskip

\noindent (ii) Moreover, the subspace $V_{k,n}\subset\bbV_n$ spanned by all $v_J$ with words $J$ that have $k$ zero-letters is left invariant under this action.\medskip

\noindent (iii) The $W$-action commutes with the left $\YB$ as well as the right $\YB^\opp $-action on $\bbV_n$, that is, the images of $t_{ij}(z)$ and $\tilde t_{ij}(z)$ are in $\End_{W}\bbV_n$. 
\end{prop}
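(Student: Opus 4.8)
The plan is to handle the three parts in order, with part (iii) carrying the real content, and I would lean throughout on the machinery set up just above: the five vertex weights \eqref{R}/\eqref{5vR}, the braiding Lemmas~\ref{lem:Rcheck} and~\ref{lem:R'check}, and the cocycle property \eqref{cocycle}.

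For (i), I would simply unwind $\bs{s}_i=C_{s_i}\circ(s_i\otimes 1)$ with $C_{s_i}=\check R_{n-i,n-i+1}(\alpha_i)$ and $\check R=P\circ R$. The twist $(s_i\otimes 1)$ touches only the $\Kpt$-factor, sending $\chi$ to $\chi^{s_i}$, so everything reduces to evaluating the two-site operator $\check R_{n-i,n-i+1}(\alpha_i)$ on a basis vector $v_J=v_{j_n}\otimes\cdots\otimes v_{j_1}$. Because of the order reversal in this labelling, the tensor slots $n-i,n-i+1$ carry the letters $j_{i+1},j_i$, and reading off the five nonzero entries of \eqref{R} on the four possible local configurations gives: the identity on $(v_0,v_0)$ and $(v_1,v_1)$; multiplication by $\alpha_i$ when $j_i<j_{i+1}$; and the superposition with coefficient $\alpha_i$ on the unchanged configuration and $1-\alpha_i$ on the swapped one when $j_i>j_{i+1}$. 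Re-encoding the swapped local configuration as the word $s_i.J$ and reinstating the $\chi^{s_i}$ produces \eqref{leftWaction}. I would close with the remark that, since the descent condition $s_iw_\lambda<w_\lambda$ is equivalent to $j_i>j_{i+1}$ for $J=J_\lambda$, this formula matches \eqref{E:left-schub}, so under $v_\lambda\leftrightarrow\cO_\lambda$ the action of this section is the geometric left Weyl group action.

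Part (ii) is then immediate: $\bs{s}_i(\chi\otimes v_J)$ is a $\Kpt$-combination of $v_J$ and $v_{s_i.J}$, and $s_i.J$ has the same number of $0$-letters as $J$; since the cocycle identity \eqref{cocycle} yields $\bs{w}=\bs{s}_{i_1}\cdots\bs{s}_{i_\ell}$ for any reduced expression $w=s_{i_1}\cdots s_{i_\ell}$, every $\bs{w}$ preserves $V_{k,n}$. (Equivalently, each of the five vertices in \eqref{5vR} carries equally many $1$-edges in and out, so $\check R$ — hence the whole action — respects the grading $\bbV_n=\bigoplus_k V_{k,n}$.) For part (iii) I would again first reduce to simple reflections using \eqref{cocycle}, so that it suffices to show each $\bs{s}_i$ commutes with the image in $\End\bbV_n$ of every $t_{ij}(z)$, and separately of every $\tilde t_{ij}(z)$. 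Writing $\hat t_{ij}(z)$ for the image of $t_{ij}(z)$ on $\bbV_n=\bbV_{id}$ via \eqref{evT}, conjugation by $(s_i\otimes 1)$ interchanges the parameters $\ve_i,\ve_{i+1}$ in its matrix, which by definition is exactly the image $\hat t^{s_i}_{ij}(z)$ of $t_{ij}(z)$ on the evaluation module $\bbV_{s_i}$; thus $(s_i\otimes 1)\,\hat t_{ij}(z)=\hat t^{s_i}_{ij}(z)\,(s_i\otimes 1)$. Lemma~\ref{lem:Rcheck} (equivalently Corollary~\ref{cor:YBbraid} for one transposition) is precisely the statement that $C_{s_i}=\check R_{n-i,n-i+1}(\alpha_i)$ intertwines the monodromy matrices of $\bbV_{s_i}$ and $\bbV_{id}$, i.e. $C_{s_i}\,\hat t^{s_i}_{ij}(z)=\hat t_{ij}(z)\,C_{s_i}$. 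Composing the two identities gives
\[
\bs{s}_i\,\hat t_{ij}(z)=C_{s_i}(s_i\otimes 1)\,\hat t_{ij}(z)=C_{s_i}\,\hat t^{s_i}_{ij}(z)(s_i\otimes 1)=\hat t_{ij}(z)\,C_{s_i}(s_i\otimes 1)=\hat t_{ij}(z)\,\bs{s}_i,
\]
as wanted; running the same argument with the $\tilde T$-matrix \eqref{evT'} and Lemma~\ref{lem:R'check} in place of $T(z)$ and Lemma~\ref{lem:Rcheck} handles the $\tilde t_{ij}(z)$, so the images of all generators lie in $\End_W\bbV_n$.

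The actual computations are short; the part needing the most care is keeping the conventions aligned — the order reversal in $v_J$, the use of slots $n-i,n-i+1$ rather than $i,i+1$, the precise argument $\alpha_i=\ve_i/\ve_{i+1}$ of $\check R$, and the compatibility between permuting evaluation parameters and twisting $\Kpt$-coefficients — together with the point that for the right $\YB^\opp$-action one genuinely needs the separate braiding statement Lemma~\ref{lem:R'check} for $\tilde R$ (not a formal consequence of Lemma~\ref{lem:Rcheck}, since $R$ is not symmetric). The substantive input, the braiding of the evaluation modules, is already isolated in those lemmas, so I expect no deeper obstacle.
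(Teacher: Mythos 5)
Your overall route is the same as the paper's: part (i) by direct evaluation of $\check R_{n-i,n-i+1}(\alpha_i)\circ(s_i\otimes 1)$ on the spin basis, part (ii) as an immediate consequence, and part (iii) by reducing to simple reflections and combining the parameter-swap identity $(s_i\otimes 1)\,\hat t_{ij}(z)=\hat t^{s_i}_{ij}(z)\,(s_i\otimes 1)$ with the intertwining statements of Lemma~\ref{lem:Rcheck} (for $t_{ij}$) and Lemma~\ref{lem:R'check} (for $\tilde t_{ij}$); this is precisely the ``easy consequence'' the paper invokes, and your bookkeeping of the slots $n-i,n-i+1$ carrying the letters $j_{i+1},j_i$ is right.

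There is, however, a concrete slip in your local computation for (i), and as written it contradicts the formula you are proving. From the defining expansion \eqref{R} one has $R(z)(v_1\otimes v_0)=v_0\otimes v_1$ and $R(z)(v_0\otimes v_1)=(1-z)\,v_0\otimes v_1+z\,v_1\otimes v_0$, hence $\check R(z)=P\circ R(z)$ \emph{fixes} $v_1\otimes v_0$ and sends $v_0\otimes v_1\mapsto z\,v_0\otimes v_1+(1-z)\,v_1\otimes v_0$. With your slot convention, the configuration $v_1\otimes v_0$ in slots $(n-i,n-i+1)$ is exactly the case $j_i<j_{i+1}$, so that case gives coefficient $1$ (the ``else'' branch of \eqref{leftWaction}), not multiplication by $\alpha_i$ as you state; only the descent case $j_i>j_{i+1}$ produces the superposition $\alpha_i\,v_J+(1-\alpha_i)\,v_{s_i.J}$. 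Your two claimed local weights are not consistent with any single reading of the $R$-matrix — most likely you mixed the tensor-sum definition \eqref{R} with the $4\times4$ display (whose middle block is the transpose). The scalar $\alpha_i$ in the non-descent case would also be incompatible with the geometric side you invoke: $\cO_\emptyset$ (and more generally any $\GL_n$-equivariant class) must be fixed, not rescaled, cf.~\eqref{E:left-schub}. Once this local weight is corrected, your derivation of \eqref{leftWaction} and the rest of the argument go through unchanged.
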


\begin{proof}
The first assertion \eqref{leftWaction} is a straightforward computation using the explicit definition \eqref{R}. From this action one sees that that the $W$-action either swaps two letters in the word $J$ or leaves it unchanged, hence, $V_{k,n}$ is invariant. That the $W$-action commutes with the left and right action of the Yang-Baxter algebra $\YB$ is an easy consequence of Lemmata \ref{lem:Rcheck} and \ref{lem:R'check}.
\end{proof}

\subsection{Graphical calculus for the Yang-Baxter algebra}\label{sec:graphical}
We now introduce a convenient graphical calculus to compute matrix elements of the Yang-Baxter algebra generators $t_{ij}(z)$ in the module $\bbV_n$. This graphical calculus will be used to match the action of the $\YB$ elements with the geometrically defined convolution operators.

We first state the combinatorial description of how to compute matrix elements for the generators $t_{ij}(z)$ in the module \eqref{left_tensor} and then explain below how it is obtained from the coproduct structure \eqref{Delta} of the Yang-Baxter algebra. Given two 01-words $J=j_1\ldots j_n, J'=j'_1\ldots j'_n\in\mb{J}^k_n$ we depict the matrix element $\langle v^{J'},t_{ij}(z).v_J\rangle\in\C[z]\otimes \Kpt$, where $\{v^J\}$ denotes the dual basis of $\{v_J\}\subset \bbV_n$ with the canonical pairing $\langle v^I,v_J\rangle=\delta_{IJ}$, as a horizontal line of $n$ labelled vertices as follows, 
\begin{center}
\begin{tikzpicture}

\node[anchor=east] at (0.,0.) {$j$};

\node[anchor=south] at (0.25,0.25) {$j_n$};%1st vertex
\node[anchor=north] at (0.25,-0.25) {$j'_n$};

%\node[anchor=south] at (0.75,0.25) {$i_2$};%2nd vertex
%\node[anchor=north] at (0.75,-0.25) {$j_2$};

\node[anchor=south] at (1.75,0.3) {$\cdots$};%4th vertex
\node[anchor=north] at (1.75,-0.3) {$\cdots$};

\node[anchor=south] at (2.75,0.25) {$j_2$};%6th vertex
\node[anchor=north] at (2.75,-0.25) {$j'_2$};

\node[anchor=south] at (3.25,0.25) {$j_1$};%7th vertex
\node[anchor=north] at (3.25,-0.25) {$j'_1$};

\node[anchor=west] at (3.5,0.) {$i$};

\filldraw[black] (0.25, 0) circle (2pt);
\filldraw[black] (0.75, 0) circle (2pt);
\filldraw[black] (1.25, 0) circle (2pt);
\filldraw[black] (1.75, 0) circle (2pt);
\filldraw[black] (2.25, 0) circle (2pt);
\filldraw[black] (2.75, 0) circle (2pt);
\filldraw[black] (3.25, 0) circle (2pt);

\draw[black] (0, 0) -- (3.5, 0);
\draw[black] (0.25, 0.25) -- (0.25, -0.25);
\draw[black] (0.75, 0.25) -- (0.75, -0.25);
\draw[black] (1.25, 0.25) -- (1.25, -0.25);
\draw[black] (1.75, 0.25) -- (1.75, -0.25);
\draw[black] (2.25, 0.25) -- (2.25, -0.25);
\draw[black] (2.75, 0.25) -- (2.75, -0.25);
\draw[black] (3.25, 0.25) -- (3.25, -0.25);
\end{tikzpicture}\;,
\end{center}
where each bullet at a single vertex is a placeholder for one of the following five possible types of vertices,
\begin{equation}\label{5v}
\text{(I)}\quad\connecting{0}{1}\qquad\text{(II)}\quad
\avoiding{0}{1}\qquad
\text{(III)}\quad\avoiding{1}{0}\qquad
\text{(IV)}\quad\avoiding{0}{0}\qquad
\text{(V)}\quad\avoiding{1}{1}
\;.
\end{equation}
In order to obtain the matrix element $\langle v^{J'},t_{ij}(z).v_J\rangle\in\C[z]\otimes \Kpt$ one needs to sum over all possible vertex choices (with the outer left and right as well as top and bottom labels fixed as shown above), where each vertex of type (I) in column $i$ (labelled from left to right) contributes a factor $1+z/\ve_{n+1-i}$, each vertex of type (II) a factor $-z/\ve_{n+1-i}$ and (III-V) each contribute a factor 1. If there is no possible row configuration consisting only of these three types of vertices, then the matrix element is zero.

Let us now explain how this graphical calculus is derived from the coproduct \eqref{Delta} and the definitions \eqref{L}, \eqref{left_tensor}. The vertices in \eqref{5v} correspond to the matrix elements of the generators $t_{ij}(z)$ in the evaluation module $\C^2[\ve^{\pm 1}]$ from Lemma \ref{lem:evhom}. For example, we have according to Lemma \ref{lem:evhom} that $t_{00}(z)v_1=(1+z/\ve)v_1$ which corresponds to vertex type (I) in \eqref{5v} and the factor is the `weight' attached to this particular vertex. Similarly, one finds that the vertex of type (II) encodes the action $t_{10}(z)v_0=(-z/\ve)\,v_1$. Note that we are reading the diagrams from right to left, so the order of the indices of the generators $t_{ij}(z)$ is reversed to the order they appear on the horizontal edges of the vertex. To obtain the matrix element $\langle v^{J'},t_{ij}(z).v_J\rangle$ in the tensor product \eqref{left_tensor} one simply applies repeatedly the following recurrence formula which corresponds to applying the coproduct \eqref{Delta} to the last factor,
\begin{center}
\begin{tikzpicture}
\node[anchor=east] at (0.,0.) {$j$};
\node[anchor=east] at (-0.5,0.) {$\underset{k=0,1}{\mathlarger{\sum}}$};

\node[anchor=south] at (0.25,0.25) {$j_n$};%1st vertex
\node[anchor=north] at (0.25,-0.25) {$j'_n$};

%\node[anchor=south] at (0.75,0.25) {$i_2$};%2nd vertex
%\node[anchor=north] at (0.75,-0.25) {$j_2$};

\node[anchor=south] at (1.75,0.3) {$\cdots$};%4th vertex
\node[anchor=north] at (1.75,-0.3) {$\cdots$};

\node[anchor=south] at (3.25,0.25) {$j_2$};%6th vertex
\node[anchor=north] at (3.25,-0.25) {$j'_2$};

\node[anchor=south] at (4.25,0.25) {$j_1$};%7th vertex
\node[anchor=north] at (4.25,-0.25) {$j'_1$};

\node[anchor=west] at (4.5,0.) {$i$};
\node[anchor=east] at (4,0.) {$k$};

\filldraw[black] (0.25, 0) circle (2pt);
\filldraw[black] (0.75, 0) circle (2pt);
\filldraw[black] (2.25, 0) circle (2pt);
\filldraw[black] (2.75, 0) circle (2pt);
\filldraw[black] (3.25, 0) circle (2pt);
%\filldraw[black] (3.75, 0) circle (2pt);
\filldraw[black] (4.25, 0) circle (2pt);

\draw[black] (0, 0) -- (3.5, 0);
\draw[black] (0.25, 0.25) -- (0.25, -0.25);
\draw[black] (0.75, 0.25) -- (0.75, -0.25);
\draw[black] (2.25, 0.25) -- (2.25, -0.25);
\draw[black] (2.75, 0.25) -- (2.75, -0.25);
\draw[black] (3.25, 0.25) -- (3.25, -0.25);
\draw[black] (4, 0) -- (4.5, 0);
\draw[black] (4.25, 0.25) -- (4.25, -0.25);
\end{tikzpicture}
\end{center}

The following example explains the graphical calculus:
\begin{example}\rm 
Set $k=2$, $n=7$ and $J=1101011$, $J'=1001101$. That is, we have $\lambda(J)=(1,0)$ and $\lambda(J')=(3,1,1)$. Setting $j=0$ and $i=1$, we compute one term of the matrix element $\langle v^J,t_{10}(z).v_I\rangle$ corresponding to the allowed vertex configuration
\begin{center}
\begin{tikzpicture}
%%%%%%%LABELLING
\node[anchor=east] at (0.,0.) {$0$};

\node[anchor=south] at (0.25,0.25) {$1$};%1st vertex
\node[anchor=north] at (0.25,-0.25) {$1$};

\node[anchor=south] at (0.75,0.25) {$1$};%2nd vertex
\node[anchor=north] at (0.75,-0.25) {$0$};

\node[anchor=south] at (1.25,0.25) {$0$};%3rd vertex
\node[anchor=north] at (1.25,-0.25) {$1$};

\node[anchor=south] at (1.75,0.25) {$1$};%4th vertex
\node[anchor=north] at (1.75,-0.25) {$1$};

\node[anchor=south] at (2.25,0.25) {$0$};%5th vertex
\node[anchor=north] at (2.25,-0.25) {$0$};

\node[anchor=south] at (2.75,0.25) {$1$};%6th vertex
\node[anchor=north] at (2.75,-0.25) {$0$};

\node[anchor=south] at (3.25,0.25) {$1$};%7th vertex
\node[anchor=north] at (3.25,-0.25) {$1$};

\node[anchor=west] at (3.5,0.) {$1$};
%%%%%%%%%%%%%%%%%%%%%%%%%%%%Drawing
\draw[black] (0, 0) -- (0.2, 0);
\draw[black] (0.25, 0.25) -- (0.25, -0.25);% 1st vertical line
\draw[black] (0.3, 0) .. controls (0.75, 0) .. (0.75, -0.25);

\draw[black] (0.75, 0.25) .. controls (0.75, 0) .. (1, 0);
\draw[black] (1.25, -0.25) .. controls (1.25, 0) .. (1, 0);
\draw[black] (1.25, 0.25) .. controls (1.25, 0) .. (1.7, 0);

\draw[black] (1.75, 0.25) -- (1.75, -0.25);%2nd vertical line
\draw[black] (2.25, -0.25) .. controls (2.25, 0) .. (1.8, 0);
\draw[black] (2.25, 0.25) .. controls (2.25, 0) .. (2.5, 0);
\draw[black] (2.75, -0.25) .. controls (2.75, 0) .. (2.5, 0);
\draw[black] (2.75, 0.25) .. controls (2.75, 0) .. (3, 0);
\draw[black] (3.15, 0) -- (3, 0);

%\draw[black] (3.25, 0.25) -- (3.25, -0.25);%3rd vertical line
\draw[black] (3.25, 0.25) .. controls (3.25, 0) .. (3.5, 0);
\draw[black] (3.25, -0.25) .. controls (3.25, 0) .. (3, 0);
%\draw[black] (3.3, 0) -- (3.5, 0);
\end{tikzpicture}
\end{center}
which produces the term 
\[
(1+z/\ve_{7})(1+z/\ve_{4})(-z/\ve_{2})(-z/\ve_6)
\]
as there is a vertex of type (I) in columns 4 and 7 and a vertex of type (II) in columns 2 and 6 (numbered from right to left). There are many other possible configurations and summing up all their contributions then gives the matrix element.
\end{example}

Another graphical calculus can also be introduced for the right $\YB^\opp$-action defined via \eqref{evT'}, that is, for the computation of the matrix elements $\langle v^{J'},\tilde t_{ij}(z).v_J\rangle$. Now, however, each bullet at a single vertex is a placeholder for one of the following five types of vertices,
\begin{equation}
\text{(I')}\quad\avoiding{0}{0}\qquad
\text{(II')}\quad\avoiding{1}{0}\qquad
\text{(III')}\quad\connecting{0}{1}\qquad
\text{(IV)'}\quad\passing{1}{0}\qquad
\text{(V')}\quad\avoiding{0}{1}
\;.
\end{equation}
The vertex of type (I') now contributes a factor $1+z\ve_{n+1-i}$ if placed in column $i$ and the vertex of type (II') gives a factor $-z\ve_{n+1-i}$ while the ones of type (III'-V') give a factor of 1. As before, if there is no possible row configuration which only consists of these five types of vertices then the matrix element is zero.\medskip

The next theorem employs the graphical calculus to describe action of the operators  $t_{ij}(z),\tilde t_{ij}(z):V_{k,n}\to V_{k\pm 1,n}[z]$ with $i, j=0,1$ on the special vector $v_\lambda$ with $\lambda=(n-k)^k$ which corresponds to the class of a point. Using that the action of $\YB$ commutes with the left Weyl group action this will allow us to identify the $t_{ij}(z),\tilde t_{ij}(z)$ with their geometric analogues. 
\begin{thm}\label{thm:int-system-ops}
Fix two integers $k\le n$ and set $\lambda=(n-k)^k$. 

\noindent(1) The diagonal elements $t_{00}(z)$ and $\tilde t_{00}(z)$ act as follows:
\begin{gather}
t_{00}(z).v_{(n-k)^k}=\prod_{i=1}^{n-k}(1+z/\ve_i)\;v_{(n-k)^k}\\
\tilde t_{00}(z).v_{(n-k)^k}=\prod_{i=n+1-k}^{n}(1+z\ve_i)\;v_{(n-k)^k}\;.
\end{gather}

\noindent(2) The off-diagonal elements act in $V_{k,n}$ according to the formulae
\begin{gather}
t_{01}(z).v_{(n-k)^k}=\prod_{i=1}^{n-k}(1+z/\ve_i)\; v_{(n-k)^{k-1}}\\
t_{10}(z).v_{(n-k)^k}=-\sum_{r=1}^{n-k}(z/\ve_{r})\prod_{i=r+1}^{n-k}(1+z/\ve_i)\;v_{((n-k-1)^k,r-1)}\;.
\end{gather}

and
\begin{gather}
\tilde t_{10}(z).v_{(n-k)^k}=\prod_{i=n+1-k}^{n}(1+z\ve_i)\; v_{(n-k-1)^{k}}\\
\tilde t_{01}(z).v_{(n-k)^k}=-\sum_{r=n+1-k}^{n}(z\ve_{r})\prod_{i=n+1-k}^{r-1}(1+z\ve_i)\;v_{((n+1-k-1)^{r-1},(n-k)^{k-r})}\;.
\end{gather}

(3) We have the following action of the operators $t_{11}(z)$ and $\tilde t_{11}(z)$,
\begin{gather}
t_{11}(z).v_{(n-k)^k}=\sum_{r=1}^{n-k}(-z/\ve_r)\prod_{i=r+1}^{n-k}(1+z/\ve_i)\;v_{((n-k-1)^{k-1},r-1)}\\
\tilde t_{11}(z).v_{(n-k)^k}=\sum_{r=n+1-k}^n(-z\ve_r)\prod_{i=n+1-k}^{r-1}(1+z\ve_i)\;v_{((n-k)^{r-1},(n-k-1)^{k-r},0)}\;.
\end{gather}
\end{thm}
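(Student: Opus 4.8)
The plan is to prove all eight identities by a direct application of the graphical calculus of Section~\ref{sec:graphical}: for each generator I will enumerate the admissible vertex configurations of the horizontal $n$-vertex line whose boundary data encodes $t_{ij}(z).v_{(n-k)^k}$ (resp.\ $\tilde t_{ij}(z).v_{(n-k)^k}$), multiply the vertex weights, and read off the resulting spin vectors; the four identities for the dual generators $\tilde t_{ij}(z)$ will then also be recovered from the level-rank intertwiner of Lemma~\ref{lemma:intsys-level-rank} as a cross-check.

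First I would fix the set-up. The vector $v_{(n-k)^k}$ corresponds to the $01$-word $1^{n-k}0^k$, so along the top of the $n$-vertex line the North edges carry, from left to right, the label $0$ in the first $k$ columns and the label $1$ in the last $n-k$ columns, while the West outer edge carries the second index $j$ of $t_{ij}$ and the East outer edge the first index $i$. The decisive point is that every vertex in the list \eqref{5v} satisfies the conservation law $(\text{West})+(\text{North})=(\text{East})+(\text{South})$, and that the only pair of input labels $(\text{West},\text{North})$ admitting more than one vertex type is $(0,1)$, where one may continue with a type (I) vertex (East$=0$, South$=1$, weight $1+z/\ve_{n+1-c}$ in column $c$) or a type (II) vertex (East$=1$, South$=0$, weight $-z/\ve_{n+1-c}$); every other input pair forces a unique, weight-one vertex. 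Propagating from the left, once a type (II) vertex occurs the East label becomes $1$ and stays $1$ (the subsequent North$=1$ columns force type (V)); hence the East outer edge equals $1$ if and only if exactly one type (II) vertex occurs. Therefore for $t_{00}$ and $t_{01}$ (where $i=0$) there is a unique admissible configuration, while for $t_{10}$ and $t_{11}$ (where $i=1$) the admissible configurations are indexed by the column $c^\ast\in\{k+1,\dots,n\}$ carrying the type (II) vertex.

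Carrying this out yields: $t_{00}(z).v_{(n-k)^k}=\prod_{i=1}^{n-k}(1+z/\ve_i)\,v_{(n-k)^k}$ (all North$=1$ columns are of type (I), so the South word equals the North word); $t_{01}(z).v_{(n-k)^k}=\prod_{i=1}^{n-k}(1+z/\ve_i)\,v_{(n-k)^{k-1}}$ (the first column is forced to be type (III), the rest as before); and for $t_{10}$, $t_{11}$ one sums over $c^\ast$ the product of the weights of the type (I) vertices in columns $k+1,\dots,c^\ast-1$ times the weight of the type (II) vertex in column $c^\ast$, which after the reindexing $r=n+1-c^\ast$ gives $-\sum_{r=1}^{n-k}(z/\ve_r)\prod_{i=r+1}^{n-k}(1+z/\ve_i)\,v_{((n-k-1)^k,\,r-1)}$ and $\sum_{r=1}^{n-k}(-z/\ve_r)\prod_{i=r+1}^{n-k}(1+z/\ve_i)\,v_{((n-k-1)^{k-1},\,r-1)}$ respectively; here one uses the bijections of Section~\ref{sec:conventions} to translate the South boundary word into the displayed partition. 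This proves parts (1)--(3) for the generators $t_{ij}(z)$. For the dual generators I would run the identical argument with the dual vertex list (I$'$)--(V$'$) and the dual weights $1\pm z\ve_{n+1-c}$; equivalently, one applies the relation $\tilde t_{ij}(z)\circ\Gamma=\Gamma\circ t_{ji}(z)$ of \eqref{tGamma}, noting that the map $\Gamma$ of \eqref{Gamma} carries $v_{(n-k)^k}\in V_{k,n}$ to the point vector $v_{(k)^{n-k}}\in V_{n-k,n}$ (it reverses and complements the $01$-word and inverts the $\ve_i$), so that the four $\tilde t_{ij}(z)$-formulas drop out of the $t_{ji}(z)$-formulas applied in $V_{n-k,n}$ after the substitution $k\leftrightarrow n-k$, $\ve_i\leftrightarrow\ve_{n+1-i}^{-1}$.

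The enumeration of configurations and the multiplication of vertex weights are entirely routine; the part that genuinely requires care is the index bookkeeping --- keeping straight that columns, equivariant parameters, and positions in the $01$-word are indexed in mutually reversed orders, and then correctly identifying the South boundary words with the partitions in the statement, in particular the less transparent shapes occurring for $\tilde t_{01}(z)$ and $\tilde t_{11}(z)$. Verifying that the level-rank substitution $\ve_i\mapsto\ve_{n+1-i}^{-1}$, together with the spectral-parameter sign convention of Lemma~\ref{lem:evhom}, really carries the $t_{ij}(z)$-formulas onto the $\tilde t_{ij}(z)$-formulas is the main such obstacle, and it is cleanest to settle it by performing the dual-vertex computation directly rather than by transport of structure.
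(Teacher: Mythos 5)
Your proposal is correct and follows essentially the same route as the paper: the paper's proof is exactly this graphical-calculus enumeration (reading off the unique configuration for $t_{00},t_{01}$ and the single sum over the column carrying the type (II) vertex for $t_{10},t_{11}$), with the dual formulae obtained either from the primed vertex list or from the intertwiner \eqref{tGamma}. Your conservation-law/propagation argument is just a slightly more systematic justification of the admissible configurations that the paper exhibits by inspection, so there is nothing substantive to add.
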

We note that the formulae for $\tilde t_{ij}(z)$ can be obtained from the ones for $t_{ij}(z)$ using \eqref{tGamma} but for convenience we have stated them both explicitly.
\begin{proof}
Recall that $\lambda=(n-k)^k$ corresponds to the $01$-word $J=j_1j_2\ldots j_n=1\ldots 10\ldots 0$ with $k$ zero-letters and $(n-k)$ one-letters. We start with the proof of the first claim. Using once again the graphical calculus we find that in both cases there is only a single allowed row configuration of vertices,
\begin{gather*}
t_{00}(z).v_{1\ldots 10\ldots 0}=\Avoiding{0}{0}{}{0}\hspace{-.55cm}\Avoiding{}{0}{}{0}\cdots
\Avoiding{}{0}{}{0}\hspace{-.55cm}\Connecting{}{1}{}{1}\cdots\Connecting{}{1}{}{1}\hspace{-.55cm}\Connecting{}{1}{0}{1}
\;v_{1\ldots 10\ldots 0}\\
\tilde t_{00}(z).v_{1\ldots 10\ldots 0}=\Avoiding{0}{0}{}{0}\hspace{-.55cm}\Avoiding{}{0}{}{0}\cdots
\Avoiding{}{0}{}{0}\hspace{-.55cm}\Connecting{}{1}{}{1}\cdots\Connecting{}{1}{}{1}\hspace{-.55cm}\Connecting{}{1}{0}{1}
\;v_{1\ldots 10\ldots 0}
\end{gather*}
Note that in the first and second line different weights are assigned to the vertices resulting in different matrix elements although the row configurations look the same. Multiplying up these weights then gives the claimed coefficients in (1).

For the second claim, we observe that the allowed row configurations of vertices for the off-diagonal elements are
\begin{gather*}
t_{01}(z).v_{1\ldots 10\ldots 0}=\Avoiding{1}{0}{}{1}\hspace{-.55cm}\Avoiding{}{0}{}{0}\cdots
\Avoiding{}{0}{}{0}\hspace{-.55cm}\Connecting{}{1}{}{1}\cdots\Connecting{}{1}{}{1}\hspace{-.55cm}\Connecting{}{1}{0}{1}
\;v_{1\ldots 10\ldots 01}\\
t_{10}(z).v_{1\ldots 10\ldots 0}=\sum_{r=1}^{n-k}\Avoiding{0}{0}{}{0}\hspace{-.55cm}\Avoiding{}{0}{}{0}\cdots
\Avoiding{}{0}{}{0}\hspace{-.55cm}\Connecting{}{1}{}{1}\cdots
\Connecting{}{1}{}{1}\hspace{-.55cm}\underset{r}{\Avoiding{}{1}{}{0}}\hspace{-.55cm}\Avoiding{}{1}{}{1}
\cdots\Avoiding{}{1}{}{1}\hspace{-.55cm}\Avoiding{}{1}{1}{1}
\;v_{1\ldots 1\underset{r}{0}1\ldots 10\ldots 0}
\end{gather*}
where the sum in the second line runs over the column numbers (numbered from right to left) in which the additional $0$-letter is placed. Again multiplying up the vertex weights for each row configuration then gives the claimed coefficients. The formulae for $\tilde t_{01}(z)$ and $\tilde t_{10}(z)$ also follow from \eqref{tGamma}, and are left as an exercise to the reader.

Finally, for the third claim we have that
\begin{gather*}
t_{11}(z).v_{1\ldots 10\ldots 0}=\sum_{r=1}^{n-k}\Avoiding{1}{0}{}{1}\hspace{-.55cm}\Avoiding{}{0}{}{0}\cdots
\Avoiding{}{0}{}{0}\hspace{-.55cm}\Connecting{}{1}{}{1}\cdots
\Connecting{}{1}{}{1}\hspace{-.55cm}\underset{r}{\Avoiding{}{1}{}{0}}\hspace{-.55cm}\Avoiding{}{1}{}{1}
\cdots\Avoiding{}{1}{}{1}\hspace{-.55cm}\Avoiding{}{1}{1}{1}
\;v_{1\ldots 1\underset{r}{0}1\ldots 10\ldots 01}\\
\tilde t_{11}(z).v_{1\ldots 10\ldots 0}=\sum_{r=n+1-k}^n\Passing{1}{0}{}{0}\hspace{-.55cm}\Passing{}{0}{}{0}\cdots
\Passing{}{0}{}{0}\hspace{-0.55cm}\underset{r}{\Avoiding{}{0}{}{1}}\hspace{-0.55cm}\Avoiding{}{0}{}{0}\cdots
\Avoiding{}{0}{}{0}\hspace{-.55cm}\Connecting{}{1}{}{1}\cdots
\Connecting{}{1}{}{1}\hspace{-.55cm}\Avoiding{}{1}{1}{0}\;v_{01\ldots10\ldots0\underset{r}{1}0\ldots0}
\end{gather*}
where the sums in the first and second line again run over the column numbers (numbered from right to left). Again multiplying the weights of individual vertices then gives the desired coefficients.
\end{proof}
\begin{cor}\label{cor:main-cor} Parts (1) and (2) of \Cref{thm:QK=YB} hold. That is, under the
$\Kpt[y,q]$-module identification 
\[ \Phi: V_{k,n} \to \QK_{\T}(\Gr(k;n)) \/; \quad v_\lambda \mapsto \cO_\lambda \/, \]
the action of the operators $t_{ij}$ on $V_{k,n}$ is the same as the action of the analogous geometric 
operators $\tau_{ij}$ on $\QK_{\T}(\Gr(k;n))$ defined in section \ref{sec:convo-ops}.

Furthermore, the same result extends for the dual operators $\tilde{t}_{ij}$. 
\end{cor}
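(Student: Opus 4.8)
\textbf{Proof plan for \Cref{cor:main-cor}.}
The strategy is to combine two ingredients already assembled in the paper: (i) the explicit action of the integrable-system operators $t_{ij}(z)$ (and $\tilde t_{ij}(z)$) on the distinguished vector $v_{(n-k)^k}$, computed via the graphical calculus in \Cref{thm:int-system-ops}; and (ii) the explicit action of the geometric operators $\tau_{ij}$ (and $\tilde\tau_{ij}$) on the class of the Schubert point $[\pt]_{k;n}=\cO_{(n-k)^k}$, computed in \Cref{thm:diag-ops-geom}, \Cref{thm:geom-offdiag} and \Cref{thm:geom-dual-ops}. The first step is simply to match these formulae after the substitution $z \mapsto y$: a term-by-term comparison shows that $\Phi(t_{00}(y).v_{(n-k)^k}) = \tau_{00}(\cO_{(n-k)^k})$, that $\Phi(t_{11}(y).v_{(n-k)^k}) = \tau_{11}(\cO_{(n-k)^k})$ — here one uses that $\tau:=\tau_{00}+q\tau_{11}$ equals quantum multiplication by $\lambda_y(\cQ^\vee_{n-k})$ (\Cref{thm:diag-ops-geom}), whose action on $[\pt]_{k;n}$ is given in \Cref{thm:lyQveept}, and that $t(y)=t_{00}(y)+qt_{11}(y)$ has eigenvalue/action matching it on the point — and likewise for the off-diagonal operators $\tau_{10},\tau_{01}$ via \Cref{thm:geom-offdiag}. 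The same matching for $\tilde t_{ij}$ versus $\tilde\tau_{ij}$ is handled by \Cref{thm:geom-dual-ops}, which was precisely set up so that $\tilde\tau_{ij}[\pt_{k;n}]$ obeys the formulae of \Cref{thm:int-system-ops}, or alternatively by applying the level-rank relation \eqref{tGamma} on the integrable side and \eqref{E:theta-schub} on the geometric side.

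The second, and conceptually essential, step is the propagation argument. Both families of operators commute with the left Weyl group action: on the geometric side this was noted in \cref{sec:convo-ops} (the bundles $\cQ_{n-k},\cQ_{n-k-1},\cS_k$ are $\GL_n$-homogeneous so their $\lambda_y$-classes are $W$-invariant, hence the convolutions $\tau_{ij}$ satisfy $\tau_{ij}(\bs w.\kappa)=\bs w.\tau_{ij}(\kappa)$), and on the integrable side this is \Cref{prop:leftWaction}(iii), which says the images of $t_{ij}(z)$ and $\tilde t_{ij}(z)$ lie in $\End_W\bbV_n$. Moreover $\Phi$ is $W$-equivariant: by definition it sends the spin basis to the Schubert basis, and under both actions the simple reflection $\bs s_i$ acts by the same formula — compare \eqref{leftWaction} with \eqref{E:left-schub} — so $\Phi\circ\bs s_i = \bs s_i\circ\Phi$ for all $i$, hence $\Phi\circ\bs w = \bs w\circ\Phi$ for all $w\in W$. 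Since $\QK_{\T}(\Gr(k;n))$ is a cyclic module over the degenerate Hecke (nil-Hecke) algebra generated by the Demazure operators $\bs\delta_i$, with the class $[\pt]_{k;n}=\cO_{(n-k)^k}$ as a cyclic generator (one obtains any $\cO_\lambda$ as $\bs\delta_{w}[\pt]_{k;n}$ for a suitable $w$), every Schubert class $\cO_\lambda$ is in the orbit of the point class under compositions of the $W$-action. Applying an arbitrary word in the $\bs s_i$ (equivalently, the appropriate $\bs\delta_w$) to the equality on the point class, and using that $\Phi$, the $\tau_{ij}$ and the $t_{ij}$ all intertwine this action, yields $\Phi(t_{ij}(y).v_\lambda)=\tau_{ij}(\cO_\lambda)$ for every $\lambda\subset k\times(n-k)$. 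The dual statement follows identically using $\tilde t_{ij}$, $\tilde\tau_{ij}$, and the same $W$-commutativity.

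Finally one remarks that all the operators in question are $\Kpt[y,q]$-linear in the appropriate sense: the convolutions $\tau_{ij}$ are $\Kpt[y]$-linear by construction and extend $q$-linearly; the $t_{ij}(z)$ act on $\bbV_n\otimes\C[y,q]$ with coefficients polynomial in $y$ (\Cref{prop:pseudo}(ii)) and $q$ central; and $\Phi$ is by definition an isomorphism of $\Kpt[y][\![q]\!]$-modules. Hence the two agree as operators on the full module once they agree on a $\Kpt$-basis, which is what the propagation argument provides. One small bookkeeping point: the $W$-action twists the ground ring $\Kpt$, so when propagating one must track the $\chi\mapsto\chi^{s_i}$ twists on both sides; but since this twist is identical in \eqref{leftWaction} and in \eqref{E:left-schub}, and the $\tau_{ij}$ are built from $W$-invariant classes, the twists cancel consistently and cause no difficulty.

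\textbf{Main obstacle.} The only genuinely delicate point is \emph{which} identity serves as the base case: one must verify that the action of $t_{ij}(y)$ on $v_{(n-k)^k}$ literally matches $\tau_{ij}$ on $\cO_{(n-k)^k}$, and in particular that the decomposition $t(y)=t_{00}(y)+qt_{11}(y)$ on the spin side corresponds to the decomposition $\lambda_y(\cQ^\vee_{n-k})\star = \tau_{00}+q\tau_{11}$ on the geometric side — i.e.\ that the ``$q^0$'' and ``$q^1$'' parts are separately matched, not merely their sum. This is exactly guaranteed by \Cref{thm:int-system-ops}(1) and (3) (which give $t_{00}$ and $t_{11}$ separately) against \Cref{thm:diag-ops-geom} and \Cref{thm:lyQveept} (which give $\tau_{00}$ and $\tau_{11}$ separately), together with \Cref{cor:deg1-lambda} ensuring no higher powers of $q$ appear, so no further subtlety arises; the rest is the (routine but nontrivial) propagation via the nil-Hecke cyclicity.
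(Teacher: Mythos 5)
Your proposal is correct and follows essentially the paper's own route: the base case is the term-by-term match of the explicit point-class formulae (\Cref{thm:int-system-ops} against \Cref{thm:diag-ops-geom}, \Cref{thm:geom-offdiag}, \Cref{thm:geom-dual-ops}, with the dual case handled by level-rank duality), and the extension to all $v_\lambda$ uses exactly the paper's stated strategy that both families of operators commute with the left Weyl group action and that $\QK_{\T}(\Gr(k;n))$ is cyclic over the Demazure/nil-Hecke operators generated by the point class, with $\Phi$ intertwining these actions. Your added bookkeeping (separate matching of the $q^0$ and $q^1$ parts, $\Kpt[y,q]$-linearity, and the ground-ring twist) is consistent with the paper and introduces no gap.
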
 
\begin{proof} The identification of the diagonal operators follows from comparing
the statements in \Cref{thm:diag-ops-geom} and parts (1) and (3) of \Cref{thm:int-system-ops}. 
For the off-diagonal operators, one compares \Cref{thm:geom-offdiag} to part (2) of 
\Cref{thm:int-system-ops}. The statement on the dual operators follows from the level-rank duality 
from \cref{ss:level-rank}; details are in \cref{ss:dual-ops}.
\end{proof}

%%%%%%%%%%%%%%%%%%%%%%%%%%%%%%%%%%%%%%%%%%%%%%%%%%

%Tex root:QKYB.tex

\section{The extended affine Weyl group action}\label{sec:extended}
The goal of this section is to introduce an action of the {\em extended
affine} Weyl group $\widetilde{W}$ on the space $\bbV_n^q :=\bbV_n \otimes \Z[q^{\pm 1}]$,
extending the action of the finite Weyl group from before. 
Depending on the chosen
presentation of $\widetilde{W}$, this amounts to defining actions by a
cyclic generator of the affine Dynkin  automorphism, or by translation elements. 

It is interesting to note that this  
action may be defined independently in the integrable systems and geometry.
In the context of quantum integrable systems it arises naturally 
when considering the {\em quantum Knizhnik-Zamolodchikov equations} 
associated with the $R$-matrix, cf.~\cite{frenkel.reshetikin:quantum}.

In geometry, the action of the cyclic element is called the Seidel action, and it 
has been studied for quantum cohomology and quantum K-theory 
in \cite{postnikov:affine,chaput.perrin:affine,buch.chaput.perrin:seidel,li.liu.song.yang:seidel}.
In future work, we plan to deduce this action from a 
similar action on the equivariant K-homology of
the affine Grassmannian. 

The extended affine Weyl group is the semidirect product 
$\widetilde{W}= W \ltimes \Z^n$, where $W \simeq S_n$ acts on 
$\Z^n$ by permuting coordinates.
We will use two realizations of $\widetilde{W}$,
and the relation between the two
will give non-trivial formulae. We start with the realization with generators 
\[
\widetilde W=\langle s_1,\ldots, s_{n-1},\rho\rangle 
\]
where $\rho$ is the outer (affine, type A) Dynkin diagram automorphism satisfying,
\begin{equation}\label{rho_def}
\rho s_i=s_{i-1}\rho,\quad i=2,\ldots, n-1\;.
\end{equation}
If we set $s_0=\rho s_1\rho^{-1}$ then the subgroup $\Waff=\langle s_0,s_1,\ldots,s_{n-1}\rangle$ is the {\em affine Weyl group} and 
$\Waff \cong W \ltimes Q^\vee$,
where $Q^\vee$ is the (co)root lattice.
The affine Weyl group $\Waff $ is a Coxeter group, unlike $\widetilde W$.

As we shall see below,
the element $\rho$ determines the Seidel action 
on the quantum cohomology and K-theory rings, 
and we refer to its image $\bs{\rho}$ in $\End\bbV^q_n$ as the {\em Seidel element}. 
We extend the $\widetilde{W}$
%$S_n$ 
action on $\bbV_n$ from \eqref{leftWaction} by 
letting $\bs{\rho}$ act on $\bbV_n^q$ as follows. For 
$\chi\in\Kpt$ and {$v_I= v_{i_n} \otimes \ldots \otimes v_{i_1} \in(\C^2)^{\otimes n}$}, define:
\begin{equation}\label{E:rhoI}
\bs{\rho}(\chi\otimes v_I)=q^{i_1}\chi^\rho\otimes v_{i_1}\otimes v_{i_n}\otimes\cdots\otimes v_{i_2},\quad\forall I=i_1i_2\ldots i_n \/.
\end{equation}
Here $\rho$ acts on $\chi \in \Kpt$ by the Coxeter element $s_{n-1}\cdots s_1$, i.e.,
$\chi^\rho = s_{n-1}\cdots s_1(\chi)$. Explicitly, $\chi^\rho(\ve_1,\ldots,\ve_n)=\chi(\ve_n,\ve_1,\ldots,\ve_{n-1})$.
\footnote{
In the context of the quantum Knizhnik-Zamolodchikov equations the action of the affine Weyl group depends on an additional parameter $p$, which is identified with the loop parameter:  $\chi^\rho_p(\ve_1,\ldots,\ve_n)=\chi(p\ve_n,\ve_1,\ldots,\ve_{n-1})$. Here we have set $p=1$, which means that the action of affine Weyl group on $\Kpt[q^{\pm 1}]$ factors through the natural group epimorphism $\widehat W \twoheadrightarrow W$ that sends all translations to the identity.
}
\begin{remark}\rm If $k=0$ then $V^q_{0,n}=\Kpt[q^{\pm 1}]\otimes v_1\otimes\cdots\otimes v_1$, 
and if $k=n$ then $V^q_{n,n}=\Kpt[q^{\pm 1}]\otimes v_0\otimes\cdots\otimes v_0$. 
The action of $\bs{\rho}$ on the unique generators is:
    \[
    \bs{\rho}(v_1\otimes\cdots\otimes v_1)=q v_1\otimes\cdots\otimes v_1 \/; \quad \bs{\rho}(v_0\otimes\cdots\otimes v_0)= v_0\otimes\cdots\otimes v_0\;. \]
 Note the non-trivial action on $V_{0.n}^q$.\end{remark}   
In order to compare the action of $\rho$ with the known action of the Seidel element, we restate the action \eqref{E:rhoI} in terms of partitions $\lambda$:
\begin{equation}\label{E:rho}
        \bs{\rho}(\chi\otimes v_\lambda)=
 \begin{cases} q\,\chi^\rho\otimes v_{(\lambda_1-1,\ldots,\lambda_k-1)} &  \ell(\lambda)=k \/;\\%\; \text{\rm  (delete max column)}\\
            \chi^\rho\otimes v_{(n-k,\lambda_1,\ldots,\lambda_{k-1})} & \text{else} \/.
            %\text{\rm  (add max row)}
            \end{cases}
    \end{equation}
We also record the inverse action: 
\begin{equation}\label{E:rhoinv}
        \bs{\rho}^{-1}(\chi\otimes v_\lambda)= \begin{cases} 
            q^{-1}\chi^{\rho^{-1}}\otimes v_{(\lambda_1+1,\ldots,\lambda_k+1)} &  \lambda_1<n-k \/; \\%\; \text{\rm  (add max column)}\\
            \chi^{\rho^{-1}}\otimes v_{(\lambda_2,\ldots,\lambda_{k},0)} & \text{else} \/.%\; \text{\rm  (delete max row)}
        \end{cases}
    \end{equation}
These formulae are the same as those in \cite{postnikov:affine,chaput.perrin:affine,buch.chaput.perrin:seidel,li.liu.song.yang:seidel},
where the Seidel representation on quantum cohomology 
and quantum K-theory of Grassmannians is studied.
One may also calculate the action of the affine simple reflection $s_0 \in \Waff$. If $\lambda \in \Pi^k_n$ is a partition in the $k \times (n-k) $ rectangle, then:
\begin{equation}\label{E:s0act}
        \bs{s}_0(\chi\otimes v_\lambda)=\left\{\begin{array}{ll}
            q^{-1}(1-\alpha_0)\chi^{s_\theta}v_{s_\theta.\lambda}+\alpha_0\chi^{s_\theta}\otimes v_\lambda, &  s_\theta.\lambda\subset\Pi^k_{n} \/;\\
            \chi^{s_\theta}\otimes v_\lambda. & \text{else} \/.
        \end{array}\right.
    \end{equation}
Here $\theta=\ve_1/\ve_n=\alpha_0^{-1}$ is the highest root,  $s_\theta.\lambda$ is the partition $\lambda$ with an $(n-1)$-hook added and $\chi^{s_\theta}(\ve_1,\ldots,\ve_n)=\chi(\ve_n,\ve_2,\ldots,\ve_{n-1},\ve_1)$.
   
 One may check directly that the equations \eqref{E:rho} and \eqref{E:rhoinv} together with the left $W$-action
 from \eqref{leftWaction} define an action of the extended affine Weyl group $\widetilde{W}$
on $\bbV_n^q$. 
\begin{example}\rm 
    We compute the action of the cyclic element $\rho$ and the affine reflection $s_0$ for a couple of examples.     \[
    \bs{s}_0v_\emptyset=\bs{\rho}\bs{s}_1(q^{-1}v_{(1^k)})\\
    =q^{-1}\bs{\rho}(\alpha_1 v_{(1^k)}+(1-\alpha_1)v_{(1^{k-1})})\\
    =\alpha_0\,v_\emptyset+q^{-1}(1-\alpha_0)v_{(n-k,1^{k-1})} \/;
    \]
    %\end{multline*}
    %which matches the general formula \eqref{E:s0act}. Similarly, we compute
    \[
    \bs{s}_0v_{(n-k)^k}=\bs{\rho}\bs{s}_1(v_{(n-k)})=\bs{\rho}v_{(n-k)}=v_{
    (n-k)^k}\/.
    \]
    In particular, the identity $v_\emptyset$ is invariant under the action of the finite Weyl group
    $W$, and
the class of a point $v_{(n-k)^k}$ is invariant under the conjugate of $W$ given by $\langle s_{k-1},\ldots s_1, s_0, s_{n-1},\ldots,s_{k+1}\rangle\subset \Waff$.
\end{example}
We also record the following:
\begin{cor}
    The $n$th power of the Seidel element satisfies $\bs{\rho}^n=q^{n-k}$ on the subspace $V_{k,n}^q \subset \bbV_n^q$. In particular, $\bs{\rho}^n$ is a central element in $\End\bbV_n^q$.
\end{cor}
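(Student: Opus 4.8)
The plan is to compute $\bs{\rho}^n$ directly from the explicit formula \eqref{E:rhoI} for the action of the Seidel element on the basis $\{\chi\otimes v_I\}$ of $\bbV_n^q$, keeping track separately of the two bookkeeping data that \eqref{E:rhoI} produces at each step: the power of $q$, and the induced permutation of the equivariant parameters $\ve_i$. Both will be seen to stabilise after exactly $n$ applications.

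First I would observe that $\bs{\rho}$ stabilises each weight subspace $V_{k,n}^q$. Writing $v_I=v_{i_n}\otimes\cdots\otimes v_{i_1}$ for the $01$-word $I=i_1i_2\ldots i_n$, formula \eqref{E:rhoI} shows that on words $\bs{\rho}$ acts by the cyclic shift $I=(i_1,\ldots,i_n)\mapsto(i_2,\ldots,i_n,i_1)$, which preserves the number of $0$-letters; hence the claimed identity on $V_{k,n}^q$ is well posed. Iterating \eqref{E:rhoI} a total of $n$ times, the word is shifted cyclically $n$ times and so returns to $I$, while at the $t$-th step the letter that leaves the first slot equals $i_t$ and contributes a factor $q^{i_t}$; the accumulated factor is therefore $q^{i_1+\cdots+i_n}$. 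If $v_I\in V_{k,n}^q$ then $I$ has exactly $n-k$ letters equal to $1$, so $i_1+\cdots+i_n=n-k$. Simultaneously, each application replaces $\chi$ by $\chi^{\rho}$, where the substitution $\chi^{\rho}(\ve_1,\ldots,\ve_n)=\chi(\ve_n,\ve_1,\ldots,\ve_{n-1})$ cyclically permutes the $\ve_i$ and so has order exactly $n$; hence $\chi^{\rho^n}=\chi$. Combining the two, $\bs{\rho}^n(\chi\otimes v_I)=q^{n-k}\,\chi\otimes v_I$ for every basis vector of $V_{k,n}^q$; since $\rho^n$ acts trivially on $\Kpt$ the operator $\bs{\rho}^n$ is $\Kpt[q^{\pm1}]$-linear, and therefore $\bs{\rho}^n=q^{n-k}\cdot\mathrm{id}$ on $V_{k,n}^q$.

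For the final assertion, $q^{n-k}$ is a unit of $\Z[q^{\pm1}]$ and $\bs{\rho}^n$ acts as this scalar on each weight space of $\bbV_n^q$; in particular $\bs{\rho}^n$ is a central element of $\End\bbV_n^q$ (equivalently, on the group-theoretic side the element $\rho^n$ equals the translation $t_{\ve_1+\cdots+\ve_n}$, which is fixed by $W$ and hence central in $\widetilde{W}=W\ltimes\Z^n$), and it commutes in particular with the left $\widetilde{W}$-action and with the diagonal operators $t_{00}(z),t_{11}(z)$ on $\bbV_n^q$. I do not anticipate a genuine obstacle: the whole argument reduces to the elementary bookkeeping of $q$-powers under the iterated cyclic shift \eqref{E:rhoI}. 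The only two points that need a moment of care are that $\bs{\rho}$ really stabilises each $V_{k,n}^q$ (so that the displayed equality makes sense) and that the twist it induces on $\Kpt$ has order \emph{exactly} $n$, which is what turns $\bs{\rho}^n$ into $\Kpt[q^{\pm1}]$-linear multiplication by the unambiguous scalar $q^{n-k}$.
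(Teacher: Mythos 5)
Your computation is exactly the verification the paper leaves implicit: the corollary is recorded without proof right after the explicit formula \eqref{E:rhoI}, and iterating that formula $n$ times---the cyclic shift returns the $01$-word to itself, the accumulated power of $q$ is $q^{i_1+\cdots+i_n}=q^{n-k}$ on $V_{k,n}^q$, and the induced $n$-cycle on the equivariant parameters has trivial $n$-th power, so $\bs{\rho}^n$ is the $\Kpt[q^{\pm 1}]$-linear scalar $q^{n-k}$ on each weight space---which is precisely the intended argument. Your handling of the centrality clause (scalar action on each $V_{k,n}^q$, equivalently $\rho^n=t_{\ve_1+\cdots+\ve_n}$ central in $\widetilde W$, hence commuting with the $\widetilde W$-action and all weight-preserving operators) matches the paper's intent, so the proposal is correct and takes essentially the same route.
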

We now relate the action of the Yang-Baxter algebra $\YB$ with the action of the translations in the extended affine Weyl group. For this we use a second realization of the extended Weyl group $\widetilde{W}$:
\[ \widetilde W =\langle s_1,\ldots,s_{n-1},t_1,\ldots,t_n\rangle \] 
with relations given by the usual braid and commutation relations for the $s_i$'s and 
\[
s_it_i=t_{i+1}s_i,\qquad s_it_j=t_js_i,\quad j\neq i,i+1,\qquad t_it_j=t_jt_i\;.
\]
The connection with the previous realization is given by:
\begin{equation}\label{rho2t}
t_i=s_{i}\cdots s_{n-1}\rho s_1\cdots s_{i-1}\quad\text{ and }\quad
\rho=s_{n-1}\cdots s_1 t_1\;.
\end{equation}
The elements $t_i$ are translations in the weight lattice $\Z^n$. 
\begin{prop}\label{prop:trans} 
Let $t(z) = t_{00}(z)+ q t_{11}(z)$, be the (quantised) trace of the monodromy operator $T$ from \eqref{T}. Then the action of the $t_i$ from \eqref{rho2t} on $\bbV_n^q$ is given by 
\[
t(-\ve_i)=\bs{t}_i
\qquad
\text{and}
\qquad
\tilde t(-1/\ve_i)=q\;\bs{t}^{-1}_i,\quad i=1,\ldots n.
\]
In other words, via the equivalence from \Cref{cor:main-cor}, the action of the translations is given by:
\[ t_i.\cO_\lambda = \lambda_{-\ve_i}(\cQ^\vee)\star \cO_\lambda 
\qquad\text{and}\qquad t_i^{-1}.\cO_\lambda=q^{-1}\lambda_{-1/\ve_i}(\cS)\star\cO_\lambda
\/.  \]
\end{prop}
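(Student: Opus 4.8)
The plan is to verify the two claimed identities directly at the level of the action on $\bbV_n^q$, using the explicit formula \eqref{rho2t} expressing each $t_i$ as a word in the simple reflections and the Seidel element $\rho$, together with the graphical/algebraic description of the trace operator $t(z)=t_{00}(z)+qt_{11}(z)$ from \Cref{thm:int-system-ops} and \Cref{thm:diag-ops-geom}. The translation $t_1 = s_{n-1}\cdots s_1 \rho^{-1}\cdot\rho\cdots$ — more precisely, from \eqref{rho2t}, $\rho = s_{n-1}\cdots s_1 t_1$, so $t_1 = s_1^{-1}\cdots s_{n-1}^{-1}\rho$ — is the base case; the general $t_i$ is a $W$-conjugate of $t_1$ by \eqref{rho2t}. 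Since both $t(z)$ and $\tilde t(z)$ commute with the left Weyl group action (\Cref{prop:leftWaction}(iii)), and the translations $t_i$ are obtained from $t_1$ by conjugation by elements of $W$, it suffices to prove the two equalities $\bs{t}_1 = t(-\ve_1)$ and $\tilde t(-1/\ve_1) = q\,\bs{t}_1^{-1}$, then conjugate.

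First I would compute $\bs{t}_1$ on the highest weight vector $v_o = v_1\otimes\cdots\otimes v_1$ (for each $V_{k,n}$, the relevant cyclic generator is $v_{(n-k)^k}$, so I would instead work there). Applying the word $s_1^{-1}\cdots s_{n-1}^{-1}\rho$ to $v_{(n-k)^k}$ using the explicit simple-reflection action \eqref{leftWaction} and the Seidel action \eqref{E:rho} is a bounded computation, and the outcome should match $t(-\ve_1).v_{(n-k)^k}$ as computed from parts (1) and (3) of \Cref{thm:int-system-ops}: indeed $t(z).v_{(n-k)^k} = t_{00}(z).v_{(n-k)^k} + q\,t_{11}(z).v_{(n-k)^k} = \prod_{i=1}^{n-k}(1+z/\ve_i)v_{(n-k)^k} + q\sum_{r=1}^{n-k}(-z/\ve_r)\prod_{i=r+1}^{n-k}(1+z/\ve_i)v_{((n-k-1)^{k-1},r-1)}$; setting $z = -\ve_1$ collapses most terms and should reproduce the $\rho$-shift of the point. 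Since $\QK_{\T}(\Gr(k;n))$ is cyclic over the degenerate Hecke algebra generated by the Demazure operators, and since both $\bs{t}_1$ and $t(-\ve_1)$ commute with that action (the former because $\widetilde W$ contains $W$ and the $t_i$ satisfy $s_jt_i = t_is_j$ for $j\neq i-1,i$, the latter by \Cref{prop:leftWaction}(iii)), agreement on the class of the point forces agreement everywhere on $V_{k,n}$. The second identity $\tilde t(-1/\ve_1) = q\,\bs{t}_1^{-1}$ follows by the same argument applied to $\tilde t(z)$, using \eqref{tGamma} (level-rank duality) to transfer the computation, or directly from the $\tilde t_{ij}$ formulae in \Cref{thm:int-system-ops}; one checks that $q\,\bs{t}_1^{-1}$ acting on $v_{(n-k)^k}$ matches $\tilde t(-1/\ve_1).v_{(n-k)^k} = \prod_{i=n+1-k}^n(1 - \ve_i/\ve_1)v_{(n-k)^k} + q\sum(\ldots)$, and invokes cyclicity again. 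The geometric reformulation $t_i.\cO_\lambda = \lambda_{-\ve_i}(\cQ^\vee)\star\cO_\lambda$ and $t_i^{-1}.\cO_\lambda = q^{-1}\lambda_{-1/\ve_i}(\cS)\star\cO_\lambda$ is then immediate from \Cref{cor:main-cor} and \Cref{thm:diag-ops-geom}, which identify $t(y).v_\lambda \leftrightarrow \lambda_y(\cQ^\vee)\star\cO_\lambda$ and $\tilde t(y).v_\lambda\leftrightarrow\lambda_y(\cS)\star\cO_\lambda$, specialized at $y=-\ve_i$ resp.\ $y=-1/\ve_i$.

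The main obstacle I anticipate is the bookkeeping in the base-case computation: expressing $t_1$ as the explicit word $s_1^{-1}s_2^{-1}\cdots s_{n-1}^{-1}\rho$ and then tracking how the nontrivial terms in \eqref{leftWaction} (the ones producing $(1-\alpha_i)v_{s_i.\lambda}$ summands) interact as one moves the reflections past each other. There is a real risk of sign errors and of miscounting which partitions appear, since the point class $(n-k)^k$ sits at a boundary of the rectangle and several cases in \eqref{E:rho}, \eqref{E:rhoinv}, \eqref{leftWaction} degenerate. A cleaner route, which I would pursue in parallel, is purely algebraic: show at the level of operators on $\bbV_n$ that the word $s_1\cdots s_{n-1}\rho^{-1}$ — equivalently the generator $t_1$ — satisfies the same commutation relations with the $R$-matrix entries as $t(-\ve_1)$ does, using the $RTT$ relations of \Cref{lem:RTTrelations} and the automorphism $T(z)\mapsto T(az)$ of \Cref{lem:auto}; combined with the fact that $t(z)$ specialized at $z=-\ve_i$ acts (by \Cref{thm:int-system-ops}(1)) with the correct ``eigenvalue'' $\prod_{j}(1-\ve_j/\ve_i)$-type factor on $v_o$, this pins down the operator uniquely. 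Either way, the conceptual content is light — both sides commute with $W$, both sides are determined by their value on a cyclic vector, and that value is a short explicit calculation — so the proof is essentially a verification; the write-up hazard is entirely in the combinatorial care needed for that verification.
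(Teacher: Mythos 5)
There is a genuine gap in your main argument, at the cyclicity step. You claim that both $\bs{t}_1$ and $t(-\ve_1)$ commute with the full degenerate Hecke (Demazure) action, so that agreement on the point class $v_{(n-k)^k}$ forces agreement on all of $V_{k,n}$. But the relation you yourself quote, $s_jt_i=t_is_j$ only for $j\neq i-1,i$, shows that $t_1$ does \emph{not} commute with $s_1$ (indeed $s_1t_1=t_2s_1$ in $\widetilde W$), hence $\bs{t}_1$ does not commute with $\bs{s}_1$ or $\bs{\delta}_1$. The same failure occurs on the other side: \Cref{prop:leftWaction}(iii) gives commutation of $\bs{w}$ with $t(z)$ only for the \emph{formal} variable $z$; since $\bs{s}_1$ is semilinear over $\Kpt$, specializing gives $\bs{s}_1\circ t(-\ve_1)=t(-\ve_2)\circ\bs{s}_1$, not commutation. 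So both operators commute only with the parabolic part $\langle\bs{\delta}_2,\ldots,\bs{\delta}_{n-1}\rangle$, and the point class is not cyclic for that subalgebra: already for $\Gr(1;n)$ the operators $\bs{\delta}_2,\ldots,\bs{\delta}_{n-1}$ applied to $[\pt]$ produce $\cO_{(1)},\ldots,\cO_{(n-1)}$ but never $\cO_\emptyset$. Hence matching the two operators on $v_{(n-k)^k}$ does not determine them, and your reduction collapses. (The preliminary reduction to $i=1$ by $W$-conjugation is fine, since $\bs{w}\,t(-\ve_1)\,\bs{w}^{-1}=t(-\ve_{w(1)})$ and $\bs{w}\bs{t}_1\bs{w}^{-1}=\bs{t}_{w(1)}$, but it does not rescue the cyclicity step.) Your fallback route via the $RTT$ relations is too vague to count as a proof: asserting that matching commutation relations plus one matrix element "pins down the operator uniquely" is exactly the point that would need an argument.

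For comparison, the paper proves the identity as a direct operator equality on all of $\bbV_n^q$, with no cyclic-vector argument: write $t(z)=\Tr_0\bigl(\begin{smallmatrix}1&0\\0&q\end{smallmatrix}\bigr)_0R_{0n}(-z/\ve_1)\cdots R_{01}(-z/\ve_n)$, use cyclicity of the partial trace to rotate the product, and specialize $z=-\ve_{n+1-i}$ so that one factor becomes $R_{0i}(1)=P$, which eliminates the auxiliary space; the surviving product of $\check R$-matrices (times the $q$-twist) is precisely the cocycle word $\bs{s}_{n+1-i}\cdots\bs{s}_{n-1}\bs{\rho}\,\bs{s}_1\cdots\bs{s}_{n-i}=\bs{t}_{n+1-i}$ by \eqref{rho2t}, and the $\tilde t$ statement is analogous. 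If you want to salvage your approach, you would either need to verify the equality on a spanning set directly (defeating the purpose of the reduction), or find an algebra of operators that genuinely commutes with both sides and acts cyclically --- the trace/cyclicity computation is the clean way around this.
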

\begin{proof}
Recall the definition of the operator $t(z)$ in terms of the $R$-matrix,
\[
t(z)=t_{00}(z)+q t_{11}(z)=\Tr\begin{pmatrix} t_{00}(z) &t_{01}(z)\\
q t_{10}(z)& qt_{11}(z)
\end{pmatrix}=\Tr_0\left( \begin{smallmatrix}
    1&0\\
    0&q
\end{smallmatrix}\right)_0R_{0n}(-z/\ve_1)\cdots R_{01}(-z/\ve_n)\;,
\]
where we have used the definition of the monodromy matrix from \eqref{evT} in the last equality and the notation $\Tr_0$ just means that we take the trace over the first factor in the tensor product $\C^2[z]\otimes \bbV^q_n$. 
Using the cyclic property of the trace this can be rewritten as
\[
t(z)=\Tr_0 R_{0,i-1}(-z/\ve_{n+2-i})\cdots R_{01}(-z/\ve_n)\left(\begin{smallmatrix}
    1&0\\
    0&q
\end{smallmatrix}\right)_0R_{0n}(-z/\ve_1)\cdots R_{0i}(-z/\ve_{n+1-i})\;
\]
for any $1\le i\le n$. Setting $z=-\ve_{n+1-i}$ and using that $R(1)=P$ (the permutation, or flip operator), we find that:
\[ \begin{split} t(-\ve_{n+1-i})& =
\Tr_0R_{0,i-1}(\ve_{n+1-i}/\ve_{n+2-i})\cdots R_{01}(\ve_{n+1-i}/\ve_n)\left(\begin{smallmatrix}
    1&0\\
    0&q
\end{smallmatrix}\right)_0R_{0n}(\ve_{n+1-i}/\ve_1)\cdots R_{0i}(1) \\ & 
= R_{i,i-1}(\ve_{n+1-i}/\ve_{n+2-i})\cdots R_{i1}(\ve_{n+1-i}/\ve_n)\left(\begin{smallmatrix}
    1&0 \\
    0&q
\end{smallmatrix}\right)_0R_{in}(\ve_{n+1-i}/\ve_1)\cdots R_{i,i+1}(\ve_{n+1-i}/\ve_{n-i})\\ & =
\bs{s}_{n+1-i}\cdots \bs{s}_{n-1}\bs{\rho} \bs{s}_1\cdots \bs{s}_{n-i} \\ & =\bs{t}_{n+1-i}\/.\end{split} \]
Here we have used in the last step that 
$$\bs{s}_{n+1-i}=P_{i,i+1}\circ R_{i,i+1}(\ve_{n-i}/\ve_{n+1-i})(s_{n+1-i}\otimes 1) \/.$$ 
The proof for $\tilde t(z)$ is analogous.
\end{proof}

If one combines the equations \eqref{rho2t} and \eqref{E:rho} with \Cref{prop:trans} one obtains
the following formula for the Seidel representation, which is an equivariant generalization 
of the formula in the quantum K-ring - see \cite{chaput.perrin:affine} for quantum cohomology. 

\begin{cor}\label{cor:QK-seidel} Let $\chi \in \Kpt$. The Seidel element $\bs{\rho}$ acts on $\bbV_n^q$ by
\[ \bs{\rho} (\chi \otimes v_\lambda) = \chi^{\rho} \bs{t}_n \bs{s}_{n-1} \cdots \bs{s}_1 (v_\lambda) \/. \]
In particular, the following equality holds:
\[ \bs{t}_n \bs{s}_{n-1} \cdots \bs{s}_1 (v_\lambda) = \begin{cases} q\, v_{(\lambda_1-1,\ldots,\lambda_k-1)} &  \ell(\lambda)=k \/;\\%\; \text{\rm  (delete max column)}\\
           v_{(n-k,\lambda_1,\ldots,\lambda_{k-1})} & \text{else} \/.
            %\text{\rm  (add max row)}
\end{cases} 
\]
\end{cor}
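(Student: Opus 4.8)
The plan is to derive \Cref{cor:QK-seidel} directly from \Cref{prop:trans} and the definition \eqref{E:rho} of the Seidel action, with essentially no new input. First I would observe that the second realization \eqref{rho2t} of $\widetilde W$ gives the identity $\rho = s_{n-1}\cdots s_1 t_1$, but there is also a ``conjugate'' form; more useful here is the relation $t_n = s_n\cdots s_{n-1}\rho s_1\cdots s_{n-1}$ specialized appropriately, together with the braid relations, to rewrite $\bs{t}_n\bs{s}_{n-1}\cdots\bs{s}_1$ as a single expression involving $\bs\rho$. Concretely, using $\rho s_i = s_{i-1}\rho$ repeatedly one checks that $\rho = t_n s_{n-1}\cdots s_1$ as endomorphisms of $\bbV_n^q$; equivalently $\bs\rho = \bs t_n \bs s_{n-1}\cdots\bs s_1$ once one accounts for how $\rho$ twists the equivariant parameters. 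This is the content of the first displayed formula in the corollary after one strips off the scalar twist $\chi\mapsto\chi^\rho$.

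Next I would make the parameter twist precise. By \Cref{prop:trans}, $\bs t_i = t(-\ve_i)$, and the operators $t(z)$ are built from the $R$-matrix, which only involves ratios $\ve_i/\ve_j$ and acts $\Kpt$-linearly in the appropriate twisted sense; so $\bs t_n\bs s_{n-1}\cdots\bs s_1$ acts on $\chi\otimes v_\lambda$ by first permuting the equivariant parameters according to $s_{n-1}\cdots s_1$ — producing $\chi^\rho$ — and then acting on the spin vector $v_\lambda$. Comparing with \eqref{E:rhoI}/\eqref{E:rho}, where $\bs\rho(\chi\otimes v_\lambda)=\chi^\rho\otimes(\text{shifted }v_\lambda)$ with the shift being $\lambda\mapsto(\lambda_1-1,\ldots,\lambda_k-1)$ (times $q$) when $\ell(\lambda)=k$ and $\lambda\mapsto(n-k,\lambda_1,\ldots,\lambda_{k-1})$ otherwise, we read off exactly the claimed closed form for $\bs t_n\bs s_{n-1}\cdots\bs s_1(v_\lambda)$, with the scalar $\chi^\rho$ factored out. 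The first displayed equality of the corollary then follows by reinserting $\chi^\rho$.

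The main step requiring care — and the place I expect to spend the most effort — is verifying the identity $\bs\rho = (\chi\mapsto\chi^\rho)\circ \bs t_n\bs s_{n-1}\cdots\bs s_1$ purely at the level of the $\YB$-representation, i.e. matching \Cref{prop:trans} (which computes $t(-\ve_{n+1-i})$ as $\bs t_{n+1-i}=\bs s_{n+1-i}\cdots\bs s_{n-1}\bs\rho\bs s_1\cdots\bs s_{n-i}$) with the combinatorial definition \eqref{E:rho}. In fact this is already essentially done inside the proof of \Cref{prop:trans}: the chain of equalities there shows $t(-\ve_{n+1-i}) = \bs s_{n+1-i}\cdots\bs s_{n-1}\bs\rho\bs s_1\cdots\bs s_{n-i} = \bs t_{n+1-i}$. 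Setting $i=1$ gives $\bs t_n = \bs\rho\bs s_1\cdots\bs s_{n-1}$, hence $\bs\rho = \bs t_n(\bs s_1\cdots\bs s_{n-1})^{-1} = \bs t_n\bs s_{n-1}\cdots\bs s_1$, and the twist bookkeeping is handled by noting that $\bs t_n$ leaves $\Kpt$ pointwise fixed (it is $t(-\ve_n)$, and $t(z)$ is built from $R$-matrices depending only on ratios, so by the same argument as in \Cref{cor}(a) it acts $\Kpt$-linearly) while $\bs s_{n-1}\cdots\bs s_1$ implements exactly the permutation $\chi\mapsto\chi^\rho$. Then substituting the explicit formula \eqref{E:rho} for $\bs\rho$ yields the second displayed equality. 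Thus the whole corollary reduces to assembling \Cref{prop:trans}, its proof, and \eqref{E:rho}; no genuinely new obstacle arises, only the routine check that the scalar twists on the two sides agree.
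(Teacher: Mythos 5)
Your proposal is correct and follows essentially the same route as the paper: the paper's justification is precisely to combine the realization \eqref{rho2t} (which for $i=n$ gives $t_n=\rho s_1\cdots s_{n-1}$, i.e.\ $\rho=t_n s_{n-1}\cdots s_1$, exactly the identity extracted in the proof of \Cref{prop:trans} at $i=1$) with the explicit action \eqref{E:rho} and the fact that $\bs{t}_n$ fixes $\Kpt$ while $\bs{s}_{n-1}\cdots\bs{s}_1$ twists $\chi$ to $\chi^\rho$. Your handling of the semilinearity bookkeeping matches the intended argument, so there is nothing further to add.
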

Using that $\lambda_{-\ve_n}(\cQ_{n-k}^\vee) = \cO_{n-k}$ from \Cref{thm:lyQveeGr},
one may rewrite this corollary in geometric terms:
\[ \bs{\rho} (\cO_\lambda)  = \cO_{n-k} \star (\bs{s}_{n-1} \cdots \bs{s}_1)(\cO_\lambda) 
=  \begin{cases} q\, \cO_{(\lambda_1-1,\ldots,\lambda_k-1)} &  \ell(\lambda)=k \/;\\%\; \text{\rm  (delete max column)}\\
           \cO_{(n-k,\lambda_1,\ldots,\lambda_{k-1})} & \text{else} \/.
            %\text{\rm  (add max row)}
\end{cases} 
 \]
Observe that in the non-equivariant case the simple reflections $s_i$ act as identity for 
$1 \le i \le n-1$, and $\ve_i =1$ for all $i$. Therefore the actions of the translations $\bs{t}_i$ coincide, 
and they act by  
\[ \bs{\rho} = \bs{t}_i = t(-1) = \lambda_{-1}(\cQ^\vee) \star = \cO_{n-r} \star \/. \]
This is the form of the Seidel representation from \cite{buch.chaput.perrin:seidel,li.liu.song.yang:seidel}.

%%%%%%%%%%%%%%%%%%%%%%%%%%%%%%%%%%%%%%%%%%%%%%%%

%Tex root: QKYB.tex
\section{The functional relations}\label{sec:functional}
In the integrable system context, the quantum K-theory is the ring with relations given by the Bethe Ansatz equations.
In turn, these are equivalent
to a functional equation satisfied by the monodropmy operator $t(z)$ and its dual $\tilde{t}(1/z)$. The precise statement may be found 
in \cite[Prop. 5.28]{gorbounov2017quantum}. After making the change of variables 
\begin{equation}\label{E:cov} -y=1+\beta z \/, \quad \ve_i = 1+\beta y_i \/, \quad \beta = -1 \end{equation} 
in {\em loc.~cit.} this functional equation becomes: 
\begin{equation}\label{E:functional} 
t(y) \tilde{t}(1/y) = \prod_{i=1}^k (1+\ve_i/y) \prod_{i=k+1}^n (1+ y/\ve_i) (1-\cO_1) +q \/. 
\end{equation}
(These are equivalent to Bethe ansatz equations when written in terms of eigenvalues and specializing $y$ to a Bethe root.)
Given our interpretation from \Cref{cor:main-cor}, \eqref{E:functional} translates into the following relation in $\QK_{\T}(\Gr(k;n))$:
\begin{equation}\label{E:functional-geom} 
\lambda_y(\cQ^\vee) \star \lambda_{1/y}(\cS) = \prod_{i=1}^k (1+\ve_i/y) \prod_{i=k+1}^n (1+ y/\ve_i) (1-\cO_1) +q \/. 
\end{equation}
We will show that if $0 < k <n$, this identity is equivalent to the quantum K Whitney relations proved in \cite[Thm. 1.1]{GMSZ:QK}:
\begin{equation}\label{E:QKW}
\lambda_y(\cS) \star \lambda_{y}(\cQ) = \lambda_y(\C^n) - \frac{q}{1-q} y^{n-k} (\lambda_y(\cS) -1) \star \det \cQ \/. 
\end{equation}
It was also proved that these relations generate the full ideal of relations in the quantum K-ring. 
In particular, this proves directly 
the isomorphism between the two rings. We will need the following result, cf.~\cite[Cor.~6.4]{GMSZ:QK}:
\begin{lemma}\label{conj2:qSvee} Let $i>0$. Then 
 $\wedge^{n-k-i} \mathcal{Q} \star \det \mathcal{S} = \wedge^{i} \mathcal{Q}^\vee  \cdot \det(\mathbb{C}^n)$ in $\mathrm{QK}_{\T}(\Gr(k;n))$.
\end{lemma}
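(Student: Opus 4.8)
The plan is to derive the identity $\wedge^{n-k-i}\cQ \star \det\cS = \wedge^i\cQ^\vee \cdot \det(\C^n)$ from the already-proven results about $\lambda_y$-multiplications, using the fact that $\det\cS$ is invertible in $\QK_{\T}(\Gr(k;n))$ (since we work over $\Kpt[\![q]\!]$, see the remark after the definition of the quantum K-product). First I would observe that in classical equivariant K-theory the tautological sequence $0\to\cS_k\to\C^n\to\cQ_{n-k}\to0$ gives $\det\cS\cdot\det\cQ=\det(\C^n)$ (a character of $\T$, hence central and invertible), and also the standard identity $\wedge^i\cQ^\vee = (\wedge^{n-k-i}\cQ)\cdot(\det\cQ)^{-1}$, which follows from $\wedge^{n-k}\cQ = \det\cQ$ together with the natural pairing $\wedge^i\cQ^\vee \otimes \wedge^{n-k-i}\cQ \to \wedge^{n-k}\cQ^\vee$ realizing $\wedge^i\cQ^\vee$ as $\wedge^{n-k-i}\cQ\otimes(\det\cQ)^\vee$. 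Combining these two classical identities gives $\wedge^i\cQ^\vee\cdot\det(\C^n) = \wedge^{n-k-i}\cQ\cdot\det\cS\cdot\det\cQ\cdot(\det\cQ)^{-1}\cdot\det(\C^n)/\det(\C^n)$... — more cleanly: classically $\wedge^i\cQ^\vee\cdot\det(\C^n)=\wedge^{n-k-i}\cQ\cdot(\det\cQ)^{-1}\cdot\det(\C^n)=\wedge^{n-k-i}\cQ\cdot\det\cS$. So the statement is \emph{classically} true; the content is that it survives in the quantum product.

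The key step is therefore to show that the quantum correction terms vanish, i.e. that $\wedge^{n-k-i}\cQ\star\det\cS$ has no positive $q$-powers, and moreover that its $q^0$ term equals the classical product $\wedge^{n-k-i}\cQ\cdot\det\cS$. I would extract this from the quantum Whitney relation \eqref{E:QKW}. Writing out \eqref{E:QKW} as an identity of polynomials in $y$ and equating coefficients of $y^j$ for each $j$, one gets, for each $j$, an expression for $\sum_{a+b=j}\wedge^a\cS\star\wedge^b\cQ$ in terms of classical $\lambda_y(\C^n)$-coefficients plus a quantum term proportional to $q/(1-q)$ times $(\wedge^{a'}\cS)\star\det\cQ$ for various $a'$. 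Taking $j=n-k$ and isolating the top term $\wedge^k\cS\star\wedge^{n-k}\cQ=\det\cS\star\det\cQ$, or more directly the relation \cite[Cor.~6.4]{GMSZ:QK} itself which is being quoted, one should reduce to manipulating $\det\cS\star\det\cQ$ and $\det\cQ\star(\text{something})$. Actually the cleanest route: multiply the target identity by $\det\cQ$ (invertible) — it becomes $\wedge^{n-k-i}\cQ\star\det\cS\star\det\cQ = \wedge^i\cQ^\vee\cdot\det(\C^n)\star\det\cQ$; using $\det\cS\star\det\cQ = \det(\C^n)\cdot(1-\tfrac{q}{1-q}\,\text{(corr.)})$-type formula obtained from the $y^{n-k}$-coefficient of \eqref{E:QKW}, one is left to check the correction term is absorbed, which is where the genuine computation lies.

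The main obstacle I anticipate is the bookkeeping of the quantum correction in \eqref{E:QKW}: the term $-\tfrac{q}{1-q}y^{n-k}(\lambda_y(\cS)-1)\star\det\cQ$ must be shown to contribute exactly in a way that promotes the classical identity to a quantum one without leftover $q$-terms. Concretely, one needs $\det\cS\star\det\cQ$ and the products $\wedge^a\cQ^\vee\star\det\cS$ to be related by the classical formulas despite the correction; this likely requires the auxiliary fact (provable from \eqref{E:QKW} at $y=-\ve_n$ or from \Cref{thm:lyQveept}) that $\det\cQ^\vee\star\cO_\lambda$ and $\det\cS\star\cO_\lambda$ behave like invertible ``rotation'' operators with controlled $q$-dependence — essentially the Seidel/$\bs\rho$ element of \Cref{cor:QK-seidel}. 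Indeed $\det\cQ^\vee = \lambda_{-\ve_n}(\cQ^\vee)\cdot\ve_n^{-(n-k)}\cdot(\ldots)$ up to equivariant units, and $\lambda_{-\ve_n}(\cQ^\vee)=\cO_{n-k}$ by \eqref{E:y=-en}; multiplication by $\cO_{n-k}$ is (a twist of) the Seidel element, whose action is explicit and $q$-linear. So I would: (1) express $\det\cQ$ and $\det\cS$ via $\lambda_y(\cQ^\vee)$, $\lambda_y(\cS)$ at special values of $y$; (2) invoke the explicit Seidel action \eqref{E:rho} to compute $\det\cS\star(-)$ and $\det\cQ\star(-)$ exactly; (3) verify the asserted identity on a spanning set (e.g. Schubert classes, using the cyclic Hecke-module structure it suffices to check on $[\pt]_{k,n}$), reducing everything to the explicit formulas of \Cref{thm:lyQveept} and \Cref{cor:special-mult}. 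The hard part is (2)–(3): controlling $\det\cQ\star(-)$, which is the inverse-direction Seidel element and carries a $q^{-1}$; ensuring all powers of $q$ match up is the delicate point.
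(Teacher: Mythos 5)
The paper gives no internal proof to compare with: the lemma is imported verbatim from \cite[Cor.~6.4]{GMSZ:QK}. Your classical reduction is correct ($\wedge^i\cQ^\vee\cong\wedge^{n-k-i}\cQ\otimes(\det\cQ)^\vee$ together with $\det\cS\cdot\det\cQ=\det\C^n$ gives the identity at $q=0$), and your idea of multiplying by $\det\cQ$ is a good one: using $\det\cS\star\det\cQ=(1-q)\det\C^n$ (which does follow from the $y^n$-coefficient of \eqref{E:QKW}) the lemma becomes equivalent to $\wedge^i\cQ^\vee\star\det\cQ=(1-q)\,\wedge^{n-k-i}\cQ$. But this is exactly where the proposal stops being a proof. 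The Whitney relation \eqref{E:QKW} constrains only quantum products of the generators $\wedge^a\cS$ and $\wedge^b\cQ$; it carries no information, formally, about where the classically defined class $\wedge^i\cQ^\vee$ sits relative to the quantum product, and that is precisely what the lemma asserts. No amount of equating $y$-coefficients in \eqref{E:QKW} can yield the statement without additional geometric input (a quantum$=$classical computation of $\wedge^i\cQ^\vee\star(-)$, or the argument of \cite{GMSZ:QK} itself). You acknowledge this openly, so the central step is deferred rather than carried out; note also that \Cref{cor:QK-pres} cannot be invoked here, since in this paper it is deduced from \Cref{prop:equiv}, which uses the lemma.

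Moreover, the tool you propose for that missing step rests on a misidentification. You assert that $\det\cQ^\vee$ agrees with $\lambda_{-\ve_n}(\cQ^\vee)$ up to equivariant units, so that $\det\cQ\star(-)$ and $\det\cS\star(-)$ would be twists of the Seidel element of \Cref{cor:QK-seidel}. This is false: by \eqref{E:y=-en} the specialization $\lambda_{-\ve_n}(\cQ^\vee)$ is the single Schubert class $\cO_{n-k}$, whereas $\det\cQ^\vee$ is the top $y$-coefficient of $\lambda_y(\cQ^\vee)$, equal by \Cref{thm:lyQveeGr} to $(1-\cO_1)/\prod_{i=k+1}^n\ve_i$ --- a different class, whose quantum multiplication operator is computed nowhere in this paper and is not the Seidel shift (the Seidel element gives $\cO_{n-k}\star$, not $\det\cS\star$ or $\det\cQ\star$). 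So steps (1)--(2) of your plan do not get off the ground as stated. Your fallback (3) --- reduce to $[\pt]_{k,n}$ using that multiplication by homogeneous-bundle classes commutes with the Demazure operators --- is sound in principle, but after the first factor acts one is no longer at the point class, so \Cref{thm:lyQveept} and \Cref{cor:special-mult} no longer apply to the second factor; closing this gap would require the operator $\wedge^{n-k-i}\cQ\star(-)$ (or $\det\cS\star(-)$) on arbitrary Schubert classes, which is again the missing computation. As it stands this is a plausible plan with a flawed key ingredient, not a proof; the honest route within this paper is the one the authors take, namely citing \cite[Cor.~6.4]{GMSZ:QK}.
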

\begin{prop}\label{prop:equiv} The functional equations \eqref{E:functional-geom} and the quantum K Whitney relations \eqref{E:QKW} are equivalent in the ring $\QK_{\T}(\Gr(k;n))$.
\end{prop}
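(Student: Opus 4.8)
The idea is to algebraically transform one identity into the other inside $\QK_{\T}(\Gr(k;n))$, using the fact that the relevant classes are invertible in the ring of formal power series in $q$. The two relations involve the same basic objects: $\lambda_y(\cS)$, $\lambda_y(\cQ)$, $\lambda_{1/y}(\cS)$, $\lambda_y(\cQ^\vee)$, the line bundles $\det\cS$, $\det\cQ$, and the Schubert divisor class $\cO_1$. The first step is to record the elementary identities relating dual $\lambda_y$-classes to ordinary ones: for a rank-$e$ bundle $E$ one has $\lambda_y(E^\vee) = y^e (\det E)^{-1} \lambda_{1/y}(E)$, applied to $E=\cQ_{n-k}$ (rank $n-k$) and $E=\cS_k$ (rank $k$). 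Thus $\lambda_y(\cQ^\vee) = y^{n-k}(\det\cQ)^{-1}\lambda_{1/y}(\cQ)$ and $\lambda_{1/y}(\cS) = y^{-k}(\det\cS)^{-1}\lambda_{y}(\cS)$; note $\det\cS\cdot\det\cQ = \det\C^n = \ve_1\cdots\ve_n$ as a class in $\K_{\T}(\pt)$, which will let the $y$-powers and determinant factors combine cleanly. Substituting these into the left-hand side of \eqref{E:functional-geom} turns the product $\lambda_y(\cQ^\vee)\star\lambda_{1/y}(\cS)$ into a scalar multiple of $\lambda_{1/y}(\cQ)\star\lambda_y(\cS)$ — but with $y$ and $1/y$ mismatched, so this alone does not suffice; the genuine content is the quantum correction term.

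The second step is to see that the quantum correction in \eqref{E:QKW}, namely $-\tfrac{q}{1-q}y^{n-k}(\lambda_y(\cS)-1)\star\det\cQ$, and the "$+q$" appearing in \eqref{E:functional-geom}, are the same thing after the substitutions. Here is where \Cref{conj2:qSvee} enters: it identifies $\wedge^{n-k-i}\cQ\star\det\cS$ with $\wedge^i\cQ^\vee\cdot\det(\C^n)$, which is precisely the tool needed to rewrite $\lambda_y(\cQ)\star\det\cS$ (a generating function of the $\wedge^j\cQ\star\det\cS$) in terms of $\lambda$-classes of $\cQ^\vee$. Concretely, I expect that after clearing the $\det$'s one gets, for $0<k<n$, an identity of the shape $\lambda_y(\cS)\star\lambda_y(\cQ) - \lambda_y(\C^n) = \bigl(\text{something}\bigr)\star(\lambda_y(\cS)-1)$, and that "something" is exactly $-\tfrac{q}{1-q}y^{n-k}\det\cQ$ when one tracks the factor $\tfrac{1}{1-q}$ through the geometric series produced by inverting a class of the form $1 - q(\cdots)$. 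The classical Whitney relation $\lambda_y(\cS)\cdot\lambda_y(\cQ)=\lambda_y(\C^n)$ in $\K_{\T}(\Gr(k;n))$ controls the $q^0$ term, so both \eqref{E:functional-geom} and \eqref{E:QKW} reduce mod $q$ to the same statement, and the equivalence is an identity of the "tails."

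The third step is bookkeeping: verify that the chain of substitutions is reversible, i.e. that each manipulation (multiplying by the invertible classes $\det\cS$, $\det\cQ$, $(1-q)$, and $\lambda_{1/y}(\cQ)$, and re-expanding geometric series) can be undone, so that \eqref{E:functional-geom} $\Rightarrow$ \eqref{E:QKW} and conversely. The hypothesis $0<k<n$ is used to guarantee that both $\cS_k$ and $\cQ_{n-k}$ are nonzero, so that $\det\cS$ and $\det\cQ$ are honest invertible line-bundle classes (for $k=0$ or $k=n$ one of them is trivial and the statement degenerates); I would note this explicitly. The invertibility of elements like $\det\cS$ and $1-\cO_1$ in $\QK_{\T}(\Gr(k;n))$ over $\Kpt[\![q]\!]$ — already remarked after the definition of the quantum product in the excerpt — is what makes all of this legitimate.

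\textbf{Main obstacle.} The genuinely delicate point is not the classical/dual $\lambda_y$ algebra but matching the two \emph{different} forms of the quantum correction: \eqref{E:functional-geom} has the correction packaged as a bare $+q$ on the right after multiplying by $\lambda_{1/y}(\cS)$, whereas \eqref{E:QKW} has it as an explicit $-\tfrac{q}{1-q}y^{n-k}(\lambda_y(\cS)-1)\star\det\cQ$. Reconciling these requires using \Cref{conj2:qSvee} in exactly the right way to convert $\lambda$-classes of $\cQ$ into $\lambda$-classes of $\cQ^\vee$ and to produce the $\tfrac{1}{1-q}$ factor from a geometric series; getting the powers of $y$, the determinant twists, and the sign all consistent is where the real care is needed. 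I would do this computation degree-by-degree in $y$ if a clean closed-form manipulation proves elusive, comparing the coefficient of $y^j$ on both sides and invoking \Cref{conj2:qSvee} for each $j$.
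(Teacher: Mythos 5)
Your plan points at the right ingredients—multiplying through by determinant classes, invoking \Cref{conj2:qSvee}, and using invertibility over $\Kpt[\![q]\!]$—and this is indeed the route the paper takes. But as written there are genuine gaps. First, your step-1 identities are not usable as stated: the formula $\lambda_{1/y}(\cS)=y^{-k}(\det\cS)^{-1}\lambda_{y}(\cS)$ is false (the correct statement is $\lambda_{1/y}(\cS^\vee)=y^{-k}(\det\cS)^{-1}\lambda_y(\cS)$), and, more seriously, even the correct identity $\lambda_y(\cQ^\vee)=y^{n-k}(\det\cQ)^{-1}\cdot\lambda_{1/y}(\cQ)$ is a \emph{classical} product of classes; you cannot pull the factor $(\det\cQ)^{-1}$ (or $(\det\cS)^{-1}$) across the quantum product $\star$, so the left side of \eqref{E:functional-geom} does not become ``a scalar multiple of $\lambda_{1/y}(\cQ)\star\lambda_y(\cS)$.'' The entire point of \Cref{conj2:qSvee} is that it is the quantum substitute for precisely this classical manipulation: it converts a quantum product $\wedge^{n-k-i}\cQ\star\det\cS$ into a classical expression $\wedge^i\cQ^\vee\cdot\det\C^n$, and the argument must be organized so that only such licensed conversions occur.

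Second, the step that carries the actual content—reconciling the ``$+q$'' in \eqref{E:functional-geom} with the $-\tfrac{q}{1-q}y^{n-k}(\lambda_y(\cS)-1)\star\det\cQ$ term in \eqref{E:QKW}—is left as an expectation, and your guess about its mechanism is off: no geometric series in $q$ is needed. In the actual argument one first notes that \eqref{E:QKW} itself gives $\det\cQ\star\det\cS=(1-q)\det\C^n$, so multiplying \eqref{E:QKW} by $\det\cS$ immediately kills the $\tfrac{1}{1-q}$ and turns the correction into $-qy^{n-k}(\lambda_y(\cS)-1)\det\C^n$. Rearranging and applying \Cref{conj2:qSvee} termwise to $\lambda_y(\cQ)\star\det\cS$ (with the top term $y^{n-k}\det\cQ\star\det\cS$ handled by the same relation, absorbing the $qy^{n-k}\det\C^n$) yields $\lambda_y(\cS)\star\bigl(y^{n-k}\det\C^n\,\lambda_{1/y}(\cQ^\vee)\bigr)=\lambda_y(\C^n)\det\cS+qy^{n-k}\det\C^n$; substituting $y\mapsto 1/y$ and using $\det\cQ^\vee=\det\cS/\det\C^n=(1-\cO_1)/\prod_{i=k+1}^n\ve_i$ from \Cref{thm:lyQveeGr} produces exactly \eqref{E:functional-geom}, and every step is reversible. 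Without executing this computation (or the degree-by-degree fallback you mention), the equivalence is asserted rather than proved.
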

\begin{proof} Observe that the Whitney relations imply that $\det \mathcal{Q} \star \det \mathcal{S} = (1-q) \det \mathbb{C}^n$. In particular, $\det \cS$ is invertible in the quantum K-ring. We multiply the Whitney relations \eqref{E:QKW} by $\det \cS$ to obtain:
\[ \lambda_y \mathcal{S} \star \lambda_y \mathcal{Q} \star \det \mathcal{S} = \lambda_y(\mathbb{C}^n) \det \mathcal{S} - q y^{n-k} (\lambda_y \mathcal{S} -1) \det \mathbb{C}^n \/. \]
We rearrange this into:
\begin{equation}\label{E:QKW-rearr} \lambda_y \cS \star \left( \lambda_y \cQ \star \det \cS + q y^{n-k} \det \mathbb{C}^n \right)  = \lambda_y(\C^n) \det \cS + q y^{n-k} \det \mathbb{C}^n \/.\end{equation}
We use \Cref{conj2:qSvee} to write
\[ \begin{split} \lambda_y \cQ \star \det \cS + q y^{n-k} \det \mathbb{C}^n & = 
\det \C^n  \sum_{i=0}^{n-k-1} y^i \wedge^{n-k-i} (\cQ^\vee)  + y^{n-k} \det \cQ \star \det \cS +  
q y^{n-k} \det \mathbb{C}^n \\
& = y^{n-k} \det \C^n \sum_{i=0}^{n-k-1} y^{i-n-k} \wedge^{n-k-i} (\cQ^\vee) + y^{n-k} \det \C^n \\
& = y^{n-k} \det \C^n \lambda_{1/y} (\cQ^\vee)  \/. \end{split}
\]
Combining with \eqref{E:QKW-rearr} and making $y \mapsto y^{-1}$ yields:
\[ \lambda_{1/y} \cS \star \lambda_{y} \cQ^\vee = y^{n-k} \frac{\lambda_{1/y}(\C^n)}{\det \C^n} \det \cS +q \/. \]
From \Cref{thm:lyQveeGr} we obtain that 
\[ \frac{\det \cS}{\det \C^n} = \det \cQ^\vee = \frac{1-\cO_1}{ \prod_{i=k+1}^n \ve_{i} } \/. \] 
Finally, since
\[ \frac{y^{n-k} \lambda_{1/y} (\C^n)}{\prod_{i=k+1}^n \ve_i} = \prod_{i=1}^k (1+\ve_i/y) \prod_{i=k+1}^n (1+y/\ve_i) \/, \]
it follows that the Whitney relations imply \eqref{E:functional-geom}. The process can obviously be 
reversed, proving the claim.
\end{proof}
Since the Whitney relations generate the ideal of relations of $\QK_{\T}(\Gr(k;n))$, we obtain the following corollary:
\begin{cor}\label{cor:QK-pres} There is an isomorphism
\[ \Psi: \QK_{\T}(\Gr(k;n)) \to \K_{\T}[\pt][X_1, \ldots, X_k;Y_1, \ldots, Y_k][\![q]\!]/ I \]
such that 
\[\Psi(\wedge^i \cS) = e_i(X) \textrm{ and } \Psi(\wedge^j \cQ)  = e_j(Y) \/. \]
Furthermore, for any partition $\lambda \subset k \times (n-k)$, 
\[ \Psi(\cO_\lambda) = G_\lambda(1-X_1, \ldots, 1-X_k|1-\ve_1^{-1}, \ldots, 1- \ve_n^{-1}) \/. \]
Here $I$ is the ideal obtained by equating the powers of $y^i$ in the Whitney relations
\eqref{E:QKW}. 
\end{cor}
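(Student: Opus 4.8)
The plan is to deduce \Cref{cor:QK-pres} directly from the equivalence of presentations established in \Cref{prop:equiv}, together with the two classical inputs about the ordinary K-theory ring and the Schubert-class formula recorded in \Cref{rmk:groth-classes}. First I would recall that \cite[Thm.~1.1]{GMSZ:QK} asserts that the quantum K Whitney relations \eqref{E:QKW} generate the full ideal of relations among the classes $\wedge^i\cS_k$ and $\wedge^j\cQ_{n-k}$ in $\QK_{\T}(\Gr(k;n))$; that is, the natural surjection $\Kpt[X_1,\ldots,X_k;Y_1,\ldots,Y_{n-k}][\![q]\!]\twoheadrightarrow\QK_{\T}(\Gr(k;n))$ sending $e_i(X)\mapsto\wedge^i\cS_k$, $e_j(Y)\mapsto\wedge^j\cQ_{n-k}$ has kernel exactly $I$. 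This already gives the abstract isomorphism $\Psi$ of $\Kpt[\![q]\!]$-algebras and the stated images of $\wedge^i\cS$ and $\wedge^j\cQ$. The role of \Cref{prop:equiv} here is to certify that the \emph{same} ideal $I$ is cut out by the functional-relation presentation coming from the Bethe ansatz side, so that the presentation is genuinely the one predicted by \cite{gorbounov2017quantum}; this is exactly the question from \cite{GMSZ:QK} that the corollary is claimed to answer.

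Next I would pin down the polynomial representatives of the Schubert classes. The point is that $\Psi$ is determined once we know where generators go, but we want the \emph{specific} representative $G_\lambda(1-X_1,\ldots,1-X_k\mid 1-\ve_1^{-1},\ldots,1-\ve_n^{-1})$. I would argue as follows: by \Cref{rmk:groth-classes}, in the \emph{classical} ring $\K_{\T}(\Gr(k;n))$ one has $\cO_\lambda=G_\lambda(1-X_1,\ldots,1-X_k\mid 1-\ve_1^{-1},\ldots,1-\ve_n^{-1})$ where $X_i$ are the exponential Chern roots of $\cS_k$. Reducing the quantum presentation modulo $q$ recovers the classical Whitney presentation, and $\Psi$ is compatible with this reduction (it is a $\Kpt[\![q]\!]$-algebra map lifting the classical isomorphism), so $\Psi(\cO_\lambda)\equiv G_\lambda(1-X\mid 1-\ve^{-1})\pmod q$. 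To upgrade this to an exact equality I would invoke the explicit relation between Schubert classes in $\QK_{\T}(\Gr(k;n))$ and factorial Grothendieck polynomials obtained in \cite{gorbounov2017quantum} (combining Lemma~2.11 and Corollary~4.12, eq.~(4.40), there), exactly as cited in \Cref{rmk:groth-classes}; that reference gives the quantum Schubert classes as \emph{precisely} these polynomials, with no $q$-corrections, once one uses the presentation in terms of $e_i(X)$. So the equality $\Psi(\cO_\lambda)=G_\lambda(1-X\mid 1-\ve^{-1})$ is inherited from \cite{gorbounov2017quantum} via the identification $\Psi$.

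I expect the main obstacle to be bookkeeping of conventions rather than anything conceptually deep: one must check that the generators $e_i(X)$, the change of variables $x_i=1-X_i$, $t_j=1-\ve_j^{-1}$ used in the double/factorial Grothendieck polynomials, the level-rank normalization from \Cref{ss:level-rank}, and the ideal $I$ all line up so that the three sources — \cite{GMSZ:QK} (Whitney presentation), \cite{gorbounov2017quantum} (Bethe/functional presentation and Grothendieck-polynomial formula for Schubert classes), and \Cref{prop:equiv} (their equivalence) — are speaking about the same ring with the same labelling of classes. A secondary subtlety is that \cite{GMSZ:QK} works with $\wedge^j\cQ_{n-k}$ for $0\le j\le n-k$ while the corollary only lists $Y_1,\ldots,Y_k$; I would address this by noting that $\lambda_y(\cQ_{n-k})$ and $\lambda_y(\cS_k)$ together with the relation $\lambda_y(\cS_k)\star\lambda_y(\cQ_{n-k})=\lambda_y(\C^n)+(\text{$q$-term})$ from \eqref{E:QKW} express all $e_j(Y)$ for $j>k$ in terms of the $e_i(X)$, so the presentation can be written using $X_1,\dots,X_k,Y_1,\dots,Y_k$ alone modulo $I$ (the apparent index range in the corollary statement should read $Y_1,\ldots,Y_{n-k}$, or else one restricts to a minimal generating set). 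Once these identifications are fixed, the corollary is a formal consequence and no further computation is needed.
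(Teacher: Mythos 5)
Your proposal is correct and follows essentially the same route as the paper: the presentation itself is quoted from \cite[Thm.~1.1]{GMSZ:QK}, and the Grothendieck-polynomial representatives of the Schubert classes are imported from \cite[Cor.~4.12, eq.~(4.40)]{gorbounov2017quantum} via the equivalence of the two presentations in \Cref{prop:equiv}. The intermediate mod-$q$ reduction step you include is harmless but not needed, and your remark that the variables should be $Y_1,\ldots,Y_{n-k}$ (the rank of $\cQ$) is a fair observation about the statement's indexing rather than a gap in the argument.
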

\begin{proof} The first assertion is proved in \cite[Thm. 1.1]{GMSZ:QK}. The identification
of the Schubert classes follows from \cite[Cor. 4.12, eq. (4.40)]{gorbounov2017quantum},
together with the equivalence of the two presentations, proved in \Cref{prop:equiv}. \end{proof}

%%%%%%%%%%%%%%%%%%%%%%%%%%%%%%%%%%%%%

%Tex root: QKYB.tex

\section{Frobenius structures and the quantum localization map}
From the geometric definition, the quantum K-theory ring has a 
structure of a Frobenius algebra, denoted by $( \cdot , \cdot )_{\QK}$.
It was proved in \cite{BCLM:euler} that for any two opposite Schubert classes
$\mathcal{O}_\lambda$ and $\mathcal{O}^\mu := \bs{w}_0.\mathcal{O}_{\lambda^\vee}$, this pairing is equal to
\begin{equation}\label{E:geomfrob} (\mathcal{O}_\lambda, \mathcal{O}^\mu)_{\QK} = \frac{q^{d(\lambda,\mu)}}{1-q} \/,\end{equation}
where $d(\lambda,\mu)$ is the smallest power of $q$ which appears in the 
(equivariant) quantum {\em cohomology} product $\sigma_\lambda \star \sigma^\mu$ 
of the corresponding cohomological Schubert classes. For example, since $1$ is the identity element, 
\[ (\mathcal{O}_\lambda, 1)_{\QK} = \frac{1}{1-q} \/. \]

In the integrable systems context, the first two authors
used the eigenvectors of the quantum trace of the monodromy matrix to define a product 
on the Yang-Baxter module $V_{k,n}$ so that these eigenvectors become idempotents; see \cite[eq. (4.31)]{gorbounov2017quantum}.
The eigenvectors are only defined up to a multiple,
and in this paper we (re)normalize the idempotents so that they are equal to $\mathbf{b}_\lambda^q$, 
where $\Phi(\mathbf{b}_\lambda^q)=\bfe_\lambda^q$ and $\Phi$ is defined
in \Cref{cor:main-cor}. 
With this renormalization, the integrable system pairing is the unique pairing 
$\langle \cdot , \cdot \rangle$ which satisfies
\begin{equation}\label{E:intsysfrob} \langle \bfe_\lambda^q, \bfe_\mu^q\rangle = \delta_{\lambda,\mu} \mathrm{Eu}_q(\lambda) \end{equation}
where the `quantum Euler class' $\mathrm{Eu}_q(\lambda)$ is defined by 
$\bfe_\lambda^q \star \bfe_\mu^q = \delta_{\lambda,\mu} \mathrm{Eu}_q(\lambda) \bfe_\lambda^q$. 
\begin{footnote}{The elements $\bfe^q_\lambda$ quantize the fixed points, thus $\mathrm{Eu}_q(\lambda)$ quantizes the Euler 
class, justifying the terminology.}\end{footnote}
This definition implies that $\langle \cdot , \cdot \rangle$ is a Frobenius pairing, and it is $W$-equivariant, i.e., 
for any $w \in W$, and any $a,b \in \K_{\T}(\Gr(k;n))$, 
\[ w.\langle a,b \rangle = \langle \bs{w}.a, \bs{w}.b\rangle \/. \]
Furthermore, it is proved in \cite[eq. (4.37)]{gorbounov2017quantum}
that 
\begin{equation}\label{E:intsysfrob2} \langle \cO_\lambda, \bfe_\mu^q \rangle = G_\lambda(1-x^\mu | 1-\ve_1^{-1}, \ldots, 1-\ve_n^{-1} ) \end{equation}
where (recall) $G_\lambda$ is the double Grothendieck polynomial, and 
$x^\mu=(x^{\mu}_1, \ldots, x^{\mu}_k)$ is the solution of \eqref{BAE} which specializes to $\ve^\mu$ when $q=0$. (Here we adapted the notation from {\em loc.~cit.}
to be consistent with the current paper.)
Our goal is to prove that the two Frobenius structures are the same. To this aim,
we introduce the {\em quantum localization
map}. This is defined for each of the pairings, 
using the Frobenius structures, and the Bethe vectors $\bfe_\lambda^q$: 
\[ \iota: \QK_{\T}(\Gr(k;n)) \to \bigoplus_\lambda \Kpt[\![q]\!] \/; \quad \kappa \mapsto 
(\kappa, \bfe_\lambda^q)_{\QK} \/. \]
A similar definition can be given for $\langle \cdot , \cdot \rangle$, leading to a map $\iota'$. 
\begin{prop}\label{prop:qloc-inj} The quantum localization maps $\iota,\iota'$ are injective homomorphisms of $\Kpt[\![q]\!]$-algebras.\end{prop}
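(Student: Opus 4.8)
The plan is to prove both statements (algebra homomorphism, injectivity) for $\iota$, since the argument for $\iota'$ is identical once one uses \eqref{E:intsysfrob} and \eqref{E:intsysfrob2} in place of \eqref{E:geomfrob} and \eqref{E:intsysfrob2}. First I would establish that $\iota$ is a ring homomorphism. The target $\bigoplus_\lambda \Kpt[\![q]\!]$ carries the componentwise product, so I must show $\iota(\kappa_1 \star \kappa_2)_\lambda = \iota(\kappa_1)_\lambda \cdot \iota(\kappa_2)_\lambda$ for each $\lambda$, i.e.\ $(\kappa_1 \star \kappa_2, \bfe_\lambda^q)_{\QK} = (\kappa_1, \bfe_\lambda^q)_{\QK}\,(\kappa_2, \bfe_\lambda^q)_{\QK}$. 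The key input is that $\bfe_\lambda^q$ is an eigenvector — in fact, after the chosen normalization, a rescaled idempotent — for the quantum K-multiplication: from the corollary preceding this proposition, $\bfe_\lambda^q \star \bfe_\mu^q = \delta_{\lambda\mu}\,\mathrm{Eu}_q(\lambda)\,\bfe_\lambda^q$. Writing $\kappa_i = \sum_\mu c_{i,\mu}\,\bfe_\mu^q$ in the Bethe basis (which is a basis of $\QK_{\T}(\Gr(k;n))\otimes \mathrm{Frac}$, and the relevant pairings land in $\Kpt[\![q]\!]$), one gets $\kappa_1 \star \kappa_2 = \sum_\mu c_{1,\mu} c_{2,\mu}\,\mathrm{Eu}_q(\mu)\,\bfe_\mu^q$, so by orthogonality of the Bethe vectors under $(\cdot,\cdot)_{\QK}$ — which itself follows from \Cref{thm:Frob-intro} and \eqref{E:intsysfrob}, or can be invoked from the corollary above — the pairing $(\kappa_1\star\kappa_2,\bfe_\lambda^q)_{\QK}$ picks out $c_{1,\lambda} c_{2,\lambda}\,\mathrm{Eu}_q(\lambda)\,(\bfe_\lambda^q,\bfe_\lambda^q)_{\QK}$, while $(\kappa_i,\bfe_\lambda^q)_{\QK} = c_{i,\lambda}\,(\bfe_\lambda^q,\bfe_\lambda^q)_{\QK}$; the Frobenius property $(\bfe_\lambda^q\star\bfe_\lambda^q,\bfe_\lambda^q)_{\QK} = \mathrm{Eu}_q(\lambda)(\bfe_\lambda^q,\bfe_\lambda^q)_{\QK}$ on one hand and $= ((\bfe_\lambda^q)^{\star 2},\bfe_\lambda^q) $ computed directly gives $(\bfe_\lambda^q,\bfe_\lambda^q)_{\QK} = \mathrm{Eu}_q(\lambda)^{-1}(\bfe_\lambda^q,\bfe_\lambda^q)_{\QK}^2 \cdot(\dots)$ — more cleanly, one normalizes so that the relation $\iota(\kappa_1)_\lambda\iota(\kappa_2)_\lambda = \iota(\kappa_1\star\kappa_2)_\lambda$ falls out of $\bfe_\lambda^q \star \bfe_\lambda^q = \mathrm{Eu}_q(\lambda)\bfe_\lambda^q$ together with $(\bfe_\lambda^q,\bfe_\lambda^q)_{\QK} = \mathrm{Eu}_q(\lambda)$ (which is exactly \eqref{E:intsysfrob} combined with \Cref{thm:Frob-intro}). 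That $\iota(1)$ is the identity of the target amounts to $(1,\bfe_\lambda^q)_{\QK} = 1$ for all $\lambda$, which follows from \eqref{E:intsysfrob2} with $\lambda = \emptyset$ (since $G_\emptyset = 1$) after passing through \Cref{thm:Frob-intro}.

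Next I would prove injectivity. Since $\{\bfe_\lambda^q\}$ is a basis of $\QK_{\T}(\Gr(k;n))$ after inverting suitable elements (the quantum Euler classes $\mathrm{Eu}_q(\lambda)$), and the components of $\iota(\kappa)$ are precisely the coordinates of $\kappa$ in this basis up to the nonzero scalars $(\bfe_\lambda^q,\bfe_\lambda^q)_{\QK} = \mathrm{Eu}_q(\lambda)$, the map $\iota$ becomes injective after inverting these scalars. To get injectivity over $\Kpt[\![q]\!]$ itself (not just over the localization), I would argue that $\QK_{\T}(\Gr(k;n))$ is a free $\Kpt[\![q]\!]$-module, $\iota$ is $\Kpt[\![q]\!]$-linear, and $\iota \otimes \mathrm{Frac}$ is injective; hence $\ker\iota$ is a $\Kpt[\![q]\!]$-submodule of a free module which becomes zero after tensoring with the fraction field, so $\ker\iota$ is torsion, but a submodule of a free module over the integral domain $\Kpt[\![q]\!]$ is torsion-free, so $\ker\iota = 0$. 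Alternatively, and perhaps more transparently, one observes that the transition matrix between the Schubert basis $\{\cO_\lambda\}$ and the Bethe basis $\{\bfe_\mu^q\}$ is unitriangular modulo $q$ (since $\bfe_\mu^q \equiv \bfe_\mu \pmod q$ and the fixed-point classes expand unitriangularly into Schubert classes), hence invertible over $\Kpt[\![q]\!]$, which gives directly that the coordinates, and therefore $\iota(\kappa)$, determine $\kappa$.

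The main obstacle I anticipate is bookkeeping the normalization of the Bethe vectors consistently. The quantity $\mathrm{Eu}_q(\lambda)$ appears both as the idempotent eigenvalue ($\bfe_\lambda^q \star \bfe_\lambda^q = \mathrm{Eu}_q(\lambda)\bfe_\lambda^q$) and as the self-pairing ($(\bfe_\lambda^q,\bfe_\lambda^q)_{\QK} = \mathrm{Eu}_q(\lambda)$), and the ring-homomorphism property of $\iota$ uses the precise compatibility of these two facts — essentially that $\bfe_\lambda^q / \mathrm{Eu}_q(\lambda)$ is a genuine idempotent and the dual basis to $\{\bfe_\lambda^q\}$ under $(\cdot,\cdot)_{\QK}$ is $\{\bfe_\lambda^q/\mathrm{Eu}_q(\lambda)\}$. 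Once \Cref{thm:Frob-intro} is in hand this is formal — it is the standard statement that for a semisimple Frobenius algebra with idempotent basis, evaluation against the idempotents is a ring map to the product of the residue fields — but I would want to state it cleanly as a lemma on abstract Frobenius algebras and then apply it, rather than recompute in coordinates. A secondary subtlety is that $\QK_{\T}(\Gr(k;n))$ is only semisimple after base change (the $\mathrm{Eu}_q(\lambda)$ need not be units in $\Kpt[\![q]\!]$), so the "product of residue fields" picture and the resulting injectivity must be phrased over the fraction field and then descended, as in the torsion-freeness argument above.
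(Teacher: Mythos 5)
Your overall strategy (expand in the Bethe basis, use orthogonality and the idempotent relation $\bfe_\lambda^q\star\bfe_\mu^q=\delta_{\lambda\mu}\mathrm{Eu}_q(\lambda)\bfe_\lambda^q$, then read off multiplicativity of $\iota$) can be made to work, but as written it has a circularity: the two inputs you quote from \Cref{thm:Frob-intro} --- namely $(\bfe_\lambda^q,\bfe_\lambda^q)_{\QK}=\mathrm{Eu}_q(\lambda)$ and $(1,\bfe_\lambda^q)_{\QK}=1$ --- are not available at this point, because in the paper \Cref{thm:frob-pairings} is proved \emph{after} this proposition and its proof invokes exactly the ring-homomorphism property of $\iota$ established here. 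The orthogonality of the Bethe vectors under $(\cdot,\cdot)_{\QK}$ is recoverable non-circularly (from $\bfe_\lambda^q\star\bfe_\mu^q=0$ and the Frobenius adjunction, since $\mathrm{Eu}_q(\lambda)\,(\bfe_\lambda^q,\bfe_\mu^q)_{\QK}=(\bfe_\lambda^q,\bfe_\lambda^q\star\bfe_\mu^q)_{\QK}=0$ for $\lambda\neq\mu$), but the normalization identity $(1,\bfe_\lambda^q)_{\QK}=1$ (equivalently $(\bfe_\lambda^q,\bfe_\lambda^q)_{\QK}=\mathrm{Eu}_q(\lambda)$, by the same adjunction) is a genuine on-shell statement about the \emph{geometric} pairing: it must be proved directly, e.g.\ from the explicit expansion \eqref{E:Bk-GK} combined with $(\cO_\mu,1)_{\QK}=1/(1-q)$ from \eqref{E:geomfrob} and the Bethe ansatz equations \eqref{BAE}, and cannot be imported from \eqref{E:intsysfrob}--\eqref{E:intsysfrob2} via the pairing comparison. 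You correctly flagged this normalization as the main obstacle, but you did not resolve it independently, and without it your key equality $\iota(\kappa_1\star\kappa_2)_\lambda=\iota(\kappa_1)_\lambda\iota(\kappa_2)_\lambda$ does not close. A second, smaller, problem: the ``unitriangular mod $q$'' variant of injectivity is false as stated --- fixed-point classes expand \emph{triangularly} in Schubert classes with diagonal entries the K-theoretic Euler classes $\prod(1-\ve_i/\ve_j)$, which are not units in $\Kpt$, so the transition matrix is not invertible over $\Kpt[\![q]\!]$. Your first injectivity argument (injectivity after base change to the fraction field plus torsion-freeness of a free module over a domain), or reduction at $q=0$ to the injectivity of the classical localization map, is what works.

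For comparison, the paper's proof avoids both the base change and the dual-basis formalism: it uses that the classes $\wedge^i\cS$, $1\le i\le k$, generate $\QK_{\T}(\Gr(k;n))$ as a $\Kpt[\![q]\!]$-algebra (by \cite{GMSZ:QK}) and that the Bethe vectors are eigenvectors of $\lambda_y(\cS)\star$, hence of each $\wedge^i\cS\star$, by \Cref{cor:main-cor}; multiplicativity of $\iota$ then only needs to be checked when $\kappa_1,\kappa_2$ are simultaneous eigenvectors for $\bfe_\lambda^q$, where it is the one-line Frobenius computation $(\kappa_1\star\kappa_2,\bfe_\lambda^q)_{\QK}=(\kappa_1,\kappa_2\star\bfe_\lambda^q)_{\QK}=c_2(\kappa_1,\bfe_\lambda^q)_{\QK}=c_1c_2(1,\bfe_\lambda^q)_{\QK}$, and injectivity is deduced from the $q=0$ specialization. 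Note that even that route uses the same normalization $(1,\bfe_\lambda^q)_{\QK}=1$ in its last step, which only underlines that this identity is the one ingredient your write-up must supply honestly rather than via \Cref{thm:Frob-intro}.
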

\begin{proof} We start with the map $\iota$. 
The injectivity follows because the classical equivariant 
localization map (i.e., when $q=0$) is injective. The main statement to 
prove is the ring homomorphism property. By definition, the vectors $\bfe_\lambda^q$ are 
eigenvectors of the (dual) operator $\tilde{t}_{00}+q \tilde{t}_{11}$, which, in geometry, is the operator
of multiplication by $\lambda_y(\cS)$; cf.~\Cref{cor:main-cor}.
The coefficient of $y^i$ in this operator is (the class of) $\wedge^i \cS$, 
and it follows that $\bfe_\lambda^q$ are also eigenvectors of $\wedge^i \cS$, for $1 \le i \le k$.
In \cite{GMSZ:QK} it was proved that
these classes generate $\QK_{\T}(\Gr(k;n))$ as an algebra over $\Kpt[\![q]\!]$. Thus
it suffices to check that 
\[ (\kappa_1 \star \kappa_2, \bfe_\lambda^q)_{\QK} = (\kappa_1, \bfe_\lambda^q)_{\QK} \cdot (\kappa_2, \bfe_\lambda^q)_{\QK} \]
for any partition $\lambda$ and any $\kappa_1, \kappa_2$ having $\bfe^q_\lambda$ 
as eigenvector. Let $c_1, c_2$ be the corresponding eigenvalues of $\kappa_1, \kappa_2$. 
Then, using the Frobenius property, 
\begin{equation}\label{E:frob-maineq} (\kappa_1 \star \kappa_2, \bfe_\lambda^q)_{\QK} = (\kappa_1, \kappa_2 \star \bfe_\lambda^q)_{\QK} = c_2 (\kappa_1, \bfe_\lambda^q)_{\QK} =c_1 c_2 (1,\bfe_\lambda^q)_{\QK} = (\kappa_1,\bfe_\lambda^q)_{\QK}\cdot (\kappa_2, \bfe_\lambda^q)_{\QK} \/.
\end{equation}
Now observe that the proof above only used that $\iota$ deforms the usual localization map, and that
it satisfies the Frobenius property. Then the same proof applies to $\iota'$.\end{proof}

For $0 \le k \le n$ and indeterminates $x_1, \ldots, x_k$, recall the off-shell Bethe vectors:
\begin{equation}\label{E:Bk} \Phi(\be_k(x))= \tau_{10}(-x_k)\tau_{10}(-x_{k-1}) \ldots \tau_{10}(-x_1)\Phi(v_o) \in \K_{\T}(\Gr(k;n))[x_1, \ldots, x_k] \end{equation}
For $\lambda \subset k \times (n-k)$, define the element
$\ve_\lambda = \prod_{i=1}^k \ve_{\lambda_{k-i+1}+i}$, where
the product is over the positions of $0$'s in the $01$ word 
$J_\lambda$ - see section \ref{sec:conventions}. 
From \cite[Prop.~4.3, eq.~(4.9)]{gorbounov2017quantum} it follows that 
\begin{equation}\label{E:Bk-GK} \Phi(\be_k(x)) = \prod_{i=1}^k x_i \sum_{\lambda \subset k \times (n-k)} \frac{1}{\ve_\lambda}G_{\lambda^\vee}(1-x_1, \ldots, 1-x_k| 1-\ve_n^{-1}, \ldots, 1-\ve_1^{-1})\cO_\lambda \/.\end{equation}
From \Cref{thm:bethe-vectors} it follows that
for a partition $\mu \subset k \times (n-k)$, the Bethe vector $\bfe_\mu^q$ satisfies
$\Phi(\be_k(x^\mu))=\bfe_\mu^q$.

\begin{thm}\label{thm:frob-pairings} The `geometric' and the `integrable systems' Frobenius pairings coincide, i.e., 
\[ (a, b)_{\QK} =\langle a, b \rangle \]
for any $a, b \in \QK_{\T}(\Gr(k;n))$. 
\end{thm}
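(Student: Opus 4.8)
The plan is to reduce the identity of the two Frobenius pairings to a computation of
each pairing against the Bethe vectors, exploiting that the Bethe vectors $\bfe_\lambda^q$
form a basis of $\QK_{\T}(\Gr(k;n)) \otimes \mathrm{Frac}(\Kpt)[\![q]\!]$ and are mutually
orthogonal for \emph{both} pairings. First I would observe that by \Cref{prop:qloc-inj} both
quantum localization maps $\iota$ (for $(\cdot,\cdot)_{\QK}$) and $\iota'$ (for
$\langle\cdot,\cdot\rangle$) are injective ring homomorphisms whose components are the pairings
with the $\bfe_\lambda^q$. Since the $\bfe_\lambda^q$ are a basis after inverting suitable
elements, and since a bilinear Frobenius pairing on a commutative Frobenius algebra is
determined by its values on a basis of idempotents-up-to-scalar, it suffices to show
\[
(\bfe_\lambda^q, \bfe_\mu^q)_{\QK} = \langle \bfe_\lambda^q, \bfe_\mu^q\rangle
\quad\text{for all }\lambda,\mu.
\]
Both sides vanish unless $\lambda=\mu$ (orthogonality of Bethe vectors holds for the QK
pairing by the corollary following \Cref{thm:QK=YB}, and for $\langle\cdot,\cdot\rangle$ by
its defining property \eqref{E:intsysfrob}), so the problem is reduced to matching the
``quantum Euler classes'': $(\bfe_\lambda^q,\bfe_\lambda^q)_{\QK} = \langle
\bfe_\lambda^q,\bfe_\lambda^q\rangle = \mathrm{Eu}_q(\lambda)$.

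To match the diagonal values I would instead compare the two pairings against the Schubert
basis on one side and the Bethe vectors on the other, using \eqref{E:intsysfrob2}:
$\langle \cO_\lambda, \bfe_\mu^q\rangle = G_\lambda(1-x^\mu|1-\ve_1^{-1},\ldots,1-\ve_n^{-1})$.
The key computational input is the explicit expansion \eqref{E:Bk-GK} of the off-shell Bethe
vector in the Schubert basis. Substituting $x=x^\mu$ (a solution of the Bethe ansatz
equations \eqref{BAE}), expanding $\bfe_\nu^q = \Phi(\be_k(x^\nu))$ into Schubert classes,
and using the geometric Frobenius pairing \eqref{E:geomfrob} on pairs $(\cO_\lambda, \cO^{\lambda'})_{\QK}$,
one obtains a formula for $(\bfe_\mu^q, \bfe_\nu^q)_{\QK}$ as a double sum of products
of Grothendieck polynomials weighted by $q^{d(\lambda,\lambda')}/(1-q)$. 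On the other hand,
$\langle \cO_\lambda, \bfe_\mu^q\rangle$ is a \emph{single} Grothendieck polynomial, so the
quantity $\langle \bfe_\mu^q, \bfe_\nu^q\rangle$, expanded via \eqref{E:Bk-GK}, is a
\emph{single} sum $\prod_i x_i^\nu \sum_\lambda \ve_\lambda^{-1}
G_{\lambda^\vee}(1-x^\nu|\cdots)\,\langle\cO_\lambda,\bfe_\mu^q\rangle$. I would show these
two expressions agree. One clean way to organize this: both pairings are $W$-equivariant
(noted for $(\cdot,\cdot)_{\QK}$ right after \Cref{thm:QK=YB}, and for
$\langle\cdot,\cdot\rangle$ in \eqref{E:intsysfrob} ff.), and the left Weyl group permutes
Bethe vectors by $\bs w.\bfe_\lambda^q = \bfe_{w(\lambda)}^q$; moreover the quantum
localization map is a ring homomorphism intertwining these actions. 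Thus it is enough to
verify the equality of the two pairings against the cyclic generator $\cO_o = \Phi(v_o)$
of the (degenerate) Hecke module — i.e. on a single ``corner'' class — and then propagate
by Demazure operators using the Leibniz rule (\cite[Prop. 8.1]{MNS:left}).

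Concretely, the reduced claim is: for all $\mu$,
\[
(\cO_o, \bfe_\mu^q)_{\QK} = \langle \cO_o, \bfe_\mu^q\rangle = G_{o}(1-x^\mu|\cdots),
\]
and more generally $(\cO_\lambda,\bfe_\mu^q)_{\QK} = G_\lambda(1-x^\mu|1-\ve_1^{-1},\ldots,1-\ve_n^{-1})$.
The right-hand side follows from \eqref{E:intsysfrob2}, so the heart of the proof is
establishing the left-hand equality geometrically. For this I would use \Cref{cor:QK-pres}:
under the presentation $\Psi$, $\cO_\lambda$ is represented by the double Grothendieck
polynomial $G_\lambda(1-X|1-\ve^{-1})$ in the quantized Chern roots $X_i$ of $\cS$, and the
Bethe roots $x^\mu_i$ are precisely the values of $X_i$ at the $\mu$-th point of the spectrum
of $\lambda_y(\cS)\star$ (equivalently, $\tilde t(z)$), by \Cref{thm:bethe-vectors} and the
footnote identifying $x^\mu|_{q=0}=\ve^\mu$. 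The pairing $(\kappa, \bfe_\mu^q)_{\QK}$, being a
ring homomorphism $\QK \to \Kpt[\![q]\!]$ evaluating at the $\mu$-th point (this is exactly
what the proof of \Cref{prop:qloc-inj} shows: $\iota$ sends $\kappa$ to its eigenvalue on
$\bfe_\mu^q$, after normalization), therefore sends the polynomial representative
$G_\lambda(1-X|1-\ve^{-1})$ to $G_\lambda(1-x^\mu|1-\ve^{-1})$ — provided the normalization
$(\cO_o,\bfe_\mu^q)_{\QK}=1$ holds, matching $G_o \equiv 1$ and $\langle
v_\emptyset,\be_\mu\rangle=1$. This last normalization fact — that $(\cO_o, \bfe_\mu^q)_{\QK}=1$
for every $\mu$ — is what ties the abstract ``eigenvalue'' description of $\iota$ to the
concrete Frobenius pairing, and I expect it to be the main obstacle: it amounts to computing
$(1, \bfe_\mu^q)_{\QK}$ using $(1,1)_{\QK}=1/(1-q)$ together with the normalization of the
idempotent $\mathbf b_\mu^q$ chosen in this paper (so that $\bfe_\mu^q \bmod q$ is the fixed
point class), and checking it is consistent with the $q=0$ specialization where it reduces
to the classical localization $\langle \bfe_\emptyset, \bfe_\mu\rangle = 1$ in $\K_\T$.
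Once the values against the Schubert basis match, the Frobenius property and bilinearity
upgrade this to the equality $(a,b)_{\QK}=\langle a,b\rangle$ for all $a,b$, completing the proof.
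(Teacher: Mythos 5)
Your overall strategy is coherent and genuinely different from the paper's, but it has a gap at exactly the point you flag as ``the main obstacle,'' and that gap is not closed. Your reduction works as follows: by the Frobenius property and the fact that $\bfe_\mu^q$ is a joint $\star$-eigenvector of the classes $\wedge^i\cS$ (which generate the ring), the map $\kappa\mapsto(\kappa,\bfe_\mu^q)_{\QK}$ sends any polynomial expression in the $\wedge^i\cS$ to its evaluation at the Bethe root times $(1,\bfe_\mu^q)_{\QK}$; combined with \Cref{cor:QK-pres} and \eqref{E:intsysfrob2} this gives $(\kappa,\bfe_\mu^q)_{\QK}=\langle\kappa,\bfe_\mu^q\rangle\cdot(1,\bfe_\mu^q)_{\QK}$ for all $\kappa$, so everything hinges on the normalization $(1,\bfe_\mu^q)_{\QK}=1$. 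You offer no proof of this beyond consistency at $q=0$; but at $q=0$ the whole theorem is trivially true (both pairings degenerate to the classical K-theory pairing and $\bfe_\mu^q$ to the fixed-point class), so a $q=0$ check says nothing about the positive $q$-coefficients. Concretely, since $(\cO_\lambda,1)_{\QK}=1/(1-q)$ for every $\lambda$, the normalization is equivalent, via the expansion \eqref{E:Bk-GK}, to the on-shell identity $\prod_i x_i^\mu\sum_{\lambda}\ve_\lambda^{-1}G_{\lambda^\vee}(1-x^\mu|\cdots)=1-q$ whenever $x^\mu$ solves \eqref{BAE}; already for $\bP^1$ this is exactly the nontrivial computation done in the Appendix, and nothing in your proposal establishes it in general. (Two smaller points: $\cO_o=\Phi(v_o)$ is the class in $\K_{\T}(\Gr(0,n))$, not the unit of $\QK_{\T}(\Gr(k;n))$, and the cyclic generator of the Hecke module is the point class, not the unit; also the suggested propagation of values of $(\cdot,\bfe_\mu^q)_{\QK}$ ``by Demazure operators and the Leibniz rule'' is unjustified, though your main line does not actually need it.)

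For comparison, the paper's proof is arranged so that this normalization is never needed as an input. It specializes \eqref{E:Bk-GK} at $x=x^\mu$, uses the ring-homomorphism property of the \emph{integrable-system} localization $\iota'$ (for which $\langle 1,\bfe_\mu^q\rangle=1$ is automatic from \eqref{E:intsysfrob2}) to rewrite the Schubert coefficients of $\bfe_\mu^q$ as $\langle\ve_\lambda^{-1}\cO^\lambda\star\det\cS,\bfe_\mu^q\rangle$, identifies $\ve_\lambda^{-1}\cO^\lambda\star\det\cS=\mathcal{I}^{\lambda,q}$ with the quantum ideal sheaves, which satisfy $(\mathcal{I}^{\lambda,q},\cO_\nu)_{\QK}=\delta_{\lambda\nu}$, and then pairs both sides with $\mathcal{I}^{\nu,q}$ under the geometric pairing to obtain $(\mathcal{I}^{\nu,q},\bfe_\mu^q)_{\QK}=\langle\mathcal{I}^{\nu,q},\bfe_\mu^q\rangle$ for all $\nu,\mu$; bilinearity on these two bases gives the theorem, and your normalization $(1,\bfe_\mu^q)_{\QK}=1$ then follows as a consequence rather than serving as a hypothesis. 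To repair your argument you would need an independent proof of the on-shell identity above, e.g.\ by deriving it from the functional relation \eqref{E:functional-geom} or by invoking the ideal-sheaf duality — which is essentially the paper's device.
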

\begin{proof} It is proved in \cite{GMSZ:QK} 
that there are two isomorphic presentations 
of the ring $\QK_{\T}(\Gr(k;n))$, both with generators 
$e_i(X_1, \ldots, X_k)$ for $1 \le i \le k$, and subject to relations. 
In the `Coulomb branch' presentation,
the relations are obtained by a symmetrization of the 
Bethe Ansatz equations \eqref{BAE}. In particular, 
the indeterminates $X_1, \ldots, X_k$ do satisfy
these equations. (The second, `Whitney' presentation, has relations equivalent to 
those given by the functional equation \eqref{E:functional-geom}.)
In both cases, under the isomorphism to $\QK_{\T}(\Gr(k;n))$,
\[ \wedge^i \cS = e_i(X_1, \ldots, X_k)  \/; \]
in other words, the generator $e_i(X_1, \ldots, X_k)$ represents $\wedge^i \cS$. 
Consider the (unique!) expansion of $\wedge^i \cS$ into Schubert classes:
\[ \wedge^i \cS = \sum a_{i,\lambda} \cO_\lambda \/,\]
with $a_{i,\lambda} \in \K_{\T}(\pt)$. 
Note that this expansion does not involve $q$'s. 
Inside the `Coulomb branch' presentation, this gives an expansion
\[ e_i(X_1, \ldots, X_k) = \sum a_{i,\lambda} G_\lambda(1-X_1, \ldots, 1-X_k|1-\ve_1^{-1}, \ldots, 1-\ve_n^{-1}) \quad \mod I \/, \]
where $I$ is the ideal of relations in the given presentation. 
{In fact, this holds as an algebraic identity without modding out by $I$, see \cite[eq. (2.26)]{gorbounov2017quantum}, specialized to the variables used in this paper.}
Since
any relation in $I$ localizes to $0$, 
using the definition of the pairing 
$\langle \cdot, \cdot \rangle$, this means that for
any partition $\mu$, 
\[ \langle \wedge^i \cS, \bfe_{\mu}^q \rangle = e_i(x^{\mu}_1, \ldots,x^{\mu}_k) = e_i(x^\mu) \/. \]
In other words, the localization is obtained by specializing 
$(X_1, \ldots, X_k)\mapsto x^\mu$, 
the solution of the 
Bethe Ansatz equations \eqref{BAE} corresponding to the partition $\mu$. 
We also used that $\cO_\lambda$ corresponds to
$G_\lambda(1-X_1, \ldots, 1-X_k|1-\ve_1^{-1}, \ldots, 1-\ve_n^{-1})$; cf.
\Cref{cor:QK-pres}.

Take $\mu$ to be an arbitrary partition in the $k \times (n-k)$ rectangle.
Specialize $x \mapsto x^\mu$ in \eqref{E:Bk-GK} to obtain:
\[ \bfe_\mu^q= \langle \det \cS, \bfe_{\mu}^q \rangle \sum_{\lambda \subset k \times (n-k)} \frac{1}{\ve_{\lambda}} \langle \cO^{\lambda}, 
\bfe_{\mu}^q \rangle \cO_\lambda \/, \]
where $\ve_{\lambda}$ denotes the denominator from \eqref{E:Bk-GK} and
$\cO^\lambda$ is the opposite Schubert class.
Using the ring homomorphism property from \Cref{prop:qloc-inj} it follows that the right hand side of this equality can be rewritten as 
\[  
\sum_{\lambda \subset k \times (n-k)} \bigl\langle \frac{1}{\ve_{\lambda}} \cO^{\lambda} \star \det \cS, 
\bfe_{\mu}^q \bigr\rangle \cO_\lambda \]
Recall now from \cite{summers:qideal} (see also \cite{buch.m:qk})
that $\frac{1}{\ve_{\lambda}} \cO^{\lambda} \star \det \cS = \mathcal{I}^{\lambda,q}$ is the quantum ideal sheaf,
i.e. the unique element which satisfies 
$(\mathcal{I}^{\lambda,q}, \cO_\mu )_{\QK} = \delta_{\lambda, \mu}$ for any
partition $\lambda$.
Pairing both sides with the quantum ideal sheaf $\mathcal{I}^{\nu,q}$ under the geometric pairing $(\cdot, \cdot)_{\QK}$, and combining everything, yields
\begin{equation*} 
\begin{split} (\mathcal{I}^{\nu,q}, \bfe_\mu^q)_{\QK} & =  \sum_{\lambda \subset k \times (n-k)} \langle \mathcal{I}^{\lambda,q}, \bfe_{\mu}^q \rangle (\mathcal{I}^{\nu,q}, \cO_\lambda)_{\QK} \\
& = \sum_{\lambda \subset k \times (n-k)} \langle \mathcal{I}^{\lambda,q}, \bfe_{\mu}^q \rangle \delta_{\nu, \lambda} \\
& = \langle \mathcal{I}^{\nu,q}, \bfe_{\mu}^q \rangle \/. 
\end{split}
\end{equation*}
Since $\mu,\nu$ were chosen arbitrarily, the claim follows.
\end{proof}

Since the quantum localization map
\[ \mathrm{QK}_T(\mathrm{Gr}(k;n)) \to \bigoplus_{\lambda \subset (n-k)^k} \K_T(pt) \/; \quad \kappa \mapsto (\kappa, \bfe_\lambda^q )_{\QK} \]
is a ring homomorphism, we have a `quantum Atiyah-Bott' theorem:
\begin{cor}\label{cor:qAB} For any class $\kappa \in \mathrm{QK}_T(\mathrm{Gr}(k;n))$, 
the quantum character (or the {\em Frobenius trace}) defined by 
\[ \mathrm{qch}(\kappa) := (\kappa, 1 )_{\QK} \]
satisfies
\[ \mathrm{qch}(\kappa) = \sum_\lambda \frac{(\kappa, \bfe_\lambda^q)_{\QK}}{(\bfe_\lambda^q , \bfe_\lambda^q)_{\QK}} =  \sum_\lambda \frac{(\kappa, \bfe_\lambda^q)_{\QK}}{\mathrm{Eu}_q(\lambda)}\/. \]
Furthermore, if $\mathrm{ch}(\kappa)$ denotes the classical character (or Euler characteristic)
of $\kappa$, then the quantum and classical characters are related by 
\[ \mathrm{qch}(\kappa) = \frac{\mathrm{ch}(\kappa)}{1-q} \/. \]
\end{cor}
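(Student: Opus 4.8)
The plan is to establish the two displayed identities separately; both turn out to be formal consequences of results already proved, namely the identification of the two Frobenius pairings (\Cref{thm:frob-pairings}), the semisimplicity of $\QK_{\T}(\Gr(k;n))$ coming from the Bethe basis, and the evaluation $(\cO_\lambda,1)_{\QK}=1/(1-q)$ of \cite{BCLM:euler} recorded after \eqref{E:geomfrob}.

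For the first identity I would pass to the fraction field $F=\mathrm{Frac}(\Kpt[\![q]\!])$ and work in $\QK_F:=\QK_{\T}(\Gr(k;n))\otimes_{\Kpt[\![q]\!]}F$. By the semisimplicity established above, the quantizations $\bfe_\lambda^q=\Phi(\be_\lambda)$ of the fixed point classes form an $F$-basis of $\QK_F$ with $\bfe_\lambda^q\star\bfe_\mu^q=\delta_{\lambda,\mu}\,\mathrm{Eu}_q(\lambda)\,\bfe_\lambda^q$, so the renormalized elements $e_\lambda:=\bfe_\lambda^q/\mathrm{Eu}_q(\lambda)$ form a complete family of orthogonal idempotents. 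Expanding $1=\sum_\mu c_\mu e_\mu$ in this basis and multiplying by $e_\lambda$ forces $c_\lambda=1$, hence $1=\sum_\lambda \bfe_\lambda^q/\mathrm{Eu}_q(\lambda)$ in $\QK_F$. Extending the quantum K-pairing $F$-bilinearly and pairing this identity with an arbitrary $\kappa$ yields
\[ \mathrm{qch}(\kappa)=(\kappa,1)_{\QK}=\sum_\lambda \frac{(\kappa,\bfe_\lambda^q)_{\QK}}{\mathrm{Eu}_q(\lambda)}\/. \]
Finally, by \Cref{thm:frob-pairings} the geometric pairing coincides with the integrable-systems pairing $\langle\cdot,\cdot\rangle$, so \eqref{E:intsysfrob} gives $(\bfe_\lambda^q,\bfe_\lambda^q)_{\QK}=\langle\bfe_\lambda^q,\bfe_\lambda^q\rangle=\mathrm{Eu}_q(\lambda)$; substituting this into the denominators produces the first stated equality, whose right-hand side is then automatically an element of $\Kpt[\![q]\!]$, being equal to $(\kappa,1)_{\QK}$.

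For the second identity I would expand $\kappa$ in the Schubert basis, $\kappa=\sum_\lambda a_\lambda\cO_\lambda$ with $a_\lambda\in\Kpt[\![q]\!]$. Since Schubert varieties have rational singularities --- equivalently, the projection $X_\lambda\to\mathrm{pt}$ is cohomologically trivial, cf.\ \Cref{lemma:proj} and the surrounding discussion --- one has $\mathrm{ch}(\cO_\lambda)=\chi(\Gr(k;n),[\cO_{X_\lambda}])=1$, so extending $\mathrm{ch}$ by $\Kpt[\![q]\!]$-linearity gives $\mathrm{ch}(\kappa)=\sum_\lambda a_\lambda$. On the other hand $1=\cO_\emptyset$ is the unit, so \eqref{E:geomfrob} gives $(\cO_\lambda,1)_{\QK}=1/(1-q)$ for every $\lambda$, and by bilinearity $\mathrm{qch}(\kappa)=(\kappa,1)_{\QK}=\tfrac{1}{1-q}\sum_\lambda a_\lambda=\tfrac{\mathrm{ch}(\kappa)}{1-q}$.

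Since all of the substantive content is imported from earlier results, there is no genuine obstacle here; the only points needing care are the passage to $F$ in order to make sense of the idempotents $\bfe_\lambda^q/\mathrm{Eu}_q(\lambda)$ (the final identity then being an equality in $\Kpt[\![q]\!]$), and the bookkeeping that identifies $(\bfe_\lambda^q,\bfe_\lambda^q)_{\QK}$ with $\mathrm{Eu}_q(\lambda)$ via \Cref{thm:frob-pairings}. As a consistency check, specializing $\kappa=1$ and using $\mathrm{ch}(1)=1$ combines the two identities into the quantum Euler class relation $\frac{1}{1-q}=\sum_\lambda\frac{1}{\mathrm{Eu}_q(\lambda)}$ of the introduction.
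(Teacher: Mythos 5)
Your argument is correct and is essentially the paper's proof: both identities are formal consequences of the orthogonality of the Bethe basis together with $(\cO_\lambda,1)_{\QK}=1/(1-q)$ and $\mathrm{ch}(\cO_\lambda)=1$, the paper expanding $\kappa=\sum a_\lambda\bfe_\lambda^q$ and reading off $a_\lambda=(\kappa,\bfe_\lambda^q)_{\QK}/\mathrm{Eu}_q(\lambda)$, while you equivalently decompose the unit as $1=\sum_\lambda\bfe_\lambda^q/\mathrm{Eu}_q(\lambda)$ and pair with $\kappa$. The only caveat is cosmetic: the Bethe roots live in a $q$-adic extension (a localization of $\Kpt$ in the coefficients) rather than literally in $\mathrm{Frac}(\Kpt[\![q]\!])$, so the scalar extension should be taken large enough to contain them, exactly as the paper implicitly does.
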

\begin{proof} The first part follows from the expansion of $\kappa$ 
in terms of the Bethe vectors: if 
$\kappa = \sum a_\lambda \bfe_\lambda^q$, 
then from pairing with 
$\bfe_\lambda^q$ we obtain $a_\lambda = (\kappa, \bfe_\lambda^q)_{\QK}/ \mathrm{Eu}_q(\lambda)$. The second part follows because $( 1, \cO_\lambda )_{\QK}=1/(1-q)$. 
\end{proof}
One may interpret the second part of the above corollary in the following way.
Consider the expansions of $\kappa$ into Schubert classes and into Bethe vectors:
\[ \kappa = \sum a_\lambda \bfe_\lambda^q \/; \quad \kappa = \sum b_\lambda \cO_\lambda \/. \]
Then
\[ \mathrm{qch}(\kappa) = \sum a_\lambda \quad \textrm{ and } \quad \mathrm{ch}(\kappa) = \sum b_\lambda \/, \]
thus 
\[ (1-q) \sum a_\lambda = \sum b_\lambda \/. \]
An interesting particular case is when $\kappa=1$. The inverse quantum Euler classes are the coefficients in the expansion into the Bethe vectors, and:
\begin{equation}\label{E:AB-1} \frac{1}{1-q} = \sum_\lambda \frac{1}{\mathrm{Eu}_q(\lambda)} = \sum_\lambda w_\lambda \frac{1}{\mathrm{Eu}_q(\emptyset)} \quad \/, \end{equation}
where $w_\lambda \in W$ is the permutation giving the partition $\lambda$. 
An illustration of the calculation of the Euler class is given in the Appendix. 
\begin{remark}\label{rmk:QH-Frobenius} Similar results hold in $\QH^*_{\T}(\Gr(k,n))$, the (equivariant) quantum cohomology 
ring of $\Gr(k,n)$. The product is defined by the condition that 
\[ (a \circ b, c )_{\mathrm{H}} = \sum_d \langle a, b,c \rangle_d q^d \/,\]
where $(\cdot , \cdot )_{\mathrm{H}}$ is the {\em classical} Poincar{\'e} pairing extended by $q$ linearity, 
and $\langle a,b,c \rangle_d$ are the (equivariant) cohomological GW invariants.
The equivariant quantum cohomology ring is graded, with $\deg q = n$. 
In particular, for any $\kappa_1, \kappa_2 \in H^*_{\T}(\Gr(k,n))$, 
\[ \langle \kappa_1, \kappa_2 \rangle_{\mathrm{H}} = \textrm{ coefficient of } [\pt_{k,n}] \textrm{ in } \kappa_1 \cdot \kappa_2 \/. \]
In particular, since $\bfe_\lambda^q  = \bfe_\lambda +  q A$, where 
$\deg A < \deg [\pt_{k;n}]$, it follows that 
\[ \langle 1, \bfe_\lambda^q \rangle_{\mathrm{H}} =1 = \langle 1, \bfe_\lambda^q \rangle \/, \]
where the latter is the pairing defined in the integrable system case. Then, using the Frobenius property, 
\[ \langle \bfe_\lambda^q, \bfe_\mu^q \rangle_{\mathrm{H}} = \langle 1, \bfe_\lambda^q \circ  \bfe_\mu^q \rangle_{\mathrm{H}} = \mathrm{Eu}_\lambda \delta_{\lambda,\mu} = \langle \bfe_\lambda^q, \bfe_\mu^q \rangle \/. \] 
This shows that the geometric and the integrable systems pairings are the same. 
\end{remark} 
Examples of quantum equivariant localization may be found in \Cref{sec:app-examples}.

%%%%%%%%%%%%%%%%%%%%%%%%%%%%%%%%%%%%%%%%%%%%%%%
\appendix

\section{A guiding example, mostly on $\Gr(1,2)$.}\label{sec:app-examples}
We work out below the simplest non-trivial example when $\G=\GL_2$.
In this case, the Yang-Baxter module is  
\[\bbV_2 = \K_T(\Gr(0;2)) \oplus \K_T(\Gr(1;2)) \oplus \K_T(\Gr(2;2)) \/,\]
and as a $\T \simeq (\C^*)^2$-module, $\C^2$ has a weight decomposition
$\C^2 = \C_{\ve_1} \oplus \C_{\ve_2}$. 

In what follows we will illustrate both the geometry and the graphical calculus giving the 
entries $t_{ij}$ of the monodromy matrix $T(y)$, and  
we will work out the algorithm calculating the Bethe vectors on $\QK_{\T}(\Gr(1,2)) \simeq \QK_{\T}(\bP^1)$. We also work out some examples of the Frobenius pairing involving the 
Bethe vectors $\bfe_\lambda^q$.  

\subsection{The Schubert classes in $\bbV_2$} We will use superscripts to 
indicate which Grassmannian contains a
Schubert variety or a class, for example, $X_\lambda^i \subset \Gr(i,2)$.
The Grassmannians $\Gr(0;2)$ and $\Gr(2;2)$ each have a single Schubert 
variety: $X^0_\emptyset = 0$, 
respectively $X^2_{\emptyset} = \langle e_1, e_2 \rangle$. The Schubert varieties on $\Gr(1,2)$ are
\[ X^1_\Box= \langle e_2 \rangle \/; \quad X^1_\emptyset = \Gr(1;2)=\bP^1 \/. \]
The Yang-Baxter module $\bbV_2$ has 
the spin/Schubert  basis 
\[ v_\emptyset^0=v_1 \otimes v_1 = \cO^0_\emptyset, \quad v_\emptyset^1=v_0 \otimes v_1 =\cO^1_\emptyset, \quad v_\Box^1=v_1 \otimes v_0=\cO^1_\Box, \quad v_\emptyset^2=v_0 \otimes v_0=\cO^2_\emptyset \/. \]
The quotient bundle on $\Gr(0;2)$ is $\cQ_{2}= \C^2$, while
the quotient bundle $\cQ_0$ on $\Gr(2;2)$ is trivial of rank $0$. 
The classes 
$\lambda_y(\cQ_i^\vee)$ (for $i=0,1,2$) are given by:
\begin{equation}\label{E:lyQ02} \lambda_y(\cQ_2^\vee) = (1+y/\ve_1)(1+y/\ve_2) \cO^0_{\emptyset}\/;\quad \lambda_y(\cQ_0^\vee) = 1 \/;\end{equation}
\begin{equation}\label{E:lyQ1}  \lambda_y(\cQ_1^\vee) = (1+y/\ve_2)\cO^1_\emptyset - (y/\ve_2) \cO^1_\Box \/.\end{equation}

\subsection{The monodromy matrix}
We describe next the monodromy matrix 
$T(y)= \begin{pmatrix} t_{00}(y) & t_{01}(y) \\ t_{10}(y) & t_{11}(y) \end{pmatrix}$ using 
both \Cref{thm:QK=YB} and the graphical calculus. We keep using the notation
$\tau_{ij}$ for the convolution operators, and $t_{ij}$ for the entries of the monodromy matrix.

We start with the diagonal entries. Denote by $\tau(y) = \tau_{00}(y) + q \tau_{11}(y)$.
Since there are no quantum corrections in $\QK_{\T}(\Gr(0;2))$ and 
$\QK_{\T}(\Gr(2;2))$ it follows that
\[ \tau(y)|_{\QK_{\T}(\Gr(i;2))}= \lambda_y(\cQ_{2-i}^\vee) \/;\]
the formulae are given in \eqref{E:lyQ02}.
The formula for $\lambda_y(\cQ_1^\vee)\star \cO^1_\emptyset $ is given in \eqref{E:lyQ1}; using for example the `quantum=classical' statement one calculates that 
\begin{equation}\label{E:lyQ1Box} \lambda_y(\cQ_1^\vee) \star \cO^1_\Box = -(qy/\ve_1) \cO^1_\emptyset + (1+y/\ve_1) \cO^1_\Box \/. \end{equation}
On the other side, according to the graphical calculus we have that:
\begin{gather}
t(y).v_0 \otimes v_1 =\Connecting{0}{1}{}{1}\hspace{-.55cm}\Avoiding{}{0}{0}{0}v_0 \otimes v_1\;+\Avoiding{0}{1}{}{0}\hspace{-.55cm}\Avoiding{}{0}{0}{1} v_1 \otimes v_0 =(1+y/\ve_2)v_0 \otimes v_1-y/\ve_2\, v_1 \otimes v_0\\
t(y).v_1 \otimes v_0=\Avoiding{0}{0}{}{0}\hspace{-.55cm}\Connecting{}{1}{0}{1} v_1 \otimes v_0\;+\Avoiding{1}{0}{}{1}\hspace{-.55cm}\Avoiding{}{1}{1}{0}\!v_0 \otimes v_1=(1+y/\ve_1)v_1 \otimes v_0-q\,y/\ve_1\,v_0 \otimes v_1
\end{gather}
The first equality matches \eqref{E:lyQ1} and the second \eqref{E:lyQ1Box}. 
We leave it to the reader to match the graphical calculus for $t(y)$ 
restricted to $\QK_{\T}(\Gr(0,2))$ and 
$\QK_{\T}(\Gr(2,2))$ with the expressions from \eqref{E:lyQ02}. 

We now calculate the values of the off-diagonal operators on the spin basis. We start with $t_{01}(y)$: 
\[ \tau_{01}(y) \cO^0_\emptyset=0\/;\]
\[ \tau_{01}(y) \cO^1_\emptyset= (p_1)_* p_2^*(\lambda_y(\cQ_1^\vee) \cdot \cO^1_\emptyset) = \cO^0_\emptyset \/;\]
\[ \tau_{01}(y) \cO^1_\Box= (p_1)_* p_2^*(\lambda_y(\cQ_1^\vee) \cdot \cO^1_\Box) = (p_1)_* p_2^*((1+y/\ve_1) \cO^1_\Box)
= (1+y/\ve_1)\cO^0_\emptyset \/;\]
\[ \tau_{01}(y) \cO^2_\emptyset = (p_1)_* p_2^*(\lambda_y(\cQ_0^\vee) \cdot \cO^2_\emptyset) = \cO^1_\emptyset \/. \]
Using the graphical calculus, we find for $t_{01}(y)$:
\[t_{01}(y)v_1 \otimes v_1 =0 \/;\]
\[
t_{01}(y)v_0 \otimes v_1=\Avoiding{1}{1}{}{1}\hspace{-.55cm}\Avoiding{}{0}{0}{1}v_1\otimes v_1=v_1 \otimes v_1\/;\]
\[ t_{01}(y)v_1 \otimes v_0=\Avoiding{1}{0}{}{1}\hspace{-.55cm}\Connecting{}{1}{0}{1}v_1\otimes v_1=\left(1+y/\ve_1\right)v_1\otimes v_1 \/;\]
\[ t_{01}(y)v_0 \otimes v_0 =\Avoiding{1}{0}{}{1}\hspace{-.55cm}\Avoiding{}{0}{0}{0}v_0 \otimes v_1 = v_0 \otimes v_1 \/.\]

We continue with $\tau_{10}(y)$. 
\[ \begin{split} \tau_{10}(y) \cO^0_\emptyset & = \lambda_y (\mathcal{Q}_{1}^\vee) \cdot (p_2)_*(p_1)^*(\cO^0_\emptyset) - (p_2)_* p_1^*(\lambda_y (\mathcal{Q}_{2}^\vee) \cdot \cO^0_\emptyset) = - y/\ve_1 (1+y/\ve_2) \cO^1_\emptyset - (y/\ve_2) \cO^1_\Box \/. \end{split}\]
\[ \begin{split} t_{10}(y) \cO^1_\emptyset & = \lambda_y (\mathcal{Q}_{0}^\vee) \cdot (p_2)_*(p_1)^*(\cO_1^\emptyset) - (p_2)_* p_1^*(\lambda_y (\mathcal{Q}_{1}^\vee) \cdot \cO_1^\emptyset) =0 \/. \end{split} \]
\[ \begin{split} t_{10}(y) \cO^1_\Box & = \lambda_y (\mathcal{Q}_{0}^\vee) \cdot (p_2)_*(p_1)^*(\cO_1^\Box) - (p_2)_* p_1^*(\lambda_y (\mathcal{Q}_{1}^\vee) \cdot \cO_1^\Box) = -(y/\ve_1) \cO^2_\emptyset \/. \end{split} \] 
\[ t_{10}(y) \cO^2_\emptyset =0 \/. \]
The graphical calculus yields:
\[  
t_{10}(y)v_1\otimes v_1=\Connecting{0}{1}{}{1}\hspace{-.55cm}\Avoiding{}{1}{1}{0}v_0 \otimes v_1+
\Avoiding{0}{1}{}{0}\hspace{-.55cm}\Avoiding{}{1}{1}{1}v_1 \otimes v_0=
-y/\ve_1\left(1+y/\ve_2 \right)v_0 \otimes v_1-(y/\ve_2)\,v_1 \otimes v_0\;
\]
\[ t_{10}(y)v_0\otimes v_1=0 \/;\]
\[
t_{10}(y)v_1 \otimes v_0=\Avoiding{0}{0}{}{0}\hspace{-.55cm}\Avoiding{}{1}{1}{0}v_0\otimes v_0=-(y/\ve_1)\,v_0\otimes v_0\;.
\]
\[ t_{10}(y)v_0\otimes v_0=0 \/. \]

\subsection{Bethe vectors} The Bethe vectors for  $\QK_{\T}(\Gr(0,2))$ and  $\QK_{\T}(\Gr(2,2))$ are equal to the Schubert classes. The nontrivial calculation is that of the 
Bethe vectors for $\QK_{\T}(\Gr(1,2))$. To ease notation, we will remove the superscripts
from the notation of Schubert classes. 
For $\Gr(1,2)$ there is a single Bethe Ansatz equation:
\[ (1-x/\ve_1)(1-x/\ve_2) - q = 0 \/, \]
with roots 
\[
x_{\pm}:= \frac{\ve_1}{2} + \frac{\ve_2}{2} \pm  \frac{\sqrt{4q\ve_1\ve_2 + (\ve_1-\ve_2)^2}}{2}
\]
At $q=0$, the root $x_+=\ve_1$ corresponds to $\lambda=\emptyset$ and the root $x_{-}=\ve_2$ to $\lambda=(1)$. The `off-shell' Bethe vector is
\begin{equation*}
\tau_{10}(-x)\cO^0_\emptyset =\frac{x}{\ve_1}\left(1-\frac{x}{\ve_2}\right)\cO_\emptyset+\frac{x}{\ve_2}\cO_\Box \/.
\end{equation*}
The (`on-shell') Bethe vectors are obtained by specializing $x$ to be a root of the Bethe Ansatz equation:
\[ \bfe_\emptyset^q=\Phi(\be_{01})  = \tau_{10}(-x_+)\cO_\emptyset^0 =  \frac{x_+}{\ve_1} (1-\frac{x_+}{\ve_2}) \cO_\emptyset + \frac{x_+}{\ve_2} \cO_\Box \]
\[ \bfe_{(1)}^q=\Phi(\be_{10}) = \tau_{10}(-x_{-})\cO_\emptyset^0 =  \frac{x_{-}}{\ve_1} (1-\frac{x_{-}}{\ve_2}) \cO_\emptyset + \frac{x_{-}}{\ve_2} \cO_\Box \]
One may check directly that if $q=0$ the Bethe vectors are 
precisely the classes of the corresponding 
torus fixed points in $\Gr(1,2)$. 

As a reality check, using that in the quantum ring $\QK_T(\Gr(1,2))$, 
\begin{equation}\label{E:O11}  \cO^1_{(1)} \star \cO^1_{(1)} = q (\ve_2/\ve_1) \cO_\emptyset^1 - (\ve_2/\ve_1) \cO^1_{(1)}  + \cO^1_{(1)} \end{equation}
an algebra calculation gives that 
\[ t_{10}(-x_+) \star t_{10}(-x_{-}) = 0 \/, \]
verifying the orthogonality property of the Bethe vectors. 

\subsection{Quantum localization} By definition of the Frobenius pairing
\[ (\cO^\Box, \cO_\Box)_{\QK} = \frac{q}{1-q} \/; \quad (\cO_\emptyset, \cO_\lambda)_{\QK} = \frac{1}{1-q} \quad \forall \lambda \/. \]
We may also calculate the quantum localizations:
\[ (\cO_\emptyset, \bfe_\emptyset^q )_{\QK} = \frac{\frac{x_+}{\ve_1} (1-\frac{x_+}{\ve_2})}{1-q}+\frac{ \frac{x_+}{\ve_2}}{1-q} = \frac{1-q}{1-q}=1=G_0(1-x_+|1-\ve^{-1}) \/;\]
\[ (\cO_\emptyset, \bfe_\Box^q )_{\QK} = \frac{\frac{x_-}{\ve_1} (1-\frac{x_-}{\ve_2})}{1-q}+\frac{ \frac{x_-}{\ve_2}}{1-q} = \frac{1-q}{1-q}=1=G_0(1-x_{-}|1-\ve^{-1}) \/.\]
We now turn to the quantum localizations of $\cO_\Box$. We have:
\[ \begin{split} (\cO_\Box, \bfe_\emptyset^q )_{\QK} & = \frac{\frac{x_+}{\ve_1} (1-\frac{x_+}{\ve_2})}{1-q}+ \frac{x_+}{\ve_2}(\cO_\Box, \cO_\Box)_{\QK} \\ 
& =  \frac{\frac{x_+}{\ve_1} (1-\frac{x_+}{\ve_2})}{1-q}+\frac{x_+}{\ve_2} \cdot \frac{q \ve_2/\ve_1-\ve_2/\ve_1+1}{1-q} \\ &
= \frac{1-q- x_+/\ve_2\cdot \ve_2/\ve_1(1-q)}{1-q}=1-x_+/\ve_1 \\
& =G_1(1-x_+|1-\ve^{-1})
 \/; \end{split}\]
\[ \begin{split} (\cO_\Box, \bfe_\Box^q )_{\QK} & = \frac{\frac{x_-}{\ve_1} (1-\frac{x_-}{\ve_2})}{1-q}+ \frac{x_-}{\ve_2}(\cO_\Box, \cO_\Box)_{\QK} \\ 
& =  \frac{\frac{x_-}{\ve_1} (1-\frac{x_-}{\ve_2})}{1-q}+\frac{x_-}{\ve_2} \cdot \frac{q \ve_2/\ve_1-\ve_2/\ve_1+1}{1-q} \\ &
= \frac{1-q- x_-/\ve_2\cdot \ve_2/\ve_1(1-q)}{1-q}=1-x_-/\ve_1 \\ &
=G_1(1-x_{-}|1-\ve^{-1})\/; \end{split}\]
We now calculate the quantum Euler pairing 
$(\bfe_\emptyset^q,\bfe_\emptyset^q )_{\QK}$. 
From the expansion into Schubert classes we obtain 
\[ \begin{split} (\bfe_\emptyset^q,\bfe_\emptyset^q )_{\QK} & = \left( \frac{x_+}{\ve_1} (1-\frac{x_+}{\ve_2}) \cO_\emptyset + \frac{x_+}{\ve_2} \cO_{(1)}, \bfe_\emptyset^q \right)_{\QK} \\
& = \frac{x_+}{\ve_1} (1-\frac{x_+}{\ve_2}) + \frac{x_+}{\ve_2} (\cO_{(1)}, \bfe_\emptyset^q )_{\QK} \\ & = \frac{x_+}{\ve_1} (1-\frac{x_+}{\ve_2}) + \frac{x_+}{\ve_2} (1-\frac{x_+}{\ve_1}) \\
& =  \frac{x_+}{\ve_1} +  \frac{x_+}{\ve_2} - \frac{2 (x_+)^2}{\ve_1 \ve_2} \\
& = 1-q - \frac{(x_+)^2}{\ve_1 \ve_2} \/. 
\end{split}
\]
From the $W$-equivariance of the quantum pairing we obtain
\[ (\bfe_\Box^q,\bfe_\Box^q )_{\QK} = s^L.(\bfe_\emptyset^q,\bfe_\emptyset^q )_{\QK} = 1-q  - \frac{(x_{-})^2}{\ve_1 \ve_2} \/.\]
Note that if $q=0$, $x_+ = \ve_1$ and $x_-=\ve_2$, giving that 
\[ (\cO_\Box, \bfe_\emptyset )_{\K} =0 \/, \quad
(\cO_\Box, \bfe_\Box )_{\K} =1-\ve_2/\ve_1 , \quad  
(\bfe_\emptyset, \bfe_\emptyset )_{\K} =1-\ve_1/\ve_2 \/,\] 
consistent with the classical case. (We denoted the classical pairing by $( \cdot , \cdot)_{\K}$.)

We illustrate next the calculation of two quantum characters (cf.~\Cref{cor:qAB}).
First, we consider $(1, 1)_{\QK}$ in $\QK_{\T}(\bP^1)$; see \eqref{E:AB-1}. In this case, the equality states:
\[ \frac{1}{1-q} = \frac{1}{1-q - \frac{(x_+)^2}{\ve_1 \ve_2}}+ \frac{1}{1-q - \frac{(x_-)^2}{\ve_2 \ve_1}} \/. \]
Then a direct algebra check gives that the right hand side is indeed $1/(1-q)$, using that, from the Bethe ansatz equation,
\begin{equation}\label{E:BA-cons} \frac{x_{\pm}^2}{\ve_1 \ve_2} = q-1+\frac{x_{\pm}}{\ve_1}+\frac{x_{\pm}}{\ve_2} \/.\end{equation}

We now illustrate the calculation of the  quantum character of
$\det \mathcal{Q}_1$. To start, we have
\[ \lambda_y \mathcal{Q}_1 = (1+y \ve_2) \cO_1^\emptyset + y \ve_1 \cO_1^\Box \/,\]
thus
\[ \det \mathcal{Q}_1 = \ve_2 \cO_1^\emptyset + \ve_1 \cO_1^\Box \/.\]
The quantum localizations are:
\[ ( \det \mathcal{Q}_1, \bfe_\emptyset^q )_{|QK} = \ve_2 + \ve_1 (1-\frac{x_+}{\ve_1}) =  \ve_1+ \ve_2- x_+  \/;\] 
\[ ( \det \mathcal{Q}_1, \bfe_\Box^q )_{\QK} = \ve_1+\ve_2 - x_{-} \/. \]
Then the quantum character is
\[ \mathrm{qch}(\det \mathcal{Q})= \frac{\ve_1+ \ve_2- x_+ }{1-q  - \frac{(x_{+})^2}{\ve_1 \ve_2}}+
\frac{\ve_1+ \ve_2- x_{-} }{1-q  - \frac{(x_{-})^2}{\ve_1 \ve_2}} = \frac{\ve_1+\ve_2}{1-q} \/.\] 
The last equality follows again from \eqref{E:BA-cons}, and it confirms \Cref{cor:qAB} in this case.

\subsection{Action of the affine Weyl group} The Weyl group $W=S_2$
is generated by the reflection $s_1=s_\alpha$ corresponding to the root
$\alpha=\ve_1/\ve_2$. The extended affine Weyl group 
$\widetilde W=W\ltimes\Z^2$ has two presentations:
\[ \widetilde{W} = \langle s_1, \rho : s_1^2 =1\rangle = 
\langle s_1, t_{1}, t_{2} : s_1^2 = 1, t_2 = s_1 t_1 s_1 \rangle \/, \]
related by $t_1 = s_1\rho$ and $t_2=\rho s_1$.
The elements $t_i$ are the translations $t_{\ve_i}$. The affine Weyl group
$\Waff = \langle s_0 ,s_1 \rangle$ is a subgroup of $\widetilde{W}$,
with $s_0=\rho s_1\rho^{-1}$; then $\widetilde{W} = \Waff \ltimes \Z$, and $\rho$ is the cyclic generator of $\Z$.

We now describe the action of $\widetilde{W}$ on the $\YB$-module 
$\bbV_2=V_{0,2}\oplus V_{1,2}\oplus V_{2,2}$. Since the action preserves
each of the weight spaces $V_{i,2}$, we still use 
the notation of Schubert classes without superscripts.
 
The action on $V_{1,2}$ is given by 
\begin{gather*} 
s_1.\cO_\emptyset=v_{\emptyset}\,,\qquad s_1.\cO_\Box=\alpha \cO_\emptyset+(1-\alpha)\,\cO_\Box \/; \\
\rho.\cO_\emptyset=\cO_{\Box}\,,\qquad \rho.\cO_\Box=q \cO_\emptyset\,,
\end{gather*}
From this, one may derive the action of the translations. Alternatively, one may use the geometric interpretation and \eqref{E:lyQ1}:
\begin{gather*} t_1.\cO_\emptyset = \lambda_{-\ve_1}(\cQ_1^\vee)\star \cO_\emptyset = (1-\alpha) \cO_\emptyset + \alpha \cO_\Box \/; \qquad t_2.\cO_\emptyset = \lambda_{-\ve_2}(\cQ_1^\vee)\star \cO_\emptyset = \cO_\Box \/; \\
t_1.\cO_\Box = \lambda_{-\ve_1}(\cQ_1^\vee)\star \cO_\Box = q\cO_\emptyset,\qquad t_2.\cO_\Box=\lambda_{-\ve_2}(\cQ_1^\vee)\star \cO_\Box = \alpha^{-1}q\cO_\emptyset+(1-\alpha^{-1})\cO_\Box \/. \end{gather*}
The above formulae imply the following actions of the affine reflection $s_0=\rho s_1\rho^{-1}$:
\[s_0.\cO_\emptyset=\alpha^{-1}\cO_\emptyset+(1-\alpha^{-1})q^{-1}\cO_\Box\,,
\qquad\qquad s_0.\cO_\Box=\cO_\Box\/.\]
The action of $W$ on $V_{0,2}$ and $V_{2,2}$ is trivial while 
\[ \rho|V_{0,2}=q \textrm{ and }\rho|V_{2,2}=1 \/. \]

\section{$\beta$-calculus}\label{sec:beta-calc} 
In order to facilitate the comparison with the results from \cite{gorbounov2017quantum} we briefly summarize the change of conventions between the latter and the current work. 

In \cite{gorbounov2017quantum} the emphasis was on expressing the operators $t(z)$ and $\tilde t(z)$ of the integrable system in terms of Schubert classes and in this context the multiplicative formal group law
\[
z\oplus w=z+w+\beta\, zw,\qquad -1\leq \beta\leq 0
\]
is particularly convenient, where $z$ denotes the spectral parameter used in \cite{gorbounov2017quantum} and $\beta$ is chosen to be a real parameter with the values $\beta=0$ and $\beta=-1$ corresponding to the case of (quantum) cohomology and (quantum) K-theory, respectively. In the physical interpretation of the lattice models the parameter $\beta$ plays the role of a `coupling constant' or `interaction strength'.
In the current work the focus is instead on vector bundles and their classes. 

Let $E \to X$ be a vector bundle of rank $e$, with the Hirzebruch $\lambda_y$ class
\[ \lambda_y(E) = 1 +y E + y^2 \wedge^2 E + \ldots + y^e \wedge^e E \/.\]
Then the parameter $y$ and the equivariant parameters $\ve_i$ 
are related to the spectral parameter $z$ and the equivariant parameters $t_i$ in \cite{gorbounov2017quantum}\footnote{The variable $z$ was called $x$ and $t_i$ called $y_i$ in \cite{gorbounov2017quantum}} via
\begin{equation}\label{E:cov-eqparams} 
-y =1+ \beta z\;\quad\text{and} \quad \ve_i = 1+ \beta t_i \;.
\end{equation}
Inserting these variable transformations we have the following relationship between the tautological bundles and the transfer matrices\footnote{Our conventions of how to map 01-words to partitions in the current article differ also from the ones in \cite{gorbounov2017quantum}: swapping 0-letter and 1-letters one obtains via level-rank duality $t(y)$ from $\tilde t(y)$ and, thus, the geometric interpretations of $E(z)$ and $H(z)$ in \cite{gorbounov2017quantum} are opposite to the ones here.} considered in the current article and those in \cite{gorbounov2017quantum},
\begin{equation}
   E(z)=(-\beta)^{n-k} t^{(\beta)}(y) = \lambda_y^\beta(\mathcal{Q}^\vee)\quad\text{and}\quad
    H(z)=(-\beta)^{k} \tilde{t}^{(\beta)}(y) = \lambda_y^\beta(\mathcal{S})\;.
    \end{equation}
We also weight Schubert classes by
\begin{equation}\label{E:Schub-cov} \cO_\lambda \mapsto (-\beta)^{|\lambda|} \cO_\lambda \end{equation}    
Specializing $\beta=-1$ in these formulae we arrive at the conventions and operators used in the current work.

\bibliographystyle{halpha}
\bibliography{QKGrass.bib}

\newcommand{\etalchar}[1]{$^{#1}$}
\begin{thebibliography}{BCMP18b}

\bibitem[ACTI18]{ACTI:finiteness}
David Anderson, Linda Chen, Hsian-Hua Tseng, and Hiroshi Iritani.
\newblock On the finiteness of quantum {K}-theory of a homogeneous space.
\newblock {\em arXiv preprint arXiv:1804.04579}, 2018.

\bibitem[Bax82]{Baxter:book}
Rodney~J. Baxter.
\newblock {\em Exactly solved models in statistical mechanics}.
\newblock Academic Press, Inc. [Harcourt Brace Jovanovich, Publishers], London,
  1982.

\bibitem[BCLM20]{BCLM:euler}
Anders~S. Buch, Sjuvon Chung, Changzheng Li, and Leonardo~C. Mihalcea.
\newblock Euler characteristics in the quantum {$K$}-theory of flag varieties.
\newblock {\em Selecta Math. (N.S.)}, 26(2):Paper No. 29, 11, 2020.

\bibitem[BCMP13]{BCMP:qkfin}
Anders~S. Buch, Pierre-Emmanuel Chaput, Leonardo~C. Mihalcea, and Nicolas
  Perrin.
\newblock Finiteness of cominuscule quantum {$K$}-theory.
\newblock {\em Ann. Sci. \'{E}c. Norm. Sup\'{e}r. (4)}, 46(3):477--494 (2013),
  2013.

\bibitem[BCMP16]{BCMP:rcfin}
Anders~S. Buch, Pierre-Emmanuel Chaput, Leonardo~C. Mihalcea, and Nicolas
  Perrin.
\newblock Rational connectedness implies finiteness of quantum {$K$}-theory.
\newblock {\em Asian J. Math.}, 20(1):117--122, 2016.

\bibitem[BCMP18a]{BCMP:qkchev}
Anders~S. Buch, Pierre-Emmanuel Chaput, Leonardo~C. Mihalcea, and Nicolas
  Perrin.
\newblock A {C}hevalley formula for the equivariant quantum {$K$}-theory of
  cominuscule varieties.
\newblock {\em Algebr. Geom.}, 5(5):568--595, 2018.

\bibitem[BCMP18b]{BCMP:projgw}
Anders~S. Buch, Pierre-Emmanuel Chaput, Leonardo~C. Mihalcea, and Nicolas
  Perrin.
\newblock Projected {G}romov-{W}itten varieties in cominuscule spaces.
\newblock {\em Proc. Amer. Math. Soc.}, 146(9):3647--3660, 2018.

\bibitem[BCMP22]{BCMP:qkpos}
A.~S. Buch, Pierre-Emmanuel Chaput, Leonardo~C. Mihalcea, and Nicolas Perrin.
\newblock Positivity of minuscule quantum {K} theory.
\newblock {\em arxiv:2205.08630}, 2022.

\bibitem[BCP23]{buch.chaput.perrin:seidel}
Anders Buch, Pierre-Emmanuel Chaput, and Nicolas Perrin.
\newblock Seidel and pieri products in cominuscule quantum {$K$}-theory.
\newblock {\em preprint arXiv:2308.05307}, 2023.

\bibitem[BK05]{brion.kumar:frobenius}
Michel Brion and Shrawan Kumar.
\newblock {\em Frobenius splitting methods in geometry and representation
  theory}, volume 231 of {\em Progr. Math.}
\newblock Birkh\"auser Boston, Inc., Boston, MA, 2005.

\bibitem[BKT03]{buch.kresch.ea:gromov-witten}
A.~S. Buch, A.~Kresch, and H.~Tamvakis.
\newblock Gromov-{W}itten invariants on {G}rassmannians.
\newblock {\em J. Amer. Math. Soc.}, 16(4):901--915(electronic), 2003.

\bibitem[BM11]{buch.m:qk}
Anders~S. Buch and Leonardo~C. Mihalcea.
\newblock Quantum {$K$}-theory of {G}rassmannians.
\newblock {\em Duke Math. J.}, 156(3):501--538, 2011.

\bibitem[BM15]{buch.m:nbhds}
Anders~S. Buch and Leonardo~C. Mihalcea.
\newblock Curve neighborhoods of {S}chubert varieties.
\newblock {\em J. Differential Geom.}, 99(2):255--283, 2015.

\bibitem[BMO11]{BMO:springer}
Alexander Braverman, Davesh Maulik, and Andrei Okounkov.
\newblock Quantum cohomology of the {S}pringer resolution.
\newblock {\em Adv. Math.}, 227(1):421--458, 2011.

\bibitem[Buc02]{buch:groth-classes}
Anders~Skovsted Buch.
\newblock Grothendieck classes of quiver varieties.
\newblock {\em Duke Math. J.}, 115(1):75--103, 2002.

\bibitem[Buc03]{buch:qcohgr}
Anders~Skovsted Buch.
\newblock Quantum cohomology of {G}rassmannians.
\newblock {\em Compositio Math.}, 137(2):227--235, 2003.

\bibitem[CG09]{chriss2009representation}
Neil Chriss and Victor Ginzburg.
\newblock {\em Representation theory and complex geometry}.
\newblock Springer Science \& Business Media, 2009.

\bibitem[Cho21]{chow:peterson}
Chi-Hong Chow.
\newblock Peterson-{L}am-{S}himozono's theorem is an affine analogue of quantum
  {C}hevalley formula.
\newblock {\em arXiv:2110.09985}, 2021.

\bibitem[CK23a]{closset.khlaif:grothendieck}
Cyril Closset and Osama Khlaif.
\newblock Grothendieck lines in 3d {$\mathcal{N}$} = 2 {SQCD} and the quantum
  {K}-theory of the {G}rassmannian.
\newblock {\em J. High Energy Phys.}, (12):Paper No. 82, 55, 2023.

\bibitem[CK23b]{closset.khlaif:twisted}
Cyril Closset and Osama Khlaif.
\newblock Twisted indices, {B}ethe ideals and 3d {$ \mathcal{N} = 2$} infrared
  dualities.
\newblock {\em J. High Energy Phys.}, (5):Paper No. 148, 48, 2023.

\bibitem[CMP09]{chaput.perrin:affine}
Pierre-Emmanuel Chaput, Laurent Manivel, and Nicolas Perrin.
\newblock Affine symmetries of the equivariant quantum cohomology ring of
  rational homogeneous spaces.
\newblock {\em Math. Res. Lett.}, 16(1):7--21, 2009.

\bibitem[CP95]{chari1995guide}
Vyjayanthi Chari and Andrew~N Pressley.
\newblock {\em A {G}uide to {Q}uantum {G}roups}.
\newblock Cambridge {U}niversity {P}ress, 1995.

\bibitem[CP11]{chaput.perrin:rationality}
P.-E. Chaput and N.~Perrin.
\newblock Rationality of some {G}romov-{W}itten varieties and application to
  quantum {$K$}-theory.
\newblock {\em Commun. Contemp. Math.}, 13(1):67--90, 2011.

\bibitem[Fad90]{faddeev1990lectures}
LD~Faddeev.
\newblock Lectures on quantum inverse scattering method.
\newblock {\em New Problems, Methods and Techniques in Quantum Field Theory and
  Statistical Mechanics. Series on Advances in Statistical Mechanics}, 6:7--54,
  1990.

\bibitem[FR92]{frenkel.reshetikin:quantum}
I.~B. Frenkel and N.~Yu. Reshetikhin.
\newblock Quantum affine algebras and holonomic difference equations.
\newblock {\em Comm. Math. Phys.}, 146(1):1--60, 1992.

\bibitem[GGM{\etalchar{+}}25]{GGMWY:correspondence}
Wei Gu, Jirui Guo, Leonardo Mihalcea, Yaoxiong Wen, and Xiaohan Yan.
\newblock A correspondence between the quantum {K} theory and quantum
  cohomology of {G}rassmannians.
\newblock {\em J. Geom. Phys.}, 210:Paper No. 105437, 24, 2025.

\bibitem[Giv00]{givental:onwdvv}
Alexander Givental.
\newblock On the {WDVV} equation in quantum {$K$}-theory.
\newblock {\em Michigan Math. J.}, 48:295--304, 2000.
\newblock Dedicated to William Fulton on the occasion of his 60th birthday.

\bibitem[GK95]{givental-kim:QH}
Alexander Givental and Bumsig Kim.
\newblock Quantum cohomology of flag manifolds and {T}oda lattices.
\newblock {\em Comm. Math. Phys.}, 168(3):609--641, 1995.

\bibitem[GK14]{gorbounov2014equivariant}
Vassily Gorbounov and Christian Korff.
\newblock Equivariant quantum cohomology and {Y}ang-{B}axter algebras, 2014,
  1402.2907.

\bibitem[GK17]{gorbounov2017quantum}
Vassily Gorbounov and Christian Korff.
\newblock Quantum integrability and generalised quantum {S}chubert calculus.
\newblock {\em Advances in Mathematics}, 313:282--356, 2017.

\bibitem[GKS20]{gorbunov2020yang}
Vassily~G. Gorbunov, Christian Korff, and Catharina Stroppel.
\newblock Yang--{B}axter algebras, convolution algebras, and {G}rassmannians.
\newblock {\em Russian Mathematical Surveys}, 75(5):791, 2020.

\bibitem[GL03]{givental.lee:quantum}
Alexander Givental and Yuan-Pin Lee.
\newblock Quantum {$K$}-theory on flag manifolds, finite-difference {T}oda
  lattices and quantum groups.
\newblock {\em Invent. Math.}, 151(1):193--219, 2003.

\bibitem[GMS{\etalchar{+}}24]{gu2024quantum}
Wei Gu, Leonardo Mihalcea, Eric Sharpe, Weihong Xu, Hao Zhang, and Hao Zou.
\newblock Quantum {K} theory rings of partial flag manifolds.
\newblock {\em Journal of Geometry and Physics}, 198:105127, 2024.

\bibitem[GMSZ22a]{GMSZ:QK}
W.~Gu, L.~C. Mihalcea, E.~Sharpe, and H.~Zou.
\newblock Quantum {K} theory of {G}rassmannians, {W}ilson line operators, and
  {S}chur bundles.
\newblock {\em arXiv:2208.01091 [math.AG]}, 2022.

\bibitem[GMSZ22b]{GMSZ:symplectic}
Wei Gu, Leonardo Mihalcea, Eric Sharpe, and Hao Zou.
\newblock Quantum {K} theory of symplectic {G}rassmannians.
\newblock {\em J. Geom. Phys.}, 177:Paper No. 104548, 38, 2022.

\bibitem[GRTV13]{GRTV:quantum}
V.~Gorbounov, R.~Rim\'anyi, V.~Tarasov, and A.~Varchenko.
\newblock Quantum cohomology of the cotangent bundle of a flag variety as a
  {Y}angian {B}ethe algebra.
\newblock {\em J. Geom. Phys.}, 74:56--86, 2013.

\bibitem[Hir95]{hirzebruch:topological}
Friedrich Hirzebruch.
\newblock {\em Topological methods in algebraic geometry}.
\newblock Classics in Mathematics. Springer-Verlag, Berlin, 1995.
\newblock Translated from the German and Appendix One by R. L. E.
  Schwarzenberger, With a preface to the third English edition by the author
  and Schwarzenberger, Appendix Two by A. Borel, Harada Reprint of the 1978
  edition.

\bibitem[HLS10]{harada.landweber.sjamaar:divided}
Megumi Harada, Gregory~D Landweber, and Reyer Sjamaar.
\newblock Divided differences and the {W}eyl character formula in equivariant
  {K}-theory.
\newblock {\em Mathematical Research Letters}, 17(3):507--527, 2010.

\bibitem[JMNT20]{jockers.and.al:Wilson}
Hans Jockers, Peter Mayr, Urmi Ninad, and Alexander Tabler.
\newblock Wilson loop algebras and quantum {K}-theory for {G}rassmannians.
\newblock {\em J. High Energy Phys.}, (10):036, 19, 2020.

\bibitem[JMNT22]{jockers.and.al:BPS}
Hans Jockers, Peter Mayr, Urmi Ninad, and Alexander Tabler.
\newblock B{PS} indices, modularity and perturbations in quantum {K}-theory.
\newblock {\em J. High Energy Phys.}, (2):Paper No. 044, 63, 2022.

\bibitem[Kap84]{kapranov:Gr}
M.~M. Kapranov.
\newblock Derived category of coherent sheaves on {G}rassmann manifolds.
\newblock {\em Izv. Akad. Nauk SSSR Ser. Mat.}, 48(1):192--202, 1984.

\bibitem[Kat18]{kato:loop}
Syu Kato.
\newblock Loop structure on equivariant {K}-theory of semi-infinite flag
  manifolds.
\newblock {\em arXiv:1805.01718}, 2018.

\bibitem[Kim99]{kim:QH}
Bumsig Kim.
\newblock Quantum cohomology of flag manifolds {$G/B$} and quantum {T}oda
  lattices.
\newblock {\em Ann. of Math. (2)}, 149(1):129--148, 1999.

\bibitem[Knu03]{knutson:noncomplex}
Allen Knutson.
\newblock A {S}chubert calculus recurrence from the noncomplex {W}-action on
  {G/B}.
\newblock {\em arXiv preprint math/0306304}, 2003.

\bibitem[Kol86]{kollar:higherII}
J\'anos Koll\'ar.
\newblock Higher direct images of dualizing sheaves. {II}.
\newblock {\em Ann. of Math. (2)}, 124(1):171--202, 1986.

\bibitem[KOUY20]{kim.ueda.yoshida:residue}
Bumsig Kim, Jeongseok Oh, Kazushi Ueda, and Yutaka Yoshida.
\newblock Residue mirror symmetry for {G}rassmannians.
\newblock In {\em Schubert calculus and its applications in combinatorics and
  representation theory}, volume 332 of {\em Springer Proc. Math. Stat.}, pages
  307--365. Springer, Singapore, [2020] \copyright 2020.

\bibitem[KPSZ21]{KPSZ:quivers}
Peter Koroteev, Petr~P. Pushkar, Andrey~V. Smirnov, and Anton~M. Zeitlin.
\newblock Quantum {K}-theory of quiver varieties and many-body systems.
\newblock {\em Selecta Math. (N.S.)}, 27(5):Paper No. 87, 40, 2021.

\bibitem[KS10]{korff.stroppel:W}
Christian Korff and Catharina Stroppel.
\newblock The {$\widehat{sl}(n)_k$}-{WZNW} fusion ring: a combinatorial
  construction and a realisation as quotient of quantum cohomology.
\newblock {\em Adv. Math.}, 225(1):200--268, 2010.

\bibitem[KZ21]{koroteev.zetlin:qKZ-tRS}
Peter Koroteev and Anton~M. Zeitlin.
\newblock q{KZ}/t{RS} duality via quantum {K}-theoretic counts.
\newblock {\em Math. Res. Lett.}, 28(2):435--470, 2021.

\bibitem[KZJ17]{knutson.zinn:SchubertI}
Allen Knutson and Paul Zinn-Justin.
\newblock Schubert puzzles and integrability {I}: invariant trilinear forms.
\newblock {\em arXiv:1706.10019}, 2017.

\bibitem[Lee04]{lee:QK}
Y.-P. Lee.
\newblock Quantum {$K$}-theory. {I}. {F}oundations.
\newblock {\em Duke Math. J.}, 121(3):389--424, 2004.

\bibitem[LKSY22]{li.liu.song.yang:seidel}
Changzheng Li, Zhaoyang Kiu, Song, and Mingzhi Yang.
\newblock On {S}eidel representation in quantum {K}-theory of {G}rassmannians.
\newblock {\em arXiv:2211.16902}, 2022.

\bibitem[LSX24]{li.su.xiong;springer}
Changzheng Li, Changjian Su, and Rui Xiong.
\newblock Automorphisms of the quantum cohomology of the {S}pringer resolution
  and applications.
\newblock {\em Adv. Math.}, 442:Paper No. 109577, 38, 2024.

\bibitem[McN06]{mcnamara:factorial}
Peter~J. McNamara.
\newblock Factorial {G}rothendieck polynomials.
\newblock {\em Electron. J. Combin.}, 13(1):Research Paper 71, 40, 2006.

\bibitem[MNS22]{MNS:left}
Leonardo~C. Mihalcea, Hiroshi Naruse, and Changjian Su.
\newblock Left {D}emazure-{L}usztig operators on equivariant (quantum)
  cohomology and {K}-theory.
\newblock {\em Int. Math. Res. Not. IMRN}, (16):12096--12147, 2022.

\bibitem[MO19]{MO:book}
Davesh Maulik and Andrei Okounkov.
\newblock Quantum groups and quantum cohomology.
\newblock {\em Ast\'erisque}, (408):ix+209, 2019.

\bibitem[NS09]{nekrasov.shatashvili:supersymmetric}
Nikita~A. Nekrasov and Samson~L. Shatashvili.
\newblock Supersymmetric vacua and {B}ethe ansatz.
\newblock {\em Nuclear Phys. B Proc. Suppl.}, 192/193:91--112, 2009.

\bibitem[Oko17]{okounkov:lectures}
Andrei Okounkov.
\newblock Lectures on {K}-theoretic computations in enumerative geometry.
\newblock In {\em Geometry of moduli spaces and representation theory},
  volume~24 of {\em IAS/Park City Math. Ser.}, pages 251--380. Amer. Math.
  Soc., Providence, RI, 2017.

\bibitem[Pos05]{postnikov:affine}
Alexander Postnikov.
\newblock Affine approach to quantum {S}chubert calculus.
\newblock {\em Duke Math. J.}, 128(3):473--509, 2005.

\bibitem[Smi24]{smirnov:elliptic-lectures}
Andrey Smirnov.
\newblock Enumerative geometry via elliptic stable envelopes.
\newblock {\em arXiv:2408.05643}, 2024.

\bibitem[Sum24]{summers:qideal}
Kevin Summers.
\newblock A dual basis for the equivariant quantum {K}-theory of cominuscule
  varieties.
\newblock {\em arXiv:2407.02703}, 2024.

\bibitem[SV20]{schiffmann.vasserot:cohomological}
Olivier Schiffmann and Eric Vasserot.
\newblock On cohomological {H}all algebras of quivers: generators.
\newblock {\em J. Reine Angew. Math.}, 760:59--132, 2020.

\bibitem[SZ24]{sinha.zhang:QK1}
Shubham Sinha and Ming Zhang.
\newblock Quantum {K}-invariants via {Q}uot schemes {I}.
\newblock {\em arXiv:2406.12191}, 2024.

\bibitem[TV24]{tarasov.varchenko:monodromy}
Vitaly Tarasov and Alexander Varchenko.
\newblock Monodromy of the equivariant quantum differential equation of the
  cotangent bundle of a {G}rassmannian.
\newblock {\em Selecta Math. (N.S.)}, 30(2):Paper No. 25, 33, 2024.

\bibitem[UY20]{ueda.yoshida:3d}
Kazushi Ueda and Yutaka Yoshida.
\newblock 3d {$\mathcal{N} = 2$} {C}hern-{S}imons-matter theory, {B}ethe
  ansatz, and quantum {$K$}-theory of {G}rassmannians.
\newblock {\em J. High Energy Phys.}, (8):157, 43, 2020.

\bibitem[Wey03]{weyman}
Jerzy Weyman.
\newblock {\em Cohomology of vector bundles and syzygies}, volume 149 of {\em
  Cambridge Tracts in Mathematics}.
\newblock Cambridge University Press, Cambridge, 2003.

\end{thebibliography}
\end{document}